\newtheorem{thm}{Theorem}[section]  % gli sto dicendo usare per tutto la stessa numerazione, ricominciando ogni sezione
\newtheorem{cor}[thm]{Corollary}
\newtheorem{lem}[thm]{Lemma}
\newtheorem{defi}[thm]{Definition}
\newtheorem{prop}[thm]{Proposition}
\newtheorem{es}[thm]{Example}
\newtheorem{rem}[thm]{Remark}
\newtheorem{con}[thm]{Conjecture}
\DeclareMathOperator{\ad}{ad}
\DeclareMathOperator{\Sp}{Sp}
\DeclareMathOperator{\Spa}{Spa}
\DeclareMathOperator{\id}{id}
\DeclareMathOperator{\Spf}{Spf}
\DeclareMathOperator{\Spec}{Spec}
\DeclareMathOperator{\Gal}{Gal}
\DeclareMathOperator{\op}{op}
\DeclareMathOperator{\Bun}{Bun}
\DeclareMathOperator{\Homs}{\mathscr{H}\text{\kern -3pt {\calligra\large om}}\,}
\DeclareMathOperator{\Exts}{\mathscr{E}\text{xt}\,}
\DeclareMathOperator{\sF}{\mathscr{F}}
\newcommand{\sG}{\mathscr{G}}
\newcommand{\sL}{\mathscr{L}}
\DeclareMathOperator{\HT}{HT}
\DeclareMathOperator{\Berk}{Berk}
\title{Constructibility and Reflexivity in non-Archimedean geometry}
\author{Ildar Gaisin, John Welliaveetil }
\begin{document}

\maketitle

\begin{abstract}
We introduce a notion of constructibility 
for étale sheaves with torsion coefficients over 
a suitable class of adic spaces. 
 This notion is related to the classical notion of constructibility for schemes via the nearby cycles functor. 
 We use the work of R. Huber to define an adic Verdier dual and 
 investigate the extent to which we have a 6-functor formalism in this context. 
 Lastly, we attempt to classify those sheaves which are 
 reflexive with respect to the adic Verdier dual.
 %Furthermore we provide a notion of perversity related to the one given in \cite{torsion12}. We prove that there are two natural $t$-structures on the category of constructible %sheaves such that the intersection of their common cores realizes this perversity.
\end{abstract}

 {\hypersetup {linkcolor = black} 
\tableofcontents
}
\pagebreak
   
 \section{Introduction} 
 
     %Over the course of the twentieth century there have been several attempts at developing a suitable theory of
     %non-Archimedean analytic spaces similar to the theory that governs complex analytic varieties. 
     In the early 1960's, motivated by the question of uniformizing elliptic curves with split multiplicative reduction over a 
     non-Archimedean real valued field, Tate introduced the theory of rigid analytic spaces 
     and a robust formalism within which one could discuss a notion of coherent sheaves. 
     Since the introduction of rigid geometry, there have been other significant theories of non-Archimedean geometry, 
     namely - the theory of formal schemes outlined by Raynaud, the theory of Berkovich spaces and Huber's adic spaces, each of which 
     was developed with a different goal in mind. 
     Berkovich's theory provides one with non-Archimedean analytic spaces 
     with nice topological properties whilst adic spaces were introduced to better understand the étale cohomology of rigid varieties. 
     
     An étale cohomology theory for non-Archimedean spaces seeks to prove analogues of the 
     classical theorems that hold true in the étale cohomology of algebraic varieties such as finiteness results, 
     base change theorems, Poincaré duality, Kunneth formula... 
  Despite the advances made by Berkovich and Huber, we do not yet have a suitable theory of 
  constructible sheaves for non-Archimedean spaces. 
  If we were to imitate the classical definition and require that constructible sheaves be those for which there exists 
  a semi-analytic stratification such that the restriction of the sheaf to each element of the strata is finite locally constant then 
  it is not true that this class of sheaves is stable by pushforwards,  cf. \cite{FM}.  
  In fact it was only recently shown that if $X$ is a compact 
strictly $k$-analytic space then the groups $H^{q}(X,\mathbb{Z}/\ell\mathbb{Z}) \cong H_{c}^{q}(X,\mathbb{Z}/\ell\mathbb{Z})$ 
are of finite dimension (cf. \cite{berk94}, \cite{berk96}, \cite{berk13}) where 
$k$ is an algebraically closed complete non-Archimedean real valued field and $\ell$ is a prime number
 different from the characteristic of the residue field $\widetilde{k}$. 
In the language of adic spaces, if $f: X \rightarrow Y$ is 
smooth and quasi-compact then there is a theorem of 
stability for $R^qf_{!}$ with respect to a certain class of constructible sheaves, cf. \cite{hub96}, and 
in the case that $\dim(Y) \leq 1$, and $f: X \rightarrow Y$ is only quasi-compact, we 
have results of stability by $R^qf_{*}$ (in characteristic 0) and $R^qf_{!}$ for 
another class of sheaves (cf. \cite{hub98a}, \cite{hub98b}, \cite{hub07}).
The focus of this paper is to discuss a notion of constructibility that 
generalizes the constructions of Huber and Berkovich and to study the 
extent to which one has a six functor formalism in this context. We also attempt to relate our notion of constructibility to reflexivity with respect to the adic Verdier dual. More precisely we conjecture that semi-constructible sheaves coincides with reflexive sheaves, while also verifying this conjecture in some particular cases.  This is in the spirit of some upcoming work of P. Scholze.
     %Amongst the difficulties that one encounters is the lack of a suitable notion of constructible sheaves. 
     %In this respect, let us briefly survey the work of Berkovich and Huber. 

     %The focus of this article is to study certain questions in the étale cohomology of adic spaces 
     %that are motivated by the classical theory of étale cohomology for varieties.
     
\subsection{Motivation}     
 
     Let $X$ be an adic space that is separated and finite type over $k$. 
     We refer to such spaces as \emph{fine} $k$-adic spaces. 
     Recall that we have an isomorphism of locally ringed topological spaces
     \begin{align} 
       X \simeq  \varprojlim_{\mathfrak{X} \in \mathfrak{B}} \mathfrak{X}_s
     \end{align} 
    where $\mathfrak{B}$ is the cofiltered family of admissible formal models of $X$. 
    
    Observe that for every $\mathfrak{X} \in \mathfrak{B}$, $\mathfrak{X}_s$
    is a variety over $\tilde{k}$. 
    Hence, it seems reasonable to wonder if 
     one could study the étale topos of 
     $X$ using the relatively well understood étale topoi of the varieties $\mathfrak{X}_s$ as 
     $\mathfrak{X}$ varies along the family $\mathfrak{B}$. 
     With this in mind, we restrict the étale site of $X$ and consider only those objects which
     are fine, thus obtaining the fine étale site $X^f_{\text{ét}}$.  
     This allows us to define in a natural way the nearby cycles functor cf. \S \ref{nearby cycles functor} which was studied in \cite{hub96} (in the adic context) as well as 
          \cite{berk94}, \cite{berk96} and \cite{berk13} (in the Berkovich context).
       More precisely, given a formal model $\mathfrak{X}$ of $X$, we have a functor of derived categories
       \begin{align*} 
         R\psi_\mathfrak{X} : \mathcal{D}^b(X^f_{\text{et}},\Lambda) \to \mathcal{D}^b(\mathfrak{X}_{s,\text{et}},\Lambda) 
       \end{align*}    
          where $\Lambda := \mathbb{Z}/\ell\mathbb{Z}$ for some prime $\ell$ different from $\mathrm{char}(\tilde{k})$.
          
   \subsection{Constructible and semi-constructible complexes}    
   
        Our notion of semi-constructibility is motivated by the isomorphism (1) above. We say that 
    a complex $\sF \in \mathcal{D}^b(X^f_{\text{ét}},\Lambda)$ is semi-constructible if 
    after pulling back to any object $U$ of $X^f_{\text{ét}}$, the image of $\sF$ via the nearby cycles functor 
    $R\psi_{\mathfrak{U}}(\sF_{|U})$ is constructible for every formal model $\mathfrak{U}$ of $U$. 
    The class of semi-constructible sheaves is not stable for pullback (cf. Proposition \ref{verdier dual not constructible}). 
    We get around this issue by defining a constructible sheaf to be a semi-constructible sheaf that is stable for 
    pullbacks along morphisms $f : Y \to X$ where $Y$ is a fine $L$-adic space for some non-Archimedean algebraically closed 
    complete field extension $L$ of $k$. The precise definition can be found in 
     \S \ref{constructible sheaves}.

    In \S 3, we show that the class of constructible sheaves described above has reasonable properties. 
    Firstly, the class of constructible sheaves is large enough to include both 
   Huber's notion of constructibility (cf. Remark \ref{huber constructible is constructible}) and those 
   introduced by Berkovich in \cite{berk13}. 
   In Proposition \ref{stability for pushforwards}, we prove that this class is stable for pushforwards and the lower shriek functor. 
   It is natural to ask if the properties of being semi-constructible or constructible are local for the fine étale topology. 
   We answer these questions in the affirmative in Theorem \ref{theorem above} and \S \ref{semi-constructibility is an etale local property}. 
   A powerful tool that we use to prove these results is an inductive construction from 
   \cite{berk94} which we adapt to the adic setting (cf. Lemma \ref{lem:redberkad}).

     Huber's definition of constructibility preserves the spirit of the classical construction in that he requires that his sheaves be finite locally constant along certain stratifications. 
    We provide an alternate characterization of constructibility that shows the definition introduced above is similar in spirit to the classical definition. 
    Firstly, in Proposition \ref{classical constructibles are finite}, using an argument from SGA 4.5, we show that an étale sheaf $\sG$ of $\Lambda$-modules on a
    $k$-variety $Z$ is constructible if and only if $H^0(Z'_{\text{ét}},g^*(\sG))$ is finite where 
    $Z'$ is a $K$-variety \footnote{The field $K$ is an algebraically closed 
     field extension of $k$.} and
    $g : Z' \to Z$ is a morphism of schemes that is the composition of a morphism of $K$-varieties $Z' \to Z_K$ 
    followed by the projection $Z_K \to Z$.   
      Theorem \ref{theorem above} tells us that much like in the case of varieties, the 
    constructible sheaves we are interested in must satisfy a strong finiteness property. 
   More precisely, we show that a sheaf 
   $\sF$ is constructible if and only if for every $q$,
   $H^q(Y^f_{\text{ét}},f^*(\sF))$ is finite, where 
   $Y$ is a fine $L$-adic space \footnote{The field $L$ is a non-Archimedean algebraically closed 
    complete field extension of $k$.} and 
    the morphism $f : Y \to X$ is the composition of a morphism 
    of fine $L$-adic spaces $Y \to X_L$ followed by the projection 
    $X_L \to X$. 
    
      Let $f$ be a morphism between fine $k$-adic spaces. The following table summarises the various
       properties of constructible, semi-constructible and Huber constructible sheaves.

      \begin{center}
    \begin{tabular}{ | p{3.5cm} | c | c | c |}
    \hline
     & Huber constructible & constructible & semi-constructible \\ \hline
    stable under $f^*$ & Yes & Yes & No \\ \hline
    stable under $Rf_*$ & No & Yes & Yes \\ \hline
    stable under $Rf_!$ & No & Yes & Yes \\ \hline
    stable under $f^!$ & No & No & No \\ \hline
    stable under $\otimes^L$ & Yes &  No & No \\ \hline
    stable under $R\Homs$ & Yes & No & No \\ \hline
    can be checked étale locally & Yes & Yes & Yes \\ \hline
    %Ind-category maps surjectively onto the category of torsion sheaves & Yes & Yes & Yes \\ \hline
    %abelian category & Yes &  &  \\ \hline
    \end{tabular}
\end{center}

Moreover we have inclusions of categories
\[
\left\{ \text{Huber constructible} \right\} \subset \left\{ \text{constructible} \right\} \subset \left\{ \text{semi-constructible} \right\} \subset \left\{ \text{reflexive-sheaves} \right\}
\]
where the first two inclusions are typically strict and the last inclusion is conjectured to be an equality.

\subsection{Counterexamples} 

      The affirmative entries in the table above are justified with proofs and we 
      provide counterexamples for the negative entries. 
      There are three principal counterexamples. 
      
      In \S \ref{huber constructible is not stable for upper shriek}, we provide an example 
      of a morphism $f : X \to Y$ of fine $k$-adic spaces and a sheaf $\sF$ on $Y$ such that
      $f^!(\sF)$ is not Huber constructible. This example was suggested to us by R. Huber. 
       More precisely, we take $X$ to be a singular curve over the field $k$ and 
        then show that $p_{X^{\mathrm{ad}}}^!(\Lambda)$ is not a complex whose cohomology is Huber constructible. 
        On the other hand, $p_{X^{\mathrm{ad}}}^!(\Lambda)$ is a constructible complex.
         
        In \S \ref{counterexample for Verdier dual stability}, we give an example 
        of a sheaf $\sF$ on the adic unit disk $X$ such that 
        the adic Verdier dual $D^{\mathrm{ad}}(\sF)$ is not a constructible complex. 
         This example was suggested to us by P. Scholze. 
         The sheaf $\sF$ is an infinite direct sum of the form 
         $\oplus_{i \in \mathbb{N}} \sF_i$ where for every 
         $i \in \mathbb{N}$, $\sF_i$ is a constructible sheaf. 
        An important fact to verify is that 
        for any morphism $f : Y \to X$ of fine $k$-adic spaces and 
        any $j \in \mathbb{N}$, we have that for all but finitely many 
        $i \in \mathbb{N}$, $H^j(Y,f^*(\sF_i)) = 0$.
        Since the Verdier dual of a semi-constructible sheaf is semi-constructible, 
        this construction gives us
         an étale sheaf on the adic unit disk which has infinite stalk at the origin
          but whose nearby cycles are constructible  
         
         In \S \ref{counterexample for tensor product stability}, we 
         find constructible sheaves $\sF$ and $\sG$ on the adic unit disk $X$ such that 
         $\sF \otimes^L \sG$ is not semi-constructible. This is a variation of 
         the example described above and makes crucial use of 
         results in \cite{hub98a}.

 \subsection{Adic Verdier dual} 
 
   % Given a morphism of fine $k$-adic spaces $f : X \to Y$, Huber defines the 
    %functors 
    %$Rf_! : \mathcal{D}^b(X_{\text{ét}},\Lambda) \to \mathcal{D}(Y_{\text{ét}},\Lambda)$
     %and proves that it admits a right adjoint 
     %$f^! : \mathcal{D}^b(Y_{\text{ét}},\Lambda) \to \mathcal{D}(X_{\text{ét}},\Lambda)$.
     %After comparing the sites $X^f_{\text{ét}}$ and $X_{\text{ét}}$, we show that the constructions introduced by Huber 
     %can be realised in the fine étale site. 
     %That is to say,
     By the results of Huber in \cite{hub96}, 
     we have a functor 
     $Rf_! : \mathcal{D}^b(X^f_{\text{ét}},\Lambda) \to \mathcal{D}(Y^f_{\text{ét}},\Lambda)$ 
     that admits a right adjoint 
     $f^! : \mathcal{D}^b(Y^f_{\text{ét}},\Lambda) \to \mathcal{D}(X^f_{\text{ét}},\Lambda)$.
     As in classical algebraic geometry, given a complex $\sF \in \mathcal{D}^b(X^f_{\text{ét}},\Lambda)$,
     we define 
     \begin{align*}
       D^{\mathrm{ad}}(\sF) := R\Homs(\sF,p_X^!(\Lambda))
     \end{align*} 
      where $p_X : X \to \mathrm{Spa}(k,k^0)$ is the structure morphism. 
     
     Once again, prompted by isomorphism (1) above, we 
     ask the following question. \\
     
     \noindent \emph{Question} : Can the adic Verdier dual  dual of $\sF$ be described in terms of the nearby cycles of $\sF$ 
                       and its pullbacks along étale morphisms ? \\
                       
     In \S 4, we answer this question by proving that $D^{\mathrm{ad}}(\sF)$ can be recovered from the data of the nearby cycles $R\psi_\mathfrak{U}(\sF_{|U})$ where 
     $U \to X$ is an object in $X^f_{\text{ét}}$ and $\mathfrak{U}$ is a formal model of $U$.  
     More precisely, we show in Proposition \ref{compved} that for any étale morphism $U \to X$ of fine $k$-adic spaces,
     \begin{align*} 
     D^{\mathrm{ad}}(\sF)(U) = R\Gamma(\mathfrak{U}_s,R\psi_\mathfrak{U}(\sF_{|U}))      
          \end{align*}  
     where $\mathfrak{U}$ is any formal model of $U$. 
     
      It can be shown that
      the association 
      $U \mapsto R\Gamma(\mathfrak{U}_s,R\psi_\mathfrak{U}(\sF_{|U}))$ is a well defined presheaf on 
      $X^f_{\text{ét}}$ which takes values in the derived category of bounded complexes of $\Lambda$-modules. 
      In fact, using techniques from the theory of $\infty$-categories, one can show without alluding to $f^!$ that this recipe 
        fully defines a complex in $\mathcal{D}^b(X^f_{\text{ét}},\Lambda)$.
     
      From the discussion above, it should not be surprising that we prove in Theorem \ref{big result small proof}, the identity
      \begin{align*} 
       R\psi_\mathfrak{X} \circ D^{\mathrm{ad}} = D \circ R\psi_\mathfrak{X}. 
      \end{align*} 
     The adic Verdier dual does not preserve constructible sheaves, however it does preserve semi-constructibles. 
      As in the classical case, we prove in Lemma \ref{projection formula adic spaces} and Corollary \ref{Corollary 2}, that it behaves nicely with respect to the functors $Rf_*$, $Rf_!$ and 
      $f^!$. An important point to note is that the dualizing complex is a constructible complex. 
       We prove this using an adic version of the classical cohomological descent argument (cf. \S \ref{cohomological descent}) akin to 
       \cite[\S 1.2]{berk13}.

 \subsection{Classification of reflexive sheaves} 
 In some upcoming work, P. Scholze shows that the notion of \emph{reflexive sheaves} turns out to be a suitable framework for the sheaves appearing in Fargues' conjecture, cf. \cite[Conjecture 4.4]{farguesgeo}. More precisely, for $G$ a quasi-split reductive group over $\mathbb{Q}_p$ there is a diamond stack in groupoids $\Bun_G$ whose points are the $G$-torsors over the relative Fargues-Fontaine curve. Scholze proves that there is a fairly simple characterization of reflexive sheaves over $\Bun_G$: namely they are the sheaves whose stalks are admissible representations. The need to work with reflexive sheaves in this setting is highlighted by the fact that the typical sheaves appearing in Fargues' conjecture have infinite dimensional stalks. For this reason the \emph{classical} notion of constructibility is not sufficient.
 
        In \S \ref{sfjxoosdfsdfsd}, we attempt to classify those sheaves on $X^f_{\text{ét}}$ which 
      are reflexive with respect to the adic Verdier dual i.e. the class of sheaves $\sF$ such 
      that the canonical morphism $\sF \to D^{\mathrm{ad}} \circ D^{\mathrm{ad}}(\sF)$ is an isomorphism. 
      As a first step in this direction, we prove in Theorem \ref{reflexive on generic and special} that 
      a sheaf $\sF$ on $X^{f}_{\text{ét}}$ is reflexive if for 
      every $U \to X$ in $X^f_{\text{ét}}$ there exists a formal model $\mathfrak{U}$ of $U$ such that 
    $R\psi_\mathfrak{U}(\sF_{|U})$ is reflexive with respect to 
   the classical Verdier dual on $\mathfrak{U}_{s,\text{ét}}$. 
   We conjecture (cf. Conjecture \ref{reflexivity conjecture}) that an étale sheaf $\sG$ on a variety 
   $Z$ over an algebraically closed field is reflexive if and only if it is constructible. 
   In Corollaries \ref{cor:trsfinindust}, \ref{cor:5.17god} and Propositions \ref{the case of the direct sum}, \ref{prop:5223} we verify particular instances of this conjecture. The idea in each of the cases is to show that a reflexive sheaf has étale cohomology which can be controlled. For instance we make use of the Grothendieck-Ogg-Shafarevich formula. From these calculations, it appears (at least to the authors) that Conjecture \ref{reflexivity conjecture} is related to ramification problems.

   One can 
      use techniques from SGA 4.5 to show that Conjecture \ref{reflexivity conjecture} is true if certain pullbacks of reflexive sheaves remain reflexive.
   
       Finally assuming Conjecture \ref{reflexivity conjecture}, we prove using Theorem \ref{big result small proof} that 
      a sheaf $\sF$ on $X^f_{\text{ét}}$ is reflexive if and only if 
      $\sF$ is semi-constructible.    \\

\noindent \textbf{Acknowledgments}. Ildar Gaisin would like to express his deep 
gratitude to his advisors Jean-Fran\c cois Dat and Laurent Fargues, who 
suggested that he should think about developing a notion of perversity in 
relation to the Langlands Program.
John Welliaveetil would like to thank the Max-Planck Institute for Mathematics where a portion of this work was done. 
 He is also grateful to Imperial college, London for the suitable working conditions and Johannes Nicaise in particular for his
 support and encouragement through this period. 
 The essential ideas of this article are an
 elaboration of comments made to us by Peter Scholze and we are grateful that he shared this with us. Next we want to thank 
 Marco Robalo for spending countless hours answering our questions. We
  would also like to thank Ahmed Abbes, Justin Campbell, Pierre Deligne, Ofer Gabber, Benjamin Hennion, Luc Illusie, Andreas Gross and Damien Lejay for many
   helpful remarks. \\  
%   This work was written at the Université Pierre et Marie Curie (UPMC) and 
%   at the Max Planck Institute of Mathematics and we want to thank both of the institutions 
%   for their support. I. Gaisin is supported by DIM-RDM-IDF and J. Welliaveetil is supported by Max Planck institute for mathematics, Bonn.

\noindent \textit{Notation:} For the remainder of the article, unless otherwise stated, we fix an algebraically closed non-Archimedean real valued field $k$. 

\section{The fine topology on an adic space}

  Our goal in this section is to provide a short introduction to the theory of adic spaces. We then proceed to define the category of 
  \emph{fine} adic spaces which will form the focus of the sections that follow. Our primary references are R. Huber's book 
  \cite{hub96} and P. Scholze's article \cite{perfsapcschp}. 

\subsection{Adic spaces} 
  
 \subsubsection{Huber rings}

  \begin{defi}  
  \emph{A} Huber ring \emph{is a topological ring $A$ that contains an open sub-ring $A_0$ such that there exists 
  a finitely generated ideal $I \subset A_0$ and the family $\{I^n | n \in \mathbb{N}\}$ forms a basis of open neighbourhoods of 
  zero. We call any such ring $A_0$ a} ring of definition of $A$ \emph{while any ideal $I$ that satisfies the property above is referred to as an} ideal of definition. 
 \end{defi} 
  
 % Henceforth, we will assume that all Huber rings under consideration are \emph{complete}. 
  
  \begin{es} \label{Huber ring examples} 
   \begin{enumerate}
   \item  \emph{Let $k$ be a non-Archimedean field. The valuation ring $k^0 := \{x \in k | |x| \leq 1\}$ is a ring of definition of $k$ and 
    the ideal generated by any element $\pi \in k^0$ such that $|\pi| < 1$ is an ideal of definition. 
    There is a large and interesting class of Huber rings that are also $k$-algebras, namely the Tate algebras defined over $k$. 
    It can be checked easily that the algebra $k\langle{ }X_1,\ldots,X_n\rangle{ }/I$ is a Huber ring where $I \subset k\langle{ }X_1,\ldots,X_n\rangle{ }$ is an ideal.  
   Such examples of Huber rings where the ideal of definition is generated by a topologically nipotent unit in the ring are called} Tate Huber rings. 
                                                    \end{enumerate} 
  \end{es}

\subsubsection{Affinoid adic spaces}

  \begin{defi} 
    \emph{An} affinoid ring \emph{is a pair $(A,A^+)$ such that $A$ is a Huber ring and $A^+$ is an open integrally closed sub-ring that is contained in the set 
    of power bounded elements, cf. \cite[\S 1.1]{hub96}. The ring $A^+$ is said to be a} ring of integral elements. 
    \emph{A morphism $(A,A^+) \to (B,B^+)$ of affinoid rings is a continuous ring homomorphism $\phi : A \to B$ such that 
    $\phi(A^+) \subset B^+$.}
    \end{defi}
    
     Henceforth, we will assume that all affinoid rings under consideration are \emph{complete}.  
    
    \begin{es} \label{affinoid ring examples}
    \emph{Example \ref{Huber ring examples} can be generalized to an arbitrary base ring that is Huber (leading to the notion of topologically of finite type). 
    Let $A$ be a Huber ring and $M = (M_1,\ldots,M_n)$ be a tuple of finite subsets of $A$ such that 
    for every $i$, $M_i \cdot A \subset A$ is open. 
    Define 
    \begin{align*} 
       A\langle X_1,\ldots,X_n\rangle_M := \{ \Sigma_{v \in \mathbb{N}^n} a_vX^v | \text{for every open set $U$ in $A$, we have that} \\
                                               \text{for all but 
                                                finitely many indices}, a_v \notin M^v \cdot U\}. 
    \end{align*} 
    where if $v = (v_1,\ldots,v_n)$, $X^v := X_1^{v_1} \cdot\ldots \cdot X_n^{v_n}$ and $M^v := M_1^{v_1}\cdot \ldots \cdot M_n^{v_n}$.}
    
    \emph{We endow $A\langle X_1,\ldots,X_n\rangle_M$ with a topology such that the sets}
    $$\{\Sigma_{v \in \mathbb{N}^n} a_vX^v |  a_v \in M^v\cdot U\}$$ ($U$ is an open neighbourhood of $0$ in $A$) 
                                        \emph{ form a basis of open neighbourhoods of $0$ in 
                                          $A\langle X_1,\ldots,X_n\rangle{ }_M$. 
                     It can be checked that $A\langle{ }X_1,\ldots,X_n\rangle{ }_M$ is a Huber ring.}                      
         
        \emph{ Suppose we are given a ring of integral elements $A ^+ \subset A$. We  
        can define a ring of integral elements in 
        $A\langle{ }X_1,\ldots,X_n\rangle{ }_M$ as follows.}
        \emph{Let}  $B := \{\Sigma_{v \in \mathbb{N}^n} a_vX^v |  a_v \in M^v \cdot A^+\}$ \emph{and $C$ denote the integral closure of $B$ 
        in $A\langle{ }X_1,\ldots,X_n\rangle{ }_M$. We then have that $(A\langle{ }X_1,\ldots,X_n\rangle{ }_M, C)$ is an} affinoid ring.
            \end{es}  
    
     Given an affinoid ring $(A,A^+)$ we can associate to it a locally topologically ringed topological space which we denote 
     $\mathrm{Spa}(A,A^+)$.
     
    \begin{defi}(Locally topologically ringed spaces)
      \emph{Let $(V)$ denote the category of triples 
     $(X,\mathcal{O}_X, (v_x | x \in X))$ where $X$ is a topological space, $\mathcal{O}_X$ is a sheaf of complete topological rings 
     and for every $x \in X$, $v_x$ is an equivalence class of valuations 
     \footnote{A brief discussion of valuations is provided in \cite[\S 1.1]{hub96}.}
      on the local ring $\mathcal{O}_{X,x}$. 
      A morphism $(X,\mathcal{O}_X,(v_x | x \in X)) \to (Y, \mathcal{O}_Y,(v_y | y \in Y))$ of objects in $(V)$ is a continuous map 
      $f : X \to Y$ and a morphism of sheaves of topological rings $\phi : \mathcal{O}_Y \to f_*(\mathcal{O}_X)$ such that 
      for every $x \in X$,
      the induced morphism of topological local rings $\phi_x : \mathcal{O}_{Y,f(x)} \to \mathcal{O}_{X,x}$ is compatible with the valuations 
      $v_{f(x)}$ and $v_x$ respectively, i.e. $v_{f(x)} = v_x \circ \phi_x$. For every object of $(V)$, $\mathcal{O}_X^{+}$ denotes the subsheaf of $\mathcal{O}_X$ with $\mathcal{O}_X^{+}(U) := \{ s \in \mathcal{O}_X(U) \text{ }\lvert v_x(s) \leq 1 \text{ for every $x \in U$} \}$ (here $U$ is any open in $X$).} 
      \end{defi}
    
       Let $\mathrm{Spa}(A,A^+)$ be the set of continuous valuations (up to equivalence) on $v : A \to \Gamma \cup \{0\}$ such that 
      $v(a) \leq 1$ for every $a \in A^+$. We equip $\mathrm{Spa}(A,A^+)$ with the topology generated by 
      the sets $\{x \in \mathrm{Spa}(A,A^+) | |a(x)| \leq |b(x)| \neq 0, a,b \in A\}$. 
      
      The space $X := \mathrm{Spa}(A,A^+)$ can be endowed with a structure presheaf $\mathcal{O}_X$ by specifying its values 
      on so-called rational subsets of $X$ and then extending this definition in a natural way 
      to include all open sets in $X$ .
    
    \begin{defi} (Rational subsets)
    \emph{ Let $X = \mathrm{Spa}(A,A^+)$ where $(A,A^+)$ is an affinoid ring. 
      Let $T \subset A$ be a finite set of elements such that $T \cdot A \subset A$ is open. 
     For $s \in A$ let $U(T/s) = \{x \in \mathrm{Spa}(A,A^+) \text{ }| |t(x)| \leq |s(x)| \neq 0, t \in T \}$. 
     A set of this form is referred to as a} rational subset of $X$. 
    \end{defi} 
  
  \begin{prop} 
    Let $(A,A^+)$ be an affinoid ring and satisfies one of the following conditions. 
    \begin{enumerate}
    \item The ring $A$ is discrete. 
    \item  The ring $A$ is finitely generated over a Noetherian ring of definition.
    \item  The ring $A$ is Tate and for every $n \in \mathbb{N}$, $A\langle{ }X_1,\ldots,X_n\rangle{ }$ is Noetherian. 
    \end{enumerate} 
     Then there exists a unique sheaf $\mathcal{O}_X$ on $X := \mathrm{Spa}(A,A^+)$ 
    such that 
     for every rational subset $U(T/s)$, $\mathcal{O}_X(U(T/s)) = A[1/s]^{\wedge}$ where the topology on $A[1/s]$ is 
     such that $A_0[T/s]$ is a ring of definition
     for any ring of definition $A_0$ in $A$ and 
     $I \cdot A_0[T/s]$ is an ideal of definition for any ideal of definition $I \subset A_0$.   
  \end{prop}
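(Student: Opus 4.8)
The plan is to construct $\mathcal{O}_X$ by first pinning down its values on rational subsets through a universal property, then extending by limits to arbitrary opens, and finally verifying the sheaf axiom by reducing it to the exactness of \v{C}ech complexes attached to Laurent coverings. First I would observe that, once one fixes the topology on $A[1/s]$ so that $A_0[T/s]$ is a ring of definition and $I\cdot A_0[T/s]$ an ideal of definition (a topology one checks to be independent of the choices of $A_0$ and $I$), the completion $B := A[1/s]^{\wedge}$, together with the integral closure $B^+$ in $B$ of the image of $A^+[T/s]$, represents the following functor: $(B,B^+)$ is the universal complete affinoid $(A,A^+)$-algebra $(C,C^+)$ in which $s$ becomes invertible and $t/s \in C^+$ for every $t \in T$. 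The point of this formulation, following Huber, is that the latter condition is equivalent to requiring that the induced map $\mathrm{Spa}(C,C^+) \to X$ factor through the \emph{set} $U(T/s)$; hence the value $\mathcal{O}_X(U(T/s))$ depends only on the rational subset $U$ and not on the presentation $(T,s)$, and functoriality of restriction is automatic. The three hypotheses (1)--(3) enter already here to guarantee that $B$ is again of the same type and, in cases (2) and (3), is Noetherian, so that the universal object exists and the prescribed topology is well-posed.

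Next I would extend the assignment to an arbitrary open $V \subseteq X$ by setting $\mathcal{O}_X(V) := \varprojlim_{U \subseteq V} \mathcal{O}_X(U)$, the limit ranging over rational subsets $U \subseteq V$, with transition maps supplied by the universal property. Since $X = \mathrm{Spa}(A,A^+)$ is a spectral space in which the rational subsets form a basis of quasi-compact opens stable under finite intersection, checking that $\mathcal{O}_X$ is a sheaf reduces to verifying the sheaf axiom for \emph{finite} coverings of a rational subset $U$ by rational subsets. After replacing $(A,A^+)$ by $\mathcal{O}_X(U)$, this amounts to showing that for every finite rational covering $\{U_i\}$ of $X$ itself the augmented \v{C}ech complex
\[
0 \to \mathcal{O}_X(X) \to \prod_i \mathcal{O}_X(U_i) \to \prod_{i,j} \mathcal{O}_X(U_i \cap U_j) \to \cdots
\]
is exact.

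The heart of the matter is this \v{C}ech exactness. I would reduce a general finite rational covering to a Laurent covering by the standard refinement lemma, using the transitivity principle that exactness for a refinement (together with exactness of its trace on each $U_i$) forces exactness for the original covering; I would then reduce an arbitrary Laurent covering, by induction on the number of generating functions, to the basic two-element covering $\{U(f \le 1), U(f \ge 1)\}$ attached to a single $f \in A$. For this basic covering the required statement is the exactness of
\[
0 \to A \to A\langle f\rangle \oplus A\langle f^{-1}\rangle \to A\langle f, f^{-1}\rangle \to 0,
\]
where the rings denote the appropriate completions. The exactness of the uncompleted analogue is elementary; the content is upgrading it to the completed sequence, and this is exactly where the hypotheses are indispensable: in case (1) completion is trivial, while in cases (2) and (3) one invokes that $I$-adic completion is exact on finitely generated modules over a Noetherian ring, so that the cohomology of the uncompleted complex is preserved upon completing.

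The main obstacle is precisely this final step: controlling the interaction of completion with the localizations $A[1/s]$ in order to deduce exactness of the completed \v{C}ech complex from that of the uncompleted one. The Noetherian conditions (2),(3) do the real work here, since they furnish the flatness and exactness of completion that the argument requires; the discrete case (1) is comparatively formal. Once existence is established, uniqueness is immediate: any sheaf taking the prescribed values on rational subsets agrees with the constructed one on this basis of the topology, and a sheaf on $X$ is determined by its restriction to a basis.
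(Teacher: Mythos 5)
The paper does not prove this proposition at all: it is Huber's theorem, quoted as background in \S 2, and its implicit proof is precisely the Tate--Huber argument whose skeleton you reproduce. Your first steps are sound: the universal property of $A[1/s]^{\wedge}$ (which gives independence of the presentation $(T,s)$ and functoriality), the extension to arbitrary opens by inverse limits over the basis of rational subsets, the reduction of the sheaf axiom to finite coverings via quasi-compactness of rational subsets in the spectral space $X$, and the Tate-style reduction to Laurent coverings and then, by induction, to the simple covering $\{|f|\leq 1\}\cup\{|f|\geq 1\}$ attached to a single $f \in A$.

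The gap is in the step you yourself flag as the heart of the matter. You propose to pass from the (elementary) exactness of the uncompleted sequence to the exactness of
\begin{equation*}
0 \to A \to A\langle f\rangle \oplus A\langle f^{-1}\rangle \to A\langle f, f^{-1}\rangle \to 0
\end{equation*}
by invoking that $I$-adic completion is exact on finitely generated modules over a Noetherian ring. That principle does not apply here, for two reasons. First, the three completions are not taken with respect to one $I$-adic topology on modules over a single ring: each term carries its own topology, with rings of definition $A_0$, $A_0[f]$, $A_0[f^{-1}]$ and $A_0[f,f^{-1}]$ respectively. Second, the terms of the uncompleted complex, namely $A$ and $A[1/f]$, are not finitely generated modules over any of these Noetherian rings of definition ($A[1/f]$ is a finitely generated $A_0[f^{-1}]$-\emph{algebra}, not a finite module). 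Note also that even injectivity of the completed first map is not formal: the topologies defining $A\langle f\rangle$ and $A\langle f^{-1}\rangle$ are coarser than the original topology of $A$, so each completion map out of $A$ has, a priori, a nonzero kernel. What the Noetherian hypotheses actually buy, and what a complete proof must establish, is one of the following: (i) that the maps of the uncompleted complex are \emph{strict} for these topologies --- the nontrivial point being an Artin--Rees type comparison showing, e.g., that for each $m$ there is $n$ with $A \cap \bigl(I^{n}A_0[f] \oplus I^{n}A_0[f^{-1}]\bigr) \subset I^{m}A_0$ --- after which one uses that completion is exact on strict exact sequences of topological groups with countable fundamental systems; or (ii) that ideals such as $(T-f) \subset A\langle T\rangle$ are closed (a consequence of Noetherianity of $A\langle T\rangle$, resp.\ of $A_0[T]^{\wedge}$), which yields presentations $A\langle f\rangle \cong A\langle T\rangle/(T-f)$, $A\langle f^{-1}\rangle \cong A\langle S\rangle/(1-fS)$, $A\langle f,f^{-1}\rangle \cong A\langle T,T^{-1}\rangle/(T-f)$, followed by an explicit power-series computation using flatness of completion. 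Your phrase ``flatness and exactness of completion'' gestures at (ii), but as written the argument conflates the easy uncompleted exactness with the hard completed one; bridging that is exactly the content of Huber's theorem, so as it stands the proposal does not close.
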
 
  
  \begin{rem} 
   \emph{On a general open subset $U \subset X$, we have that 
   \begin{align*} 
    \mathcal{O}_X(U) := \varprojlim_{W \subset U, \text{W rational}} \mathcal{O}_X(W)
   \end{align*} }
  \end{rem} 
  
  \begin{defi} 
  \begin{enumerate} 
  \item \emph{An} affinoid adic space \emph{is an object in the category $(V)$ that is isomorphic in $(V)$ to a space of the form 
  $\mathrm{Spa}(A,A^+)$. 
  \item An} adic space \emph{is an object of the category (V) that is locally isomorphic to an affinoid adic space.} 
  \end{enumerate}
\end{defi}

\subsubsection{Morphisms of adic spaces}

\begin{defi} \label{finite type ring morphism}
 \begin{enumerate} 
 %\item \emph{A morphism of Huber rings $f : A \to B$ is} adic \emph{if 
 %there exists rings of definition $A_0 \subset A$ and $B_0 \subset B$ such that 
 %$f(A_0) \subset B_0$ and if $I$ is an ideal of definition of $A$ then $f(I)B_0$ is an ideal of definition of $B$.} 
 \item \emph{A morphism of affinoid rings $f : (A,A^+) \to (B,B^+)$ is of} topologically finite type \emph{if 
 there exists $n \in \mathbb{N}$, a tuple of finite subsets $M = (M_1,\ldots,M_n)$ such that
 $M_i \cdot A \subset A$ is open and a continuous surjective open ring homomorphism 
 $g : A\langle{ }X_1,\ldots,X_n\rangle{ }_M \to B$ (cf. Example \ref{affinoid ring examples}) which satisfies the following properties.}
 \begin{enumerate} 
 \item \emph{Let $h : A \to A\langle{ }X_1,\ldots,X_n\rangle{ }_M$ be the canonical morphism. 
 We have that $f = g \circ h$.}
 \item \emph{In Example \ref{affinoid ring examples}, we defined a ring of integral elements 
 $C \subset A\langle{ }X_1,\ldots,X_n\rangle{ }_M$ such that $h(A^+) \subset C$. We then have that
 $B^+$ is the integral closure of  
 $g(C)$.}
 \end{enumerate} 
  \item \emph{A morphism $f : X \to Y$ of adic spaces is said to be} locally of finite type \emph{if 
 for every $x \in X$, there exists affinoid open neighbourhoods $U$ of $x$ and $V$ of $f(x)$ with $f(U) \subset V$ such that 
 the map $(\mathcal{O}_Y(V),\mathcal{O}_Y^+(V)) \to (\mathcal{O}_X(U),\mathcal{O}_X^+(U))$ is topologically of finite type.} 
 \emph{A morphism of adic spaces is of} finite type \emph{if it is locally of finite type and quasi-compact i.e. the pullback of a quasi-compact open 
 is quasi-compact.} 
 \end{enumerate} 
\end{defi}

We fix a non-trivially valued non-Archimedean field $k$ that is algebraically closed. 

\begin{rem} \label{rem:adkana}
\emph{A $k$-adic space is an adic space $X$ along with a structure map $p_X : X \to \mathrm{Spa}(k,k^0)$. 
A morphism $f : (X,p_X) \to (Y,p_Y)$ of $k$-adic spaces is a morphism of adic spaces 
$f : X \to Y$ such that $p_X = p_Y \circ f$. Let $k-ad$ denote the category of $k$-adic spaces.  
Observe that if $X$ is a $k$-adic space and $x \in X$ then there exists an affinoid neighbourhood $U$ of $x$ such that 
$O_X(U)$ is Tate. Hence, a $k$-adic space is a particular instance of an} analytic adic space. 
\end{rem}

As stated earlier, we would like to define a sub-category of the category of $k$-adic spaces which possess certain favourable properties. 
 Before doing so, we briefly discuss the notion of a separated morphism. 
 
\begin{defi} \label{definition separated}
  \emph{A morphism $f : X \to Y$ of adic spaces which is locally of finite type is said to be} separated \emph{if 
  the image of the diagonal map is closed i.e. if 
  $\Delta : X \to X \times_Y X$ is the diagonal map then
  $\Delta(X)$ is closed in $X \times_Y X$.
 }
\end{defi} 

\begin{rem}  \label{diagonal map is closed}
\emph{If $f : X \to Y$ is separated then the morphism $\Delta : X \to X \times_Y X$ is a homeomorphism of $X$ onto its image. 
Indeed, it suffices to show that the morphism $\Delta$ is closed. 
Let $Z \subset X$ be a closed set
and let $\overline{\Delta(Z)}$ denote its closure in $X \times_Y X$.
Since $f$ is separated 
$\overline{\Delta(Z)}$ is contained in $\Delta(X)$.  
We claim that $\Delta(Z) = \overline{\Delta(Z)}$. 
Suppose there exists $z \in X$ such that 
$\Delta(z) \in \overline{\Delta(Z)} \smallsetminus \Delta(Z)$. 
As $Z$ is closed in $X$, we have that 
there exists an open set $O$ in $X$ that contains $z$ and is disjoint from $Z$.
The set $O \times_Y O \subset X \times_Y X$ is an open neighbourhood of $\Delta(z)$ that 
does not intersect $\Delta(Z)$.
Indeed, $O \times_Y O$ is open since if $p_1$ and $p_2$ denote the projections $X \times_Y X \to X$ then 
$O \times_Y O = p_1^{-1}(O) \cap p_2^{-1}(O)$.  
 Hence $\Delta(z)$ cannot lie in $\overline{\Delta(Z)}$ and this contradicts our 
initial claim.}
\end{rem}

\begin{lem} \label{lem:sepcompsd}
Let $f \colon X \to Y$ and $g \colon Y \to Z$ be locally of finite type morphisms of $k$-adic spaces. Suppose $g \circ f$ is separated. Then so is $f$.
\end{lem}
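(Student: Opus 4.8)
The plan is to reduce the separatedness of $f$ to that of $g\circ f$ by realising the diagonal $\Delta_f$ as a base change of $\Delta_{g\circ f}$, and then to invoke the stability of closed immersions under base change in Huber's framework. First I would record that the relevant fibre products exist: since $f$ is locally of finite type the product $X\times_Y X$ exists, and since $g\circ f$ is locally of finite type the product $X\times_Z X$ exists. Writing $p_1,p_2\colon X\times_Y X\to X$ and $q_1,q_2\colon X\times_Z X\to X$ for the projections, the identity $f p_1=f p_2$ forces $(g\circ f)p_1=(g\circ f)p_2$, so the universal property of $X\times_Z X$ produces a canonical morphism $h\colon X\times_Y X\to X\times_Z X$ with $q_i\circ h=p_i$ for $i=1,2$; by construction $h\circ\Delta_f=\Delta_{g\circ f}$.

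Next I would prove that the square
\[
\begin{array}{ccc}
X & \xrightarrow{\Delta_f} & X\times_Y X\\
\| & & \downarrow\, h\\
X & \xrightarrow{\Delta_{g\circ f}} & X\times_Z X
\end{array}
\]
is Cartesian. This is a purely formal computation with universal properties: an $S$-point of $(X\times_Y X)\times_{X\times_Z X}X$ is a map $b=(b_1,b_2)\colon S\to X\times_Y X$ (so $f b_1=f b_2$) together with $a\colon S\to X$ satisfying $h\circ b=\Delta_{g\circ f}\circ a$; comparing the two $X\times_Z X$-points $(b_1,b_2)$ and $(a,a)$ forces $a=b_1=b_2$. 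Hence the fibre product is just $X$, with its map to $X\times_Y X$ equal to $\Delta_f$, so $\Delta_f$ is the base change of $\Delta_{g\circ f}$ along $h$.

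Finally I would conclude. Since $g\circ f$ is separated, $\Delta_{g\circ f}(X)$ is closed in $X\times_Z X$, and $\Delta_{g\circ f}$ is a closed immersion, being the diagonal of a separated locally-of-finite-type morphism (cf. Remark \ref{diagonal map is closed} for the homeomorphism onto its closed image). Closed immersions of adic spaces are stable under base change, and the underlying space of such a base change is the preimage of the image; applying this to the Cartesian square shows that $\Delta_f\colon X\to X\times_Y X$ is a closed immersion with image $h^{-1}(\Delta_{g\circ f}(X))$. In particular $\Delta_f(X)$ is closed, so $f$ is separated.

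The main obstacle is that the underlying topological space of a fibre product of adic spaces is not the fibre product of the underlying spaces, so the naive point-set argument — namely that $p_1(w)=p_2(w)$ should force $w\in\Delta_f(X)$ — is unavailable, since the fibre over a point $(x,x)$ may contain several points. The identification of the base-changed space as the honest preimage $h^{-1}(\Delta_{g\circ f}(X))$ is exactly what circumvents this, and it hinges on $\Delta_{g\circ f}$ genuinely qualifying as a closed immersion rather than merely a homeomorphism onto a closed subset; checking that the diagonal of a locally-of-finite-type morphism is an immersion is the delicate point, and it is the part I would take care to cite precisely from Huber's theory.
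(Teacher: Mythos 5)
Your categorical skeleton is sound: the comparison morphism $h \colon X\times_Y X \to X\times_Z X$ exists (both fibre products exist by \cite[Proposition 1.2.2]{hub96}, since $f$ and $g\circ f$ are locally of finite type), $h\circ\Delta_f = \Delta_{g\circ f}$, and your universal-property verification that the square is Cartesian is valid in any category once those fibre products exist. The genuine gap is exactly the step you flag at the end, and it is more than a missing reference: you need $\Delta_{g\circ f}$ to be a closed immersion in the structure-theoretic sense, but separatedness in this paper (Definition \ref{definition separated}), as in Huber's book, is a purely topological condition -- the image of the diagonal is closed -- and Remark \ref{diagonal map is closed} upgrades it only to ``homeomorphism onto a closed subset''. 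In the adic category that weaker property does not pass to base changes: since $|X\times_Y X|$ is not the fibre product of the underlying spaces, a topological embedding with closed image gives no control over its pullback, so your identity $\Delta_f(X)=h^{-1}(\Delta_{g\circ f}(X))$ is unjustified as written. To repair the argument you would have to prove, not merely cite, that the diagonal of a locally-of-finite-type morphism of these spaces is an immersion (locally this comes from surjectivity and openness of the multiplication map $B\hat{\otimes}_A B\to B$), together with base-change stability of closed immersions; alternatively it would suffice to show that $|(X\times_Y X)\times_{X\times_Z X}X|$ surjects onto the topological fibre product, which combined with your Cartesian square yields the desired identity directly. These facts are true in the present strongly noetherian setting, but neither the paper nor \cite{hub96} provides them off the shelf, so they constitute real foundational work that your proposal leaves open.

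This is precisely what the paper's proof is designed to avoid: it never touches closed immersions. It first shows $f$ is quasi-separated, by covering $X\times_Y X$ with affinoids of the form $A_{r_1}\times_{B_s}A_{r_2}$ whose $\Delta_f$-preimages $A_{r_1}\cap A_{r_2}$ are quasi-compact (using quasi-separatedness of $g\circ f$), and then applies Huber's valuative criterion for separatedness \cite[Proposition 1.3.7]{hub96}: a valuation ring of $X$ with two distinct centers $z_1\neq z_2$ lying over one point of $Y$ would a fortiori have two distinct centers over one point of $Z$, contradicting separatedness of $g\circ f$. That route uses only topological and valuation-theoretic inputs that are unambiguously available in Huber's framework, which is what your EGA-style argument trades away.
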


\begin{proof}
Since $g \circ f$ is separated, it is also quasi-separated. We show first that $f$ is quasi-separated.
This is equivalent to saying that the diagonal morphism $\Delta_f : X \to X \times_Y X$ is quasi-compact. 
To show that $\Delta_f$ is quasi-compact, we show that there exists an affinoid covering $\{U_i\}_i$ of 
$X \times_Y X$ such that $f^{-1}(U_i)$ is quasi-compact for every $i$ cf. \cite[Tag 01K4]{stacks-project}.  

We choose affinoid coverings $\{C_t\}$ of $Z$, $\{B_s\}$ of $Y$ and $\{A_r\}$ of $X$ such that 
for every $r$ there exists $s$ such that the image of $A_r$ is contained in $B_s$ and for 
every $s$, the image of $B_s$ is contained in $C_t$ for some $t$. Furthermore, for every 
$s$, $f^{-1}(B_s)$ is covered by some sub-family of elements in $\{A_r\}$. 
It follows that $X \times_Y X$ is covered by sets of the form 
$A_{r_1} \times_{B_s} A_{r_2}$ which is affinoid by construction. The pre-image of 
$A_{r_1} \times_{B_s} A_{r_2}$ in $X$ is $A_{r_1} \cap A_{r_2}$ which is quasi-compact because 
the map $X \to Z$ is quasi-separated and hence after choosing an appropriate $t$,
 the preimage of $A_{r_1} \times_{C_t} A_{r_2}$ in $X$ which is $A_{r_1} \cap A_{r_2}$ is quasi-compact 

 To complete the proof, we use Huber's \emph{valuative criterion for separatedness}, cf. \cite[Proposition 1.3.7]{hub96}. Let $(x,A)$ be a valuation ring of $X$ and suppose that it possesses two distinct centers $z_1 \not= z_2 \in X$ such that $f(z_1) = f(z_2)$. Then $(g \circ f)(z_1) = (g \circ f)(z_2)$, which contradicts separatedness of $g \circ f$.

\end{proof}

\begin{lem} \label{lem:septaut} \label{lem:adlfttf}
Let $X$ and $Y$ be separated finite type $k$-adic spaces i.e.
the structure morphisms $p_X : X \to \mathrm{Spa}(k,k^0)$ and 
$p_Y : Y \to \mathrm{Spa}(k,k^0)$ are separated and finite type.
If $f : X \to Y$ is a morphism of $k$-adic spaces 
then the morphism $f$ is separated, taut and of finite type \footnote{In the language of \cite{hub96}, this implies that $f$ is locally of ${}^+$weakly finite type.}.
\end{lem}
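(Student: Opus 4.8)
The plan is to deduce each property of $f$ from the corresponding property of the structure morphisms by cancellation, in the order: locally of finite type, then separated, then quasi-compact (hence of finite type), and finally taut. All of these are formal consequences of the fact that $p_X = p_Y \circ f$ with $p_X, p_Y$ separated and of finite type.

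First I would show that $f$ is locally of finite type. Since $p_X = p_Y \circ f$ is of finite type it is in particular locally of finite type, so this is the cancellation property: if $p_Y \circ f$ is locally of finite type then so is $f$. In the adic setting one checks this directly on affinoids as in Definition \ref{finite type ring morphism}. Given $x \in X$, choose affinoid opens $U \ni x$ and $V \ni f(x)$ with $f(U) \subseteq V$ such that both $(\mathcal{O}_X(U),\mathcal{O}_X^+(U))$ and $(\mathcal{O}_Y(V),\mathcal{O}_Y^+(V))$ are topologically of finite type over $(k,k^0)$ (possible since $p_X$ and $p_Y$ are locally of finite type); writing $\mathcal{O}_X(U)$ as a quotient of a Tate algebra $k\langle T_1,\dots,T_n\rangle$ exhibits it as topologically of finite type over $\mathcal{O}_Y(V)$ as well, after which one verifies the condition on the rings of integral elements. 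Alternatively one may invoke the adic cancellation lemma of \cite{hub96}.

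Separatedness of $f$ is then immediate from Lemma \ref{lem:sepcompsd} applied to $p_X = p_Y \circ f$: the morphism $p_Y$ is locally of finite type, $f$ is locally of finite type by the previous step, and $p_X$ is separated by hypothesis, so $f$ is separated and in particular quasi-separated. For quasi-compactness I would use the dual cancellation: since $p_X$ is quasi-compact (being of finite type) and $p_Y$ is quasi-separated (being separated), the morphism $f$ is quasi-compact. Concretely, $f$ factors as the graph $\Gamma_f \colon X \to X \times_{\mathrm{Spa}(k,k^0)} Y$ followed by the projection to $Y$; the projection is a base change of $p_X$ and hence quasi-compact, while $\Gamma_f$ is a base change of the diagonal of $p_Y$ and hence quasi-compact, the relevant fibre products existing because every morphism in sight is locally of finite type. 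Combined with the first step, $f$ is of finite type.

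Finally, for tautness I would first record that $X$ and $Y$ are quasi-compact: each structure morphism is quasi-compact and $\mathrm{Spa}(k,k^0)$ is a spectral, hence quasi-compact, space, so $X = p_X^{-1}(\mathrm{Spa}(k,k^0))$ and $Y$ are quasi-compact. Since $f$ is quasi-compact, for every quasi-compact open $V \subseteq Y$ the preimage $f^{-1}(V)$ is quasi-compact. Tautness then follows from the elementary fact that a closed subset of a quasi-compact space is quasi-compact: the closure of any quasi-compact open of $X$, whether formed in $X$ or in some $f^{-1}(V)$, is a closed subset of a quasi-compact space and is therefore quasi-compact, which together with the quasi-separatedness already established is exactly the condition that $f$ be taut in the sense of \cite{hub96}. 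I do not expect a serious obstacle, since the argument is a string of standard cancellations; the two points requiring genuine care are adic-specific. The first is the cancellation for \emph{locally of finite type} at the level of rings of integral elements, where the $A^+$-structure of Definition \ref{finite type ring morphism} must be tracked. The second is matching the topological argument for tautness to the precise definition of a taut morphism, in particular whether closures are taken in $X$ or in $f^{-1}(V)$; but because both ambient spaces are quasi-compact, the argument is insensitive to this choice.
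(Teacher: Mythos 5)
Your proposal is correct, and two of its four steps coincide with the paper's own proof: separatedness is obtained exactly as in the paper, by applying Lemma \ref{lem:sepcompsd} to the factorization $p_X = p_Y \circ f$, and your graph factorization $X \to X \times_{\mathrm{Spa}(k,k^0)} Y \to Y$ for quasi-compactness is precisely the proof of Lemma \ref{lem:fineadqcshi}, which the paper invokes by reference rather than re-deriving. The genuine differences lie in the other two steps. For \emph{locally of finite type}, the paper does not argue directly on affinoid rings: it first uses \cite[Proposition 4.5(iv)]{hub93} to realize $f$ as a morphism of rigid analytic varieties, and then quotes \cite[Lemma 3.3(iii)]{hub93} for the statement that any continuous $k$-algebra homomorphism $k\langle\overline{T}\rangle/\mathfrak{a} \to k\langle\overline{U}\rangle/\mathfrak{b}$ is topologically of finite type. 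That citation does real work which your sketch elides: Definition \ref{finite type ring morphism} demands that the surjection $\mathcal{O}_Y(V)\langle T_1,\ldots,T_n\rangle \to \mathcal{O}_X(U)$ be \emph{open}, and this is exactly where the open mapping theorem for $k$-Banach algebras enters (the paper says so parenthetically). Your list of points requiring care flags the ring-of-integral-elements condition but misses openness entirely, and the ``adic cancellation lemma of \cite{hub96}'' you offer as a fallback is not available off the shelf; the correct reference is the one above. For \emph{tautness}, the paper simply cites \cite[Lemma 5.1.3(i),(iii)]{hub96}, whereas you re-prove that lemma in the case at hand from quasi-compactness of $X$, $Y$ and $f$, using that closed subsets of quasi-compact spaces are quasi-compact; this is sound (and your worry about where closures are formed is indeed harmless, since each $f^{-1}(V)$ with $V$ quasi-compact open is itself quasi-compact once $f$ is quasi-compact), and it buys a self-contained argument at the cost of matching it against Huber's exact definition of a taut morphism.
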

\begin{proof}
By \cite[Lemma 5.1.3(i)]{hub96} a separated finite type $k$-adic space is taut and by Lemma 5.1.3(iii) in loc.cit. a morphism of separated finite type $k$-adic spaces is taut.  The fact that $f$ is separated follows from Lemma \ref{lem:sepcompsd}.

We show that the morphism $f$ is locally of finite type. 
First note that $f$ comes from a morphism of rigid analytic varieties by \cite[Proposition 4.5(iv)]{hub93}. Working locally on $X$, it suffices to show that a morphism of $k$-Tate rings of the form $f \colon k \langle \overline{T} \rangle/\mathfrak{a} \to k \langle \overline{U} \rangle/\mathfrak{b}$ is topologically of finite type\footnote{Here $k\langle \overline{T}\rangle := k\langle T_1, T_2, \ldots, T_m  \rangle$ for some $m \geq 0$ and $\mathfrak{a} \subset k \langle \overline{T} \rangle$ is an ideal. Similarly $k\langle \overline{U}\rangle := k\langle U_1, U_2, \ldots, U_n  \rangle$ for some $n \geq 0$ and $\mathfrak{b} \subset k \langle \overline{U} \rangle$ is an ideal.}. This follows from Lemma 3.3(iii) in loc.cit. (the point is that the morphism induced by $f$,  $\left( k \langle \overline{T} \rangle/\mathfrak{a} \right) \langle \overline{U} \rangle \to k \langle \overline{U} \rangle/\mathfrak{b}$ is a continuous, surjective and open ring homomorphism. It is open because of the open mapping theorem).

  Finally, the morphism $f$ is quasi-compact by Lemma \ref{lem:fineadqcshi}.
\end{proof}

\begin{lem} \label{lem:fineadqcshi}
Let $f \colon X \to Y$ and $g \colon Y \to Z$ be locally of finite type morphisms of $k$-adic spaces such that $g \circ f$ is quasi-compact and $g$ is quasi-separated. Then $f$ is quasi-compact.
\end{lem}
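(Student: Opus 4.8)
The plan is to run the standard cancellation argument for quasi-compact morphisms, factoring $f$ through its graph and exhibiting the two factors as base changes of morphisms already known to be quasi-compact. Since $f$ and $g$ are locally of finite type, so is $g \circ f$; consequently the fibre products $X \times_Z Y$ and $Y \times_Z Y$ exist in the category of adic spaces (fibre products exist along morphisms locally of finite type, cf. \cite{hub96}), and this is what makes the formal argument available here.

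First I would introduce the graph morphism $\Gamma_f \colon X \to X \times_Z Y$ determined by $\mathrm{id}_X$ and $f$, together with the second projection $\mathrm{pr}_2 \colon X \times_Z Y \to Y$, so that $f = \mathrm{pr}_2 \circ \Gamma_f$. The projection $\mathrm{pr}_2$ is precisely the base change of $g \circ f \colon X \to Z$ along $g \colon Y \to Z$; since $g \circ f$ is quasi-compact and quasi-compactness of a morphism is stable under base change, $\mathrm{pr}_2$ is quasi-compact.

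Next I would identify $\Gamma_f$ as a base change of the relative diagonal $\Delta_{g} \colon Y \to Y \times_Z Y$. Indeed, one checks directly that the square with top arrow $\Gamma_f$, left arrow $f$, bottom arrow $\Delta_{g}$ and right arrow $f \times_Z \mathrm{id}_Y \colon X \times_Z Y \to Y \times_Z Y$ is Cartesian, so that $\Gamma_f$ is the pullback of $\Delta_g$. Because $g$ is quasi-separated, $\Delta_g$ is quasi-compact, and hence so is $\Gamma_f$. Composing, $f = \mathrm{pr}_2 \circ \Gamma_f$ is a composition of quasi-compact morphisms and is therefore quasi-compact.

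The step I expect to require the most care is the justification of the two inputs in the adic setting rather than the diagram chase itself: namely that the displayed squares really are Cartesian in the category of $k$-adic spaces (which rests on the existence and expected universal property of the fibre products above) and that quasi-compactness of a morphism of adic spaces is preserved under base change. The latter I would either take from \cite{hub96} or reduce to the corresponding statement for spectral spaces and spectral maps underlying the adic spaces in question; with these two facts in hand the argument closes exactly as above.
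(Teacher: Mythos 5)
Your proof is correct and takes essentially the same route as the paper: you factor $f$ through its graph as $X \xrightarrow{\Gamma_f} X \times_Z Y \xrightarrow{\mathrm{pr}_2} Y$, identify $\mathrm{pr}_2$ as the base change of the quasi-compact morphism $g \circ f$ along $g$, identify $\Gamma_f$ as the base change of the quasi-compact diagonal $\Delta_g$ (using quasi-separatedness of $g$), and conclude by stability of quasi-compactness under base change and composition. The paper's proof is word-for-word this argument, with $(1,f)$ in place of $\Gamma_f$ and the same citations to Huber for existence of the fibre products and base-change stability.
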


\begin{proof}
The proof is formal. Observe that $f$ factorises as $X \xrightarrow{(1,f)} X \times_Z Y \to Y$ (the fiber product in question exists by \cite[Proposition 1.2.2]{hub96}), where the second morphism is projection. The second morphism is quasi-compact because it is base change of $g \circ f$ (cf. Corollary 1.2.3(iii) in loc.cit.). The first morphism sits in a cartesian diagram
$$
\begin{tikzcd}[row sep = large, column sep = large]
X \arrow[r, "{(1,f)}"] \arrow[d] &
X \times_Z Y \arrow[d] \\
Y \arrow[r, "\Delta_g"]
&
Y \times_Z Y.
\end{tikzcd}
$$
Since $g$ is quasi-separated, $\Delta_g$ is quasi-compact and again by base change $(1,f)$ is quasi-compact. The composition of quasi-compact morphisms is again quasi-compact and this proves the lemma.
\end{proof}

      As outlined in the introduction, we develop a theory of constructible sheaves via the nearby cycles functor which requires 
      that the adic spaces we deal with admit quasi-compact formal models. Hence, we restrict our attention to 
      a sub-class of adic spaces which we call \emph{fine}. 

\begin{defi} 
\emph{Let} $Fine_{k-ad}$ \emph{denote the category of fine $k$-adic spaces. The objects of $Fine_{k-ad}$
 are separated finite type $k$-adic spaces and the maps between objects are morphisms of $k$-adic spaces.}  
\end{defi}

%\begin{rem}
%\emph{The category of fine $k$-adic spaces is a full sub-category of the category of $k$-adic spaces. This follows from 
%\ref{lem:septaut}.}
%\end{rem}

\begin{lem} \label{lem:morsitfin}
The category $Fine_{k-ad}$ admits fiber products and the canonical functor $Fine_{k-ad} \rightarrow k-ad$ preserves fiber products. 
\end{lem}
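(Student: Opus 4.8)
The plan is to build the fiber product directly from Huber's construction and then to check that it remains \emph{fine}; because $Fine_{k-ad}$ is a full subcategory of $k-ad$, this will yield both assertions of the lemma at once.

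First I would fix a cospan $X \xrightarrow{f} S \xleftarrow{g} Y$ in $Fine_{k-ad}$. By Lemma \ref{lem:adlfttf}, any morphism between separated finite type $k$-adic spaces is itself separated and of finite type; in particular $f$ and $g$ are locally of finite type, so \cite[Proposition 1.2.2]{hub96} produces the fiber product $X \times_S Y$ in $k-ad$, together with its projections $\mathrm{pr}_X : X \times_S Y \to X$ and $\mathrm{pr}_Y : X \times_S Y \to Y$.

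The heart of the argument is to show that the structure morphism $X \times_S Y \to \Spa(k,k^0)$ is separated and of finite type, so that $X \times_S Y$ is again an object of $Fine_{k-ad}$. I would factor this structure morphism as $X \times_S Y \xrightarrow{\mathrm{pr}_X} X \xrightarrow{p_X} \Spa(k,k^0)$. In the defining cartesian square the projection $\mathrm{pr}_X$ is the base change of $g$ along $f$; since $g$ is separated and of finite type and both classes of morphisms are stable under base change (the diagonal of a base change is the base change of the diagonal, and closed immersions are preserved by base change), $\mathrm{pr}_X$ is separated and of finite type. As $X$ is fine, $p_X$ is separated and of finite type as well. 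Finite type is stable under composition, and so is separatedness: one factors $\Delta_{p_X \circ \mathrm{pr}_X}$ through the closed subspace $(X \times_S Y) \times_X (X \times_S Y) \hookrightarrow (X \times_S Y) \times_{\Spa(k,k^0)} (X \times_S Y)$, which is the base change of the closed immersion $\Delta_{p_X}$, cf. \cite[\S 1.2--1.3]{hub96}. Hence the composite is separated of finite type and $X \times_S Y \in Fine_{k-ad}$.

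Finally, since $Fine_{k-ad} \subset k-ad$ is full and the object $X \times_S Y$ we constructed already lies in $Fine_{k-ad}$, its universal property against all of $k-ad$ restricts verbatim to a universal property against the objects of $Fine_{k-ad}$; thus it is a fiber product in $Fine_{k-ad}$, and the inclusion $Fine_{k-ad} \to k-ad$ carries it to a fiber product, i.e. preserves fiber products. The only genuine obstacle is confirming the base change and composition stability of separatedness and of finite type in the adic setting; all of these inputs are available in Huber's book, so no essentially new argument is required beyond organizing the reductions correctly.
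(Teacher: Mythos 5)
Your proposal is correct and follows essentially the same route as the paper: both construct $X \times_S Y$ in $k$-$ad$ via Huber's Proposition 1.2.2 (using Lemma \ref{lem:adlfttf} to ensure the morphisms are of finite type), then verify the result is fine by checking that the structure morphism is of finite type and separated through base-change and composition stability, concluding by fullness of $Fine_{k-ad} \subset k$-$ad$. The only cosmetic difference is that you spell out the diagonal argument for stability of separatedness by hand, where the paper simply cites Huber's Lemma 1.10.17(iii) together with Lemma \ref{lem:septaut}.
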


\begin{proof}
First, note that by Lemma \ref{lem:adlfttf} and \cite[Proposition 1.2.2(a)]{hub96}, fiber products of fine $k$-adic spaces exists in $k-ad$.
Hence, it suffices to verify that the fiber product of fine $k$-adic spaces in the category of $k$-adic spaces is fine.
 Let $X$, $X'$ and $X''$ be fine $k$-adic spaces sitting in a cartesian diagram
$$
\begin{tikzcd}[row sep = large, column sep = large]
X' \times_X X'' \arrow[r, "\tilde{g}"] \arrow[d, "\tilde{f}"] &
X'' \arrow[d, "f"] \\
X' \arrow[r, "g"]
&
X.
\end{tikzcd}
$$
Since $f$ and $g$ are of finite type (Lemma \ref{lem:adlfttf}), so are $\tilde{f}$ and $\tilde{g}$, cf. Corollary 1.2.3 (i), (iiic) in loc.cit. A composition of finite type morphisms is of finite type and so $X' \times_X X''$ is of finite type over $k$.
 It remains to prove $X' \times_X X''$ is separated. This follows from Lemma 1.10.17(iii) in loc.cit. and Lemma \ref{lem:septaut}.
\end{proof}

\subsubsection{Rigid varieties, Berkovich spaces and adic spaces}
\subsection{Rigid varieties and adic spaces} \label{rigid varieties and adic spaces}

     %Fix a field $k$ which is algebraically closed, complete and non-Archimedean real valued (rank 1). 
     We have a fully faithful functor, cf. \cite[1.1.11]{hub96}:
		\begin{align*}
		\left\{ \text{rigid-analytic varieties}/k \right\} &\rightarrow \left\{ \text{adic spaces}/k \right\} \\
		X &\mapsto X^{\ad}
		\end{align*}
		sending $\Sp(R)$ to $\Spa(R,R^{+})$ for any affinoid $k$-algebra $(R,R^{+})$ of topologically finite type (tft). It induces an equivalence
	 \begin{align*} 
	 \left\{\text{quasi-compact quasi-separated rigid-analytic varieties}/k \right\} \\ \cong 
	 	  \{\text{quasi-compact quasi-separated adic spaces of finite type}/k \}.
		  \end{align*} 
	There is also an equivalence of categories, cf. \cite[Proposition 8.3.1]{hub96}) :
	\begin{align*}
	\left\{ \text{Hausdorff strictly} \hspace{1mm} k-\text{analytic Berkovich spaces} \right\}  \\
	\cong \left\{\text{taut adic spaces locally of finite type}/k \right\}
	\end{align*}
	sending $\mathcal{M}(R)$ to $\Spa(R,R^+)$ for any affinoid $k$-algebra $(R,R^{+})$ of tft. It induces equivalences
	\begin{align*}
	\left\{ \text{compact Hausdorff strictly} \hspace{1mm} k-\text{analytic Berkovich spaces} \right\} \\
	\cong \left\{\text{quasi-separated adic spaces of finite type}/k \right\}
	\end{align*}
	and 
	\begin{align*}
	\left\{ \text{compact separated strictly} \hspace{1mm} k-\text{analytic Berkovich spaces} \right\} \\
	\cong \left\{\text{separated adic spaces of finite type}/k \right\}
	\end{align*}
	Putting everything together we get the following equivalence of categories where Raynaud's theorem applies:
	\begin{align*}
	\left\{ \text{fine} \hspace{1mm} k- \text{adic spaces} \right\} \\
	\cong \left\{ \text{compact separated strictly} \hspace{1mm} k-\text{analytic Berkovich spaces} \right\}\\
	\cong \left\{\text{separated quasi-compact rigid-analytic varieties}/k \right\}.
	\end{align*}

\subsection{The fine étale site} 

\begin{defi} \label{def:2.1}
\begin{enumerate}
\item \emph{A morphism $(R,R^+) \to (S,S^+)$ of affinoid $k$-algebras is called} finite étale \emph{if $S$ is a finite étale $R$-algebra with the induced topology and $S^+$ is the integral closure of $R^+$ in $S$.}
\item \emph{A morphism $f \colon X \to Y$ of adic spaces over $k$ is called} finite étale \emph{if there is a cover of $Y$ by open affinoids $V \subset Y$ such that the preimage $U = f^{-1}(V)$ is affinoid and the associated morphism of affinoid $k$-algebras
\[
(\mathcal{O}_Y(V), \mathcal{O}_Y^+(V)) \to (\mathcal{O}_X(U), \mathcal{O}_Y^+(U))
\]
is finite étale.}
\item \emph{A morphism $f \colon X \to Y$ of adic spaces over $k$ is called} étale \emph{if for any point $x \in X$ there are open affinoid neighbourhoods $U$ and $V$ of $x$ and $f(x)$ respectively, and a commutative diagram} 
$$
\begin{tikzcd}[row sep = large, column sep = large]
U \arrow[rd, "f|_U"] \arrow[r, "j"] &
W \arrow[d, "p"] \\
&
V  
\end{tikzcd}
$$
\emph{where $j$ is an open embedding and $p$ is finite étale.} 
\end{enumerate}
\end{defi} 

\begin{rem}
\emph{\cite[Definition 1.6.5(i)]{hub96} is equivalent to Definition \ref{def:2.1} for the adic spaces that we will consider and we will use both definitions when convenient, cf. Lemma 2.2.8 in loc.cit.}
\end{rem}

\begin{defi} 
\emph{Let $X$ be a fine $k$-adic space. We define $X^{f}_{\text{ét}}$ to be the site $X_\text{ét}^{f}$
generated by those objects of $X_{\text{ét}}$ which are fine $k$-adic 
 i.e. 
the objects of $X_{\text{ét}}^f$ are étale morphisms $\alpha : Y \to X$ where $Y$ is fine $k$-adic and given $Y \to X$ in $X_{\text{ét}}^f$, we say that 
$\{(Y_i \to X) \to (Y \to X)\}_i$ is a covering if it is a covering in $X_\text{ét}$.
Given a torsion ring $\Lambda$, let $\mathcal{D}^+(X^f_{\text{ét}},\Lambda)$ denote the bounded below derived category of fine étale sheaves 
of $\Lambda$-modules. 
} 
\end{defi}

\subsubsection{A Comparison of sites} 

      In \cite{hub96}, Huber works with the étale site of an adic space and 
      develops a six functor formalism in this setting\footnote{It should be noted that certain hypothesis are required to construct the functors $f_!$ and $f^!$.}.
      We would like to make use of these constructions for fine $k$-adic spaces. 
      
    \begin{prop} \label{comparison theorem}
      Let $X$ be a fine $k$-adic space. There exists a canonical morphism of sites (cf. Lemma \ref{lem:morsitfin})
       \begin{align*} 
        u_X : X_{\text{ét}} \to X^f_{\text{ét}}
       \end{align*}
      such that the induced morphism of topoi 
      $\widetilde{u_X} := (u_X^*,u_{X*}) :  \widetilde{X^f_{\text{ét}}} \to \widetilde{X_{\text{ét}}}$ is an equivalence of categories.  
    \end{prop}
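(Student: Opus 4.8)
The plan is to exhibit an equivalence of topoi by comparing the two sites directly. The key observation is that every object of $X_{\text{�t}}$ is locally fine: given any �tale morphism $\alpha : Y \to X$ with $Y$ an arbitrary (not necessarily fine) $k$-adic space, $Y$ is covered by affinoid opens, each of which is a separated finite type $k$-adic space, hence fine. Thus $X^f_{\text{�t}}$ is a \emph{topologically generating} full subcategory of $X_{\text{�t}}$: every object of $X_{\text{�t}}$ admits a covering by objects of $X^f_{\text{�t}}$, and the induced coverings are exactly those inherited from $X_{\text{�t}}$. First I would verify carefully that the inclusion $X^f_{\text{�t}} \hookrightarrow X_{\text{�t}}$ is a continuous functor inducing a morphism of sites $u_X : X_{\text{�t}} \to X^f_{\text{�t}}$; this requires checking that the inclusion sends covering families to covering families (immediate from the definition of coverings in $X^f_{\text{�t}}$) and that fiber products of fine spaces over a fine base are fine, which is precisely Lemma \ref{lem:morsitfin}.

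Having set up the morphism of sites, the strategy is to apply the Comparison Lemma (cf. \cite[Tag 03A0]{stacks-project} or SGA 4, III.4.1), which states that if $\mathcal{C}' \subset \mathcal{C}$ is a full subcategory that is stable under fiber products, is closed under the relevant coverings, and is such that every object of $\mathcal{C}$ has a covering by objects of $\mathcal{C}'$, then restriction induces an equivalence of topoi $\widetilde{\mathcal{C}} \xrightarrow{\sim} \widetilde{\mathcal{C}'}$. Concretely, I would show that $u_{X*}$ (restriction of a sheaf on $X_{\text{�t}}$ to the fine subsite) and $u_X^*$ (the left adjoint, given by sheafification of the left Kan extension) are mutually inverse equivalences. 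The essential point is that a sheaf on $X^f_{\text{�t}}$ extends uniquely to a sheaf on $X_{\text{�t}}$: for an arbitrary �tale $Y \to X$, one defines the value on $Y$ via the sheaf condition applied to a fine affinoid covering $\{Y_i \to Y\}$, namely as the equalizer of $\prod_i \sF(Y_i) \rightrightarrows \prod_{i,j} \sF(Y_i \times_Y Y_j)$, and one checks this is independent of the chosen covering and again satisfies the sheaf axiom.

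The two verifications to carry out are then: (i) $u_{X*} \circ u_X^* \cong \id$ on $\widetilde{X^f_{\text{�t}}}$, which amounts to saying the extension-then-restriction of a fine �tale sheaf recovers it, and follows because objects of $X^f_{\text{�t}}$ are their own covers; and (ii) $u_X^* \circ u_{X*} \cong \id$ on $\widetilde{X_{\text{�t}}}$, which says a sheaf on the full �tale site is determined by its restriction to fine objects, and follows because the sheaf condition on $X_{\text{�t}}$ over any fine affinoid covering reconstructs the value on an arbitrary object. The comparison lemma packages exactly these two facts.

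The main obstacle is verifying the hypotheses of the Comparison Lemma with the correct handling of fiber products and coverings within the \emph{full} subcategory $X^f_{\text{�t}}$, rather than merely on objects. Specifically, one must ensure that $X^f_{\text{�t}}$ is stable under the fiber products that appear in the sheaf condition (the products $Y_i \times_Y Y_j$ for $Y_i, Y_j \in X^f_{\text{�t}}$ over a fine base $Y$) — this is guaranteed by Lemma \ref{lem:morsitfin}, but when $Y$ itself is not fine the intersections $Y_i \times_Y Y_j$ need separate attention, since the fiber product is taken over a non-fine base. I would resolve this by always refining to coverings by affinoid opens, where all relevant fiber products are affinoid and separated of finite type, hence fine; the quasi-separatedness established in Lemma \ref{lem:fineadqcshi} ensures these intersections remain quasi-compact. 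Once the subcategory is confirmed to be closed under the operations entering the sheaf axiom, the equivalence follows formally from the comparison lemma.
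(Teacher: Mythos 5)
Your overall strategy is exactly the paper's: the two inputs are (i) every object of $X_{\text{\'et}}$ admits a covering by fine objects, which is the paper's Lemma \ref{lem:2.14}, and (ii) the comparison theorem for a topologically generating full subcategory endowed with the induced topology, i.e.\ SGA~4, Expos\'e~III, Th\'eor\`eme~4.1. The paper's proof is precisely this two-line deduction, so in outline your proposal is correct. Two of your supporting details, however, need attention.

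First, your proof of (i) rests on the assertion that any affinoid open of an \'etale $X$-space is separated and of finite type over $k$, hence fine. That is true but not free: separatedness over $k$ of an arbitrary affinoid open requires an argument. The paper instead manufactures fine refinements from the local structure of \'etale morphisms (Definition \ref{def:2.1}(3)): each point has an affinoid neighbourhood $Z_{ij}$ that is an open subspace of a finite \'etale cover of an affinoid open $V_{ij} \subset X$, and then $Z_{ij} \to \Spa(k,k^0)$ is a composite of an open immersion, a finite (hence separated) morphism, an open immersion, and the separated structure map of $X$, so it is separated; finite type follows since the affinoid $Z_{ij}$ is quasi-compact and locally of finite type over $k$. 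You should either reproduce this argument or supply a reference for the separatedness of affinoid opens.

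Second, your resolution of what you call the main obstacle does not work --- but fortunately the obstacle is illusory. If $Y \in X_{\text{\'et}}$ is not fine, it can fail to be quasi-separated over $X$; then for fine affinoids $Y_i, Y_j$ \'etale over $Y$, the fiber product $Y_i \times_Y Y_j$ is only an open subspace of the fine space $Y_i \times_X Y_j$ (the pullback of the open diagonal of $Y/X$), and it need not be affinoid or even quasi-compact, no matter how you refine the covering. Lemma \ref{lem:fineadqcshi} cannot rescue this, since its hypothesis is exactly the quasi-separatedness that is missing here. What saves the argument is that SGA~4, III.4.1 imposes no stability of the subcategory under fiber products computed in the ambient site: it is formulated with sieves, and its only hypotheses are that $X^f_{\text{\'et}} \subset X_{\text{\'et}}$ is a full subcategory with the induced topology such that every object of $X_{\text{\'et}}$ is covered by objects of $X^f_{\text{\'et}}$. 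The sheaf axiom on the fine site only ever involves coverings of a \emph{fine} base, where Lemma \ref{lem:morsitfin} does guarantee that the fiber products stay fine. So delete your final paragraph, justify the covering claim as above, and what remains is the paper's proof.
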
   
\begin{proof} 
  The proposition follows from Lemma \ref{lem:2.14} and \cite[Exposé III, Théor\`{e}me 4.1]{sga4tome1}. 
\end{proof} 

\begin{lem} \label{lem:2.14}
Let $Z$ be a $k$-adic space, which is not necessarily fine. Suppose there is an étale morphism $g \colon Z \to X$. Then there exists an étale covering $Z_i \to Z$ such that the $Z_i$ are fine $k$-adic spaces.
\end{lem}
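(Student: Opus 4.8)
The plan is to exploit the local structure theorem for étale morphisms recorded in Definition \ref{def:2.1}(3) and to check that each of the affinoid charts it produces is already fine. Since the spaces $Z_i$ are only required to be étale over $Z$ and jointly surjective, it suffices to produce, for every point $z \in Z$, an \emph{affinoid} open neighbourhood $U_z \subset Z$ that happens to be a fine $k$-adic space. The family $\{U_z \hookrightarrow Z\}_{z \in Z}$ is then the desired étale covering: each inclusion is an open immersion, hence étale, the family is surjective since $z \in U_z$, and each $U_z$ is an open subspace of the $k$-adic space $Z$, hence is itself a $k$-adic space. Thus the whole problem reduces to checking fineness of the charts.

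First I would fix $z \in Z$ and apply Definition \ref{def:2.1}(3) to the étale morphism $g$. This furnishes an open affinoid neighbourhood $U = U_z$ of $z$ in $Z$, an open affinoid neighbourhood $V$ of $g(z)$ in $X$, and a factorisation $U \xrightarrow{j} W \xrightarrow{p} V$ in which $j$ is an open embedding and $p$ is finite étale. The strategy is then to propagate fineness along this chain, from $X$ down to $U$. Step one: $V$ is fine, being an affinoid open in the fine space $X$ — it is separated over $k$ because an open immersion is separated and $X \to \Spa(k,k^0)$ is separated, and it is of finite type over $k$ because it is locally of finite type and quasi-compact (affinoids are quasi-compact). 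Step two: $W$ is fine, since $p$ is finite étale, hence finite type and separated over $V$ (indeed $W$ is again affinoid), so composing with $V \to \Spa(k,k^0)$ keeps both properties. Step three: $U$ is fine, being a quasi-compact (affinoid) open subspace of the fine space $W$; the open immersion $j$ is locally of finite type and separated, and $U$ is quasi-compact, so $U \to \Spa(k,k^0)$ is separated and of finite type.

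The step I expect to be the only genuine subtlety is the \emph{finite type}, i.e. quasi-compactness, requirement in step three: an arbitrary open subspace of a fine space need not be quasi-compact, so passing from $W$ to an open subset could in principle destroy finiteness. The resolution — and the reason the argument closes cleanly — is that Definition \ref{def:2.1}(3) hands us \emph{affinoid}, hence automatically quasi-compact, neighbourhoods $U_z$, so local finite-typeness upgrades to finite-typeness for free. The remaining verifications (separatedness descending to open subspaces, finite morphisms being separated and of finite type, and compositions of separated resp.\ finite type morphisms retaining these properties) are routine stability statements from Huber's formalism, e.g.\ \cite[\S 1.2, \S 1.10]{hub96}, and I would simply invoke them rather than reprove them.
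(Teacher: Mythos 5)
Your proof is correct and takes essentially the same route as the paper: both invoke the local structure of étale morphisms from Definition \ref{def:2.1}(3) to produce affinoid charts $U \xrightarrow{j} W \xrightarrow{p} V$ with $j$ an open embedding and $p$ finite étale over an affinoid open $V \subset X$, and then propagate separatedness and finite type down this chain to conclude that each chart is fine. The only cosmetic difference is that the paper first passes through an auxiliary covering of $Z$ by finite-type pieces before refining to these affinoid charts, a step your argument shows can be skipped.
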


\begin{proof}
An étale morphism is by definition locally of finite type (in fact it is locally of finite presentation). Thus $Z$ is locally of finite type over $k$ and so we can take a covering $g_i \colon Z_i' \to Z$ where the $Z_i'$ are of finite type. 
If the $Z'_i$ were separated over $k$ then we would be done. To complete the proof, we refine each of the $Z'_i$ by affinoid spaces that are separated over $X$. 

Working locally for each point $z_{ij}' \in Z_i'$, there are open affinoid neighbourhoods $Z_{ij}$ and $V_{ij}$ of $z_{ij}'$ and $g(g_i(z_{ij}'))$ respectively, and a commutative diagram
$$
\begin{tikzcd}[row sep = large, column sep = large]
Z_{ij} \arrow[rd] \arrow[r, "j"] &
W \arrow[d, "p"] \\
&
V_{ij}  
\end{tikzcd}
$$
where $j$ is an open embedding and $p$ is finite étale. In particular $g \circ g_i \lvert_{Z_{ij}}$ is separated. Thus the the $Z_{ij}$ are fine $k$-adic spaces and cover $Z$.
\end{proof}

\begin{thm}
   Let $\phi : X \to Y$ be a morphism of fine $k$-adic spaces. There exists a functor
   \begin{align*} 
     R\phi_! : \mathcal{D}^+(X^f_{\text{ét}},\Lambda) \to D^+(Y^f_{\text{ét}},\Lambda)
   \end{align*} 
 which satisfies the following properties. 
 \begin{enumerate} 
 \item Let $\phi_! := R^0\phi_!$. If $\sF \in {Sh}(X^f_{\text{ét}},\Lambda)$ then 
 \begin{align*} 
 \phi_!(\sF)(U) := \{s \in \Gamma(X \times_Y U) | \mathrm{supp}(s) \text{ is proper over } U\}.
 \end{align*} 
 \item Assume the morphism $\phi$ is étale. We have that $\phi_!$ is exact. 
 Furthermore, if 
  $\sF \in {Sh}(X^f_{\text{ét}},\Lambda)$ and $\sG \in Sh(Y^f_{\text{ét}},\Lambda)$ then 
 \begin{align*} 
   \mathrm{Hom}_{ {Sh}(Y^f_{\text{ét}},\Lambda)}(\phi_!(\sF),\sG) =  \mathrm{Hom}_{ {Sh}(X^f_{\text{ét}},\Lambda)}(\sF,\phi^*(\sG)).
 \end{align*} 
 \item If the morphism $\phi$ is partially proper 
 then $R\phi_!$ is the right derived functor of $\phi_!$. 
  \end{enumerate} 
\end{thm}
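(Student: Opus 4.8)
The plan is to transport Huber's construction of the functor $Rf_!$ on the full \'etale site to the fine \'etale site by means of the equivalence of topoi furnished by Proposition \ref{comparison theorem}. First I would record that the morphism $\phi : X \to Y$ of fine $k$-adic spaces is separated, taut and of finite type by Lemma \ref{lem:adlfttf}; in particular $\phi$ is taut and locally of ${}^+$weakly finite type, so that Huber's hypotheses for the existence of the functor
\[
R\phi_!^{\mathrm{Hub}} : \mathcal{D}^+(X_{\text{\'et}},\Lambda) \to \mathcal{D}^+(Y_{\text{\'et}},\Lambda)
\]
are satisfied, cf. \cite{hub96}. By Proposition \ref{comparison theorem} the comparison of sites induces equivalences of topoi $\widetilde{X^f_{\text{\'et}}} \simeq \widetilde{X_{\text{\'et}}}$ and $\widetilde{Y^f_{\text{\'et}}} \simeq \widetilde{Y_{\text{\'et}}}$, and hence equivalences of the bounded below derived categories $\mathcal{D}^+(X^f_{\text{\'et}},\Lambda) \simeq \mathcal{D}^+(X_{\text{\'et}},\Lambda)$ and likewise over $Y$. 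I would then \emph{define} $R\phi_!$ on the fine site to be the composite of Huber's functor with these equivalences and their quasi-inverses, and set $\phi_! := R^0\phi_!$.

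Next I would deduce the three listed properties from their counterparts in \cite{hub96}, the key input being that the pullback $\phi^*$ on the fine sites is intertwined with Huber's $\phi^*$ by the comparison equivalences; this intertwining follows from the functoriality of the comparison morphism of sites in the adic space, i.e. from the compatibility of $u_X$ and $u_Y$ with $\phi$. For property (1), since the fiber product $X \times_Y U$ is again a fine $k$-adic space for every $U \in Y^f_{\text{\'et}}$ by Lemma \ref{lem:morsitfin}, and since a fine \'etale sheaf is determined by its restriction to fine objects, evaluating the transported functor on $U$ reproduces exactly Huber's formula for sections with proper support over $U$. For property (2), when $\phi$ is \'etale Huber's $\phi_!$ is exact extension by zero and is left adjoint to $\phi^*$; both the exactness and the adjunction isomorphism transport along the equivalences once the compatibility of $\phi^*$ is in hand. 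For property (3), the identification of $R\phi_!$ with the right derived functor of $\phi_!$ when $\phi$ is partially proper is an equality of functors that is preserved under conjugation by an exact equivalence of derived categories.

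The main obstacle I anticipate lies not in any single property but in establishing the naturality underlying all of them: one must check that the comparison equivalences of Proposition \ref{comparison theorem} commute coherently with the structural morphism $\phi$, i.e. that the square of topoi assembled from $u_X$, $u_Y$ and $\phi$ commutes up to canonical isomorphism, and that Huber's construction of $R\phi_!$ --- via a compactification of $\phi$ followed by the associated proper pushforward --- is insensitive to whether one works with all \'etale objects or only the fine ones. Granting this compatibility, all three assertions then follow formally from the corresponding results of Huber.
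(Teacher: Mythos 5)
Your proposal follows exactly the paper's own argument: the paper's proof is a one-line citation of Huber's Theorem 5.4.3 in \cite{hub96}, Lemma \ref{lem:septaut} (which gives that $\phi$ is separated, taut and of finite type, hence satisfies Huber's hypotheses), and Proposition \ref{comparison theorem} (the equivalence of the fine \'etale topos with the full \'etale topos), which is precisely the transport-of-structure argument you describe. Your additional discussion of the coherence of the comparison equivalences with $\phi$ is a reasonable elaboration of what the paper leaves implicit, but it is the same proof.
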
 
\begin{proof}
  This is a direct consequence of \cite[Theorem 5.4.3]{hub96}, Lemma \ref{lem:septaut} and Proposition \ref{comparison theorem}. 
\end{proof} 

\begin{thm} \label{upper shriek}
 Let $\phi : X \to Y$ be a morphism of fine $k$-adic spaces. There exists a functor
   \begin{align*} 
     \phi^! : \mathcal{D}^+(Y^f_{\text{ét}},\Lambda) \to D^+(X^f_{\text{ét}},\Lambda)
   \end{align*} 
such that if $A \in \mathcal{D}^+(X^f_{\text{ét}},\Lambda)$ and 
$B \in D^+(Y^f_{\text{ét}},\Lambda)$ then there is a functorial isomorphism
\begin{align*}
 \mathrm{Hom}_{D^+(Y^f_{\text{ét}},\Lambda)}(R\phi_!(A),B) =  \mathrm{Hom}_{D^+(X^f_{\text{ét}},\Lambda)}(A,\phi^!(B)).
\end{align*} 
In particular, the functor $\phi^!$ is right adjoint to $R\phi_!$.  
\end{thm}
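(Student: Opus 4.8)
The plan is to deduce the existence of $\phi^!$ from Huber's exceptional inverse image on the full \'etale site and to transport it along the equivalence of topoi of Proposition \ref{comparison theorem}. Recall that the functor $R\phi_!$ of the preceding theorem was itself obtained by transporting Huber's $Rf_!$ (cf. \cite[Theorem 5.4.3]{hub96}) across the equivalences $\mathcal{D}^+(X^f_{\text{et}},\Lambda) \simeq \mathcal{D}^+(X_{\text{et}},\Lambda)$ and $\mathcal{D}^+(Y^f_{\text{et}},\Lambda) \simeq \mathcal{D}^+(Y_{\text{et}},\Lambda)$ induced by $\widetilde{u_X}$ and $\widetilde{u_Y}$. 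Hence it suffices to produce a right adjoint to Huber's $Rf_!$ on the full \'etale sites and transport it back: since adjunctions are preserved by equivalences of categories, the transported functor is automatically right adjoint to our $R\phi_!$ and yields the stated $\mathrm{Hom}$-isomorphism.

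First I would check that $\phi$ satisfies the hypotheses needed for Huber's construction. By Lemma \ref{lem:septaut} (equivalently Lemma \ref{lem:adlfttf}) the morphism $\phi$ is separated, taut and of finite type, hence locally of ${}^+$weakly finite type, which is precisely the class of morphisms for which $Rf_!$ and its adjoint are available in \cite{hub96}. Moreover, since $k$ is algebraically closed and the spaces involved are of finite type over $k$, their dimensions are bounded and $\ell$ is invertible on the residue field, so $Rf_!$ has finite cohomological dimension; this is what ensures that $Rf_!$ restricts to $\mathcal{D}^+$ and that its right adjoint $f^!$ again takes values in $D^+$, as required by the statement. I would then define $\phi^!$ as the composite of the transport equivalence $\mathcal{D}^+(Y^f_{\text{et}},\Lambda) \simeq \mathcal{D}^+(Y_{\text{et}},\Lambda)$, Huber's $f^!$, and the inverse equivalence $\mathcal{D}^+(X_{\text{et}},\Lambda) \simeq \mathcal{D}^+(X^f_{\text{et}},\Lambda)$.

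The step I expect to be the main obstacle is the compatibility between the two incarnations of the lower-shriek functor, namely that Huber's $Rf_!$ on $X_{\text{et}}$ genuinely corresponds to the $R\phi_!$ of the preceding theorem under $\widetilde{u_X}$ and $\widetilde{u_Y}$, so that the adjoint we transport is adjoint to the correct functor. In our situation this is essentially forced, since the preceding theorem defines $R\phi_!$ by this very transport and the presheaf-level formula $\phi_!(\sF)(U) = \{ s \in \Gamma(X \times_Y U) \mid \mathrm{supp}(s) \text{ proper over } U \}$ agrees with Huber's; one must nonetheless confirm that $\widetilde{u_X}$ is exact and compatible with supports so that the identification passes to derived categories. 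A more intrinsic alternative would avoid Huber's explicit construction: $R\phi_!$ is triangulated and commutes with arbitrary direct sums, and the unbounded derived category $\mathcal{D}(X^f_{\text{et}},\Lambda)$ is compactly generated, so Brown representability produces a right adjoint directly, which one checks preserves $\mathcal{D}^+$ using the finite cohomological dimension above. I would record this as a remark but carry out the proof via the transport route, since it also pins down $\phi^!$ in terms of Huber's dualizing formalism, which the later sections on the adic Verdier dual require.
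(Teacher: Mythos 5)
Your proposal is correct and follows essentially the same route as the paper: verify via Lemma \ref{lem:septaut} that $\phi$ is separated, taut and of finite type, establish the requisite finiteness condition, and then transport Huber's adjunction (Theorem 7.1.1 of \cite{hub96}) across the equivalence of Proposition \ref{comparison theorem}. The only difference is cosmetic: the paper phrases the finiteness hypothesis as $\mathrm{dim.tr}(\phi) = \dim(\phi) \leq \dim(X) < \infty$ (citing Corollary 1.8.7(ii) and Lemma 1.8.6 in loc.cit., after reducing to $X$ affinoid by quasi-compactness), whereas you phrase it as finite cohomological dimension of $R\phi_!$, but the underlying justification — boundedness of dimension for finite type spaces over $k$ — is the same.
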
 
\begin{proof}
   We claim that $\mathrm{dim.tr}(\phi) < \infty$. Indeed $\mathrm{dim.tr}(\phi) = \mathrm{dim}(\phi) \leq \mathrm{dim}(X)$, where the first equality follows from \cite[Corollary 1.8.7(ii)]{hub96}. But $X$ is in particular quasi-compact and so we can assume $X$ is affinoid. The claim then follow from Lemma 1.8.6 in loc.cit.
    
  The theorem then follows from Theorem 7.1.1 in loc.cit., Lemma \ref{lem:septaut} and Proposition \ref{comparison theorem}.
\end{proof}

\subsection{Formal schemes}

  The theory of formal schemes allows us to relate the étale site of a fine $k$-adic space and 
  the étale sites of certain $\tilde{k}$-varieties (here $\tilde{k}$ is the residue field of $k$). We refer the reader to \cite[\S 2.2]{schweinpadiv} for more details on how to describe the generic fiber of a formal scheme. In what follows, we provide a brief introduction to formal geometry and 
  define the étale site of the formal schemes we will be interested in. We end the subsection by introducing the 
  \emph{nearby cycles functor}, the properties of which we exploit later on. 

\subsubsection{Affine formal schemes} 
  
  We make use of the following notation. Let $k^0 := \{x \in k | |x| \leq 1\}$ and 
  $k^{00} := \{x \in k | |x| < 1\}$ (so that $\tilde{k} = k^0/k^{00}$). The ring $k^0$ is referred to as the ring of \emph{power bounded elements} of 
  $k$ while $k^{00}$ is called the ring of \emph{topologically nilpotent elements} of $k$. 
  We will restrict our attention to formal schemes defined over $k^0$. 
  We fix a topological nilpotent element $\pi$ that is non-zero if the valuation on 
  $k$ is non-trivial and zero if it is trivial. 
  
    As in classical algebraic geometry, a formal scheme over $k^0$ is obtained by glueing together affine formal schemes while an affine formal scheme over $k^0$
    is the formal spectrum of a $k^0$-algebra topologically of finite presentation where the topology is defined by $(\pi)$.
      
\begin{defi} 
\begin{enumerate}
\item  \emph{Let $X_1,\ldots,X_n$ be a set of variables. We define the} ring of restricted power series over $k^0$ \emph{
\begin{align*} 
k^{0}\langle X_1,\ldots,X_n\rangle := \left\{ \sum_{\mathbf{i} := (i_1,\ldots,i_n) \in \mathbb{N}^n} a_{\mathbf{i}} X_1^{i_1}\ldots X_n^{i_n} | |a_{\mathbf{i}}| \mapsto 0 \text{ as } |i| \mapsto \infty \right\}.
\end{align*}  }
 \item \emph{A $k^0$-algebra $A$ is} topologically of finite presentation \emph{if for some $n \in \mathbb{N}$, 
 $A$ is isomorphic to $k^{0}\langle X_1,\ldots,X_n\rangle /I$ where $I$ is a finitely generated ideal in $k^{0}\langle X_1,\ldots,X_n\rangle$.}
   \end{enumerate} 
\end{defi}    

We often simplify notation by setting $\overline{X} := (X_1,\ldots,X_n)$ 
and using $k^0\langle \overline{X}\rangle$ to denote the algebra
$k^{0}\langle X_1,\ldots,X_n \rangle$.  

\begin{rem} \label{projective limit of nilpotent}
\emph{Observe that}
\begin{align*} 
  k^0\langle \bar{X}\rangle = \varprojlim_{m \in \mathbb{Z}_{\geq 1}} k^0[\bar{X}]/ (\pi^m). 
\end{align*} 
\end{rem} 

 \begin{defi}
   \emph{Let $A$ be a $k^0$-algebra which is topologically of finite presentation. We define the} formal spectrum \emph{of $A$ to be
   \begin{align*} 
     \mathrm{Spf}(A) := \varinjlim_m \mathrm{Spec}(A_m)
   \end{align*} 
   where $A_m := A /(\pi^m)$ and the limit is taken in the category of locally ringed spaces.}
 \end{defi} 
 
   Given a $k^0$-algebra $A$ which is topologically of finite presentation, one can think of 
   $\mathrm{Spf}(A)$ as a topological space $\mathrm{Spec}(A \times_{k^0} \tilde{k})$ along 
   with a structure sheaf that contains information about an infinitesimal neighbourhood of $\mathrm{Spec}(A \times_{k^0} \tilde{k})$. 
   We refer to $\mathrm{Spec}(A \times_{k^0} \tilde{k})$ as the special fibre of $\mathrm{Spf}(A)$.  

    \begin{defi}
      A formal scheme over $k^0$ \emph{is a locally ringed space that is locally isomorphic to an affine formal scheme 
      of the form $\mathrm{Spf}(A)$ where $A$ is topologically of finite presentation. 
      A morphism of formal schemes is a morphism of locally ringed spaces. 
      } 
    \end{defi} 
    
    \begin{rem} 
      \emph{Let $\mathfrak{X}$ be a $k^0$-formal scheme. We can associate to $\mathfrak{X}$ a $\tilde{k}$-scheme $\mathfrak{X}_s$ 
      which we call the special fibre of $\mathfrak{X}$ and an adic space $\mathfrak{X}_\eta$ over $k$ which we call 
      the generic fibre of $\mathfrak{X}$. 
       Suppose $\mathfrak{X}$ was of the form $\Spf(A)$ then $\mathfrak{X}_s$ is the scheme
       $\mathrm{Spec}(A \otimes_{k^0} \tilde{k})$ and 
       $\mathfrak{X}_{\eta}$ is the adic space $\mathrm{Spa}(A, A) \times_{\mathrm{Spa}(k^0, k^0)} \mathrm{Spa}(k, k^0)$.
       In general, the special fibre (generic fibre) of a formal scheme $\mathfrak{X}$ is obtained by glueing together the special fibres (generic fibres)
       of a family of affine formal schemes which cover $\mathfrak{X}$. 
       In such a situation, we say that $\mathfrak{X}$ is a formal model of $\mathfrak{X}_\eta$.
       If $\mathfrak{X}$ is separated and quasi-compact over $k^0$ then 
       $\mathfrak{X}_\eta$ is a fine $k$-adic space while $\mathfrak{X}_s$ is a variety over $\tilde{k}$.} 
    \end{rem} 
 
\subsubsection{Étale site of a formal scheme} 
 
 Let $\mathfrak{X}$ denote a formal scheme over $k^0$ with structure sheaf $\mathcal{O}_{\mathfrak{X}}$. Let 
 $\mathfrak{X}_n$ be the scheme $(|\mathfrak{X}|, \mathcal{O}_{\mathfrak{X}}/ \pi^n\mathcal{O}_{\mathfrak{X}})$.

\begin{defi}
 \emph{A morphism of formal schemes over $k^0$, $\phi : \mathfrak{X} \to \mathfrak{Y}$ is étale if for every 
 $n \in \mathbb{N}$, the induced morphism of schemes 
 $\phi_n : \mathfrak{X}_n \to \mathfrak{Y}_n$ is étale.}
\end{defi}

\begin{prop} \label{prop:canfumosishi}
Let $\mathfrak{X}$ be a $k^0$-formal scheme. 
  \begin{enumerate} 
  \item The correspondence $\mathfrak{Y} \mapsto \mathfrak{Y}_s$ induces an equivalence between the category of formal schemes étale over 
  $\mathfrak{X}$ and schemes étale over $\mathfrak{X}_s$. 
  \item Let $f : \mathfrak{Y} \to \mathfrak{X}$ be an étale morphism of formal schemes. We then have that the induced map of generic fibres 
  $f_\eta : \mathfrak{Y}_\eta \to \mathfrak{X}_\eta$ is étale. 
   In particular, we have a morphism of sites 
   $\nu_{X,\mathfrak{X}}: X^f_{\text{ét}} \rightarrow \mathfrak{X}_{s,\text{ét}}$. 
 \end{enumerate} 
\end{prop}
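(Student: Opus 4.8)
The plan is to deduce both statements from the topological invariance of the small \'etale site of schemes together with the local structure of \'etale morphisms, and then to combine them with the generic fibre functor.

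For (1), I would first reduce to the affine case $\mathfrak{X} = \Spf(A)$ and recall that, by definition, $\mathfrak{X} = \varinjlim_n \mathfrak{X}_n$ with $\mathfrak{X}_s = \mathfrak{X}_1$. The key observation is that each closed immersion $\mathfrak{X}_n \hookrightarrow \mathfrak{X}_{n+1}$ is a thickening: it is cut out by the ideal $\pi^n\mathcal{O}_{\mathfrak{X}}/\pi^{n+1}\mathcal{O}_{\mathfrak{X}}$, which is nilpotent (indeed square-zero for $n \geq 1$). By the topological invariance of the small \'etale site along nilpotent thickenings (see, e.g., \cite[Tag 04DY]{stacks-project}), base change along $\mathfrak{X}_n \hookrightarrow \mathfrak{X}_{n+1}$ induces an equivalence between schemes \'etale over $\mathfrak{X}_{n+1}$ and schemes \'etale over $\mathfrak{X}_n$; iterating, each of these categories is equivalent to the category of schemes \'etale over $\mathfrak{X}_s$. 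Next I would unwind the definition of an \'etale morphism of formal schemes: a formal scheme \'etale over $\mathfrak{X}$ is precisely a compatible system $(\mathfrak{Y}_n \to \mathfrak{X}_n)_n$ of \'etale morphisms of schemes with $\mathfrak{Y} = \varinjlim_n \mathfrak{Y}_n$, and a morphism of such is a compatible system of $\mathfrak{X}_n$-morphisms.

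Under the equivalences above, such a compatible system is the same datum as a single scheme $\mathfrak{Y}_s$ \'etale over $\mathfrak{X}_s$, each $\mathfrak{Y}_n$ being its unique \'etale lift over $\mathfrak{X}_n$. This simultaneously yields essential surjectivity --- given $Y_s \to \mathfrak{X}_s$ \'etale, lift it to the compatible system $(Y_n)_n$ and set $\mathfrak{Y} := \varinjlim_n Y_n$, which one checks is a formal scheme topologically of finite presentation and \'etale over $\mathfrak{X}$ --- and full faithfulness, since $\Hom_{\mathfrak{X}}(\mathfrak{Y},\mathfrak{Z}) = \varprojlim_n \Hom_{\mathfrak{X}_n}(\mathfrak{Y}_n,\mathfrak{Z}_n) = \Hom_{\mathfrak{X}_s}(\mathfrak{Y}_s,\mathfrak{Z}_s)$. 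Gluing the affine construction gives the general case.

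For (2), I would again work locally on $\mathfrak{X} = \Spf(A)$ and use that, by part (1) and the local (standard-\'etale) structure of \'etale morphisms of schemes, $f$ is Zariski-locally the composite of an open immersion of formal schemes into a finite \'etale formal morphism $\Spf(B) \to \Spf(A)$ with $B$ finite \'etale over $A$. Passing to generic fibres, the open immersion yields an open immersion of adic spaces, while $\Spf(B) \to \Spf(A)$ yields $\Spa(B_k,B_k^+) \to \Spa(A_k,A_k^+)$ with $B_k = B \otimes_{k^0} k$ finite \'etale over $A_k = A \otimes_{k^0} k$; this is finite \'etale in the sense of Definition \ref{def:2.1}(1). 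Hence $f_\eta$ is locally an open immersion into a finite \'etale morphism, i.e. \'etale by Definition \ref{def:2.1}(3). Finally, combining (1) and (2), the assignment sending $(Y_s \to \mathfrak{X}_s)$ to the generic fibre of its \'etale formal lift defines a functor $\mathfrak{X}_{s,\text{\'et}} \to X^f_{\text{\'et}}$ (the target being fine, since a separated quasi-compact formal model has fine generic fibre); one checks that it commutes with fibre products and carries coverings to coverings, so it is continuous and left exact and therefore induces the desired morphism of sites $\nu_{X,\mathfrak{X}} : X^f_{\text{\'et}} \to \mathfrak{X}_{s,\text{\'et}}$.

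The main obstacle I expect is in part (2): carefully controlling the topologies and completions when passing from the $\pi$-adic $k^0$-algebras to the $k$-affinoid algebras on generic fibres, in particular verifying that finite \'etale formal morphisms induce finite \'etale morphisms of affinoid adic spaces in the precise sense of Definition \ref{def:2.1}(1) (matching the rings of integral elements $B_k^+$ and $A_k^+$), and ensuring the standard-\'etale decomposition is compatible with the generic fibre functor. By contrast, the gluing step in (1) and the continuity check for the morphism of sites should be routine.
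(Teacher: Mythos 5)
Your part (1) is correct, and it is essentially the argument behind the reference the paper itself gives for this proposition (the paper's ``proof'' is only a citation to \cite[Lemma 2.1]{berk94} and \cite[Lemma 3.5.1]{hub96}): topological invariance of the \'etale site along the thickenings $\mathfrak{X}_n \hookrightarrow \mathfrak{X}_{n+1}$, plus the observation that an \'etale formal scheme over $\mathfrak{X}$ is the same thing as a compatible system of \'etale schemes over the $\mathfrak{X}_n$. One small correction: when the valuation on $k$ is not discrete, $\mathfrak{X}_s = \Spec(A\otimes_{k^0}\tilde{k})$ is \emph{not} equal to $\mathfrak{X}_1 = \Spec(A/\pi A)$; the ideal $k^{00}A/\pi A$ is nil but not nilpotent, so you must invoke topological invariance for locally nilpotent ideals (equivalently, for universal homeomorphisms), not only for square-zero thickenings. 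This is harmless but should be stated.

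Part (2), however, contains a genuine gap. You assert that, by part (1) and the standard-\'etale structure theorem, $f$ is Zariski-locally an open immersion into a \emph{finite \'etale} formal morphism $\Spf(B) \to \Spf(A)$. This is false, already for schemes (and via your part (1) the scheme statement and the formal statement are equivalent). The structure theorem only gives, Zariski-locally, an open immersion into a \emph{finite} morphism $\Spec(R[x]/(P)) \to \Spec(R)$ with $P$ monic, which is \'etale merely on the open locus $D(P')$ containing the source; one cannot in general shrink Zariski-locally on source and target to make the finite morphism \'etale everywhere. Concretely, take $f \colon \mathbb{A}^1 \to \mathbb{A}^1$, $x \mapsto t = x^3 - 3x$ (characteristic $\neq 2,3$), restricted to its \'etale locus $U = \mathbb{A}^1 \smallsetminus \{\pm 1\}$, and the point $u = 2$ lying over $s = 2$; the fibre over $s$ also contains the ramified point $x = -1$. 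If $U' \ni u$ and $V' \ni s$ were Zariski opens with a factorization $U' \hookrightarrow W \to V'$, $W \to V'$ finite \'etale, then the irreducible component $W_1$ of $W$ containing $U'$ would be normal, finite over $V'$, with function field $k(x)$, hence equal to the normalization of $V'$ in $k(x)$, i.e.\ to $f^{-1}(V')$ --- which is ramified at $x = -1$ over $s \in V'$, a contradiction. So no such factorization exists on the special fibre, and therefore none can be lifted to the formal scheme. The proposition is nevertheless true: the required factorization for $f_\eta$ in the sense of Definition \ref{def:2.1}(3) exists only after \emph{analytic} (rational) localization on the generic fibre, which is strictly finer than Zariski localization on any formal model; this is exactly why the correct proof goes through Huber's intrinsic definition of \'etale \cite[Definition 1.6.5]{hub96} (for which \'etaleness of $f_\eta$ follows from the local standard-\'etale presentations you do have) together with the nontrivial comparison \cite[Lemma 2.2.8]{hub96}, which relies on henselianity of the stalks of analytic adic spaces --- i.e.\ through \cite[Lemma 3.5.1]{hub96}, as the paper cites, rather than through a lifted Zariski-local factorization.
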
 
\begin{proof} 
 See \cite[Lemma 2.1]{berk94} and \cite[Lemma 3.5.1]{hub96}. 
\end{proof} 

 \subsubsection{Nearby cycles functor} \label{nearby cycles functor}

\begin{defi} (Nearby cycles functor) 	 
    \emph{Let $X$ be a fine $k$-adic space and $\mathfrak{X}$ be a formal model of $X$. 
   We have the following functor: }

   \begin{align*} 
   \psi_{\mathfrak{X}} : &  \widetilde{X^f_{\text{et}}} \to \widetilde{\mathfrak{X}_{s,et}} \\
                                        &\mathscr{F} \mapsto \nu_{X,\mathfrak{X}*}(\mathscr{F}).
   \end{align*} 
 %and  
%   \begin{align*} 
  % (\mathcal{F}) : &  \widetilde{X^f_{\text{qet}}} \to \widetilde{\mathfrak{X}_{s,et}} \\
    %                                    &\mathscr{F} \mapsto  \Theta_{\overline{\mathfrak{X}}}(\overline{\mathcal{F}})
   %\end{align*}   
  % where $\overline{\mathfrak{X}}
   	% We set 
	% $\Theta_{\mathfrak{X}}:=\nu_{X,\mathfrak{X}*}\mu_{X}^{*}$ and $\psi_{\mathfrak{X},\eta}(\mathcal{F}):=\psi_{\mathfrak{X},\widehat{k^{s}}}(\overline{\mathcal{F}})$.
\end{defi}

\begin{lem} \label{vanishing cycles commutes with lower shriek}
 Let $\Lambda$ be a torsion ring. Let $f : X \to Y$ be a morphism of fine $k$-adic spaces. 
 Let $\mathfrak{X}$ and $\mathfrak{Y}$ be formal models of $X$ and $Y$. We suppose
  that there exists a 
 morphism $\mathfrak{f} : \mathfrak{X} \to \mathfrak{Y}$ such that the induced morphism 
 $\mathfrak{f}_{\eta}$ between the generic fibres coincides with $f$. Let 
 $\mathfrak{f}_s : \mathfrak{X}_s \to \mathfrak{Y}_s$ denote the morphism between the respective 
 special fibres. For $\mathscr{F} \in \mathcal{D}^+(X^f_{\text{ét}},\Lambda)$ one has 
 \begin{align*} 
    R\mathfrak{f}_{s!} \circ R\psi_{\mathfrak{X}}(\mathscr{F}) \xrightarrow{\sim} R\psi_{\mathfrak{Y}} \circ Rf_!(\mathscr{F}). 
 \end{align*}  
\end{lem}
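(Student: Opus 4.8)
The plan is to use that $R\psi_{\mathfrak{X}} = R\nu_{X,\mathfrak{X}*}$ and $R\psi_{\mathfrak{Y}} = R\nu_{Y,\mathfrak{Y}*}$ are derived pushforwards along the morphisms of sites of Proposition \ref{prop:canfumosishi}, which fit into a commutative square with $f$ and $\mathfrak{f}_s$: indeed $\mathfrak{f}_s \circ \nu_{X,\mathfrak{X}} = \nu_{Y,\mathfrak{Y}} \circ f$ as morphisms of topoi, this being the content of the compatibility of $\mathfrak{f}$ with generic and special fibres. For the $*$-pushforwards the analogous identity $R\mathfrak{f}_{s*} \circ R\psi_{\mathfrak{X}} \cong R\psi_{\mathfrak{Y}} \circ Rf_*$ is purely formal, being the two factorisations of $R(\mathfrak{f}_s \circ \nu_{X,\mathfrak{X}})_* = R(\nu_{Y,\mathfrak{Y}} \circ f)_*$; the real content lies entirely in the passage from $*$ to $!$. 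First I would produce a canonical comparison morphism $R\mathfrak{f}_{s!} \circ R\psi_{\mathfrak{X}} \to R\psi_{\mathfrak{Y}} \circ Rf_!$ from the base-change transformations attached to the square together with the unit/counit of the adjunction $(Rf_!, f^!)$ of Theorem \ref{upper shriek}, and then reduce the claim to showing that this map is an isomorphism after taking stalks at an arbitrary geometric point $\bar{y}$ of $\mathfrak{Y}_s$.

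Next I would compute the two stalks. On the right, the stalk of a nearby-cycles sheaf is a filtered colimit over \'etale neighbourhoods $\mathfrak{V} \to \mathfrak{Y}$ of $\bar{y}$, namely $(R\psi_{\mathfrak{Y}}\sG)_{\bar y} \cong \varinjlim_{\mathfrak{V}} R\Gamma(\mathfrak{V}_\eta, \sG)$, whose transition system computes the cohomology of the generic fibre $\,]\bar{y}[\,$ of the strict henselisation of $\mathfrak{Y}$ at $\bar{y}$. Applying this to $\sG = Rf_!\sF$ and using Huber's base-change theorem for $Rf_!$ along the \'etale maps $\mathfrak{V}_\eta \to Y$ (so that $(Rf_!\sF)|_{\mathfrak{V}_\eta} \cong Rf_{\mathfrak{V}!}(\sF|_{X \times_Y \mathfrak{V}_\eta})$, cf. \cite{hub96}), the right-hand stalk is identified with the cohomology with proper support of $\sF$ over the tube $\,]\bar{y}[\,$, i.e. over $X \times_Y \,]\bar{y}[\,$. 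On the left, the classical proper base change theorem on schemes (cf. \cite{sga4tome1}) identifies $(R\mathfrak{f}_{s!}(-))_{\bar y}$ with compactly supported cohomology along the fibre $\mathfrak{f}_s^{-1}(\bar{y})$, and feeding in the analogous tube description of the stalks of $R\psi_{\mathfrak{X}}$ at the points of that fibre reduces the whole statement to the special case in which $\mathfrak{Y}_s$ is a single (strictly henselian) point.

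It therefore remains to prove the localised assertion that, over a strictly henselian base, the compactly supported cohomology of the nearby cycles $R\psi_{\mathfrak{X}}(\sF)$ along the fibre $\mathfrak{f}_s^{-1}(\bar{y})$ agrees with the cohomology with proper support of $\sF$ over the tube $X \times_Y \,]\bar{y}[\,$, compatibly with the specialisation map $X \times_Y \,]\bar{y}[\, \to \mathfrak{f}_s^{-1}(\bar{y})$. This local comparison is the main obstacle: unlike the preceding steps it is genuinely geometric and is not formal base change. I would handle it by d\'evissage on the relative dimension of $\mathfrak{f}$, using the adic version of Berkovich's inductive construction (Lemma \ref{lem:redberkad}) to fibre $\mathfrak{X}$ over $\mathfrak{Y}$ into relative curves and affine-line-like pieces and thereby reduce to the one-dimensional and constant cases, where the comparison of proper-support cohomology of the tube with that of the special fibre can be verified directly.

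As a consistency check supporting this reduction, I would note that the proper case is in fact formal: if $f$ is proper one may, by Raynaud--L\"utkebohmert, dominate $\mathfrak{X}$ by an admissible blow-up $\sigma : \mathfrak{X}' \to \mathfrak{X}$ with $\mathfrak{f} \circ \sigma$ proper; since $\sigma_s$ is proper one has $R\sigma_{s!} \cong R\sigma_{s*}$ and $R\psi_{\mathfrak{X}} \cong R\sigma_{s*} R\psi_{\mathfrak{X}'}$, whence $R\mathfrak{f}_{s!} R\psi_{\mathfrak{X}} \cong R(\mathfrak{f}\circ\sigma)_{s*} R\psi_{\mathfrak{X}'}$, and the latter equals $R\psi_{\mathfrak{Y}} Rf_* = R\psi_{\mathfrak{Y}} Rf_!$ by the formal $*$-identity above together with $Rf_! = Rf_*$. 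The difficulty is thus concentrated in the non-proper directions, i.e. ultimately in handling an open immersion, which is precisely what the fibration argument of the previous paragraph is designed to treat.
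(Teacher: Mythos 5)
Your proposal has a genuine gap, and it sits exactly where you yourself locate ``the main obstacle.'' The entire content of this lemma is the open-immersion direction: once $f$ is factored as $g \circ j$ with $j$ an open immersion and $g$ proper (Huber's compactification, \cite[Corollary 5.1.13]{hub96}, lifted to formal models via \cite[Proposition 8.2.16, Lemma 8.4.4]{BO}), the proper part is formal --- your ``consistency check'' paragraph is essentially the paper's argument for it, namely the $*$-compatibility $R\mathfrak{g}_{s*}R\psi_{\mathfrak{X}'} \simeq R\psi_{\mathfrak{Y}}Rg_*$ coming from commutativity of the sites, plus $Rg_!=Rg_*$ and $R\mathfrak{g}_{s!}=R\mathfrak{g}_{s*}$, plus the blow-up identity $R\mathfrak{b}_{s*}R\psi_{\mathfrak{X}_1}\simeq R\psi_{\mathfrak{X}}$. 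What remains is precisely the statement $R\mathfrak{j}_{s!}\circ R\psi_{\mathfrak{X}_1} \simeq R\psi_{\mathfrak{X}'}\circ j_!$ for an open immersion of formal models. The paper does not reprove this; it cites Huber's Corollary 3.5.11(ii), which is a genuine theorem about vanishing of nearby cycles of an extension by zero outside the closure of the open part. You neither cite this nor supply a substitute proof: you defer it to a d\'evissage that is only sketched, so the proof is incomplete at its essential point.

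Moreover, the tool you propose for that d\'evissage is not fit for purpose, and your stalkwise reduction itself has unaddressed content. Lemma \ref{lem:redberkad} compares $R^q\psi_{\mathfrak{X}}$ with $R^q\psi_{\mathfrak{X}'}$ after base change to the completed algebraic closure of the residue field at a Gauss point of a unit disc receiving $\mathfrak{X}$; it is an induction-on-dimension device for constructibility statements, and it gives no fibration of $\mathfrak{f}$ into relative curves and no control of $!$-functors along such a fibration. Separately, your identification of the stalk $\bigl(R\mathfrak{f}_{s!}R\psi_{\mathfrak{X}}\mathscr{F}\bigr)_{\bar y}$ with compactly supported cohomology of $\mathscr{F}$ over the tube $X\times_Y\,]\bar y[$ tacitly uses a base-change result for nearby cycles along the strict henselization $\mathfrak{Y}_{(\bar y)}\to\mathfrak{Y}$ (the restriction of $R\psi_{\mathfrak{X}}(\mathscr{F})$ to the fibre $\mathfrak{f}_s^{-1}(\bar y)$ is not by definition the nearby cycles of $\mathscr{F}|_{\text{tube}}$); this is itself a nontrivial theorem of Berkovich--Huber type, not formal base change, and also one must check that $R(\cdot)_!$ even makes sense over the tube, which need not be of finite type over a field. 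The efficient repair is to keep your factorization and proper-case argument, and replace the stalkwise program by a citation of \cite[Corollary 3.5.11(ii)]{hub96} for the open immersion --- which is exactly the paper's route.
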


\begin{proof}
The main ideas can be found in \cite[\S 3.5]{hub96}. By Corollary 5.1.13 in
 loc.cit. $f$ has a compacitification $f = g \circ j$ where $j \colon X\to X'$ is an 
 open embedding and $g \colon X' \to Y$ is proper. Note that $X'$ is a fine $k$-adic space. 
 %By Raynaud's theorem we have $\mathfrak{f} \circ \mathfrak{b}' = \mathfrak{g} \circ \mathfrak{j} \circ \mathfrak{b}$ where $\mathfrak{j}$ is an open immersion and 
 %$\mathfrak{b}$ (as well as $\mathfrak{b}'$) are admissible formal blow-ups (in particular they are proper). 
 By \cite[Proposition 8.2.16, Lemma 8.4.4]{BO}, there exists formal models $\mathcal{X}_1$ and $\mathcal{X}'$ 
 of $X$ and $X'$ respectively such that we have an admissible blow-up $\mathfrak{b} : \mathcal{X}_1 \to \mathfrak{X}$, 
 a morphism of formal schemes $\mathfrak{f}_1 : \mathfrak{X}_1 \to \mathfrak{Y}$, an open immersion $\mathfrak{j} : \mathfrak{X}_1 \to \mathfrak{X}'$ and 
 a map $\mathfrak{g} : \mathfrak{X}' \to \mathfrak{Y}$. 
 Furthermore, these morphisms are such that $\mathfrak{j}_{\eta} = j$, $\mathfrak{g}_\eta = {g}$, $\mathfrak{f}_1 = \mathfrak{g} \circ \mathfrak{j}$.  
 Here $g$ is the generic fibre of $\mathfrak{g}$ and by \cite[Remark 1.3.18(ii)]{hub96} $\mathfrak{g}$ is proper.

 %Thus it suffices to consider two cases: (i) $\mathfrak{f}$ is an open immersion and (ii) $\mathfrak{f}$ is proper. 
%Assume first that $\mathfrak{f} \colon \mathfrak{X} \to \mathfrak{Y}$ is an open immersion. 

By Corollary 3.5.11(ii) in loc.cit., we see that 
\begin{align*} 
  R\mathfrak{j}_{s!} \circ R\psi_{\mathfrak{X}}(\sF) \xrightarrow{\sim} R\psi_{\mathfrak{X}'} \circ j_! (\sF).
\end{align*} 

We show that 
\[
R\mathfrak{g}_{s*} \circ R\psi_{\mathfrak{X'}}(\mathscr{F}) \xrightarrow{\sim} R\psi_{\mathfrak{Y}} \circ Rg_*(\mathscr{F}).
\] 
By Grothendieck's theorem on the composition of derived functors, it suffices to verify that 
\[
\mathfrak{g}_{s*}(\nu_{\mathfrak{X}'*}(\mathscr{F})) \cong \nu_{\mathfrak{Y}*}(g_*(\mathscr{F})).
\]
 We need only check that $g_s \circ \nu_{\mathfrak{X}'} = \nu_{\mathfrak{Y}} \circ g$ as a morphism of sites .
This reduces to checking that for a formal scheme $\mathfrak{Z} \to \mathfrak{Y}$, 
$(\mathfrak{X}' \times_{\mathfrak{Y}} \mathfrak{Z})_{\eta} = \mathfrak{X}'_{\eta} \times_{\mathfrak{Y}_\eta} \mathfrak{Z}_\eta$. 
This can be further reduced to the case when the formal schemes are all affine, in which case the identity is clear. 
%Let $\mathfrak{h}_s \colon \mathfrak{U}_s \to \mathfrak{Y}_s$ be an étale morphism coming from an étale morphism $\mathfrak{h} \colon \mathfrak{U} \to \mathfrak{Y}$. Note that $\nu_{\mathfrak{Y}*}(g_*(\mathscr{F}))(\mathfrak{U}_s) = g_{*}(\mathscr{F})(U) = \mathscr{F}(X \times_Y U)$ where $U$ is the generic fibre of $\mathfrak{U}$. Similarly $\mathfrak{g}_{s*}(\nu_{\mathfrak{X}*}(\mathscr{F}))(\mathfrak{U}_s) = \nu_{\mathfrak{X}*}(\mathscr{F})(\mathfrak{X}_s \times_{\mathfrak{Y}_s} \mathfrak{U}_s) = \mathscr{F}(X \times_Y U)$. For the last equality we need to check that the special fibre of $\mathfrak{X} \times_{\mathfrak{Y}} \mathfrak{U}$ is $\mathfrak{X}_s \times_{\mathfrak{Y}_s} \mathfrak{U}_s$.
 Since $\mathfrak{f}_1 = \mathfrak{g} \circ \mathfrak{j}$, we have thus shown that 
  \begin{align*} 
  R\mathfrak{f}_{1s!} \circ R\psi_{\mathfrak{X}_1}(\sF) \xrightarrow{\sim} R\psi_{\mathfrak{Y}} \circ Rf_{1!}(\sF)  
  \end{align*} 
   where $f_1 := \mathfrak{f}_{1\eta}$. 
 By construction, $\mathfrak{f}_1 = \mathfrak{f} \circ \mathfrak{b}$. Since, $\mathfrak{b}$ is an admissible blow up, we get that 
   \begin{align*} 
  R\mathfrak{f}_{s!} \circ R\mathfrak{b}_{s!} \circ R\psi_{\mathfrak{X}_1}(\sF) \xrightarrow{\sim} R\psi_{\mathfrak{Y}} \circ Rf_{!}(\sF)  
  \end{align*}
      Since $\mathfrak{b}_s$ is proper and using the isomorphism $R\mathfrak{b}_{s*} \circ R\psi_{\mathfrak{X}_1} \to R\psi_{\mathfrak{X}}$
  we deduce that
    \begin{align*} 
  R\mathfrak{f}_{s!} \circ R\psi_{\mathfrak{X}}(\sF) \xrightarrow{\sim} R\psi_{\mathfrak{Y}} \circ Rf_{!}(\sF). 
  \end{align*}
\end{proof}

\begin{lem} \label{lem:fincohdimnecy}
 Let $X$ be a fine $k$-adic space. 
  Let $\sF \in \mathcal{D}^b(X^f_{\text{ét}},\Lambda)$ and $\mathfrak{X}$ be a formal model of $X$. 
   We then have that $R\psi_\mathfrak{X}(\sF)$ is a bounded complex on $\mathfrak{X}_{s,et}$. 
\end{lem}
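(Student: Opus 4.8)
The plan is to deduce the statement from the finiteness of the cohomological dimension of the nearby cycles functor. Recall that $R\psi_{\mathfrak{X}}$ is the total right derived functor of the left exact functor $\nu_{X,\mathfrak{X}*}$, where $\nu_{X,\mathfrak{X}} : X^f_{\text{et}} \to \mathfrak{X}_{s,\text{et}}$ is the morphism of sites of Proposition \ref{prop:canfumosishi}. As the right derived functor of a left exact functor, $R\psi_{\mathfrak{X}}$ already carries $\mathcal{D}^{+}$ into $\mathcal{D}^{+}$, so boundedness below of $R\psi_{\mathfrak{X}}(\sF)$ is automatic and the entire content of the lemma is a bound on the cohomological degrees from above. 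Writing $\sF$ as a bounded complex whose cohomology sheaves $\mathcal{H}^q(\sF)$ are concentrated in degrees $a \le q \le b$, the hypercohomology spectral sequence
\begin{align*}
E_2^{p,q} = R^p\psi_{\mathfrak{X}}(\mathcal{H}^q(\sF)) \Longrightarrow R^{p+q}\psi_{\mathfrak{X}}(\sF)
\end{align*}
reduces us to exhibiting an integer $N$, independent of the sheaf, such that $R^p\psi_{\mathfrak{X}}(\sG) = 0$ for every torsion $\Lambda$-sheaf $\sG$ on $X^f_{\text{et}}$ and every $p > N$; that is, to showing that $\nu_{X,\mathfrak{X}*}$ has finite cohomological dimension.

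I would verify this on stalks. Fix a geometric point $\bar{s} \to \mathfrak{X}_s$. By Proposition \ref{prop:canfumosishi} the étale neighbourhoods of $\bar{s}$ in $\mathfrak{X}_s$ are exactly the special fibres of formal schemes $\mathfrak{U} \to \mathfrak{X}$ that are étale, so
\begin{align*}
(R^p\psi_{\mathfrak{X}}\sG)_{\bar{s}} = \varinjlim_{\mathfrak{U}} H^p(\mathfrak{U}_\eta, \sG|_{\mathfrak{U}_\eta}),
\end{align*}
the colimit running over such étale neighbourhoods. Each $\mathfrak{U}_\eta$ is a fine $k$-adic space, hence quasi-compact and quasi-separated, and the transition maps are étale by Proposition \ref{prop:canfumosishi}; consequently the filtered colimit commutes with étale cohomology and computes $H^p(Y_{\bar{s}}, \sG)$, where $Y_{\bar{s}}$ is the tube over $\bar{s}$, i.e. the generic fibre of the completed strict henselisation of $\mathfrak{X}$ along $\bar{s}$ (cf. \cite{berk94}).

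The space $Y_{\bar{s}}$ is an analytic space over $k$ of dimension at most $\dim X$. The key input is the finiteness of the étale cohomological dimension of such spaces with $\Lambda$-coefficients (recall $\ell$ is invertible in $\tilde{k}$): there is an integer $N$ depending only on $\dim X$ with $H^p(Y_{\bar{s}}, \sG) = 0$ for all $p > N$ and all torsion sheaves $\sG$, and this bound is uniform in $\bar{s}$ because every tube has dimension at most $\dim X$. This finiteness is provided by the theory of \cite{berk94} and \cite{hub96}. Feeding it back through the stalk computation gives $R^p\psi_{\mathfrak{X}}(\sG) = 0$ for $p > N$, which is the finite cohomological dimension required in the first step, and the lemma follows.

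The hard part is precisely this uniform cohomological dimension bound for the tubes $Y_{\bar{s}}$. These spaces are typically not quasi-compact — a residue disc, for instance, is an increasing union of affinoid discs — so their finite cohomological dimension cannot be read off directly from the corresponding bound for fine $k$-adic spaces and must be imported from the finiteness theory for non-Archimedean étale cohomology over an algebraically closed field. A secondary, more routine, point is to justify that the filtered colimit over étale neighbourhoods commutes with cohomology and that its value is the cohomology of the generic fibre of the strict henselisation; this is standard given the quasi-compactness and quasi-separatedness of the $\mathfrak{U}_\eta$, but should be recorded carefully.
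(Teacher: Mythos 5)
Your reduction to finite cohomological dimension of $\psi_{\mathfrak{X}}$ via the hypercohomology spectral sequence, and the stalk formula $(R^p\psi_{\mathfrak{X}}\sG)_{\bar{s}} = \varinjlim_{\mathfrak{U}} H^p(\mathfrak{U}_\eta,\sG)$, are both sound; up to that point your argument is a stalk-local reading of what the paper does at the level of sections. The gap is in the step you yourself flag as the hard part. First, the identification of the colimit with $H^p(Y_{\bar{s}},\sG)$ for the tube $Y_{\bar{s}}$ is not a formal consequence of quasi-compactness of the $\mathfrak{U}_\eta$: it is a continuity-plus-completion statement (one must compare the uncompleted strict henselisation, its completion, and their generic fibres), and it is essentially how Berkovich and Huber compute stalks of nearby cycles, so you would be invoking a result at least as deep as the lemma being proved. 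Second, and more seriously, the cohomological dimension bound you then need is not the one you describe. Over a non-closed geometric point the tube is not an analytic space over an algebraically closed field at all: for $\mathfrak{X} = \Spf(k^0\langle T\rangle)$ and $\bar{s}$ over the generic point of $\mathfrak{X}_s$, the tube is $\Spa$ of (the completion of) the maximal unramified extension $\mathcal{H}(t)^{\nr}$ of the completed residue field at the Gauss point $t$ --- a field with nontrivial Galois cohomology; compare Lemma \ref{lem:redberkad}, whose statement involves the invariants under $P = \Gal(\mathcal{H}(t)^{sep}/\mathcal{H}(t)^{\nr})$ for precisely this reason. A bound ``uniform in $\bar{s}$'' must therefore control both the dimension of the tube and $\mathrm{cd}_\ell$ of a family of base fields varying with $\bar{s}$; neither \cite{berk94} nor \cite{hub96} hands you this in a single citable statement, and your appeal to finiteness ``over an algebraically closed field'' misidentifies the required input.

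The irony is that the passage to the limit space is unnecessary, and deleting it both closes the gap and collapses your proof into the paper's. Filtered colimits of $\Lambda$-modules are exact, so to show $\varinjlim_{\mathfrak{U}} H^p(\mathfrak{U}_\eta,\sG) = 0$ for $p > N$ it suffices to kill each term: restricting to affine \'etale neighbourhoods (cofinal in the colimit), each $\mathfrak{U}_\eta$ is a quasi-compact adic space of finite type over $\Spa(k,k^0)$, \'etale over $X$ and hence of dimension $\le \dim X$, so \cite[Corollary 2.8.3]{hub96} gives a bound $N$ depending only on $\dim X$. This is exactly the input of the paper's proof; the paper simply avoids stalks altogether, observing that boundedness of $R\psi_{\mathfrak{X}}(\sF)$ can be tested on the cohomology presheaves $Y \mapsto R\Gamma(Y, R\psi_{\mathfrak{X}}\sF)$, which by definition of $\psi_{\mathfrak{X}}$ as a pushforward are equal to $R\Gamma(\mathfrak{U}_\eta,\sF)$, and then applies the same corollary of Huber uniformly over all \'etale $Y \to \mathfrak{X}_s$.
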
 	 
\begin{proof} 
    It suffices to show that there exists $d \in \mathbb{N}$ such that
    for every $Y \to \mathfrak{X}_s$ étale, we have that
    $R\Gamma(\mathfrak{U}_s,R\psi_\mathfrak{X}(\sF)) \in \mathcal{D}^{[-d,d]}(\Lambda)$. 
Observe that there exists a formal scheme $\mathfrak{U}$ étale over $\mathfrak{X}$ with special fibre $\mathfrak{U}_s = Y$ and  
$R\Gamma(\mathfrak{U}_s,R\psi_\mathfrak{X}(\sF)) = R\Gamma(\mathfrak{U}_\eta,\sF)$. 
 By \cite[Corollary 2.8.3]{hub96}, one deduces that there exists $d \in \mathbb{N}$ such that 
for every $Y \to \mathfrak{X}_s$ étale, $R\Gamma(Y,R\psi_\mathfrak{X}(\sF)) \in \mathcal{D}^{[-d,d]}(\Lambda)$. 
This completes the proof. 
\end{proof}

\section{Constructible sheaves} 	 
	
		 We fix a prime number $l$ which is coprime to the characteristic of the residue field $\tilde{k}$ of $k$.
		 Let $\Lambda := \mathbb{Z}/ \ell^n \mathbb{Z}$.  
   
   \begin{defi} \label{constructible sheaves} 
     \emph{Let $X$ be a fine $k$-adic space and
      $\mathscr{F}$ be a sheaf of $\Lambda$-modules on the site $X^{f}_{\text{ét}}$.}
    \begin{enumerate}
    \item \emph{We say that $\sF$ is} semi-constructible \emph{if for every 
    $f : U \to X$ in $X^{f}_{\text{ét}}$ and every formal model $\mathfrak{U}$ of $U$, we have that
        $R\psi_{\mathfrak{U}}(f^{*}\sF) \in \mathcal{D}^b_c(\mathfrak{U}_s,\Lambda)$.} 
    \item \emph{The sheaf $\sF$ is} constructible \emph{if for every complete algebraically closed non-Archimedean field extension $L$
 and every morphism of fine $L$-adic spaces $f : Y \to X_L$, we have
 that $f^* \circ p_L^*(\sF)$ is semi-constructible. Here $p_L \colon X_L \to X$ is the projection.} 
    \end{enumerate} 
     \emph{We use} $Con(X^{f}_{\text{ét}},\Lambda)$ \emph{to denote the full sub-category of constructible sheaves 
     and} $sCon(X^{f}_{\text{ét}},\Lambda)$ \emph{to denote the full sub-category of semi-constructible sheaves}
   \end{defi}

   \begin{rem} \label{huber constructible is constructible}
    \emph{It is reasonable to ask if the class of \emph{constructible} sheaves
     defined above contains any sheaves of interest. By \cite[Proposition 2.12]{hub98a}, 
    the class of constructibles defined above extends the class 
    of constructible sheaves introduced by Huber in \cite[Definition 2.7.2]{hub96}. Henceforth, we will refer to these 
    sheaves as Huber constructible.}
    %We show below that the notion of constructible sheaves introduced above extends the notion introduced by Berkovich in \cite{berk13}.} 
   \end{rem}  
   
       It should be noted that there exist semi-constructible sheaves which are not constructible. For instance cf. \S \ref{counterexample for Verdier dual stability}.
    
%    \begin{prop} \label{berkovich implies constructible}
%     Let $X$ be a fine $k$-adic space. 
%    Let $\sF$ be a sheaf on $X^f_{\text{ét}}$ which takes values in the category of $\Lambda$-modules.
%    Suppose $\sF$ has the following property. 
%    For every $x \in X$, there exists an affinoid open neighbourhood $V$ of $x$ such that 
%    $\sF_{|V} =  \mu_{V*} \circ \lambda^*(\sG)$ where $\sG$ is a constructible étale sheaf on the site 
%    $\mathrm{Spec}(\mathcal{O}_X(V))_{\text{ét}}$, 
%    $\lambda : V_{\text{ét}} \to \mathrm{Spec}(\mathcal{O}_X(V))_{\text{ét}}$ and 
%    $\mu_{V} : V_{\text{ét}} \to V^f_{\text{ét}}$. 
%     We then have that $\sF$ is constructible. 
%    \end{prop} 
%   \begin{proof}Since the class of sheaves of the form above are stable for pullbacks, 
%    it suffices to check that 
%    if $\mathfrak{X}$ is a formal model of $X$ then 
%    $R\psi_\mathfrak{X}(\sF)$ is constructible (note also that by Lemma \ref{lem:fincohdimnecy}, $R\psi_\mathfrak{X}(\sF)$ is a bounded complex on $\mathfrak{X}_{s,et}$). 
%    
%    Let $X^{\mathrm{an}}$ (cf. \S \ref{css}) be the css analytic space 
%    associated to $X$.  
%    Recall from \cite{berk94}, the morphism of sites 
%  $\mu : X^{\mathrm{an}}_{\text{qét}} \to X^{\mathrm{an}}_{\text{ét}}$. 
%    In Proposition \ref{prop:eqbersitadsit}, we identified $X^{\mathrm{an},c}_{\text{qét}}$ with 
%     $X^f_{\text{ét}}$. By \cite[Theorem 1.1.2]{berk13}
%  %and Lemma \ref{cohomology comparison},
%   the sheaf $\sF$ on $X^{f}_{\text{ét}}$ is  
%  constructible.
%      \end{proof} 

\begin{es} \label{es:coexserrsubcat}
 \emph{Consider the following example of a 
 sub-sheaf of a constructible sheaf that is not constructible. 
 Let $k = \mathbb{C}_p$ and $X = \mathbb{P}^{1,\mathrm{ad}}_{\mathbb{C}_p}$. 
  Let $\Omega := X \smallsetminus \mathbb{P}^1(\mathbb{Q}_p)$. The subspace $\Omega$ is open subspace of
  $X$ and let $j : \Omega \hookrightarrow \mathbb{P}^{1,\mathrm{ad}}_{\mathbb{C}_p}$ denote the open immersion. 
   We set $G := j_{!}j^*\Lambda$. 
  It can be deduced from \cite[Théor\`{e}me 3.1.1]{dat06} that $H^1(X^{f}_{\text{ét}},G)$ is not finite
   since it corresponds to the Steinberg representation, while $H^2(X^{f}_{\text{ét}},G)$ corresponds to a twist of the trivial representation. 
  By Corollary \ref{constructible implies finite global sections}, it follows that 
  $G$ is not constructible. Hence we see that the sub-category
   $Con(X^{f}_{\text{ét}},\Lambda)$ is not necessarily a Serre sub-category of the category of sheaves of $\Lambda$-modules on $X$.}
  \end{es}   
   
    \begin{lem} 
   Let $X$ be a fine $k$-adic space. 
   The category $Con(X^{f}_{\text{ét}},\Lambda)$ is exact. 
  \end{lem}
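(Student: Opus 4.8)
The plan is to read ``exact'' in the sense of a Quillen exact category, with the exact structure inherited from the ambient abelian category $Sh(X^{f}_{\text{\'et}},\Lambda)$ of sheaves of $\Lambda$-modules. Recall the standard fact from the theory of exact categories (Quillen; see also Bühler's survey \emph{Exact categories}) that a full additive subcategory $\mathcal{C}$ of an abelian category $\mathcal{A}$ which is closed under extensions becomes an exact category once one declares the admissible short exact sequences to be those sequences $0 \to A \to B \to C \to 0$ which are exact in $\mathcal{A}$ and have all three terms in $\mathcal{C}$. This reading is forced by Example \ref{es:coexserrsubcat}, which shows precisely that $Con(X^{f}_{\text{\'et}},\Lambda)$ is \emph{not} closed under subobjects, so we cannot expect it to be a Serre (or even weak Serre) subcategory. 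Thus it suffices to verify two things: that $Con(X^{f}_{\text{\'et}},\Lambda)$ is additive, and that it is closed under extensions in $Sh(X^{f}_{\text{\'et}},\Lambda)$.

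First I would dispose of additivity. The zero sheaf is constructible, since all of its nearby cycles vanish and $0 \in \mathcal{D}^b_c(\mathfrak{U}_s,\Lambda)$. For a finite direct sum $\sF_1 \oplus \sF_2$ of constructible sheaves, fix a complete algebraically closed extension $L/k$, a morphism $f \colon Y \to X_L$ of fine $L$-adic spaces, an object $g\colon V \to Y$ of $Y^{f}_{\text{\'et}}$, and a formal model $\mathfrak{V}$ of $V$. Writing $h^{*} := g^{*}f^{*}p_L^{*}$ for the composite pullback, which is additive, and using that $R\psi_{\mathfrak{V}}$ is additive, we get $R\psi_{\mathfrak{V}}(h^{*}(\sF_1 \oplus \sF_2)) \simeq R\psi_{\mathfrak{V}}(h^{*}\sF_1) \oplus R\psi_{\mathfrak{V}}(h^{*}\sF_2)$; as $\mathcal{D}^b_c(\mathfrak{V}_s,\Lambda)$ is closed under finite direct sums, the left-hand side lies in $\mathcal{D}^b_c(\mathfrak{V}_s,\Lambda)$. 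Running the same computation over $X$ itself (i.e. $L=k$, $f=\id$) settles semi-constructibility, so $Con(X^{f}_{\text{\'et}},\Lambda)$ is an additive subcategory.

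Next I would establish closure under extensions, which is the heart of the statement. Let $0 \to \sF' \to \sF \to \sF'' \to 0$ be a short exact sequence in $Sh(X^{f}_{\text{\'et}},\Lambda)$ with $\sF'$ and $\sF''$ constructible, and fix $L$, $f\colon Y \to X_L$, $g\colon V \to Y$ and $\mathfrak{V}$ as above, with $h^{*}=g^{*}f^{*}p_L^{*}$. Each of the pullback functors is exact on sheaves, hence so is $h^{*}$, and applying it gives a short exact sequence of sheaves on $\mathfrak{V}$, which we view as a distinguished triangle $h^{*}\sF' \to h^{*}\sF \to h^{*}\sF'' \to h^{*}\sF'[1]$. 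Since $R\psi_{\mathfrak{V}}$ is a triangulated functor, we obtain a distinguished triangle
\begin{align*}
R\psi_{\mathfrak{V}}(h^{*}\sF') \to R\psi_{\mathfrak{V}}(h^{*}\sF) \to R\psi_{\mathfrak{V}}(h^{*}\sF'') \to R\psi_{\mathfrak{V}}(h^{*}\sF')[1]
\end{align*}
on $\mathfrak{V}_{s,\text{\'et}}$. By hypothesis the two outer terms lie in $\mathcal{D}^b_c(\mathfrak{V}_s,\Lambda)$, and since the bounded constructible derived category is a triangulated subcategory it is closed under the two-out-of-three property for distinguished triangles; hence the middle term lies in $\mathcal{D}^b_c(\mathfrak{V}_s,\Lambda)$ as well. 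Letting $V$, $Y$, $L$ range over all admissible choices shows that $\sF$ is constructible, so $Con(X^{f}_{\text{\'et}},\Lambda)$ is extension-closed and the lemma follows.

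The only genuinely delicate points, and the steps I would scrutinize, are the two inputs feeding the triangle argument: that the pullback functor $h^{*}$ is exact (which holds because pullback of \'etale sheaves along morphisms of sites is exact) and that $R\psi_{\mathfrak{V}}$ is triangulated (built into its definition as a derived functor, cf. \S\ref{nearby cycles functor}). Once these are granted, everything reduces to the classical fact that constructible complexes on a $\tilde k$-variety form a triangulated subcategory closed under cones, so no new adic-geometric input is needed beyond Definition \ref{constructible sheaves}. I expect no real obstacle here; the main conceptual step is simply recognizing that ``exact'' must be taken in Quillen's sense, in light of Example \ref{es:coexserrsubcat}.
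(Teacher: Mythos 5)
Your proposal is correct and its core argument — pull the short exact sequence back along $p_L^*$, $f^*$ (and further étale restriction), use exactness of pullback to get a distinguished triangle under $R\psi$, and conclude by the two-out-of-three property of $\mathcal{D}^b_c(\mathfrak{V}_s,\Lambda)$ — is exactly the paper's proof, which likewise reduces the lemma to stability under extensions. The only additions on your side are the explicit Quillen-exact-category framing and the routine verification of additivity, which the paper leaves implicit.
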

	\begin{proof} 
	It suffices to show that the category $Con(X^{f}_{\text{ét}},\Lambda)$ is stable for extensions. 
	Let $0 \to \sF \to \sF' \to \sF'' \to 0$ be a short exact sequence of constructible $\Lambda$-étale sheaves on $X$.
	Let $f : Y \to X_L$ be a morphism of fine $L$-adic spaces where $L$ is a non-Archimedean algebraically closed complete field extension of $k$. Let $p_L \colon X_L \to X$ be the projection.
    Let $\mathfrak{Y}$ be a formal model of $Y$. 
	We have an exact sequence $0 \to f^*p_L^*(\sF) \to f^*p_L^*(\sF') \to f^*p_L^*(\sF'') \to 0$ which induces a
	triangle $R\psi_{\mathfrak{Y}}(f^*p_L^*(\sF)) \to R\psi_{\mathfrak{Y}}(f^*p_L^*(\sF')) \to R\psi_{\mathfrak{Y}}(f^*p_L^*(\sF'')) \to \cdot$ 
	in $\mathcal{D}^b(\mathfrak{Y}_s,\Lambda)$. 
    By assumption, $R\psi_{\mathfrak{Y}}(f^*p_L^*(\sF))$ and $R\psi_{\mathfrak{Y}}(f^*p_L^*(\sF''))$ 
    are complexes whose cohomology is constructible. 
    It follows that $R\psi_{\mathfrak{X}}(f^*p_L^*(\sF'))$ is a complex whose cohomology is constructible and 
    hence $F'$ is an object of $Con(X^{f}_{\text{ét}},\Lambda)$. 
    	\end{proof}
	
 \begin{defi} \label{derived category of constructible sheaves}
 \begin{enumerate} 
  \item  \emph{Let $\mathcal{D}^{b}_{sc}(X^f_{\text{ét}},\Lambda)$ 
	 be the full subcategory of $\mathcal{D}^{b}(X^f_{\text{ét}},\Lambda)$ 
	 consisting of those complexes $A \in \mathcal{D}^{b}(X^f_{\text{ét}},\Lambda)$ 
	such that for every étale map $U \to X$ in $X^f_{\text{ét}}$ and every formal model $\mathfrak{U}$ of $U$, 
	$R\psi_{\mathfrak{U}}(A_{|U}) \in D^b_c(\mathfrak{U}_s,\Lambda)$.} 
  \item \emph{Let $\mathcal{D}^{b}_{c}(X^f_{\text{ét}},\Lambda)$ 
	 be the full subcategory of $\mathcal{D}^{b}(X^f_{\text{ét}},\Lambda)$ 
	consisting of those complexes $A \in \mathcal{D}^{b}(X^f_{\text{ét}},\Lambda)$ 
	such that for every complete algebraically closed non-Archimedean field extension $L$
 and every morphism of fine $L$-adic spaces $f : Y \to X_L$, we have
 that $f^* \circ p_L^*(A)$ is semi-constructible. Here $p_L \colon X_L \to X$ is the projection.}
    \end{enumerate} 
      \end{defi}  
  
  Unlike the class of Huber constructible sheaves, the objects of $\mathcal{D}^{b}_{c}(X^f_{\text{ét}},\Lambda)$ and 
  $\mathcal{D}^b_{sc}(X^f_{\text{ét}},\Lambda)$ are stable by pushforwards and lower shriek. 
  
  \begin{lem}  \label{semi-con stability for pushforwards}
      Let $f : X \to Y$ be a morphism of fine
      $k$-adic spaces and let $\mathscr{F} \in \mathcal{D}^b_{sc}(X^f_{\text{ét}},\Lambda)$.
      We have that $Rf_*(\mathscr{F}), Rf_!(\sF) \in \mathcal{D}^b_{sc}(Y^f_{\text{ét}},\Lambda)$.
  \end{lem}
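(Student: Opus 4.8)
The plan is to verify the defining condition of semi-constructibility for $Rf_*(\sF)$ and $Rf_!(\sF)$ directly, reducing in both cases, through the nearby cycles functor, to the classical finiteness theorems for the direct and exceptional images along a morphism of $\tilde k$-varieties. First I would reduce to a single formal model on the base. To prove $Rf_*(\sF)$ (resp. $Rf_!(\sF)$) semi-constructible one must show that for every $U \to Y$ in $Y^f_{\text{\'et}}$ and every formal model $\mathfrak{U}$ of $U$, the complex $R\psi_{\mathfrak{U}}\big((Rf_*\sF)_{|U}\big)$ (resp. with $Rf_!$) lies in $\mathcal{D}^b_c(\mathfrak{U}_s,\Lambda)$. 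Writing $X_U := X \times_Y U$ and $\tilde f : X_U \to U$ for the base change, $X_U$ is again fine by Lemma \ref{lem:morsitfin}, the projection $p : X_U \to X$ is \'etale, and $p^*\sF$ is again semi-constructible since a composite of \'etale morphisms is \'etale. Using \'etale base change for $Rf_*$ and the fact that $Rf_!$ commutes with any base change, one has $(Rf_*\sF)_{|U} \simeq R\tilde f_*(p^*\sF)$ and $(Rf_!\sF)_{|U} \simeq R\tilde f_!(p^*\sF)$. After renaming, it thus suffices to show: given $f : X \to Y$, a semi-constructible $\sF$, and a \emph{single} formal model $\mathfrak{Y}$ of $Y$, that $R\psi_{\mathfrak{Y}}(Rf_*\sF)$ and $R\psi_{\mathfrak{Y}}(Rf_!\sF)$ lie in $\mathcal{D}^b_c(\mathfrak{Y}_s,\Lambda)$.

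Next I would build compatible models. As in the proof of Lemma \ref{vanishing cycles commutes with lower shriek}, using \cite[Proposition 8.2.16, Lemma 8.4.4]{BO} I may, after an admissible blow-up of a formal model of $X$, produce a formal model $\mathfrak{X}$ of $X$ and a morphism $\mathfrak{f} : \mathfrak{X} \to \mathfrak{Y}$ of $k^0$-formal schemes with $\mathfrak{f}_\eta = f$; its special fibre $\mathfrak{f}_s : \mathfrak{X}_s \to \mathfrak{Y}_s$ is then a finite type morphism of varieties over the algebraically closed field $\tilde k$. Taking $U = X$, $\mathfrak{U} = \mathfrak{X}$ in the hypothesis on $\sF$ gives $R\psi_{\mathfrak{X}}(\sF) \in \mathcal{D}^b_c(\mathfrak{X}_s,\Lambda)$. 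For the lower shriek, Lemma \ref{vanishing cycles commutes with lower shriek} supplies $R\mathfrak{f}_{s!} \circ R\psi_{\mathfrak{X}}(\sF) \xrightarrow{\sim} R\psi_{\mathfrak{Y}} \circ Rf_!(\sF)$, and since $R\mathfrak{f}_{s!}$ preserves $\mathcal{D}^b_c$ for a finite type morphism of $\tilde k$-varieties (classical finiteness for the exceptional image), the right-hand side lies in $\mathcal{D}^b_c(\mathfrak{Y}_s,\Lambda)$, as desired.

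For the ordinary pushforward I would use that the commutation $R\mathfrak{f}_{s*} \circ R\psi_{\mathfrak{X}} \xrightarrow{\sim} R\psi_{\mathfrak{Y}} \circ Rf_*$ holds for this arbitrary $\mathfrak{f}$: this is precisely the $Rg_*$-step in the proof of Lemma \ref{vanishing cycles commutes with lower shriek}, which rests only on the commutation of sites $\mathfrak{f}_s \circ \nu_{X,\mathfrak{X}} = \nu_{Y,\mathfrak{Y}} \circ f$, i.e. on $(\mathfrak{X} \times_{\mathfrak{Y}} \mathfrak{Z})_\eta = \mathfrak{X}_\eta \times_{\mathfrak{Y}_\eta} \mathfrak{Z}_\eta$, and makes no use of properness. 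Combined with Deligne's finiteness theorem for the direct image along a finite type morphism of $\tilde k$-varieties (SGA 4.5; cf. \cite{sga4tome1}), this gives $R\psi_{\mathfrak{Y}}(Rf_*\sF) = R\mathfrak{f}_{s*}R\psi_{\mathfrak{X}}(\sF) \in \mathcal{D}^b_c(\mathfrak{Y}_s,\Lambda)$; boundedness in both cases is automatic because $R\mathfrak{f}_{s*}$ and $R\mathfrak{f}_{s!}$ have finite cohomological dimension on torsion sheaves over the finite-dimensional base $\mathfrak{Y}_s$. The point that needs the most care is exactly this pushforward case, namely ensuring that the nearby cycles commute with $Rf_*$ without any properness hypothesis. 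This is legitimate here because $R\psi_{\mathfrak{X}} = R\nu_{X,\mathfrak{X}*}$ is itself a derived direct image along a morphism of topoi, so it commutes with $Rf_*$ for purely formal reasons once the square of sites commutes and one invokes the Grothendieck spectral sequence for the composite of left-exact functors; the residual content is then the entirely classical finiteness on the special fibre. The remaining verifications — existence of the model $\mathfrak{f}:\mathfrak{X}\to\mathfrak{Y}$ over the fixed $\mathfrak{Y}$, \'etale base change, and stability of semi-constructibility under \'etale pullback — are all furnished by the results recalled above.
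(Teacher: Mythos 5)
Your proposal is correct and follows essentially the same route as the paper's proof: reduce via (\'etale) base change to a single formal model $\mathfrak{Y}$ of $Y$, invoke Raynaud's theorem to obtain a morphism of formal models $\mathfrak{f}\colon\mathfrak{X}\to\mathfrak{Y}$ lifting $f$, commute nearby cycles with $Rf_*$ (the purely formal $Rg_*$-step of Lemma \ref{vanishing cycles commutes with lower shriek}, which indeed needs no properness) and with $Rf_!$ (that lemma itself), and conclude by the classical finiteness theorems for $R\mathfrak{f}_{s*}$ and $R\mathfrak{f}_{s!}$ on the special fibres. Your write-up merely makes explicit two points the paper compresses, namely the \'etale localization step and the observation that the pushforward commutation is formal.
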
 
  \begin{proof}
  By \cite[Theorem 4.3.1]{hub96}, it suffices to show the following. 
   Let $\mathfrak{Y}$ be a formal model of $Y$. 
  We then have that $R\psi_{\mathfrak{Y}}(Rf_*(\sF)) \in \mathcal{D}^b_{c}(\mathfrak{Y}_{s},\Lambda)$.
	By Raynaud's theorem, there exists a
	 formal model $\mathfrak{X}$ of $X$ 
	and a morphism $\tilde{f}: \mathfrak{X} \rightarrow \mathfrak{Y}$ such that 
	 $f = \tilde{f}_{\eta}$. The argument in Lemma
	 \ref{vanishing cycles commutes with lower shriek} shows that
		$$R\psi_{\mathfrak{Y}}(Rf_{*}\mathscr{F}) \cong R\tilde{f}_{s*}(R\psi_{\mathfrak{X}}\mathscr{F}).$$
		As $\mathscr{F} \in D^{b}_{sc}(X^f_{\text{ét}},\Lambda)$, we 
		have that $R\psi_{\mathfrak{X}}(\mathscr{F}) \in \mathcal{D}^b_{c}(\mathfrak{X}_{s},\Lambda)$.  
		It follows that
		\begin{align*}
		 R\tilde{f}_{s*}(R\psi_{\mathfrak{X}}\mathscr{F}) \in \mathcal{D}^b_{c}(\mathfrak{Y}_{s},\Lambda). 
		 \end{align*} 
		 The proof that $Rf_!$ preserves semi-constructible complexes follows the same argument as above, making use of  
		 Theorem 5.4.6 in loc.cit. 
		 \end{proof}

   \begin{prop} \label{stability for pushforwards}
      Let $f : X \to Y$ be a morphism of fine
      $k$-adic spaces
       and let $\mathscr{F} \in\mathcal{D}^b_{c}(X^f_{\text{ét}},\Lambda)$.
      We have that $Rf_*(\mathscr{F}), Rf_!(\sF) \in \mathcal{D}^b_{c}(Y^f_{\text{ét}},\Lambda)$.
   \end{prop}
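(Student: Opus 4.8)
The plan is to deduce the constructibility of $Rf_*(\sF)$ and $Rf_!(\sF)$ from the semi-constructibility already obtained in Lemma \ref{semi-con stability for pushforwards}, exploiting that constructibility is, by design, stable under base change. By Definition \ref{derived category of constructible sheaves}, it suffices to fix a complete algebraically closed non-Archimedean field extension $L/k$ and a morphism $g : Z \to Y_L$ of fine $L$-adic spaces, and to show that $g^* p_L^*(Rf_*(\sF))$ and $g^* p_L^*(Rf_!(\sF))$ are semi-constructible on $Z$, where $p_L : Y_L \to Y$ is the projection. I would first form the cartesian square
\begin{equation*}
\begin{tikzcd}[row sep = large, column sep = large]
W \arrow[r, "g'"] \arrow[d, "f'"] &
X \arrow[d, "f"] \\
Z \arrow[r, "h"]
&
Y,
\end{tikzcd}
\end{equation*}
where $h := p_L \circ g$ and $W := X \times_Y Z$. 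Since $g$ factors through $Y_L$, one has $W = X_L \times_{Y_L} Z$, so by Lemma \ref{lem:morsitfin} applied over $L$ the space $W$ is fine $L$-adic and $f' : W \to Z$ is a morphism of fine $L$-adic spaces. Moreover $g'$ factors as $W \to X_L \xrightarrow{p_L} X$ with $W \to X_L$ a morphism of fine $L$-adic spaces, so by the very definition of constructibility of $\sF$ the complex $g'^*(\sF)$ is semi-constructible on $W$.

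The case of $Rf_!$ is then clean. The base change theorem for the lower shriek (\cite[Theorem 5.4.6]{hub96}), which requires no properness hypothesis, gives an isomorphism $h^*(Rf_!(\sF)) \cong Rf'_!(g'^*(\sF))$. Combining this with the previous paragraph and with Lemma \ref{semi-con stability for pushforwards} applied to the morphism $f'$ of fine $L$-adic spaces, we conclude that $g^* p_L^*(Rf_!(\sF)) \cong Rf'_!(g'^*(\sF))$ is semi-constructible on $Z$. As $L$ and $g$ were arbitrary, $Rf_!(\sF)$ is constructible.

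For $Rf_*$ the corresponding base change isomorphism $h^*(Rf_*(\sF)) \cong Rf'_*(g'^*(\sF))$ is the delicate point, and this is where I expect the main obstacle to lie, since base change for $Rf_*$ fails for a general finite type morphism. Two observations should make it tractable in the fine setting. First, the field extension factors out harmlessly: base change along the projection $p_L$ always holds, giving $p_L^*(Rf_*(\sF)) \cong Rf_{L*}(p_L^*(\sF))$ for $f_L : X_L \to Y_L$, so one is reduced to base change along the finite type morphism $g$ of fine $L$-adic spaces. Second, I would verify this remaining base change after passing to nearby cycles rather than on the adic spaces directly: choosing, as in the proof of Lemma \ref{vanishing cycles commutes with lower shriek}, compatible formal models together with a morphism $\tilde{f}' : \mathfrak{W} \to \mathfrak{Z}$ whose generic fibre is $f'$, the argument of Lemma \ref{semi-con stability for pushforwards} yields $R\psi_{\mathfrak{Z}}(Rf'_*(g'^*(\sF))) \cong R\tilde{f}'_{s*}(R\psi_{\mathfrak{W}}(g'^*(\sF)))$. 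Since $g'^*(\sF)$ is semi-constructible, its nearby cycles are constructible on $\mathfrak{W}_s$, and $R\tilde{f}'_{s*}$ preserves constructibility on the varieties $\mathfrak{W}_s, \mathfrak{Z}_s$ by the finiteness theorem of SGA 4. The crux is thus to match this with $R\psi_{\mathfrak{Z}}(g^* p_L^*(Rf_*(\sF)))$, i.e.\ to establish the $Rf_*$ base change isomorphism in the fine adic category; here quasi-compactness and the existence of formal models, both unavailable in the non-fine setting where the relevant counterexamples occur, are exactly what should save the argument, since on the special fibres one is reduced to proper base change for varieties after an admissible blow-up rendering the relevant formal morphism proper.
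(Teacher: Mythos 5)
Your reduction to semi-constructibility and your treatment of $Rf_!$ coincide with the paper's argument: pull back along $g' : X \times_Y Z \to X$, invoke Huber's base change theorem for lower shriek (\cite[Theorem 5.4.6]{hub96}), and finish with Lemma \ref{semi-con stability for pushforwards}. The gap is in the $Rf_*$ half. The isomorphism $h^*(Rf_*(\sF)) \simeq Rf'_*(g'^*(\sF))$ that you flag as ``the delicate point'' is not delicate in this setting: it is precisely Huber's quasi-compact base change theorem, \cite[Theorem 4.3.1]{hub96} (cited in the paper together with Lemma 1.1.10(v) of loc.cit.), which -- in contrast with the scheme-theoretic intuition you are reasoning from -- holds for an \emph{arbitrary} base change of a quasi-compact, quasi-separated morphism of analytic adic spaces with the torsion coefficients in play here; no properness is needed. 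Since every morphism of fine $k$-adic spaces is separated, finite type and in particular quasi-compact (Lemma \ref{lem:adlfttf}), the theorem applies verbatim, and the paper's proof of the $Rf_*$ case is exactly parallel to your $Rf_!$ case: cite the base change theorem, then apply Lemma \ref{semi-con stability for pushforwards}.

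Your proposed workaround does not close this gap. After obtaining $R\psi_{\mathfrak{Z}}(Rf'_*(g'^*\sF)) \simeq R\tilde{f}'_{s*}(R\psi_{\mathfrak{W}}(g'^*\sF))$ -- which is fine, being the composition-of-pushforwards argument of Lemma \ref{semi-con stability for pushforwards} -- you must still identify $R\psi_{\mathfrak{Z}}(h^*Rf_*(\sF))$, i.e.\ compute nearby cycles of a pullback along a morphism that is neither étale nor induced by a morphism of formal models. Nearby cycles do not commute with such pullbacks: this failure is exactly why semi-constructibility is not stable under $f^*$ (cf.\ \S\ref{counterexample for Verdier dual stability}) and why Lemma \ref{lem:redberkad} must pass to Galois invariants. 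So ``matching the two sides'' is the base change statement you set out to prove, and the argument is circular. Moreover, your closing claim that an admissible blow-up renders the relevant formal morphism proper, so that proper base change on special fibres applies, is false: properness of an admissible formal morphism can be read off from its special fibre and is equivalent to properness of its generic fibre, so no formal model of $f'$ has proper special fibre unless $f'$ itself is proper -- which a general morphism of fine adic spaces (e.g.\ the inclusion of an affinoid subdomain) is not. Replacing the entire second half of your argument by a citation of \cite[Theorem 4.3.1]{hub96} completes the proof.
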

	
	\begin{proof}
Let $g : Z \to Y_L \xrightarrow{p_L} Y$ be a morphism of fine adic spaces where $Z \to Y_L$ is $L$-adic, $p_L \colon Y_L \to Y$ is the projection morphism and $L$ is a non-Archimedean algebraically closed complete field extension of $k$.
We must show that $g^*(Rf_*(\mathscr{F})) \in \mathcal{D}^b_{sc}(Z^f_{\text{ét}},\Lambda)$. 
Let $f' : X \times_Y Z \to Z$ and $g' : X \times_Y Z \to X$ denote the morphisms obtained by base change. 
By \cite[Lemma 1.1.10 (v), Theorem 4.3.1]{hub96},
we get that 
$g^*(Rf_*(\mathscr{F})) \simeq Rf'_*(g'^*(\mathscr{F}))$. 
As $g'^*(\mathscr{F})$ is constructible on 
$X \times_Y Z$, it suffices to show that 
semi-constructibility is stable for pushforwards.
This is accomplished by Lemma \ref{semi-con stability for pushforwards}. 

		 The proof that $Rf_!$ preserves constructible complexes follows the same argument as above. In place 
		 of the quasi-compact base change theorem, we use  Theorem 5.4.6 in loc.cit. 
		 	\end{proof}

Due to the fact that fine adic spaces are in general highly non-noetherian topological spaces, it means that skyscraper sheaves have no chance of being Huber constructible, as the following example shows. On the other hand they are constructible. 
		 	
 \begin{es}
  \emph{We give an example of a constructible sheaf which is not Huber constructible. Take a closed point of type (1), $i \colon x \hookrightarrow \Spa(k\langle T \rangle, k^0\langle T \rangle)$. Then $i_!\Lambda$ is constructible by Proposition \ref{stability for pushforwards}. However we claim that $x$ is not a globally constructible subset of $X:=\Spa(k\langle T \rangle, k^0\langle T \rangle)$. That is we claim that there does not exist $U , V \subset X$ open and retrocompact in $X$ such that 
  \[
  x = U \cap V^c.
  \] 
  It suffices to show that $X \backslash x$ is not quasi-compact. Suppose it is. Then the image of $X \backslash x$ under the continuous retraction $X \to X^{\Berk}$ is also quasi-compact. But $X^{\Berk} \backslash x$ cannot be compact because $x$ is not open in $X^{\Berk}$ (for the Berkovich topology).}
  \end{es}

\begin{rem}
\emph{It is clear from the definition that constructibility is stable for pullbacks. In general this is not the case for semi-constructibility, cf. \S \ref{counterexample for Verdier dual stability}.}
\end{rem}

\begin{cor} \label{constructible implies finite global sections} 
Let $X$ be a fine $k$-adic space. Let $\sF$ be a sheaf on $X^f_{\text{ét}}$ which takes values in the category of $\Lambda$-modules.
If $\mathscr{F}$ is constructible then for every morphism $g : Z \to X_L \xrightarrow{p_L} X$   where $Z \to X_L$ is $L$-adic, $p_L \colon Y_L \to Y$ is the projection morphism and $L$ is a non-Archimedean algebraically closed complete field extension of $k$,
we have that $H^i(Y,g^*(\sF))$ is finite for every $i \in \mathbb{N}$. 
\end{cor}

\subsection{Properties of constructible sheaves}

     Recall that in the theory of étale cohomology of algebraic varieties, constructibility is an étale local condition.
     
     %and the family of
    % constructible sheaves of $\Lambda$-modules \emph{generate} the category of sheaves of $\Lambda$-modules via the operation of taking filtered direct limits. 
     We show that analogous statements hold true in the context of (semi)-constructible sheaves as defined above on fine adic spaces. 
     
 %able to show that there is an abundance of constructible sheaves, cf. Theorem \ref{thm:indulimtorshea}. To begin however we need some preparation:
 \subsubsection{An inductive construction}\label{indconsske}
  Let $\mathfrak{X}$ be a separated formal scheme of finite type over $k^0$. Suppose that the canonical morphism $\mathfrak{X} \to \Spf(k^0)$ goes through a morphism 
  $\mathfrak{X} \to \mathfrak{U}^1 := \Spf(k^{0} \langle T \rangle)$. Here $\mathfrak{U}^1_\eta$ is the one-dimensional unit disc. Let $t$ be 
  the Gauss-point of $\mathfrak{U}^1_\eta$. Under the continuous specialization map $\text{sp} \colon \mathfrak{U}^1_\eta \to \mathfrak{U}^1_s$ the point $t$ is the preimage of the generic point of $\mathfrak{U}^1_s$.
  Let $L$ be the completion of the algebraic closure of $k(t)$ where $k(t)$ is the residue class field of the local ring $\mathcal{O}_{\mathfrak{U}^1_\eta,t}$.  
   We set $\mathfrak{X}' := \mathfrak{X} \times_{\mathfrak{U}^1} \Spf(L^0)$. The projection $\lambda \colon \mathfrak{X}' \to \mathfrak{X}$ induces morphisms $\lambda_s \colon \mathfrak{X}'_s \to \mathfrak{X}_s$ and $\lambda_\eta \colon \mathfrak{X}'_\eta \to \mathfrak{X}_\eta$. Let us denote by $\mathscr{F}'$ the pullback $\lambda_\eta^*\mathscr{F}$ where $\mathscr{F}$ is a $\Lambda$-module on $X^f_{\text{ét}}$.
   
   \begin{lem} \label{lem:redberkad}
   In the situation of the preceding paragraph, for any $q \geq 0$, there is a canonical isomorphism
   \[
   \lambda_s^{*}(R^q\psi_{\mathfrak{X}}(\mathscr{F})) \simeq R^q\psi_{\mathfrak{X}'}(\mathscr{F}')^{P}
   \]
   where $P := \mathrm{Gal}(\mathcal{H}(t)^{sep}/\mathcal{H}(t)^{nr})$ and $\mathcal{H}(t)$ is the completion of $k(t)$.
   \end{lem}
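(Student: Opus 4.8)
The plan is to verify the isomorphism stalk-wise on $\mathfrak{X}'_s$ and to reduce the comparison of the two nearby-cycle sheaves to a base-change statement for the \'etale cohomology of \emph{tubes}, the discrepancy being precisely the inertia group $P$. This is the adic incarnation of the inductive step in \cite{berk94}, and the technical backbone will be Huber's description of nearby cycles in \cite[\S 3.5]{hub96} together with Proposition \ref{prop:canfumosishi}. First I would reduce to stalks. Since $\lambda_s$ is the projection from $\mathfrak{X}'_s = \mathfrak{X}_s \times_{\mathfrak{U}^1_s} \Spec(\tilde{L})$ and $\Spec(\tilde{L}) \to \mathfrak{U}^1_s$ factors through the generic point $\eta_s$, every geometric point $\bar{s} \to \mathfrak{X}'_s$ maps to a geometric point $\bar{y} \to \mathfrak{X}_s$ lying over $\eta_s$. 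By \cite[\S 3.5]{hub96} the stalk of $R^q\psi$ at such a point is the $q$-th \'etale cohomology of the generic fibre of the strict henselisation of the formal scheme there (its tube); thus $(\lambda_s^* R^q\psi_{\mathfrak{X}}\mathscr{F})_{\bar{s}} = H^q(\mathcal{T}_{\bar{y}}, \mathscr{F})$ where $\mathcal{T}_{\bar{y}}$ is the tube of $\bar{y}$ in $\mathfrak{X}$, while $(R^q\psi_{\mathfrak{X}'}\mathscr{F}')_{\bar{s}} = H^q(\mathcal{T}_{\bar{s}}, \mathscr{F}')$ where $\mathcal{T}_{\bar{s}}$ is the tube of $\bar{s}$ in $\mathfrak{X}'$.

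Next I would organise the base change $\Spf(L^0) \to \mathfrak{U}^1$ into an unramified part followed by a totally ramified part. Strictly henselising $\mathfrak{U}^1$ at $\bar{\eta}_s$ replaces the base field by the completion of the maximal unramified extension $\mathcal{H}(t)^{nr}$, whose residue field is $\tilde{k}(T)^{sep}$; this step is a filtered colimit of \'etale base changes, so by Proposition \ref{prop:canfumosishi} and the compatibility of $R\psi$ with such colimits it identifies $\mathcal{T}_{\bar{y}}$ with the tube computed over $\widehat{\mathcal{H}(t)^{nr}}$ and induces an honest isomorphism of nearby cycles with no correction term. The remaining extension $\widehat{\mathcal{H}(t)^{nr}} \to L = \widehat{\overline{\mathcal{H}(t)}}$ is Galois with group $P = \mathrm{Gal}(\mathcal{H}(t)^{sep}/\mathcal{H}(t)^{nr})$, and $\mathfrak{X}'$ is the corresponding base change; since the residue field is already separably closed at the unramified stage, $P$ acts trivially on $\mathfrak{X}'_s$ and hence acts on $R^q\psi_{\mathfrak{X}'}\mathscr{F}'$ fibrewise, so that $(R^q\psi_{\mathfrak{X}'}\mathscr{F}')^P$ is again a sheaf on $\mathfrak{X}'_s$. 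The tube $\mathcal{T}_{\bar{s}}$ is the base change of $\mathcal{T}_{\bar{y}}$ along this $P$-Galois extension, and Galois descent (the Hochschild--Serre spectral sequence $H^i(P, H^j(\mathcal{T}_{\bar{s}}, \mathscr{F}')) \Rightarrow H^{i+j}(\mathcal{T}_{\bar{y}}, \mathscr{F})$) produces, on the $E_2^{0,q}$-line, the desired natural map $\lambda_s^* R^q\psi_{\mathfrak{X}}\mathscr{F} \to (R^q\psi_{\mathfrak{X}'}\mathscr{F}')^P$.

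The hard part will be showing that this map is an isomorphism, i.e. that the naive $P$-invariants, and not the full homotopy fixed points, already compute the left-hand side; equivalently that the spectral sequence above degenerates, so that $H^i(P, H^j(\mathcal{T}_{\bar{s}}, \mathscr{F}')) = 0$ for $i > 0$. Because $\ell \neq \mathrm{char}(\tilde{k}) = p$ and the wild inertia is pro-$p$, it contributes nothing to the $\Lambda$-cohomology, so the entire obstruction lives in the tame quotient $P^{\mathrm{t}} \cong \prod_{\ell' \neq p} \mathbb{Z}_{\ell'}$, which has cohomological dimension one; one is thereby reduced to the vanishing of $H^1(P^{\mathrm{t}}, -)$ on the tube cohomology, a tame-monodromy statement. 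This is exactly where ramification enters. I expect to handle it by transporting Berkovich's argument from \cite{berk94}: the tube cohomologies are controlled by Huber's finiteness results (e.g. \cite[Corollary 2.8.3]{hub96}, cf. Lemma \ref{lem:fincohdimnecy}), and the identification of $\mathcal{T}_{\bar{s}}$ as a base change along the divisible-value-group extension $L/\widehat{\mathcal{H}(t)^{nr}}$ forces the relevant tame action to have vanishing higher cohomology.

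Finally, I would check that the stalk isomorphisms constructed above are compatible with the \'etale specialisation maps, so that they glue to the asserted isomorphism of sheaves $\lambda_s^{*}(R^q\psi_{\mathfrak{X}}(\mathscr{F})) \simeq R^q\psi_{\mathfrak{X}'}(\mathscr{F}')^{P}$ on $\mathfrak{X}'_s$, and that the whole construction is functorial in $\mathscr{F}$ so as to yield the claimed canonicity.
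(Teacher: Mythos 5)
Your plan follows the same Berkovich-style skeleton as the paper's proof, which simply transports the proof of \cite[Proposition 4.6(ii)]{berk94} to the adic setting, replacing Berkovich's Lemma 4.4(i) by a continuity statement deduced from \cite[Proposition 2.6.12]{hub96}: reduce to stalks, pass to the limit over \'etale neighbourhoods, split the base change into an unramified and a ramified part, and finish by Galois descent along $L/\widehat{\mathcal{H}(t)^{nr}}$. The problem lies in the step you yourself flag as the hard one, and it is not a technical gap but a wrong assertion about the group $P$. You claim $P^{\mathrm{t}} \cong \prod_{\ell' \neq p} \mathbb{Z}_{\ell'}$ and reduce the lemma to a vanishing of $H^1(P^{\mathrm{t}},-)$ on tube cohomology, which you then only ``expect to handle''. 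If $P$ really had such a tame quotient, that vanishing would in general be false --- already $H^1(\mathbb{Z}_{\ell'},\Lambda) \neq 0$ for the trivial action --- and the lemma itself would fail, exactly as the naive analogue fails for classical nearby cycles over a trait, where inertia genuinely produces monodromy.

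What actually makes the lemma true is that $P$ has no tame quotient at all. Since $k$ is algebraically closed, the value group of the Gauss valuation on $\mathcal{H}(t)$ is $\lvert k^{*}\rvert$, which is divisible, and it does not grow under unramified extensions; moreover the residue field of $\mathcal{H}(t)^{nr}$ is separably closed. A henselian valued field with separably closed residue field and divisible value group admits no nontrivial tame extensions, so $P = \Gal(\mathcal{H}(t)^{sep}/\mathcal{H}(t)^{nr})$ is pro-$p$ (and trivial when $\mathrm{char}(\tilde{k}) = 0$). Hence $H^{i}(P,M) = 0$ for all $i > 0$ and every smooth $\Lambda[P]$-module $M$, because $\Lambda$ is prime-to-$p$ torsion; your Hochschild--Serre sequence degenerates for this purely group-theoretic reason, with no input from finiteness theorems such as \cite[Corollary 2.8.3]{hub96}. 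The one genuinely load-bearing analytic input is the one you only gesture at: to know that the $P$-action on $H^{j}(\mathcal{T}_{\bar{s}},\mathscr{F}')$ is smooth --- equivalently, that this cohomology is the filtered colimit of the cohomologies over the finite subextensions of $L/\widehat{\mathcal{H}(t)^{nr}}$ --- you need the continuity of the Galois action on the cohomology of the base-changed space, i.e.\ \cite[Proposition 2.6.12]{hub96}; this is precisely the ingredient the paper isolates as the adic substitute for Berkovich's Lemma 4.4(i). With the structure of $P$ corrected and that continuity statement invoked, your argument closes; as written, the decisive step rests on a false description of $P$ and an unproven hope.
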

     \begin{proof}
     This is analogous to the proof of \cite[Proposition 4.6(ii)]{berk94}. The proof crucially uses Lemma 4.4(i) in loc.cit. and the analogous statement required in our setup is the following:
     
     Let $X$ be a separated finite type $L$-adic space and $\sF$ a sheaf on $X_{\text{ét}}$ which takes values in the category of $\Lambda$-modules (in this situation we do not demand that $L$ be algebraically closed but only complete). Let $L^a$ be an algebraic closure of $L$. We set $X' := X \widehat{\otimes} \widehat{L^a}$. Let $\sF'$ be the inverse image of $\sF$ on $X'$. Then
     \[
     \varinjlim_{K/L} H^0(X \widehat{\otimes} K, \sF) \simeq H^0(X', \sF')
     \]
     where $K$ runs through finite extensions of $L$ in $L^{sep}$.
     
     This statement follows from that fact that $\Gal(L^{sep}/L)$ acts continuously on $H^i(X', \sF')$, cf.  \cite[Proposition 2.6.12]{hub96}.

   \end{proof}

The next result states that constructibility is an
 étale local condition. The key step is to show 
 a converse to Corollary \ref{constructible implies finite global sections}.
	
	 \begin{thm} \label{theorem above}
	  Let $X$ be a fine $k$-adic space and let $\sF$ be a sheaf of $\Lambda$-modules on $X^f_{\text{ét}}$.
 \begin{enumerate}
   \item   $\sF$ is constructible if and only if 
  for every complete algebraically closed non-Archimedean field extension $L$
 and every morphism of fine $L$-adic spaces $f : V \to X_L$, we have
 that
    $R\Gamma(V,f^* \circ p_L^*(\sF))$ has finite cohomology. Here $p_L \colon X_L \to X$ is the projection morphism. 
    \item   Suppose that there exists an étale cover 
   $\{U_i \to X\}$ in $X^f_{\text{ét}}$ such that for every $i$, $\sF_{|U_i}$ is constructible on $U_i$. Then $\sF$ is a constructible.   
    \end{enumerate} 
 \end{thm}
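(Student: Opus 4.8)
The plan is to establish part (1) and then derive part (2) from it. The forward implication of part (1) is Corollary \ref{constructible implies finite global sections}: if $\sF$ is constructible then for any $f \colon V \to X_L$ as in the statement and any formal model $\mathfrak{V}$ of $V$ one has $R\Gamma(V, f^* p_L^* \sF) \simeq R\Gamma(\mathfrak{V}_s, R\psi_{\mathfrak{V}}(f^* p_L^* \sF))$; the right-hand complex is bounded (Lemma \ref{lem:fincohdimnecy}) and has constructible cohomology on the $\tilde L$-variety $\mathfrak{V}_s$, so the classical finiteness theorem for cohomology of constructible sheaves on varieties over an algebraically closed field gives finiteness in every degree.

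For the converse, I would first rephrase constructibility purely in terms of nearby cycles. Since the composite of an \'etale morphism with a morphism of fine $L$-adic spaces is again such a morphism, $\sF$ is constructible if and only if for every complete algebraically closed extension $L/k$, every morphism $h \colon U \to X_L$ of fine $L$-adic spaces and every formal model $\mathfrak{U}$ of $U$, the complex $R\psi_{\mathfrak{U}}(h^* p_L^* \sF)$ is constructible on $\mathfrak{U}_s$. I would prove this last assertion by induction on $\dim U$, following the inductive scheme of \cite{berk94} adapted in Lemma \ref{lem:redberkad}. Writing $\mathscr{G} := h^* p_L^* \sF$, it suffices to treat each cohomology sheaf $R^q\psi_{\mathfrak{U}}(\mathscr{G})$ separately.

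In the base case $\dim U = 0$ the special fibre $\mathfrak{U}_s$ is zero-dimensional, and the stalks of the sheaves $R^q\psi_{\mathfrak{U}}(\mathscr{G})$ are the finite groups $H^q(U,\mathscr{G})$ supplied by the hypothesis (taking $V = U$); a sheaf on a zero-dimensional variety with finite stalks is constructible. For the inductive step I would, working locally, factor $\mathfrak{U}$ through $\mathfrak{U}^1 = \Spf(k^0\langle T\rangle)$ and form the Gauss-point base change $\mathfrak{U}' = \mathfrak{U}\times_{\mathfrak{U}^1}\Spf((L')^0)$ of Lemma \ref{lem:redberkad}, where $L'$ is the completed algebraic closure of the residue field at the Gauss point; this is again a complete algebraically closed extension of $k$ and $\dim \mathfrak{U}'_s = \dim \mathfrak{U}_s - 1$. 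That lemma gives $\lambda_s^*(R^q\psi_{\mathfrak{U}}(\mathscr{G})) \simeq R^q\psi_{\mathfrak{U}'}(\mathscr{G}')^{P}$ with $\mathscr{G}' = \lambda_\eta^*\mathscr{G} = (h')^* p_{L'}^*\sF$ for the induced morphism $h' \colon \mathfrak{U}'_\eta \to X_{L'}$. Since the finiteness hypothesis on $\sF$ is quantified over all complete algebraically closed extensions of $k$, it applies over $L'$, so by induction $R^q\psi_{\mathfrak{U}'}(\mathscr{G}')$ --- and hence its $P$-invariants and $\lambda_s^*(R^q\psi_{\mathfrak{U}}(\mathscr{G}))$ --- is constructible. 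It then remains to descend constructibility along $\lambda_s$, which is where the SGA 4.5 criterion of Proposition \ref{classical constructibles are finite} enters, that criterion being insensitive to the base-field extension realised by $\lambda_s$. The main obstacle is exactly this inductive step: arranging the local fibration of $\mathfrak{U}_s$ over the affine line so that Lemma \ref{lem:redberkad} is applicable, controlling the Galois invariants $P$ appearing at each stage, and carrying out the descent of constructibility along $\lambda_s$.

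Finally, part (2) follows from part (1). Base-changing the cover to $L$ yields an \'etale cover $\{(U_i)_L \to X_L\}$, and each $\sF_{|U_i}$ stays constructible after applying $p_L^*$; hence for an arbitrary $f \colon V \to X_L$ the fibre products $W_i := V \times_{X_L} (U_i)_L$ form a finite \'etale cover of $V$ (finite by quasi-compactness of $V$) on each of whose multiple intersections $f^* p_L^* \sF$ restricts to a constructible sheaf, and therefore has finite cohomology by the forward direction of part (1). The \v{C}ech-to-derived-functor spectral sequence of this finite cover, which has only finitely many nonzero terms in each total degree since the \'etale cohomological dimension is bounded (Lemma \ref{lem:fincohdimnecy}), then shows $R\Gamma(V, f^* p_L^* \sF)$ has finite cohomology; by the converse direction of part (1), $\sF$ is constructible.
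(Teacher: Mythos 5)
Your overall skeleton coincides with the paper's: the forward direction of (1) via Corollary \ref{constructible implies finite global sections}, the converse by induction on dimension using the Gauss-point base change and Lemma \ref{lem:redberkad}, and part (2) deduced from part (1) via a \v{C}ech spectral sequence for a finite refinement of the cover. Your base case and your treatment of (2) are essentially the paper's arguments (in (2) you are in fact slightly more careful than the paper, pulling the cover back to an arbitrary $V \to X_L$ rather than checking finiteness only over $X$ itself).

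The genuine gap is the descent step of the induction, which you yourself flag as ``the main obstacle'' and then do not carry out. Knowing that $\lambda_s^*(R^q\psi_{\mathfrak{X}}(\sF))$ is constructible is strictly weaker than knowing that $R^q\psi_{\mathfrak{X}}(\sF)$ is constructible: $\lambda_s$ identifies $\mathfrak{X}'_s$ with the geometric generic fibre of $\mathfrak{X}_s \to \mathbb{A}^1_{\tilde{k}}$, and a sheaf such as an infinite direct sum of finite skyscrapers supported on closed points lying in closed fibres of this fibration pulls back to zero on the geometric generic fibre while failing to be constructible. Consequently no formal ``insensitivity to base-field extension'' of the criterion in Proposition \ref{classical constructibles are finite} can close this step; the global finiteness hypothesis on $R\Gamma(V,f^*p_L^*(\sF))$ --- which your inductive step never invokes --- has to enter precisely here. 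The paper's mechanism is the following. First, \cite[Lemma 3.5]{SGA4.5}, applied to the fibration $\mathfrak{X}_s \to \mathbb{A}^1_{\tilde{k}}$, upgrades constructibility on the geometric generic fibre to the existence of a constructible subsheaf $\mathscr{H}^q \subset R^q\psi_{\mathfrak{X}}(\sF)$ whose quotient has all local sections of finite support, hence is a direct sum of skyscrapers (Lemma \ref{classification}). Second, one passes to the quotient of the category of abelian groups by the thick subcategory of finite abelian groups and uses the spectral sequence $E_2^{p,q} = H^p(\mathfrak{X}_s, R^q\psi_{\mathfrak{X}}(\sF)) \Rightarrow H^{p+q}(X,\sF)$: there $\mathscr{H}^q$ contributes trivially and the skyscraper quotient has no higher cohomology, so $H^0(\mathfrak{X}_s, R^q\psi_{\mathfrak{X}}(\sF)/\mathscr{H}^q)$ becomes equivalent to $H^q(X,\sF)$, which is finite by hypothesis; a direct sum of skyscrapers with finite global sections is finite, so $R^q\psi_{\mathfrak{X}}(\sF)$ is an extension of constructible sheaves and hence constructible. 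This combination of the SGA 4.5 subsheaf lemma with the nearby-cycles spectral sequence computed modulo finite groups is the missing content of your inductive step; without it the induction does not close.
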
   
   \begin{proof}
   \begin{enumerate}
   \item 
   We deduce from Corollary \ref{constructible implies finite global sections} that 
  it suffices to prove 
  the reverse implication: Let $\sF$ be such that
  for every complete algebraically closed non-Archimedean field extension $L$
 and every morphism of fine $L$-adic spaces $f : V \to X_L$, we have
 that $R\Gamma(V,f^* \circ p_L^*(\sF))$ has finite cohomology. Then we show that $\sF$ is constructible. Let $\mathfrak{X}$ is a formal model of $X$. It suffices to show that $R\psi_{\mathfrak{X}}(\sF)$ is constructible.   
  Since constructibility is local for the Zariski topology, we reduce to when $X$ is affinoid and $\mathfrak{X}$ is an affine formal scheme. 
  
  We proceed by induction on $d = \dim (X) < \infty$. If $d = 0$, the site $X^f_{\text{ét}}$ is equivalent to $\Spec(A)_{\text{ét}}$ where $A$ is a finite $k$-algebra, cf. \cite[\S 6.1.2, Corollary 2]{BGR}. Note that $\Spec(A)$ is the disjoint union of a finite number of points and thus we can reduce to the case that it is a single point. Since $\Spec(A)_{\text{ét}}$ is equivalent to $\Spec(A_{\text{red}})_{\text{ét}}$, we can assume that $A$ is a field. In this case $R\psi_{\mathfrak{X}}$ is just the global sections functor, which is finite by assumption.
  
  Suppose 
  that $d \geq 1$ and that the claim is true for formal schemes whose generic fibre has dimension at most $d -1$. 
  We take a closed immersion $\mathfrak{X} \to \mathfrak{U}^N := \Spf(k^{0} \langle T_1, T_2, \ldots, T_N \rangle)$ to 
  the formal affine scheme $\mathfrak{U}^N$. It gives rise to a closed immersion of the affine schemes $\mathfrak{X}_s$ to $\mathfrak{U}^N_s$ over $\tilde{k}$. Let  
  $p_i$ be the $i$th-projection from $\mathfrak{U}^N \to \Spf(k^{0}\langle T_i \rangle)$. %such that the composition 
 % $\mathfrak{X}_s \to\mathfrak{U}^N_s \xrightarrow{p_{i,s}}  \Spec(\tilde{k} [ T_i ])$ is dominant. 
  As in \ref{indconsske}, we have the morphism 
  $\lambda : \mathfrak{X}' := \mathfrak{X} \times_{\Spf(k^{0}\langle T_i \rangle)} \Spf(L^0) \to \mathfrak{X}$ where
  $L := \widehat{\overline{k(x)}}$ where $x$ is the Gauss point of $\Spa(k\langle T_i\rangle)$.  
  
  By Lemma \ref{lem:redberkad} there is a canonical isomorphism
   \[
   \lambda_s^{*}(R^q\psi_{\mathfrak{X}}(\sF)) \simeq R^q\psi_{\mathfrak{X}'}(\lambda_\eta^*\sF)^P
   \]
  Observe that $\dim (\mathfrak{X}'_\eta) = d-1$. We apply our inductive hypothesis to get 
    $\lambda_s^{*}(R^q\psi_{\mathfrak{X}}(\sF))$ is constructible.  We deduce using  \cite[Lemma 3.5]{SGA4.5} that there exists a constructible sheaf 
 $\mathscr{H}^q \subset R^q\psi_{\mathfrak{X}}(\sF)$ such that the local sections of the quotient 
 $R^q\psi_{\mathfrak{X}}(\sF)/\mathscr{H}^q$ are of finite support.
 We follow the argument in SGA 4.5 to 
    show that $R^q\psi_{\mathfrak{X}}(\sF)$ is constructible
    provided $H^q(X,\sF)$ is finite. 
 We have a spectral sequence 
 \begin{align*} 
     E_2^{p,q} = H^p(\mathfrak{X}_s, R^q\psi_{\mathfrak{X}}(\sF)) \implies H^{p+q}(X,\sF).
 \end{align*} 
     We take the image of these abelian groups in the quotient of the category of 
     abelian groups by the thick sub-category of finite abelian groups. 
     It follows that 
     \begin{align*}
      E_2^{p,q} \sim H^p(\mathfrak{X}_s, R^q\psi_{\mathfrak{X}}(\sF)/\mathscr{H}^q)
     \end{align*} 
     and $H^p(\mathfrak{X}_s, R^q\psi_{\mathfrak{X}}(\sF)/\mathscr{H}^q) \sim 0$ when $p \geq 1$.
     Hence $H^0(\mathfrak{X}_s, R^q\psi_{\mathfrak{X}}(\sF)/\mathscr{H}^q) \sim H^q(X,\sF)$. 
     This completes the proof of part (2). 
     \item  
    To prove (2), 
    we use (1) and deduce that it suffices to show that 
    $H^q(X,\sF)$ is finite for all $q$. This
     must follow from the fact that $\sF$ is locally constructible and the space is fine. More precisely, we can use Cech cohomology associated to a finite refinement of
      the cover 
   $\mathcal{U} := \{U_i \to X\}_i$ to conclude the finiteness statement we are looking for.
     Indeed, recall that we have the following spectral sequence
     \begin{align*} 
     E_2^{p,q} = \check{H}^p(\mathcal{U},\underline{H}^q(\sF)) \implies H^{p+q}(X,\sF)
     \end{align*} 
     where $\underline{H}^q(\sF)$ is the presheaf given by $U \mapsto H^q(U,\sF)$ where 
     $U \to X$ is an element of the fine site. By Lemma \ref{constructible implies finite global sections}, 
     we get that $\underline{H}^q(\sF)$ takes values in the category of finite $\Lambda$-modules. 
     Furthermore, since $X$ is fine, the cover $\mathcal{U}$ can be refined to a finite
      cover and hence we see that 
     $H^{p+q}(X,\sF)$ is finite.
     \end{enumerate} 
   
   %This implies that $H^0(X_s,R^q\psi_\mathfrak{X}(\sF))$ is finite for all $q$. 
   %This concludes the proof. 
   \end{proof} 
   
   The classification of constructibility provided above parallels what happens in the theory of étale cohomology of varieties. 
   
   \begin{prop} \label{classical constructibles are finite}
      Let $X$ be an algebraic variety over an algebraically closed field $K$. 
   A sheaf $\sF$ in $\Lambda$-modules with $\ell \not= \text{char}(K)$ on $X_{\text{ét}}$ is constructible if and only if 
  for every algebraically closed extension $L$ of $K$
 and every morphism of $L$-varieties $f : V \to X_L$, we have
 that
    $H^0(V,f^* \circ p_L^*(\sF))$ is finite. Here $p_L \colon X_L \to X$ is the projection morphism. 
   \end{prop}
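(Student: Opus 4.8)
The plan is to prove the statement by reducing it to a finiteness criterion for global sections, following the strategy of SGA 4.5 (cf. \cite{SGA4.5}) that is also the backbone of Theorem \ref{theorem above}. The forward implication is standard: if $\sF$ is constructible on $X$, then for any $L/K$ the pullback $p_L^*(\sF)$ is constructible on $X_L$ (constructibility is stable under base field extension for varieties), and $f^*$ of a constructible sheaf along a morphism of varieties $f : V \to X_L$ is again constructible. A constructible sheaf on a variety has finite global sections $H^0(V, f^*p_L^*\sF)$ by the classical finiteness theorems, so this direction is immediate.

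The substance is the converse. First I would reduce to the case where $X$ is affine and, by Noetherian induction on $\dim X$, assume the claim known in dimensions $< d := \dim X$. The base case $d = 0$ is a finite disjoint union of spectra of fields, where $H^0$ computes the sheaf itself and the finiteness hypothesis forces $\sF$ to be finite, hence constructible. For the inductive step, I would choose a suitable quasi-finite dominant map $X \to \mathbb{A}^1_K$ (or project a chosen coordinate) and form the base change along the generic point: concretely, set $L$ to be an algebraic closure of the function field of a line and pull $\sF$ back to $X_L := X \times_K L$, which has dimension $d - 1$. By the inductive hypothesis applied over $L$, the pullback of $\sF$ to $X_L$ is constructible, so there is a dense open $U \subset X$ over which $\sF$ becomes locally constant with finite stalks (spreading out the constructibility from the generic fibre). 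Then I would invoke \cite[Lemme 3.5]{SGA4.5} to produce a constructible subsheaf $\mathscr{H} \subset \sF$ agreeing with $\sF$ on $U$ and such that the quotient $\sF / \mathscr{H}$ is supported on the closed complement $Z := X \smallsetminus U$ of dimension $< d$.

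To finish, I would analyse the short exact sequence $0 \to \mathscr{H} \to \sF \to \sF/\mathscr{H} \to 0$. The quotient $\sF/\mathscr{H}$ is supported on $Z$, and its restriction to $Z$ inherits the finiteness-of-$H^0$ hypothesis after pullback; since $\dim Z < d$, the inductive hypothesis shows $\sF/\mathscr{H}$ is constructible. As constructibility is stable under extension in the category of sheaves of $\Lambda$-modules (the two outer terms being constructible forces the middle term to be constructible), we conclude that $\sF$ itself is constructible. The key device that makes this work is that the finiteness hypothesis, being stable under further pullback and base change, passes to all the loci and field extensions encountered in the induction; this is exactly what allows us to apply the inductive hypothesis at each stage.

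The main obstacle, and the delicate point requiring care, is the passage from constructibility of $\sF$ after base change to the generic point of a line (i.e.\ over the large field $L$) back to constructibility of $\sF$ on a genuine open $U \subset X$ defined over $K$ --- in other words, the spreading-out step. One must verify that the locus where $\sF$ becomes locally constant is already open over $K$ and that \cite[Lemme 3.5]{SGA4.5} applies to produce the constructible subsheaf $\mathscr{H}$ with the correct support behaviour. The rest of the argument, while notationally involved, is a routine descent through the filtration by support dimension.
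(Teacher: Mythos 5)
Your overall strategy (the forward direction via classical finiteness, then induction on dimension using generic fibres of projections to $\mathbb{A}^1_K$ together with \cite[Lemma 3.5]{SGA4.5}) matches the paper's, but there is a genuine gap in how you close the inductive step. You claim that constructibility of the pullback of $\sF$ to the geometric generic fibre spreads out to a dense open $U \subset X$ on which $\sF$ is locally constant with finite stalks, so that the quotient $\sF/\mathscr{H}$ produced by the SGA 4.5 lemma is supported on the closed complement $Z := X \smallsetminus U$ of dimension $< d$, to which you then apply the induction hypothesis. This spreading-out step is not merely delicate, as you flag at the end; it is false. Pullback to the geometric generic fibre $X_{\overline{\eta}} = X \times_{\mathbb{A}^1_K} \overline{\eta}$ annihilates every skyscraper sheaf supported at closed points of $X$, since no closed point of $X$ lies over $\eta$. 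Hence a sheaf such as $\sG := \bigoplus_{x \in S} i_{x*}\Lambda$, with $S$ an infinite Zariski-dense set of closed points, restricts to zero (hence to a constructible sheaf) on $X_{\overline{\eta}}$, yet $\sG$ is neither locally constant nor constructible on any dense open $U$: density of $S$ forces $S \cap U$ to be infinite for every such $U$. For the same reason, the quotient $\sF/\mathscr{H}$ furnished by the lemma is only guaranteed to have local sections with finite support, i.e.\ by Lemma \ref{classification} to be a possibly infinite direct sum of skyscrapers at closed points; its support need not lie in any proper closed subset $Z$, so the lower-dimensional locus you want to induct on does not exist. (A minor symptom of the same confusion: you write that $X_L := X \times_K L$ has dimension $d-1$; base change along $K \to L$ preserves dimension, and it is the fibre $X \times_{\mathbb{A}^1_K} \overline{\eta}$ that has dimension $d-1$.)

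The missing idea is precisely where the global finiteness hypothesis must enter, and it is how the paper finishes: after Lemma 3.5 gives a constructible subsheaf $\mathscr{H} \subset \sF$ whose quotient is a direct sum of skyscrapers at closed points, one uses the finiteness of $H^0(X,\sF)$ (the hypothesis with $L = K$ and $f = \mathrm{id}$), together with finiteness of $H^1(X,\mathscr{H})$ for the constructible sheaf $\mathscr{H}$, to deduce from the long exact sequence that $H^0(X,\sF/\mathscr{H})$ is finite; since $H^0$ of a direct sum of skyscrapers is the direct sum of the stalks, this forces $\sF/\mathscr{H}$ to be a finite direct sum of finite skyscrapers, hence constructible, and $\sF$ is then constructible as an extension of constructible sheaves. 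Your argument instead defers the use of the $H^0$ hypothesis to a restriction over a closed subset that the lemma does not provide; replacing the spreading-out step by this skyscraper-killing argument repairs the proof and makes it coincide with the paper's.
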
 
\begin{proof}
We need only prove the reverse implication. 
   We may assume that $X$ is affine. 
    We proceed by induction on the dimension of $X$. 
  Let $i : X \hookrightarrow \mathbb{A}^n_K$ be a closed immersion for some $n$ and 
  let $\{p_1,\ldots p_t\}$ be projections from $\mathbb{A}^n_K$ to $\mathbb{A}^1_K$ such 
  that the image of $X$ is dense in $\mathbb{A}^1_K$. 
  Let $\eta$ denote the generic point of $\mathbb{A}^1_K$. 
  We have that $X_{\overline{\eta}}$ is of dimension $\mathrm{dim}(X) - 1$. 
  By our induction hypothesis,
  $\sF_{|X_{\overline{\eta}}}$ is constructible. 
By \cite[Lemma 3.5]{SGA4.5} it follows that there exists a constructible sub-sheaf $\mathscr{H}$ of $\sF$ such that the local sections of
  $\sF/\mathscr{H}$ are of finite support. 
  The finiteness of $H^0(X,\sF)$ implies that 
  $\sF/\mathscr{H}$ is simply the finite direct 
sum of finite skyscraper sheaves, cf. Lemma \ref{classification}. Thus we see that 
$\sF$ is constructible. 
\end{proof} 

%As a consequence of Theorem \ref{theorem above}, we prove that the category of constructible sheaves is abelian.
%
%\begin{cor}
%Let $X$ be a fine $k$-adic space. Then the category $Con(X^{f}_{\text{ét}},\Lambda)$ is abelian.
%\end{cor}
%
%\begin{proof}
%Let $f \colon \mathscr{F} \to \mathscr{G}$ be a morphism of constructible sheaves on $X$. Since pullbacks commute with kernels and cokernels, by Theorem \ref{theorem above}, it suffices to show $H^i(X_\text{ét},\ker(f))$ and $H^i(X_\text{ét},\coker(f))$ are finite. We have a distinguished triangle
% \[
% \mathscr{F} \xrightarrow{f} \mathscr{G} \to \coker(f) \oplus \ker(f)[1] \to \cdot
% \]
% which gives rise to the long exact sequence
% \[
% \cdots \to H^i(X_\text{ét}, \mathscr{F}) \to H^i(X_\text{ét}, \mathscr{G}) \to H^i(X_\text{ét}, \coker(f)) \oplus H^{i+1}(X_\text{ét}, \ker(f)) \to \cdots
% \]
% The result now follows from the finiteness of $H^i(X_\text{ét}, \mathscr{F})$ and $H^i(X_\text{ét}, \mathscr{G})$.
%\end{proof}

\subsection{Semi-constructibility is an étale local property} \label{semi-constructibility is an etale local property}

 \begin{lem} \label{refining covers}
   Let $X$ and $Y$ be fine $k$-adic spaces. 
   Let $f : Y \to X$ be a surjective étale morphism. 
   There exists a finite affinoid cover $\{U_i\}$ of $X$ such that 
   for every $i$, there exists an affinoid space $V_i \subset Y$ and 
   the map $f_{|V_i} : V_i \to  U_i$ is finite.  
 \end{lem}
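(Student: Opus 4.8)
The plan is to reduce the global statement to a purely local one and then exploit the quasi-compactness of $X$. The local statement I would isolate is: for every $y \in Y$ there exist an affinoid neighbourhood $V_y \subset Y$ of $y$ and an affinoid neighbourhood $U_y \subset X$ of $f(y)$ with $f(V_y) \subset U_y$ such that $f|_{V_y} : V_y \to U_y$ is finite. Granting this, the lemma follows quickly. Since $f$ is surjective, for each $x \in X$ I may choose $y \in f^{-1}(x)$ and obtain an affinoid neighbourhood $U_y \ni x$ as above, so the family $\{U_y\}_{y \in Y}$ is an affinoid cover of $X$. Because $X$ is fine it is separated of finite type over $k$, hence quasi-compact, so finitely many $U_{y_1}, \dots, U_{y_n}$ already cover $X$; setting $U_i := U_{y_i}$ and $V_i := V_{y_i}$ yields the desired finite affinoid cover together with the finite maps $f|_{V_i} : V_i \to U_i$.

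The content is therefore the local finiteness of the \'etale morphism $f$ at each point $y$. I would start from the local structure of \'etale maps recorded in Definition \ref{def:2.1}(3): after shrinking, $f|_U$ factors as $U \xrightarrow{j} W \xrightarrow{p} V$ with $j$ an open immersion of affinoids and $p$ finite \'etale onto an affinoid $V$, and I set $x := f(y)$. The composite $p \circ j$ is only quasi-finite, and the task is to upgrade it to a finite map after shrinking $V$. The mechanism I would use is that, for the finite \'etale morphism $p$, the diagonal $\Delta_p : W \to W \times_V W$ is simultaneously open (as $p$ is unramified) and closed (as $p$ is finite, hence separated), i.e. a clopen immersion. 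This clopen-ness is exactly what separates the points of the finite fibre $p^{-1}(x)$ from one another: it allows me to find an affinoid neighbourhood $U_y \subset V$ of $x$ for which $p^{-1}(U_y)$ decomposes as a disjoint union of affinoid \emph{sheets}, the one through $j(y)$ being contained in $j(U)$. That sheet, transported back to $Y$ along the open immersion $j$, is the required $V_y$; being a clopen direct factor of the finite \'etale cover $p^{-1}(U_y) \to U_y$, it is affinoid and maps finitely onto $U_y$.

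The main obstacle is precisely this last upgrade — manufacturing, after shrinking the base, a genuine finite local model out of the \emph{open immersion into finite \'etale} description. Concretely, one must rule out that, as the base point ranges over a small affinoid $U_y \ni x$, additional sheets of $p^{-1}(U_y)$ drift into the chosen neighbourhood of $j(y)$; this is the adic analogue of the fact that a finite \'etale cover splits in a neighbourhood of each point of a given fibre, and it is where the clopen diagonal does the real work. Equivalently, one may phrase the obstruction as the statement that \'etale morphisms have empty relative boundary, so that a quasi-finite \'etale morphism is finite near each point, for which I would appeal to the structure theory of quasi-finite \'etale morphisms, e.g. \cite{hub96}. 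By contrast, the passage from the local statement to the global one via surjectivity of $f$ and quasi-compactness of the fine space $X$ is entirely routine.
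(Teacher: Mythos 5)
Your reduction of the lemma to a pointwise claim is exactly where the proof breaks: it is \emph{not} true that an \'etale morphism of fine $k$-adic spaces admits finite local models at every point of the source. This can fail at non-maximal (higher-rank) points, and it is precisely for this reason that the paper invokes Huber's Proposition 1.5.4 only at \emph{maximal} points. Here is a concrete counterexample when the residue characteristic of $k$ is $p>0$. Let $\phi \colon \mathrm{Spa}(k\langle S\rangle,k^0\langle S\rangle) \to \mathrm{Spa}(k\langle T\rangle,k^0\langle T\rangle)$ be the degree-$p$ finite \'etale cover $T \mapsto S^p-S$ (this is \'etale also in mixed characteristic, since $|pS^{p-1}-1|=1$ on the closed disc). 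The $p$ roots of $S^p-S$ lie in $k^0$ and are pairwise at distance $1$; let $U$ be the rational subset $\{\,x : |S(x)|\le |(S-c)(x)| \neq 0 \text{ for every nonzero root } c\,\}$ and $q:=\phi|_U$, an \'etale morphism of fine $k$-adic spaces. Let $y'\in U$ be the rank-two point with $r<|S(y')|<1$ for every real $r<1$, and $z:=q(y')$ the analogous point of the target. Any open $V'\ni y'$ contains the source Gauss point (its maximal generization), and any open $U'\ni z$ contains the target Gauss point as well as the Gauss points of radius $\rho$ centred at $0$ for all real $\rho$ sufficiently close to $1$. Now the fibre of $q$ over the target Gauss point is the source Gauss point alone, with residue extension the degree-$p$ Artin--Schreier extension $\tilde{k}(s)/\tilde{k}(s^p-s)$, while the fibre of $q$ over the radius-$\rho$, centre-$0$ Gauss point has degree at most $1$: of the $p$ preimages under $\phi$ (one centred at each root), only the one centred at $0$ lies in $U$. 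If $q|_{V'}\colon V'\to U'$ were finite, it would be finite \'etale, and its degree, being locally constant, would be constant on some neighbourhood of $z$; but every neighbourhood of $z$ contains points where the degree is $p$ and points where it is at most $1$. Contradiction. The conceptual obstruction is exactly what your clopen-diagonal argument is supposed to provide: the $p$ preimages of $z$ under $\phi$ are rank-two points sharing the \emph{same} maximal generization (the source Gauss point), hence can never be separated by disjoint opens, so no ``sheet decomposition'' of $\phi^{-1}(U')$ exists for any neighbourhood $U'$ of $z$. For the same reason your fallback statement, that a quasi-finite \'etale morphism is finite near each point, is false in the adic setting.

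Since the correct local input is only available at maximal points, the globalization must change as well, and this is what the paper's proof actually does: Huber's Proposition 1.5.4, applied at a maximal point $y$ lying over each maximal point $x$ of $X$ (such $y$ exists by surjectivity), produces affinoids $U_x$ which are only known to cover the maximal points of $X$, i.e.\ the Berkovich space $X^{\mathrm{an}}$. One then uses compactness of $X^{\mathrm{an}}$ -- not quasi-compactness of $X$ -- to extract a finite subfamily, and finally the rigid--adic comparison (finitely many affinoid subdomains containing all classical points form an admissible cover of $X^{\mathrm{rig}}$, hence pull back to a cover of the adic space) to conclude that this finite family covers all of $X$, higher-rank points included. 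That last step is not cosmetic: as the example shows, a family of opens containing every maximal point need not contain every point, so your appeal to quasi-compactness of $X$ is applied to a family that, with the correct local statement, is not yet known to be a cover.
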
 
 \begin{proof} 
By \cite[Corollary 1.7.4]{hub96}, $f$ is locally quasi-finite. Since it is quasi-compact, it is also quasi-finite.
  Let $x \in X$ be a maximal point.
  Since $f$ is surjective, there exists a maximal point $y \in Y$ such that 
  $f(y) = x$. 
   By Proposition 1.5.4 in loc.cit., 
  there exists affinoid neighbourhoods
   $V_x$ of $Y$ and $U_x$ of $X$ such that 
   $f(V_x) \subset U_x$ and $f_{|V_x} : V_x \to U_x$ is finite.  
   Hence, there exists a family $\{U_x\}_{x \in X^{\mathrm{an}}}$ of affinoid open subspaces of $X$ such that 
   $\{U_x^{\mathrm{an}}\}$ covers the associated Berkovich space $X^{\mathrm{an}}$. 
   Since $X^{\mathrm{an}}$ is compact, we can replace $\{U_x^{\mathrm{an}}\}$ with a finite sub-cover $\{U^{\mathrm{an}}_i\}_{1 \leq i \leq m}$. 
   It follows that that $\{U_i^{\mathrm{rig}}\}$ is an admissible cover of $X^{\mathrm{rig}}$ and hence 
   we get that 
   $\{U_i\}_{1 \leq i \leq m}$ is a cover of the adic space $X$ which satisfies the assertion of the lemma.    
 \end{proof} 
		
\begin{lem} \label{semi-constructibility descent}
 Let $f : Y \to X$ be a finite surjective étale morphism of fine $k$-adic spaces. 
 Let $\sF$ be a sheaf of $\Lambda$-modules of $X^f_{\text{ét}}$ such that 
 $\sF_{|Y}$ is semi-constructible sheaf on $Y^f_{\text{ét}}$. 
 Then $\sF$ is semi-constructible on $X^f_{\text{ét}}$. 
\end{lem}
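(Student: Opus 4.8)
The plan is to reduce semi-constructibility of $\sF$ to the constructibility of its nearby cycles on special fibres, and then to carry out the descent entirely on the special fibres, where constructibility of complexes on $\tilde k$-varieties is an \emph{étale-local} condition.

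First I would unwind Definition \ref{constructible sheaves}: to prove $\sF$ is semi-constructible it suffices to show, for every $g : U \to X$ in $X^f_{\text{ét}}$ and every formal model $\mathfrak{U}$ of $U$, that $R\psi_{\mathfrak{U}}(g^*\sF) \in \mathcal{D}^b_c(\mathfrak{U}_s,\Lambda)$. Base-changing $f$ along $g$ produces a finite surjective étale morphism $f_U : Y\times_X U \to U$ with $f_U^*(g^*\sF) = \sF_{|Y\times_X U}$, and the latter is semi-constructible on $Y\times_X U$ because $Y\times_X U \to Y$ is étale and $\sF_{|Y}$ is semi-constructible. Thus the whole statement follows from the assertion: \emph{if $h : W \to V$ is finite surjective étale with $h^*\mathcal H$ semi-constructible and $\mathfrak V$ is a formal model of $V$, then $R\psi_{\mathfrak V}(\mathcal H) \in \mathcal D^b_c(\mathfrak V_s,\Lambda)$.} Moreover, since the admissible formal models of $V$ are cofiltered and the blow-up compatibility $R\mathfrak b_{s*}\circ R\psi_{\mathfrak V'} \xrightarrow{\sim} R\psi_{\mathfrak V}$ from the proof of Lemma \ref{vanishing cycles commutes with lower shriek}, combined with the stability of constructibility under proper pushforward, allows one to replace $\mathfrak V$ by any admissible blow-up, it is enough to verify the assertion for one conveniently chosen model.

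Next I would choose the model so that $h$ extends to a \emph{finite étale} morphism of formal schemes $\mathfrak h : \mathfrak W \to \mathfrak V$ with $\mathfrak h_\eta = h$; such a model exists after an admissible blow-up of $\mathfrak V$ by Raynaud's theory together with Proposition \ref{prop:canfumosishi}. Then $\mathfrak h_s : \mathfrak W_s \to \mathfrak V_s$ is a finite étale morphism of $\tilde k$-varieties, and surjectivity of $h$ forces $\mathfrak h_s$ to be surjective as well: its image is open and closed in $\mathfrak V_s$, hence a union of connected components, and a missed component would contradict surjectivity of $h$ on the corresponding tube. Because $\mathfrak h$ is étale, nearby cycles commute with pullback along it, giving a canonical isomorphism
\[
\mathfrak h_s^*\, R\psi_{\mathfrak V}(\mathcal H) \;\simeq\; R\psi_{\mathfrak W}(h^*\mathcal H).
\]
By hypothesis $h^*\mathcal H$ is semi-constructible, so the right-hand side lies in $\mathcal D^b_c(\mathfrak W_s,\Lambda)$, whence $\mathfrak h_s^* R\psi_{\mathfrak V}(\mathcal H)$ is constructible. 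Since $R\psi_{\mathfrak V}(\mathcal H)$ is bounded by Lemma \ref{lem:fincohdimnecy} and $\mathfrak h_s$ is a surjective étale cover, the étale-local nature of constructibility for complexes on $\tilde k$-varieties yields that each cohomology sheaf of $R\psi_{\mathfrak V}(\mathcal H)$ is constructible, i.e. $R\psi_{\mathfrak V}(\mathcal H) \in \mathcal D^b_c(\mathfrak V_s,\Lambda)$.

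The main obstacle lies in the formal-model inputs of the third paragraph: producing, after an admissible blow-up, a finite étale formal model $\mathfrak h$ of $h$, and verifying the étale base-change isomorphism $\mathfrak h_s^* R\psi_{\mathfrak V} \simeq R\psi_{\mathfrak W}\, h^*$ for nearby cycles. The first is a statement about extending finite étale covers of the generic fibre to the formal level and should follow from Raynaud's theory and Proposition \ref{prop:canfumosishi}; the second is the analogue in Huber's setting of the compatibility of nearby cycles with étale localization, which holds because $R\psi$ is defined through the morphisms of sites $\nu_{\bullet}$ of Proposition \ref{prop:canfumosishi} and these are compatible with the étale localizations induced by $\mathfrak h$. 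Once these two inputs are in place the rest is formal; in particular, no hypothesis on the degree of $h$ being invertible in $\Lambda$ is needed, since the descent is effected through the étale-locality of constructibility rather than through a trace map.
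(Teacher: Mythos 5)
Your first two paragraphs (reduction to a single formal model via base change, and the freedom to replace $\mathfrak V$ by an admissible blow-up) are fine and agree with the paper's own reduction. The gap is the key claim of your third paragraph: that after an admissible blow-up the finite étale morphism $h : W \to V$ extends to a \emph{finite étale} morphism of formal schemes $\mathfrak h : \mathfrak W \to \mathfrak V$. Raynaud's theory only produces \emph{some} morphism of formal models extending $h$ after blow-up — this is all the paper ever uses (cf. Lemma \ref{vanishing cycles commutes with lower shriek}) — and gives no control whatsoever on its étaleness. Proposition \ref{prop:canfumosishi} in fact works against you: since finite étale covers of $\mathfrak V$ are equivalent to finite étale covers of $\mathfrak V_s$, the covers of $V$ that extend in the way you need are exactly those induced from covers of the special fibre, i.e. covers with good reduction, which is a genuinely restrictive condition.

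Concretely, take $k = \mathbb{C}_p$, let $V = \mathrm{Spa}(k\langle z, z^{-1}\rangle)$ be the unit circle and $h : W \to V$ the Kummer cover $z = y^p$, where $p$ is the residue characteristic; this is finite étale of degree $p$ because $p$ is invertible in $k$, and such covers are certainly allowed in the lemma. At the Gauss point $t$ of $V$ the cover induces the extension of completed residue fields $\mathcal H(t) \subset \mathcal H(t')$ ($t'$ the Gauss point of $W$), whose residue field extension is $\overline{\mathbb{F}}_p(\tilde z) \subset \overline{\mathbb{F}}_p(\tilde y)$ with $\tilde z = \tilde y^{\,p}$ — purely inseparable of degree $p$. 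Admissible blow-ups do not change the function field of the component of the special fibre containing the reduction of $t$, and any formal model of $h$ induces precisely this extension on those components; hence its special fibre map can never be étale, so no $\mathfrak h$ as in your step exists, and with it falls the isomorphism $\mathfrak h_s^*R\psi_{\mathfrak V}(\mathcal H) \simeq R\psi_{\mathfrak W}(h^*\mathcal H)$ that drives your whole argument. This failure is not incidental: if finite étale covers always acquired good reduction after blow-up, nearby cycles could never detect ramification of the generic fibre. The paper's proof is built to avoid exactly this point: it passes to the Galois closure, extends $f$ to a mere morphism of formal models $\mathfrak f : \mathfrak Y \to \mathfrak X$, and descends through $G$-invariants, using $\sF \simeq R\Gamma^G_X(f_*f^*\sF)$, the commutation $R\psi_{\mathfrak X} \circ R\Gamma^G_X \simeq R\Gamma^G_{\mathfrak X_s} \circ R\psi_{\mathfrak X,G}$, and $R\psi_{\mathfrak X,G}(f_*f^*\sF) \simeq R\mathfrak f_{s*}R\psi_{\mathfrak Y,G}(f^*\sF)$; there only properness of $\mathfrak f_s$ (automatic) is needed, never étaleness.
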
 		
\begin{proof}
It suffices to show that if 
$\mathfrak{X}$ be a formal model of $X$,
   $R\psi_{\mathfrak{X}}(\sF) \in \mathcal{D}^b_c(\mathfrak{X}_s,\Lambda)$. 
   We can replace $Y$ by the Galois closure of $Y$ over $X$ and hence assume that 
 $f$ is finite étale and Galois.
   Let $\mathfrak{Y}$ be a formal model of $Y$ such that the map 
   $f$ extends to a map $\mathfrak{f} : \mathfrak{Y} \to \mathfrak{X}$.  
  Let $G := \mathrm{Aut}(Y/X)$. 
 The sheaf $f_*f^*(\sF)$ is a $\Lambda[G]$-module on $X$ and 
 we have the following isomorphism. 
 \begin{align} 
  \sF \simeq R\Gamma^G_X(f_*f^*(\sF))
 \end{align} 
  where $\Gamma_X^G(-)$ is the functor that sends a $\Lambda[G]$-module $\sG$ to 
 the sheafification of $\Lambda$-sub module of $G$-invariants which is defined by $U \mapsto \sG(U)^G$. 

     We can extend the construction of the nearby cycles functor 
   to obtain a functor 
   $\psi_{\mathfrak{X},G} : \mathrm{Sh}(X,\Lambda[G]) \to \mathrm{Sh}(\mathfrak{X}_s,\Lambda[G])$ 
   whose derived functors we  denote 
   \begin{align*}
   R\psi_{\mathfrak{X},G} : \mathcal{D}^b_c(X,\Lambda[G]) \to \mathcal{D}^b_c(\mathfrak{X}_s,\Lambda[G]).
   \end{align*}  
  
   We claim the following isomorphism. 
  \begin{align*} 
   R\psi_{\mathfrak{X}} \circ R\Gamma^G_X(-) \simeq R\Gamma_{\mathfrak{X}_s}^G(-) \circ R\psi_{\mathfrak{X},G}. 
   \end{align*} 
     The functor $\Gamma^G_X(-)$ is right adjoint to the exact functor that sends a 
     $\Lambda$-module $M$ to the $\Lambda[G]$-module $M$ where $G$ acts trivially. Hence, it takes injectives to injectives. 
     We thus have that 
     $R\psi_{\mathfrak{X}} \circ R\Gamma^G_X(-) \simeq R (\psi_{\mathfrak{X}} \circ \Gamma^G_X(-))$. 
     Likewise, since $\psi_{\mathfrak{X},G}$ is nothing but a pushforward morphism from the 
     sheaves of $\Lambda[G]$-modules on $X^f_{\text{ét}}$ to étale sheaves on $\mathfrak{X}_s$, we see that 
     it preserves injectives and hence 
     $R\Gamma^G_{\mathfrak{X}_s}(-) \circ R\psi_{\mathfrak{X},G} \simeq R (\Gamma^G_{\mathfrak{X}_s}(-) \circ \psi_{\mathfrak{X},G})$.
     Hence to verify the claim, we need to check 
     that 
     \begin{align*} 
     \Gamma^G_{\mathfrak{X}_s}(-) \circ \psi_{\mathfrak{X},G} \simeq \psi_{\mathfrak{X}} \circ \Gamma_X^G(-)
     \end{align*} 
      which is clear from the definitions.
      
        Let $\sG := f_*f^*(\sF)$. By 
        the claim above and (2), 
       \begin{align*} 
        R\psi_{\mathfrak{X}}(\sF) \simeq R\Gamma^G_{\mathfrak{X}_s}(R\psi_{\mathfrak{X},G}(\sG)).  
       \end{align*} 
     However, $R\Gamma^G_{\mathfrak{X}_s}(R\psi_{\mathfrak{X},G}(\sG)) \in \mathcal{D}^b_c(\mathfrak{X}_s,\Lambda)$ because 
     $R\psi_{\mathfrak{X},G}(\sG) \simeq R\mathfrak{f}_{s*}R\psi_{\mathfrak{Y},G}(f^*(\sF))$ and $f^*(\sF)$ is semi-constructible on $Y$.  
\end{proof} 

\begin{prop}
  Let $X$ be a fine $k$-adic space and $\{U_i \to X\}$ be an étale cover of $X$ by fine $k$-adic spaces. 
  Let $\sF$ be a sheaf of $\Lambda$-modules on $X^f_{\text{ét}}$ such that for every $i$,
  $\sF_{|U_i}$ is semi-constructible. Then 
  $\sF$ is semi-constructible.  
\end{prop}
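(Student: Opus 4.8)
The plan is to combine the refinement Lemma \ref{refining covers} with the finite surjective \'etale descent of Lemma \ref{semi-constructibility descent}, and then to glue on the special fibre of a formal model. First I would record the elementary point that semi-constructibility is stable under restriction along \emph{\'etale} morphisms: if $\sG$ is semi-constructible on $W^f_{\text{\'et}}$ and $V \to W$ is \'etale with $V$ fine, then for every $U' \to V$ in $V^f_{\text{\'et}}$ and every formal model $\mathfrak{U}'$ of $U'$ the composite $U' \to V \to W$ is \'etale and $(\sG_{|V})_{|U'} = \sG_{|U'}$, so $R\psi_{\mathfrak{U}'}(\sG_{|U'})$ is constructible by hypothesis; hence $\sG_{|V}$ is semi-constructible. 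This does not contradict the Remark following Corollary \ref{constructible implies finite global sections}, which concerns general, non-\'etale, pullbacks. Using this I would reduce to a single formal model as follows. To prove $\sF$ semi-constructible I must show, for every $U \to X$ in $X^f_{\text{\'et}}$ and every formal model $\mathfrak{U}$ of $U$, that $R\psi_{\mathfrak{U}}(\sF_{|U}) \in \mathcal{D}^b_c(\mathfrak{U}_s,\Lambda)$. Since $\{U_i \times_X U \to U\}_i$ is an \'etale cover of $U$ with each $\sF_{|U_i \times_X U}$ semi-constructible, the hypotheses are inherited by $U$; so it suffices to prove that for $\sF$ as in the statement and \emph{any} formal model $\mathfrak{X}$ of $X$ itself, $R\psi_{\mathfrak{X}}(\sF)$ is constructible on $\mathfrak{X}_s$.

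Next I would apply the refinement step. As $X$ is quasi-compact I may assume the cover $\{U_i \to X\}$ is finite, so that $Y := \bigsqcup_i U_i$ is a fine $k$-adic space and $f \colon Y \to X$ is surjective \'etale. Lemma \ref{refining covers} then yields a finite affinoid cover $\{W_j\}_j$ of $X$ and affinoid open subspaces $V_j \subset Y$ with $g_j := f_{|V_j} \colon V_j \to W_j$ finite \'etale. Because a finite \'etale morphism has open and closed image, after replacing each $W_j$ by the image of $g_j$ (which still contains the distinguished maximal point over which $V_j$ was constructed, so that the images again cover $X$) I may assume each $g_j$ is finite surjective \'etale. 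Each $V_j$ is an \'etale open of $Y = \bigsqcup_i U_i$, hence $\sF_{|V_j}$ is semi-constructible by the first paragraph; applying Lemma \ref{semi-constructibility descent} to $g_j$ and the sheaf $\sF_{|W_j}$ then gives that $\sF_{|W_j}$ is semi-constructible for every $j$.

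Finally I would pass from the analytic cover $\{W_j\}$ of $X$ to an honest cover of the special fibre. By Raynaud's theory there is an admissible blow-up $\mathfrak{b} \colon \mathfrak{X}' \to \mathfrak{X}$ and open formal subschemes $\mathfrak{W}_j \subset \mathfrak{X}'$ with $(\mathfrak{W}_j)_\eta = W_j$ whose special fibres cover $\mathfrak{X}'_s$. Localization of nearby cycles on formal opens gives $R\psi_{\mathfrak{X}'}(\sF)_{|(\mathfrak{W}_j)_s} \cong R\psi_{\mathfrak{W}_j}(\sF_{|W_j})$, which lies in $\mathcal{D}^b_c$ since $\sF_{|W_j}$ is semi-constructible. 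As constructibility on a scheme is Zariski-local and $\{(\mathfrak{W}_j)_s\}$ covers $\mathfrak{X}'_s$, this shows $R\psi_{\mathfrak{X}'}(\sF) \in \mathcal{D}^b_c(\mathfrak{X}'_s,\Lambda)$. Then, exactly as in the proof of Lemma \ref{vanishing cycles commutes with lower shriek}, the morphism $\mathfrak{b}_s$ is proper and $R\psi_{\mathfrak{X}}(\sF) \cong R\mathfrak{b}_{s*}R\psi_{\mathfrak{X}'}(\sF)$; since proper pushforward preserves constructibility on $\widetilde{k}$-varieties, $R\psi_{\mathfrak{X}}(\sF)$ is constructible, completing the argument.

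The hard part will be this last paragraph: reconciling the analytic cover of the generic fibre $X$ with a genuine open cover of the special fibre $\mathfrak{X}_s$ of a \emph{prescribed} formal model. This is what forces the passage to an admissible blow-up and relies on both the localization of the nearby cycles functor and the invariance $R\mathfrak{b}_{s*}\circ R\psi_{\mathfrak{X}'} \cong R\psi_{\mathfrak{X}}$ under admissible blow-ups. A secondary point needing care is arranging surjectivity of the finite \'etale maps $g_j$ while keeping their images a cover of $X$ in the refinement step.
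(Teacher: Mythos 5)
Your proposal follows the paper's proof almost verbatim: the same three ingredients in the same order, namely (i) reduction, via stability of semi-constructibility under \'etale restriction, to proving that $R\psi_{\mathfrak{X}}(\sF)$ is constructible for a formal model $\mathfrak{X}$ of $X$ itself; (ii) Lemma \ref{refining covers} plus the finite \'etale descent of Lemma \ref{semi-constructibility descent} to produce a finite cover of $X$ by open affinoids $W_j$ with $\sF_{|W_j}$ semi-constructible; (iii) the admissible blow-up of \cite[Lemma 4.4]{BL93} realizing the $W_j$ as generic fibres of a formal open cover, combined with localization of nearby cycles and $R\mathfrak{b}_{s*}\circ R\psi_{\mathfrak{X}'}\simeq R\psi_{\mathfrak{X}}$. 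The only cosmetic difference is that you apply Lemma \ref{refining covers} once to $\bigsqcup_i U_i \to X$, whereas the paper applies it to each $U_i \to \mathrm{im}(U_i)$; this is immaterial.

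The one place where you go beyond the paper is also the one place where your argument, as written, fails. You are right that Lemma \ref{semi-constructibility descent} requires the finite \'etale morphisms to be \emph{surjective} (the paper silently ignores this hypothesis), but your repair does not work in the stated order. Once Lemma \ref{refining covers} has been applied, you hold only finitely many pairs $(V_j, W_j)$, and each $W_j$ is responsible for covering many rank-one points besides its distinguished one; replacing $W_j$ by the clopen subset $\mathrm{im}(g_j)$ can therefore destroy the covering: a rank-one point $z$ may lie in $W_1$ and in no other $W_j$, while $z\notin \mathrm{im}(g_1)$. The inference ``each image contains its distinguished maximal point, hence the images cover $X$'' is false --- finitely many quasi-compact opens containing finitely many prescribed points need not cover $X$. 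The correct fix is to shrink \emph{before} extracting the finite subcover, i.e.\ inside the proof of Lemma \ref{refining covers}: for \emph{every} maximal point $x$ one has $V_x \to W_x$ finite \'etale with $x\in \mathrm{im}(V_x)$; set $W'_x := \mathrm{im}(V_x)$, which is clopen in the affinoid $W_x$ (finite morphisms are closed, \'etale morphisms are open), hence again an open affinoid of $X$, and over which $V_x \to W'_x$ is finite surjective \'etale. Since $x\in W'_x$ for every maximal point $x$, the family $\{W'_x\}_x$ still covers all rank-one points, and the compactness argument of Lemma \ref{refining covers} applies unchanged to extract a finite subcover. With this reordering your proof is complete and coincides with the paper's.
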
 
\begin{proof} 
 Let $\mathfrak{X}$ be a formal model of $X$. It suffices to show that 
   $R\psi_{\mathfrak{X}}(\sF) \in \mathcal{D}^b_c(X,\Lambda)$ to conclude a proof. 
 Given $U_i \to X$ an element of the cover, we can 
 apply Lemma \ref{refining covers} to the map $\{U_i \to \mathrm{im}(U_i)\}$ and hence
  suppose that the cover $\{U_i \to X\}$ is composed of affinoids such that for 
 every $i$, there exists an open affinoid subspace $V_i \subset X$ and the morphism $U_i \to V_i$ is finite étale. 
 By Lemma \ref{semi-constructibility descent}, $\sF_{|V_i}$ is semi-constructible. 
By \cite[Lemma 4.4]{BL93}, there exists an admissible blow up $b : \mathfrak{X}' \to \mathfrak{X}$ and 
an open formal cover $\{\mathfrak{V}_i\}$ of $\mathfrak{X}'$ such that 
$\mathfrak{V}_{i,\eta} \simeq V_i$.  
We claim that $R\psi_{\mathfrak{X}'}(\sF) \in \mathcal{D}^b_c(\mathfrak{X}'_s,\Lambda)$. 
Indeed for every $i$,
\begin{align*} 
   R\psi_{\mathfrak{X}'}(\sF)_{|\mathfrak{V_i}_s} \simeq R\psi_{\mathfrak{V}_i}(\sF_{|V_i}).
\end{align*} 
 The claim follows since $\sF_{|V_i}$ is semi-constructible and 
 by choice of $\mathfrak{X}'$, $\{\mathfrak{V}_i\}_i$ is a cover. 
 We can conclude the proof since $Rb_{s*} \circ R\psi_{\mathfrak{X}'} \simeq R\psi_{\mathfrak{X}}$. 
\end{proof}

    \section{Adic Verdier dual} 

        As stated in the introduction, Proposition \ref{compved} shows that the adic Verdier dual is compatible with the nearby cycles functor 
        and the classical Verdier dual on varieties.  
        % that we can recover the adic Verdier dual from the nearby cycles functor and the classical Verdier dual on varieties over 
        %$\tilde{k}$. 
       %If $D^{\mathrm{ad}}$ denotes the adic Verdier dual then we show that we have a quasi-isomorphism 
      %We mentioned earlier that the 
  %adic Verdier dual  dual to behave well with respect to 
  %the nearby cycles functor $R\psi_\mathfrak{X}$. More precisely, 
  %if $\mathfrak{X}$ is a formal model of $X$ then
  %we expect that 
  %a reasonable definition of the adic Verdier dual  dual satisfies the following isomorphism. 
 %\begin{align*} 
   %R\psi_{\mathfrak{X}}({D}^{\mathrm{an}}(\mathscr{F}) ) \simeq D(R\psi_\mathfrak{X}(\mathscr{F})).
 %\end{align*}  
  %Observe that this completely defines the functor $D^{\mathrm{ad}}$. 
     %Indeed, taking global sections and applying \cite[Corollary 4.2 (iii)]{berk94} gives 
    %\begin{align*} 
  %R\Gamma(X, {D}^{\mathrm{an}}(\sF)) \simeq R\Gamma(\mathfrak{X}_s,D(R\psi_\mathfrak{X}(\sF))). 
 %\end{align*}   

      Recall from Theorem \ref{upper shriek} that
    when given a morphism $\phi : X \to Y$ of fine $k$-adic spaces, Huber defined 
    a functor     
    \begin{align*} 
     \phi^! : \mathcal{D}^+(Y^f_{\text{ét}},\Lambda) \to D^+(X^f_{\text{ét}},\Lambda). 
   \end{align*} 
    
     \begin{defi}   
 \emph{Let $X$ be a fine $k$-adic space and $p_X : X \to \mathrm{Spa}(k)$ denote the structure map.  
 Let $\sF \in \mathcal{D}^b(X^f_{\text{ét}},\Lambda)$. We use $D^{\mathrm{ad}}(\sF)$ to denote the} 
 adic Verdier dual of $\sF$ \emph{and define it to be the complex $R\Homs(\sF,p_X^!(\Lambda)) \in \mathcal{D}^b(X^f_{\text{ét}},\Lambda)$}. 
    \end{defi}

    \begin{prop} \label{compved}
Let $\mathscr{F} \in \mathcal{D}^b(X^f_{\text{ét}}, \Lambda)$ and 
let $f : U \to X$ be an element of the fine étale site of $X$. 
Let $\mathfrak{U}$ be any formal model of $U$. 
Then we have that  
\begin{align*} 
  R\Gamma(U,D^{\mathrm{ad}}(\sF)) = R\Gamma(\mathfrak{U}_s, D(R\psi_{\mathfrak{U}}(f^*\mathscr{F}))) 
  \end{align*} 
 %then
%$$D^{\mathrm{ad}}(\mathscr{F}) = R\mathrm{Hom}(\mathscr{F}, p_X^{!}(\Lambda)),$$
where $p_U : U \to k$ is the structure morphism. In particular $R\Gamma(\mathfrak{U}_s, D(R\psi_{\mathfrak{U}}(f^*\mathscr{F})))$ is independent of the choice of formal model $\mathfrak{U}$.
%In particular, 
%$D^{\mathrm{ad}}(\mathscr{F})$ defines an object in $\mathcal{D}^b(X^f_{\text{ét}},\Lambda)$. 
\end{prop}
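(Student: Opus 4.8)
The plan is to peel off the two dualities by one and the same adjunction, thereby reducing the asserted identity to Lemma \ref{vanishing cycles commutes with lower shriek}. Throughout, write $q \colon \mathfrak{U}_s \to \Spec(\tilde k)$ for the special fibre of the structure morphism $\mathfrak{U} \to \Spf(k^0)$, so that the classical Verdier dual on the $\tilde k$-variety $\mathfrak{U}_s$ is $D(-) = R\Homs(-, q^!\Lambda)$ with $q^!\Lambda$ the dualizing complex. The whole argument is a double application of the pattern ``internal Hom against a dualizing object, then adjoin to a lower-shriek and land over a point''.

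First I would unwind the left-hand side. Since $f \colon U \to X$ is �tale we have $f^! = f^*$, and $R\Homs$ commutes with �tale restriction, so
\[
D^{\mathrm{ad}}(\sF)_{|U} \simeq R\Homs(f^*\sF, f^*p_X^!\Lambda) \simeq R\Homs(f^*\sF, p_U^!\Lambda),
\]
where the last step uses the functoriality $f^! p_X^! = (p_X \circ f)^! = p_U^!$. Applying $R\Gamma(U,-)$ turns the internal Hom into the derived Hom over $U$, and the adjunction $(Rp_{U!}, p_U^!)$ of Theorem \ref{upper shriek} gives
\[
R\Gamma(U, D^{\mathrm{ad}}(\sF)) \simeq R\Hom_U(f^*\sF, p_U^!\Lambda) \simeq R\Hom_\Lambda(Rp_{U!}(f^*\sF), \Lambda),
\]
the final identification using that the �tale site of $\Spa(k,k^0)$ is trivial (as $k$ is algebraically closed), so that $R\Gamma$ over the base is the identity and $\Lambda$-sheaves there are just $\Lambda$-modules.

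Next I would run the identical computation on the right-hand side. Passing again from internal to external Hom and invoking the adjunction $(Rq_!, q^!)$ together with the triviality of $\Spec(\tilde k)_{\text{�t}}$, one obtains
\[
R\Gamma(\mathfrak{U}_s, D(R\psi_{\mathfrak{U}}(f^*\sF))) \simeq R\Hom_{\mathfrak{U}_s}(R\psi_{\mathfrak{U}}(f^*\sF), q^!\Lambda) \simeq R\Hom_\Lambda(Rq_! R\psi_{\mathfrak{U}}(f^*\sF), \Lambda).
\]
It then remains to identify $Rq_! R\psi_{\mathfrak{U}}(f^*\sF)$ with $Rp_{U!}(f^*\sF)$. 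This is exactly Lemma \ref{vanishing cycles commutes with lower shriek} applied to the structure morphism $p_U$ together with the formal model $\mathfrak{U} \to \Spf(k^0)$: it yields $Rq_! \circ R\psi_{\mathfrak{U}} \simeq R\psi_{\Spf(k^0)} \circ Rp_{U!}$, and the nearby cycles functor over the base $\Spf(k^0)$ is the identity on $\mathcal{D}^+(\Lambda)$ because both fibres of $\Spf(k^0)$ are single points. Comparing the two computations gives the equality; the independence of the formal model is then automatic, since the left-hand side $R\Gamma(U, D^{\mathrm{ad}}(\sF))$ makes no reference to $\mathfrak{U}$ at all.

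The genuinely non-formal ingredient is Lemma \ref{vanishing cycles commutes with lower shriek}; everything else is bookkeeping around one adjunction applied on the generic and special sides. The points I expect to require the most care are precisely these bookkeeping steps: verifying in Huber's formalism that $f^! = f^*$ and that $R\Homs$ commutes with $f^*$ for �tale $f$, confirming that $R\psi_{\Spf(k^0)}$ is the identity, and checking that the complexes involved are bounded (e.g.\ via Lemma \ref{lem:fincohdimnecy}) so that the $(Rf_!, f^!)$ adjunction and the passage from internal to external Hom are legitimate. I anticipate the subtlest identity to be $f^* p_X^! \simeq p_U^!$, which is exactly where the �tale hypothesis on $f$ is indispensable.
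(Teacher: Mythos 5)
Your proposal is correct and follows essentially the same route as the paper's own proof: both unwind $D^{\mathrm{ad}}$ over $U$ using $f^!=f^*$ for étale $f$ together with the compatibility $f^!R\Homs(A,B)=R\Homs(f^*A,f^!B)$ (Lemma \ref{projection formula adic spaces}), pass through the adjunction against the point on each side, and reduce the comparison to Lemma \ref{vanishing cycles commutes with lower shriek} plus the triviality of nearby cycles over $\Spf(k^0)$. The only difference is cosmetic: you meet in the middle at $R\Hom_\Lambda(Rp_{U!}f^*\sF,\Lambda)$, while the paper writes a single chain of identities from left to right.
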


\begin{proof}
This is a direct calculation. 
 Let $\mathfrak{h} : \mathfrak{U} \to \mathrm{Spf}(k^0)$ and $h_s := \mathfrak{h}_s : \mathfrak{U}_s \to \mathrm{Spec}(\tilde{k})$. 
 We use $h = \mathfrak{h}_\eta$ to denote the generic fibre of $\mathfrak{h}$, so that 
 $h = p_X \circ f$.  
 Evaluating $D^{\mathrm{ad}}(\mathscr{F})$ for an étale morphism $f: U \to X$, we obtain
\begin{align*}
D^{\mathrm{ad}}(\mathscr{F})(U) 
& \overset{(1)}{=}  R\Homs(\mathscr{F}, p_X^{!}(\Lambda))(U) \\
& \overset{(2)}{=} R\Gamma(U,f^{*}R\Homs(\mathscr{F},p_X^{!}\Lambda)) \\
& \overset{(3)}{=} R\Gamma(U,f^{!}R\Homs(\mathscr{F},p_X^{!}\Lambda)) \\
& \overset{(4)}{=} R\Gamma(U,R\Homs(f^*\mathscr{F},h^{!}\Lambda)) \\
& \overset{(5)}{=} R\Homs(Rh_{!}f^*\mathscr{F},\Lambda)\\
& \overset{(6)}{=} R\Homs(R\psi_{\mathrm{Spf}(k^0)}(Rh_{!}f^*\mathscr{F}),\Lambda) \\
& \overset{(7)}{=} R\Homs(Rh_{s,!}R\psi_{\mathfrak{U}}(f^*\mathscr{F}),\Lambda) \\
&\overset{(8)}{=} R\Gamma(\mathfrak{U}_s,R\Homs(R\psi_{\mathfrak{U}}(f^*\mathscr{F}),K_{\mathfrak{U}_s})) \\
&\overset{(9)}{=}  R\Gamma(\mathfrak{U}_s, D(R\psi_{\mathfrak{U}}(f^*\mathscr{F}))) \\
\end{align*}
where (1) follows from the definition of the adic Verdier dual, 
(3) follows from the fact that $f$ is étale,
(4) follows from Lemma \ref{projection formula adic spaces}, 
(5) follows from the adjointness of $Rh_{!}$ and $h^{!}$, 
(6) follows from triviality of nearby cycles on $\Spa(k)$,
(7) follows from Lemma \ref{vanishing cycles commutes with lower shriek},
(8) follows from the adjointness of $Rh_{s,!}$ and $h_s^!$
(here $K_{\mathfrak{U}_s} = h_s^{!}\Lambda$ is the dualizing sheaf of $h_s: \mathfrak{U}_s \to \widetilde{k}$) and 
(9) follows from the definition of the classical Verdier dual
\end{proof}

\begin{rem} 
 \emph{The calculation in Proposition \ref{compved} can be used to construct the adic Verdier dual $D^{\mathrm{ad}}$ 
   using only the nearby cycles functor $R\psi_{\mathfrak{U}}$ and the classical Verdier dual.}  
 \end{rem}

 \begin{thm} \label{big result small proof}
    Let $X$ be a fine $k$-adic space.   
      Let $\mathfrak{X}$ be a formal model of $X$. 
      The adic Verdier dual satisfies the following properties. 
     \begin{enumerate}  
          \item $R\psi_{\mathfrak{X}} \circ D^{\mathrm{ad}} = D \circ R\psi_{\mathfrak{X}}$ where $D$ is the (classical) Verdier dual
      in $\mathcal{D}^b(\mathfrak{X}_s,\Lambda)$. 
          \item $R\psi_\mathfrak{X} \circ D^{\mathrm{ad}} \circ D^{\mathrm{ad}} = D \circ D \circ R\psi_\mathfrak{X}$. In particular, 
          if $\sF \in \mathcal{D}^b_{sc}(X^f_{\text{ét}},\Lambda)$ then 
          \begin{align*}
          R\psi_\mathfrak{X} \circ D^{\mathrm{ad}} \circ D^{\mathrm{ad}} (\sF) = R\psi_\mathfrak{X}(\sF).
          \end{align*}
          \item If $\mathscr{F} \in \mathcal{D}^b_{sc}(X^f_{\text{ét}},\Lambda)$ then $D^{\mathrm{ad}} \circ D^{\mathrm{ad}}(\sF)  = \sF$. 
      \end{enumerate}  
  \end{thm}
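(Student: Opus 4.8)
The plan is to treat the three parts in order, deriving everything from Proposition \ref{compved} together with two structural facts about nearby cycles that are essentially formal: the identity $R\Gamma(\mathfrak{U}_s, R\psi_{\mathfrak{X}}(\mathscr{G})) = R\Gamma(U,\mathscr{G})$ (used already in Lemma \ref{lem:fincohdimnecy}) for $\mathfrak{U}$ \'etale over $\mathfrak{X}$ with generic fibre $f\colon U \to X$, and the compatibility $R\psi_{\mathfrak{X}}(\mathscr{G})_{|\mathfrak{U}_s} = R\psi_{\mathfrak{U}}(f^*\mathscr{G})$ of nearby cycles with \'etale localisation.

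For part (1) I would show that the two complexes $R\psi_{\mathfrak{X}}(D^{\mathrm{ad}}(\sF))$ and $D(R\psi_{\mathfrak{X}}(\sF))$ on $\mathfrak{X}_{s}$ agree, compatibly, after taking derived global sections over every \'etale object. On one side, $R\Gamma(\mathfrak{U}_s, R\psi_{\mathfrak{X}}(D^{\mathrm{ad}}(\sF))) = R\Gamma(U, D^{\mathrm{ad}}(\sF))$, which Proposition \ref{compved} rewrites as $R\Gamma(\mathfrak{U}_s, D(R\psi_{\mathfrak{U}}(f^*\sF)))$. On the other side, using that nearby cycles commute with \'etale localisation and that the classical Verdier dual $D$ is \'etale-local, one gets $D(R\psi_{\mathfrak{X}}(\sF))_{|\mathfrak{U}_s} = D(R\psi_{\mathfrak{U}}(f^*\sF))$, so taking $R\Gamma(\mathfrak{U}_s,-)$ yields the same complex. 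Since a bounded complex on the fine \'etale site is determined up to canonical isomorphism by its presheaf of derived sections, part (1) follows. Part (2) is then purely formal: applying (1) twice gives $R\psi_{\mathfrak{X}} \circ D^{\mathrm{ad}} \circ D^{\mathrm{ad}} = D \circ D \circ R\psi_{\mathfrak{X}}$. For the ``in particular'' clause, $\sF \in \mathcal{D}^b_{sc}(X^f_{\text{\'et}},\Lambda)$ means precisely $R\psi_{\mathfrak{X}}(\sF) \in \mathcal{D}^b_c(\mathfrak{X}_s,\Lambda)$, and classical biduality $D \circ D = \mathrm{id}$ holds on constructible complexes over a variety, so $D \circ D \circ R\psi_{\mathfrak{X}}(\sF) = R\psi_{\mathfrak{X}}(\sF)$.

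For part (3) I would prove that the canonical biduality morphism $c\colon \sF \to D^{\mathrm{ad}} \circ D^{\mathrm{ad}}(\sF)$ is an isomorphism by testing on derived global sections. A morphism in $\mathcal{D}^b(X^f_{\text{\'et}},\Lambda)$ is an isomorphism as soon as $R\Gamma(U,c)$ is one for every $U \to X$ in the fine \'etale site: passing to the cone $C$, the lowest nonvanishing cohomology sheaf satisfies $\mathcal{H}^{i_0}(C)(U) = \mathbb{H}^{i_0}(U,C) = 0$ for all $U$, forcing $C \simeq 0$. Now $D^{\mathrm{ad}}$ commutes with \'etale restriction, since for \'etale $f$ one has $f^* = f^!$ and $f^* R\Homs(\sF,p_X^!\Lambda) = R\Homs(f^*\sF, p_U^!\Lambda)$; hence $c_{|U}$ is the biduality morphism of the sheaf $\sF_{|U}$, which is again semi-constructible. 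Writing $R\Gamma(U,c) = R\Gamma(\mathfrak{U}_s, R\psi_{\mathfrak{U}}(c_{|U}))$ and invoking part (2) for $U$, the map $R\psi_{\mathfrak{U}}(c_{|U})$ is identified with the classical biduality isomorphism on the constructible complex $R\psi_{\mathfrak{U}}(\sF_{|U})$; so $R\Gamma(U,c)$ is an isomorphism and therefore $c$ is.

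The main obstacle I anticipate is not any single computation but the naturality bookkeeping in part (3): one must check that applying $R\psi_{\mathfrak{U}}$ to the adic biduality morphism $c_{|U}$ genuinely produces the \emph{classical} biduality morphism, rather than merely an abstract isomorphism of objects, because reflexivity is a statement about the canonical map and not about isomorphism classes. This compatibility should follow by tracing the adjunctions through the chain of identifications establishing Proposition \ref{compved}, but it is the step I would write out most carefully.
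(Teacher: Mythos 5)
Your proposal is correct and takes essentially the same approach as the paper: part (1) is proved there by exactly your chain --- testing derived sections over the \'etale objects of $\mathfrak{X}_{s,\text{\'et}}$, using Proposition \ref{compved}, \'etale localisation of nearby cycles, and \'etale locality of the classical dual --- part (2) by applying (1) twice, and part (3) by evaluating over each $U \to X$ and combining part (2), classical biduality on $\mathcal{D}^b_c(\mathfrak{U}_s,\Lambda)$, and $R\Gamma(U,\mathscr{G}) \simeq R\Gamma(\mathfrak{U}_s,R\psi_{\mathfrak{U}}(\mathscr{G}))$. The only difference is one of care, not of route: your explicit cone argument and the naturality check you flag in part (3) are precisely the bookkeeping the paper's proof leaves implicit.
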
 
  \begin{proof} 
  \begin{enumerate} 
  \item 
   Let $\mathfrak{X}_s$ denote the special fibre of the formal scheme $\mathfrak{X}$. 
  Observe that it suffices to show that for every étale morphism
  $p_s : Y_s \to \mathfrak{X}_s$ and every $\Lambda$-sheaf $\mathscr{F}$ on $X_{\text{ét}}$, we have a natural isomorphism  
  \begin{align*} 
    R\Gamma(Y_s, R\psi_\mathfrak{X}(D^{\mathrm{ad}}(\mathscr{F}) )) \simeq R\Gamma(Y_s,D(R\psi_\mathfrak{X}(\mathscr{F}) )). 
  \end{align*}  
  
       We have a natural isomorphism of sites $\mathfrak{X}_{\text{s,ét}} \simeq \mathfrak{X}_{\text{ét}}$. 
       It follows that there exists an étale morphism of formal schemes 
       $\mathfrak{p} : \mathfrak{Y} \to \mathfrak{X}$ such that the induced morphism 
       between the respective special fibres coincides with $p_s$. Let 
      $p : Y \to X$ be the morphism $\mathfrak{p}_\eta$ between the respective 
      generic fibres. Since $\mathfrak{p}$ is étale, we get that $p$ is étale. 
  
  Observe that 
  \begin{align*} 
   R\Gamma(Y_s, R\psi_\mathfrak{X}(D^{\mathrm{ad}}(\mathscr{F}) )) 
   &\overset{(1)}{\simeq} R\Gamma(Y_s, p_s^*R\psi_\mathfrak{X}(D^{\mathrm{ad}}(\mathscr{F}) )) \\
   &\overset{(2)}{\simeq}  R\Gamma(Y_s, R\psi_\mathfrak{Y}(p^*D^{\mathrm{ad}}(\mathscr{F}) )) \\ 
   &\overset{(3)}{\simeq}  R\Gamma(Y,p^*D^{\mathrm{ad}}(\mathscr{F})) \\
   &\overset{(4)}{\simeq} R\Gamma(Y, D^{\mathrm{ad}}(\mathscr{F}) ) \\
   &\overset{(5)}{\simeq} R\Gamma(Y_s, D(R\psi_{\mathfrak{Y}}(p^*\mathscr{F}))) \\
   & \overset{(6)}{\simeq} R\Gamma(Y_s, p_s^*D(R\psi_{\mathfrak{X}}(\mathscr{F}) )) \\
  &\overset{(7)}{\simeq} R\Gamma(Y_s, D(R\psi_{\mathfrak{X}}(\mathscr{F}) )). 
   \end{align*} 
   
   The quasi-isomorphism (2) follows from the fact that $p_s$ is étale, (3) is because of the composition of derived functors, 
   (5) is implied by Proposition \ref{compved}, (6) is because $p_s$ is étale. 
   
%Since $p_s$ is étale,
   %\begin{align*} 
   %R\Gamma(Y_s, p_s^*R\psi_\mathfrak{X}(D^{\mathrm{ad}}(\mathscr{F}) )) \simeq
   %R\Gamma(Y_s, R\psi_\mathfrak{Y}(p^*D^{\mathrm{ad}}(\mathscr{F}) )). 
   %\end{align*} 
  %Now \cite[Corollary 4.5]{berk94} implies that 
   %\begin{align*} 
    % R\Gamma(Y_s, R\psi_\mathfrak{Y}(p^*D^{\mathrm{ad}}(\mathscr{F}) )) 
     %\simeq R\Gamma(Y,p^*D^{\mathrm{ad}}(\mathscr{F})) \simeq  R\Gamma(Y, D^{\mathrm{ad}}(\mathscr{F}) ).
   %\end{align*}
   %Applying Proposition \ref{compved}, 
   %\begin{align*} 
   %  R\Gamma(Y, D^{\mathrm{ad}}(\mathscr{F}) ) = R\Gamma(Y_s, D(R\psi_{\mathfrak{Y}}(p^*\mathscr{F}))). 
   %\end{align*}  
 %Noting that the morphism $p_s : Y_s \to X_s$ is étale, we get that 
 % \begin{align*} 
 %   R\Gamma(Y_s, D(R\psi_{\mathfrak{Y}}(p^*\mathscr{F}))) \simeq 
 %   R\Gamma(Y_s, p_s^*D(R\psi_{\mathfrak{X}}(\mathscr{F}) )) \simeq 
 %   R\Gamma(Y_s, D(R\psi_{\mathfrak{X}}(\mathscr{F}) )).  
   %\end{align*}  

  \item  Part (2) follows by manipulating the equalities from part (1). 
  By (1), we get that $R\psi_\mathfrak{X} \circ D^{\mathrm{ad}} \circ D^{\mathrm{ad}} = 
  D \circ R\psi_\mathfrak{X} \circ D^{\mathrm{ad}}$. 
  Using (1) again, implies that 
  $D \circ R\psi_\mathfrak{X} \circ D^{\mathrm{ad}} = D \circ D \circ R\psi_{\mathfrak{X}}$. 
  
\item  We verify the equality by evaluating the given expressions at an object
$f : U \to X$ of the étale site of $X$. 
 We claim that  
 \begin{align*} 
 R\Gamma(U,D^{\mathrm{ad}} \circ D^{\mathrm{ad}}(\sF)) \simeq R\Gamma(U,\sF). 
 \end{align*} 
    Indeed, by part (2) and the fact that the classical Verdier dual is an involution on $\mathcal{D}^b_c(\mathfrak{U}_s,\Lambda)$, we get that if $\mathfrak{U}$ is a formal model of $\mathfrak{U}$ then 
   \begin{align*} 
 R\psi_{\mathfrak{U}} \circ D^{\mathrm{ad}} \circ D^{\mathrm{ad}}(\sF_{|U}) \simeq R\psi_\mathfrak{U}(\sF_{|U}). 
 \end{align*} 
  Hence we get that 
    \begin{align*} 
R\Gamma(\mathfrak{U}_s,R\psi_{\mathfrak{U}} \circ D^{\mathrm{ad}} \circ D^{\mathrm{ad}}(\sF_{|U})) \simeq 
R\Gamma(\mathfrak{U}_s,R\psi_\mathfrak{U}(\sF_{|U})). 
 \end{align*} 
  Since for a complex $\mathscr{G}$ on $U_{\text{ét}}$, we have that 
  $R\Gamma(U,\mathscr{G}) \simeq R\Gamma(\mathfrak{U}_s,R\psi_\mathfrak{U}(\mathscr{G}))$, cf. \cite[Corollary 4.5]{berk94}. 
  This implies the claim. 
 \end{enumerate}
  \end{proof}  

\begin{rem}
  \emph{One of the important features of the theory of étale cohomology for varieties is the 
  fact that when restricted to the class of constructible étale sheaves, the 
  Verdier dual interacts nicely with the functors $Rf_!$, $f^*$, $f_*$ and $f^!$.  
  We deduce as simple consequences of Theorem \ref{big result small proof},
   similar results for the adic Verdier dual and the class of
    semi-constructible complexes of sheaves on a fine $k$-adic space.}
\end{rem} 
  
  \begin{lem} \label{projection formula adic spaces} 
  Let $f : X \to Y$ be a morphism of fine $k$-adic spaces.
  % Let 
  %$p_X : X \to \mathrm{Spa}(k)$ and $p_Y : Y \to \mathrm{Spa}(k)$ be the structure 
  %maps of $X$ and $Y$. 
  Let $A,B \in D^b(Y^f_{\text{ét}},\Lambda)$ and $\sF \in \mathcal{D}^b(X^f_{\text{ét}},\Lambda)$. 
  \begin{enumerate} 
  \item  $f^!R\Homs(A,B) = R\Homs(f^*(A),f^!(B))$. 
  \item  $f^!D^{\mathrm{ad}}(A) = D^{\mathrm{ad}}(f^*(A))$.
  \item  $D^{\mathrm{ad}}(Rf_{!}(\mathscr{F})) = Rf_{*}(D^{\mathrm{ad}}(\mathscr{F}))$
  \end{enumerate} 
  \end{lem}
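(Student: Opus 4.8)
The three identities are the standard compatibilities of the dualizing functor with the six operations, and the plan is to deduce all of them formally from Huber's formalism by Yoneda-type adjunction arguments, exactly as in the classical case. The inputs I will use are: the adjunction $Rf_! \dashv f^!$ of Theorem \ref{upper shriek}; the adjunction $f^* \dashv Rf_*$; the tensor--hom adjunction $\Hom(\sG \otimes^L A, B) = \Hom(\sG, R\Homs(A,B))$; the projection formula $Rf_!(\sG \otimes^L f^*A) \simeq Rf_!(\sG) \otimes^L A$; and the functoriality $(g \circ f)^! = f^! \circ g^!$ together with the relation $p_X = p_Y \circ f$ of structure morphisms. Here $\Hom$ denotes morphisms in the relevant derived category.

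First I would prove (1) by testing against an arbitrary $\sG \in \mathcal{D}^b(X^f_{\text{�t}},\Lambda)$:
\begin{align*}
\Hom(\sG, f^! R\Homs(A,B))
&= \Hom(Rf_!\sG, R\Homs(A,B)) \\
&= \Hom(Rf_!\sG \otimes^L A, B) \\
&= \Hom(Rf_!(\sG \otimes^L f^*A), B) \\
&= \Hom(\sG \otimes^L f^*A, f^!B) \\
&= \Hom(\sG, R\Homs(f^*A, f^!B)),
\end{align*}
where the successive equalities use $Rf_! \dashv f^!$, tensor--hom on $Y$, the projection formula, $Rf_! \dashv f^!$ again, and tensor--hom on $X$. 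As this is natural in $\sG$, Yoneda gives (1). Part (2) is then immediate: taking $B = p_Y^!(\Lambda)$ in (1) yields $f^! D^{\mathrm{ad}}(A) = R\Homs(f^*A, f^! p_Y^!(\Lambda))$, and since $p_X = p_Y \circ f$ we have $f^! p_Y^!(\Lambda) = p_X^!(\Lambda)$, so the right-hand side is exactly $D^{\mathrm{ad}}(f^*A)$.

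For (3) I would run the parallel computation to establish the sheafy duality $R\Homs(Rf_!\sF, \sG) \simeq Rf_* R\Homs(\sF, f^!\sG)$: for $T \in \mathcal{D}^b(Y^f_{\text{�t}},\Lambda)$,
\begin{align*}
\Hom(T, R\Homs(Rf_!\sF, \sG))
&= \Hom(T \otimes^L Rf_!\sF, \sG) \\
&= \Hom(Rf_!(f^*T \otimes^L \sF), \sG) \\
&= \Hom(f^*T \otimes^L \sF, f^!\sG) \\
&= \Hom(f^*T, R\Homs(\sF, f^!\sG)) \\
&= \Hom(T, Rf_* R\Homs(\sF, f^!\sG)),
\end{align*}
using tensor--hom, the projection formula, $Rf_! \dashv f^!$, tensor--hom, and $f^* \dashv Rf_*$. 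Specializing $\sG = p_Y^!(\Lambda)$ and again invoking $f^! p_Y^!(\Lambda) = p_X^!(\Lambda)$ gives $D^{\mathrm{ad}}(Rf_!\sF) = Rf_* R\Homs(\sF, p_X^!(\Lambda)) = Rf_*(D^{\mathrm{ad}}(\sF))$.

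The only genuine work is verifying that the two formal inputs --- the projection formula for $Rf_!$ and the internal (sheafy) tensor--hom adjunction --- are actually available in Huber's adic framework with the boundedness needed to remain inside $\mathcal{D}^b$; I expect this to be the main obstacle. I would handle it by citing \cite{hub96}, where the projection formula and internal $\Homs$ are part of the six-functor package, together with the finiteness of $\mathrm{dim.tr}(f)$ established in the proof of Theorem \ref{upper shriek}, which guarantees that $f^!$ carries a bounded complex to a bounded complex and hence keeps every expression above in $\mathcal{D}^b$.
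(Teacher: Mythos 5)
Your proposal is correct and follows essentially the same route as the paper: the paper proves (1) by exactly your Yoneda computation (via the proof of Lemma \ref{projection formula varieties}, with Huber's projection formula \cite[Theorem 5.5.9(ii)]{hub96} replacing the SGA reference), deduces (2) from (1) using $f^!p_Y^! = p_X^!$, and handles (3) by the same adjunction argument you spell out. The only difference is one of detail: you write out the sheafy duality $R\Homs(Rf_!\sF,\sG) \simeq Rf_*R\Homs(\sF,f^!\sG)$ in full, whereas the paper compresses part (3) into a one-line appeal to adjointness.
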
 
  \begin{proof} 
   Let $C \in D^b(X^f_{\text{ét}},\Lambda)$. 
   By \cite[Theorem 5.5.9(ii)]{hub96}, we have the following projection formula. 
   \begin{align*} 
    Rf_!(C) \otimes^L A \simeq Rf_!(C \otimes^L f^*(A)). 
    \end{align*} 
    Using the formula above, the proof is the same as the proof of Lemma \ref{projection formula varieties}. 
    Part (2) is a direct consequence of (1). 
    Part (3) follows from the adjointness of $Rf_!$ and $Rf_*$. 
  \end{proof}

%\begin{rem} \label{Corollary 1}
%\emph{One deduces the following statements directly from the adjointness of $Rf_!$ and $Rf_*$ and Lemma 
%\ref{projection formula adic spaces}. Let $f : Y \to X$ be a morphism of fine $k$-adic spaces. 
%Suppose $\mathscr{F} \in \mathcal{D}^b(Y^f_{\text{ét}}, \Lambda)$ and $\mathscr{G} \in \mathcal{D}^b(X^f_{\text{ét}}, \Lambda)$. 
%Then
%\begin{enumerate}
%\item $D^{\mathrm{ad}}(Rf_{!}(\mathscr{F})) = Rf_{*}(D^{\mathrm{ad}}(\mathscr{F}))$.
%\item $f^!D^{\mathrm{ad}}(\mathscr{G}) = D^{\mathrm{ad}}(f^*(\mathscr{G}))$.
%\end{enumerate}} 
% \end{rem}

\begin{cor} \label{Corollary 2}
  Let $X$ be a fine $k$-adic space and let $\sF,\sG \in \mathcal{D}^b_{sc}(X^f_{\text{ét}},\Lambda)$.
  Let $f : X \to Y$ be a morphism of fine $k$-adic spaces. 
  %Let $G \in  \mathcal{D}^b_{sc}(Y^f_{\text{ét}},\Lambda)$.
  Then we have that 
  \begin{enumerate} 
    \item $D^{\mathrm{ad}}(\sF) \in \mathcal{D}^b_{sc}(X^f_{\text{ét}},\Lambda)$. 
  \item  $D^{\mathrm{ad}}(Rf_{*}(\mathscr{F})) = Rf_{!}(D^{\mathrm{ad}}(\mathscr{F}))$.
%\item $f^*D^{\mathrm{ad}}(\mathscr{G}) = D^{\mathrm{ad}}(f^!(\mathscr{G}))$.
\item $f^!(\sF) = D^{\mathrm{ad}} \circ f^* \circ D^{\mathrm{ad}} (\sF)$.  
\item $R\mathscr{H}om(\sF,\sG) = D^{\mathrm{ad}}(\sF \otimes^L D^{\mathrm{ad}}(\sG))$. 
  \end{enumerate}  
\end{cor}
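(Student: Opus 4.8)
The plan is to deduce all four assertions formally from three inputs already at hand: the compatibilities of Lemma \ref{projection formula adic spaces}, the biduality identity $D^{\mathrm{ad}}\circ D^{\mathrm{ad}}(\sF)=\sF$ for semi-constructible $\sF$ of Theorem \ref{big result small proof}(3), and the stability of semi-constructibility under $Rf_!$ of Lemma \ref{semi-con stability for pushforwards}. I would prove (1) first, because the later parts repeatedly apply biduality and this is only permitted once we know that the dual of a semi-constructible sheaf is again semi-constructible. For (1), observe that $D^{\mathrm{ad}}$ commutes with \'etale pullback: when $f\colon U\to X$ is \'etale one has $f^!=f^*$ (as already used in Proposition \ref{compved}), so Lemma \ref{projection formula adic spaces}(2) gives $(D^{\mathrm{ad}}\sF)_{|U}=D^{\mathrm{ad}}(\sF_{|U})$. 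Then for any formal model $\mathfrak{U}$ of $U$, Theorem \ref{big result small proof}(1) yields
\begin{align*}
R\psi_{\mathfrak{U}}\big((D^{\mathrm{ad}}\sF)_{|U}\big)=R\psi_{\mathfrak{U}}\big(D^{\mathrm{ad}}(\sF_{|U})\big)=D\big(R\psi_{\mathfrak{U}}(\sF_{|U})\big).
\end{align*}
Since $\sF$ is semi-constructible, $R\psi_{\mathfrak{U}}(\sF_{|U})\in\mathcal{D}^b_c(\mathfrak{U}_s,\Lambda)$, and the classical Verdier dual preserves $\mathcal{D}^b_c(\mathfrak{U}_s,\Lambda)$ on the $\tilde{k}$-variety $\mathfrak{U}_s$; hence $D^{\mathrm{ad}}(\sF)$ is semi-constructible.

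For (3) I would simply specialise Lemma \ref{projection formula adic spaces}(2) to $A=D^{\mathrm{ad}}(\sF)$, obtaining $f^!\big(D^{\mathrm{ad}}D^{\mathrm{ad}}(\sF)\big)=D^{\mathrm{ad}}\big(f^* D^{\mathrm{ad}}(\sF)\big)$, and then rewrite the left-hand side as $f^!(\sF)$ using biduality (here $\sF$ is the semi-constructible sheaf on the target $Y$, so that $f^!(\sF)$ is defined). Part (2) is then obtained by bootstrapping from Lemma \ref{projection formula adic spaces}(3). Applying that lemma to $D^{\mathrm{ad}}(\sF)$ in place of $\sF$ gives $D^{\mathrm{ad}}\big(Rf_!(D^{\mathrm{ad}}\sF)\big)=Rf_*\big(D^{\mathrm{ad}}D^{\mathrm{ad}}\sF\big)=Rf_*(\sF)$, the last equality by biduality. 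I would then apply $D^{\mathrm{ad}}$ to both ends and invoke biduality once more; this final step is legitimate precisely because $Rf_!(D^{\mathrm{ad}}\sF)$ is semi-constructible, which follows from part (1) (so $D^{\mathrm{ad}}\sF$ is semi-constructible) together with Lemma \ref{semi-con stability for pushforwards} (so $Rf_!$ preserves this). The outcome is $Rf_!(D^{\mathrm{ad}}\sF)=D^{\mathrm{ad}}(Rf_*\sF)$, which is (2).

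Part (4) is the one purely formal identity, independent of the adic machinery beyond the definition of $D^{\mathrm{ad}}$. Using the internal tensor-hom adjunction in $\mathcal{D}^b(X^f_{\text{\'et}},\Lambda)$ — the same formal fact used classically in Lemma \ref{projection formula varieties} — I would compute
\begin{align*}
D^{\mathrm{ad}}\big(\sF\otimes^L D^{\mathrm{ad}}(\sG)\big)
&=R\Homs\big(\sF\otimes^L D^{\mathrm{ad}}(\sG),\,p_X^!(\Lambda)\big)\\
&=R\Homs\big(\sF,\,R\Homs(D^{\mathrm{ad}}(\sG),p_X^!(\Lambda))\big)\\
&=R\Homs\big(\sF,\,D^{\mathrm{ad}}D^{\mathrm{ad}}(\sG)\big)=R\Homs(\sF,\sG),
\end{align*}
the last equality being biduality for the semi-constructible sheaf $\sG$.

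The individual steps are short, so the only genuine care needed — and the place I would be most attentive — is the bookkeeping of which complexes are semi-constructible at each stage, since Theorem \ref{big result small proof}(3) is an isomorphism only on $\mathcal{D}^b_{sc}$. This subtlety is concentrated in (2), where one must verify $Rf_!(D^{\mathrm{ad}}\sF)\in\mathcal{D}^b_{sc}$ before dualising it; that is exactly the point at which part (1) and Lemma \ref{semi-con stability for pushforwards} are used in tandem, and it is why (1) must be established before anything else.
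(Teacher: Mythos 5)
Your proposal is correct and takes essentially the same route as the paper's proof: part (1) from Theorem \ref{big result small proof}(1) combined with the \'etale compatibility of $D^{\mathrm{ad}}$, parts (2) and (3) from Lemma \ref{projection formula adic spaces} plus biduality (with exactly the paper's check that $Rf_!(D^{\mathrm{ad}}\sF)$ is semi-constructible, via part (1) and Lemma \ref{semi-con stability for pushforwards}, before the final dualisation), and part (4) from tensor-hom adjunction together with $\sG = D^{\mathrm{ad}}D^{\mathrm{ad}}(\sG)$. The only difference is expository: you spell out steps the paper leaves implicit, such as why $D^{\mathrm{ad}}$ commutes with \'etale restriction and why the classical Verdier dual preserves $\mathcal{D}^b_c(\mathfrak{U}_s,\Lambda)$.
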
 
\begin{proof} 
 \begin{enumerate} 
 \item This follows from part (1) of Theorem \ref{big result small proof}. 
 \item Substituting $\sF$ with $D^{\mathrm{ad}}(\sF)$ in Lemma \ref{projection formula adic spaces}, we get 
 \begin{align*} 
 D^{\mathrm{ad}}Rf_{!}(D^{\mathrm{ad}}(\mathscr{F})) &= Rf_{*}D^{\mathrm{ad}}D^{\mathrm{ad}}(\mathscr{F})) \\
                                                                                       &= Rf_*(\sF)  
 \end{align*} 
        where the second equation follows from part (3) in Theorem \ref{big result small proof} .
        Applying $D^{\mathrm{ad}}$ to both sides of the equation above and noting that
        $Rf_{!}(D^{\mathrm{ad}}(\mathscr{F})) \in \mathcal{D}^b_{sc}(X^f_{\text{ét}},\Lambda)$ by part (1) and Lemma \ref{semi-con stability for pushforwards} completes the proof of the identity. 
 %\item The proof is a straight forward adaptation of that of part (2). 
 \item Let $\sG := D^{\mathrm{ad}}(\sF)$.
     Part (1) implies that $\sG$ is semi-constructible. 
  By Lemma \ref{projection formula adic spaces}, we have that 
 \begin{align*} 
     f^! \circ D^{\mathrm{ad}}(\sG) &= D^{\mathrm{ad}} \circ f^* (\sG).
   \end{align*} 
   The result follows from the fact that semi-constructible sheaves are reflexive.
  \item This follows by setting $\sG = D^{\mathrm{ad}} \circ D^{\mathrm{ad}}(\sG)$ and the adjointness of $R\mathscr{H}om$ and $\otimes^L$. 
 \end{enumerate} 
\end{proof}

\subsection{Huber constructible is not stable for upper shriek} \label{huber constructible is not stable for upper shriek}
 
        We provide an example of a morphism $p_X : X \to \mathrm{Spa}(k)$ of fine adic spaces such that 
        $p_X^!(\Lambda)$ is a complex whose cohomology is not Huber constructible. In the next section 
        however (cf. \S \ref{stabbyupshrei}) we show that the dualizing complex is constructible. 
        Using Lemma \ref{suitable singular curve}, we
        choose a suitable singular curve $C$ such that 
        $H^{-2}(p_{C^{\mathrm{ad}}}^!(\Lambda))$ is trivial with respect to a stratification of the form $\{U^{\mathrm{ad}},\{x\}\}$ where 
        $U$ is a Zariski open subset of $C$ and $x$ is a $k$-point.  
        Since a closed point cannot be a Huber constructible set, we obtain the counterexample we are looking for. 
        The idea for the above construction was suggested to us by R. Huber. 
        
      \begin{lem} \label{suitable singular curve}
  Let $C$ be a one-dimensional proper scheme over an algebraically closed field $k$ and assume that $\ell$ is prime to $\textrm{char}(k)$. Setting $\Lambda = \mathbb{Z}/ \ell\mathbb{Z}$ and $p_C \colon C \to k$, the structure morphism, the dualizing complex sits in an distinguished triangle
  \[
  i_*\Lambda \to p_C^!\Lambda \to Rj_*\Lambda[2] \to \cdot
  \]
  where $j \colon U \to C$ is the open complement of the singular set $i \colon Z \to C$.
  \end{lem}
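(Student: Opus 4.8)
The plan is to deduce the triangle from the standard localization (gluing) triangle attached to the open–closed decomposition $j \colon U \hookrightarrow C \hookleftarrow Z \colon i$, applied to the dualizing complex $K_C := p_C^!\Lambda$. Recall that for any $\mathcal{K} \in D^b_c(C,\Lambda)$ the recollement associated to $i$ and $j$ furnishes a distinguished triangle
\[
i_* i^! \mathcal{K} \to \mathcal{K} \to Rj_* j^* \mathcal{K} \to \cdot
\]
Taking $\mathcal{K} = K_C$, the lemma will follow once I identify the outer terms as $i_*\Lambda$ and $Rj_*\Lambda[2]$ respectively. Both identifications reduce, via the compatibility $(g\circ f)^! \simeq f^! \circ g^!$ of the upper-shriek functor with composition, to computing the dualizing complex of the smooth locus $U$ and of the singular points $Z$.

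For the right-hand term, since $j$ is an open immersion one has $j^* \simeq j^!$, so
\[
j^* K_C \simeq j^! p_C^! \Lambda \simeq (p_C \circ j)^!\Lambda \simeq p_U^!\Lambda.
\]
Because $U$ is the smooth locus of a one-dimensional scheme over the algebraically closed field $k$, it is a smooth curve, and relative purity (Poincar\'e duality for smooth curves) gives $p_U^!\Lambda \simeq \Lambda(1)[2]$, which I identify with $\Lambda[2]$ after suppressing the Tate twist; this is harmless since $k$ is algebraically closed. Hence $Rj_* j^* K_C \simeq Rj_*\Lambda[2]$.

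For the left-hand term, I give $Z$ its reduced structure and, by the invariance of the small \'etale site under nilpotent thickenings, assume $C$ itself reduced, so that $Z$ is a finite disjoint union of copies of $\Spec(k)$. Then $p_Z \colon Z \to \Spec(k)$ is \'etale of relative dimension $0$, whence $p_Z^! \simeq p_Z^*$ and
\[
i^! K_C \simeq (p_C \circ i)^!\Lambda \simeq p_Z^!\Lambda \simeq \Lambda.
\]
Thus $i_* i^! K_C \simeq i_*\Lambda$, and substituting the two computations into the localization triangle yields precisely $i_*\Lambda \to p_C^!\Lambda \to Rj_*\Lambda[2] \to \cdot$.

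The argument is essentially formal within the six-functor formalism on varieties; the only genuinely non-formal input is the purity identification $p_U^!\Lambda \simeq \Lambda(1)[2]$ on the smooth curve $U$, so that is where I expect to lean on classical results. The one piece of bookkeeping worth flagging is the reduction to $C$ reduced, needed so that $Z$ is honestly \'etale over $k$ and the computation $p_Z^!\Lambda \simeq \Lambda$ is immediate; this reduction is justified by topological invariance of the \'etale topos.
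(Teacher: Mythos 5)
Your proof is correct, and it is worth noting that it supplies an actual argument where the paper gives none: the paper's proof of this lemma is a one-line citation to the proof of Theorem 3.1 in Deninger's paper on duality for one-dimensional proper schemes, which is precisely where this triangle is constructed. Your route --- the recollement triangle $i_*i^!K_C \to K_C \to Rj_*j^*K_C \to \cdot$ applied to $K_C = p_C^!\Lambda$, combined with $(g\circ f)^! \simeq f^!\circ g^!$, purity $p_U^!\Lambda \simeq \Lambda(1)[2]$ on the smooth locus, and $i^!K_C \simeq p_Z^!\Lambda \simeq \Lambda$ on the reduced singular points --- is the standard six-functor derivation underlying that reference, so in substance you are reconstructing the cited argument rather than inventing a new one; the benefit is that your version is self-contained and makes visible exactly which non-formal input is used (purity on a smooth curve). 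One small caveat to keep in mind: your reduction to $C$ reduced silently uses that the singular set of $C$ coincides with that of $C_{\mathrm{red}}$ and is finite, which fails if $C$ is non-reduced along a whole component (there $i^!K_C$ is no longer concentrated in degree $0$). This hypothesis is implicit in the statement of the lemma itself --- and harmless for the paper's application, a reduced plane cubic --- but since you invoke topological invariance of the \'etale topos to pass to $C_{\mathrm{red}}$, it would be cleaner to state up front that $C$ is assumed generically smooth (e.g.\ reduced), so that $Z$ is a finite set of closed points and both identifications of the outer terms go through as you wrote them.
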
  
  \begin{proof}
This follows from the proof in \cite[Theorem 3.1]{den}. 
  \end{proof}
  
  \begin{lem} \label{dual of the restriction sequence}
   Let $X$ be a fine $k$-adic space. 
    Let $\sF \in \mathcal{D}^b_{sc}(X^f_{\text{ét}},\Lambda)$. 
    Let $i : Z \hookrightarrow X$ be a closed adic subspace.  
  Let $j : U :=  X \smallsetminus Z \hookrightarrow X$ be the inclusion of the complement. 
    We have the following distinguished triangle. 
\begin{align*} 
   i_*i^!(\sF) \to \sF \to Rj_*j^*(\sF) \to \cdot
\end{align*}  
  \end{lem}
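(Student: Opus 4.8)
The plan is to deduce this triangle by dualizing the \emph{complementary} localization triangle, the one built from extension by zero, whose construction is elementary and requires no finiteness. Set $\mathscr{G} := D^{\mathrm{ad}}(\sF)$. Since $\sF \in \mathcal{D}^b_{sc}(X^f_{\text{\'{e}t}},\Lambda)$, Corollary \ref{Corollary 2}(1) gives $\mathscr{G} \in \mathcal{D}^b_{sc}(X^f_{\text{\'{e}t}},\Lambda) \subset \mathcal{D}^b(X^f_{\text{\'{e}t}},\Lambda)$, and Theorem \ref{big result small proof}(3) supplies the reflexivity $D^{\mathrm{ad}}(\mathscr{G}) = D^{\mathrm{ad}} \circ D^{\mathrm{ad}}(\sF) = \sF$. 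This is precisely the place where the semi-constructibility hypothesis is used, which explains why it appears in the statement.

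The input is the short exact sequence of \'etale sheaves $0 \to j_! j^* \mathscr{G} \to \mathscr{G} \to i_* i^* \mathscr{G} \to 0$ attached to the open-closed decomposition $(j,i)$; it may be verified stalkwise and derives to a distinguished triangle $j_! j^* \mathscr{G} \to \mathscr{G} \to i_* i^* \mathscr{G} \to \cdot$. Applying the contravariant exact functor $D^{\mathrm{ad}}$ of triangulated categories carries this to the distinguished triangle
\[
D^{\mathrm{ad}}(i_* i^* \mathscr{G}) \to D^{\mathrm{ad}}(\mathscr{G}) \to D^{\mathrm{ad}}(j_! j^* \mathscr{G}) \to \cdot.
\]
I would then identify the three terms. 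The middle term is $\sF$ by reflexivity. For the left term, $i_* = Ri_!$ since $i$ is a closed immersion, so Lemma \ref{projection formula adic spaces}(3) gives $D^{\mathrm{ad}}(i_* i^* \mathscr{G}) = i_* D^{\mathrm{ad}}(i^* \mathscr{G})$, while Lemma \ref{projection formula adic spaces}(2) combined with reflexivity gives $D^{\mathrm{ad}}(i^* \mathscr{G}) = i^! D^{\mathrm{ad}}(\mathscr{G}) = i^!(\sF)$; hence the left term is $i_* i^!(\sF)$. For the right term, $j_! = Rj_!$, so Lemma \ref{projection formula adic spaces}(3) gives $D^{\mathrm{ad}}(j_! j^* \mathscr{G}) = Rj_* D^{\mathrm{ad}}(j^* \mathscr{G})$, and again by Lemma \ref{projection formula adic spaces}(2) and reflexivity $D^{\mathrm{ad}}(j^* \mathscr{G}) = j^! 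D^{\mathrm{ad}}(\mathscr{G}) = j^!(\sF)$; since $j$ is an open immersion, hence \'etale, one has $j^! = j^*$ (as exploited in step $(3)$ of Proposition \ref{compved}), so the right term is $Rj_* j^*(\sF)$. Substituting yields $i_* i^!(\sF) \to \sF \to Rj_* j^*(\sF) \to \cdot$, the claimed triangle.

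The bookkeeping of these functorial identifications is routine, and the genuine obstacle is \emph{naturality}: one must check that the isomorphisms produced by Lemma \ref{projection formula adic spaces} are compatible enough that the arrows of the dualized triangle are the canonical adjunction maps, namely the counit $i_* i^! \to \id$ and the unit $\id \to Rj_* j^*$, rather than merely abstractly isomorphic objects sitting in a triangle. I would settle this by tracking the identifications through the adjunction data underlying that lemma. A minor secondary point is to confirm that $U = X \smallsetminus Z$ is itself a fine $k$-adic space, so that $j$ is a morphism in $Fine_{k-ad}$ and $j^*, Rj_*, j^!$ are the functors of our formalism; this holds in the situations where the lemma is applied, where $U$ is quasi-compact.
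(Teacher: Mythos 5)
Your proposal is correct and is essentially the paper's own proof: both dualize the extension-by-zero localization triangle for $\mathscr{G} = D^{\mathrm{ad}}(\sF)$, use semi-constructibility to get reflexivity $D^{\mathrm{ad}}(\mathscr{G}) = \sF$ for the middle term, and identify the outer terms via Lemma \ref{projection formula adic spaces} and Corollary \ref{Corollary 2} together with $j^! = j^*$ for the open immersion. The only cosmetic difference is that the paper invokes partial properness of $j$ where you invoke exactness of $j_!$ for \'{e}tale maps, and you flag the naturality and fineness-of-$U$ issues that the paper passes over silently.
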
 
  \begin{proof} 
   We have the following triangle
   \begin{align*} 
     Rj_!j^*(D^{\mathrm{ad}}(\sF)) \to D^{\mathrm{ad}}(\sF) \to i_*i^*(D(\sF)) \to  \cdot
      \end{align*} 
    Note that $D^{\mathrm{ad}}(\sF)$ is semi-constructible
    and hence reflexive.
    We apply $D^{\mathrm{ad}}$ to the triangle above to obtain  
       \begin{align*} 
    D^{\mathrm{ad}} i_*i^*(D^{\mathrm{ad}}(\sF)) \to \sF \to  D^{\mathrm{ad}}Rj_!j^*(D^{\mathrm{ad}}(\sF))  \to \cdot 
      \end{align*} 
      By Corollary \ref{Corollary 2}, we get 
      \begin{align*} 
       D^{\mathrm{ad}} i_*i^*(D^{\mathrm{ad}}(\sF)) \simeq i_*i^!(\sF). 
      \end{align*} 
      
     For the next part note that $j \colon U \to X$ is partially proper. We have that 
    \begin{align*} 
     D^{\mathrm{ad}}Rj_!j^*(D^{\mathrm{ad}}(\sF)) &\overset{(i)}{\simeq}  Rj_*D^{\mathrm{ad}}j^*D^{\mathrm{ad}}(\sF) \\
      &\overset{(ii)}{\simeq}  Rj_*j^*D^{\mathrm{ad}}D^{\mathrm{ad}}(\sF) \\ 
       &\overset{(iii)}{\simeq}Rj_*j^*\sF. 
      \end{align*} 
        Here we see that the quasi-isomorphism (i) follows from the adjointness of $j_!$ and $j^*$. The quasi-isomorphism (ii)
       can be deduced from the projection formula \cite[Lemma 5.5.9(ii)]{hub96} using the same steps as in the proof of   
      Lemma \ref{projection formula varieties}. Lastly, the quasi-isomorphism (iii) 
      is obtained from the reflexivity of the sheaf $\sF$. 
      
       %Let $U_i \subset U$ be a increasing family of nested fine adic spaces such that
      %$U = \bigcup_i U_i$. Let $j_i : U_i \to X$. 
      % By Proposition 5.4.5 in \cite{hub96}, 
       %\begin{align*}
       % Rj_!j^*(D^{\mathrm{ad}}(\sF)) \simeq \varinjlim_i j_{i!}j_i^*(D^{\mathrm{ad}}(\sF))
       %\end{align*} 
       %Hence we see that 
       %\begin{align*}
       %D^{\mathrm{ad}}(Rj_!j^*(D^{\mathrm{ad}}(\sF))) &\overset{(i)}{\simeq} R\varprojlim_i D^{\mathrm{ad}} j_{i!}j_i^*(D^{\mathrm{ad}}(\sF)) \\
         %               &\overset{(ii)}{\simeq}  R\varprojlim_i D^{\mathrm{ad}}  j_{i!} D^{\mathrm{ad}} j_i^*(\sF) \\ 
           %             &\overset{(iii)}{\simeq}  R\varprojlim_i Rj_{i*} j_i^*(\sF)
       %\end{align*}
        %     where the second line follows from part (2) of Corollary \ref{Corollary 1} and the third line follows from 
           %  Part (2) of loc.cit. 
             
              %We claim that
             %\begin{align*} 
             %Rj_*j^*(\sF) \simeq R\varprojlim_i Rj_{i*} j_i^*(\sF). 
             %\end{align*} 
         %To verify the claim, we evaluate both sides at $R\Gamma(V,-)$ where $V \to X$ is an étale morphism. 
        % Hence it suffices to know that
         %\begin{align*} 
          %R\Gamma(V \times_X U,j^*(\sF)) \simeq  \varprojlim_i R\Gamma(V \times_X U_i,j^*(\sF)). 
         %\end{align*} 
        % However, the isomorphism above must be hold since 
        % $j^*(\sF)$ is a sheaf on $U_{\text{ét}}$ and 
        % $\{V \times_X U_i \to V\}$ is a cover of $V$.     
        % Note that we replace the $R\varprojlim$ in the equation above with 
         %$\varprojlim$ since for every $i$, 
         %$R\Gamma(V \times_X U_i,j^*(\sF))$ has finite cohomology. 
  \end{proof}

  \begin{prop} \label{comparison result} 
  Let $V$ be an algebraic variety over $k$ and $V^{\mathrm{ad}}$ denote its 
  adification. Let $\mu_V$
 denote the morphism of sites
  $V^{\mathrm{ad}}_{\text{ét}} \to V_{\text{ét}}$.
  The following isomorphism holds true.
  \begin{align*} 
    \mu_V^*p_V^!(\Lambda) \simeq p_{V^{\mathrm{ad}}}^!(\Lambda).
  \end{align*} 
  \end{prop}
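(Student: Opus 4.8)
The plan is to construct a canonical comparison morphism $\mu_V^*p_V^!(\Lambda) \to p_{V^{\mathrm{ad}}}^!(\Lambda)$ and then show it is an isomorphism by d\'evissage on $\dim V$, reducing to the smooth case where both dualizing complexes are computed by Poincar\'e duality. First I would reduce to $V$ reduced: since $k$ is algebraically closed (hence perfect) and the \'etale topoi of $V$ and of $V^{\mathrm{ad}}$, together with the functor $p^!$, are insensitive to nilpotents, we may assume $V$ is reduced. To build the comparison morphism, recall Huber's comparison theorem \cite{hub96}, which gives for any finite type morphism $f$ of $k$-schemes with adification $f^{\mathrm{ad}}$ a canonical isomorphism $\mu^*\circ Rf_! \simeq Rf^{\mathrm{ad}}_!\circ \mu^*$ (for torsion coefficients prime to $\mathrm{char}(\tilde k)$). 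Applying this to $f=p_V$, composing the counit $Rp_{V^{\mathrm{ad}}!}\,p_{V^{\mathrm{ad}}}^! \to \mathrm{id}$ with the adjunction $Rp_{V^{\mathrm{ad}}!}\dashv p_{V^{\mathrm{ad}}}^!$ from Theorem \ref{upper shriek} (and using $\mu_{\Spa(k)}^*\Lambda=\Lambda$) produces the desired base-change map $\mu_V^*p_V^!(\Lambda)\to p_{V^{\mathrm{ad}}}^!(\Lambda)$. This construction is functorial, which is what will make it compatible with the localization triangles below.

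For the d\'evissage, by generic smoothness choose a smooth dense open $j\colon U\hookrightarrow V$ with reduced closed complement $i\colon Z\hookrightarrow V$ of strictly smaller dimension. On both the algebraic and the adic side there is the localization triangle $i_*i^!(p^!\Lambda)\to p^!\Lambda\to Rj_*j^*(p^!\Lambda)\to\cdot$, valid for any object in the six-functor formalism; using the functoriality identities $i^!p_V^!=p_Z^!$ and $j^*p_V^!=j^!p_V^!=p_U^!$ these read $i_*p_Z^!\Lambda\to p_V^!\Lambda\to Rj_*p_U^!\Lambda\to\cdot$ and likewise in the adic setting. Applying $\mu_V^*$ to the algebraic triangle and invoking Huber's comparison isomorphisms $\mu_V^*\circ i_*\simeq i^{\mathrm{ad}}_*\circ\mu_Z^*$ (here $i_*=Ri_!$) and $\mu_V^*\circ Rj_*\simeq Rj^{\mathrm{ad}}_*\circ\mu_U^*$, the two outer terms become $i^{\mathrm{ad}}_*(\mu_Z^*p_Z^!\Lambda)$ and $Rj^{\mathrm{ad}}_*(\mu_U^*p_U^!\Lambda)$. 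By the inductive hypothesis applied to $Z$ one has $\mu_Z^*p_Z^!\Lambda\simeq p_{Z^{\mathrm{ad}}}^!\Lambda$, while on the smooth open $U$ of dimension $d$ purity gives $p_U^!\Lambda\simeq\Lambda(d)[2d]\simeq p_{U^{\mathrm{ad}}}^!\Lambda$, the adic half being Huber's purity theorem \cite{hub96}. Hence the comparison morphism is an isomorphism on the two outer vertices of the triangle, and the triangulated five lemma forces it to be an isomorphism on the middle vertex $\mu_V^*p_V^!(\Lambda)$. The base case $\dim V=0$ is immediate, since then $p^!\Lambda=\Lambda$ on both sides.

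The technical heart — and the place I expect the real work to lie — is twofold. First, one must check that the canonical comparison morphism is genuinely a \emph{morphism of distinguished triangles}, i.e. that Huber's comparison isomorphisms for $Ri_*$ and $Rj_*$ are compatible with the base-change maps and with the boundary morphisms of the two localization triangles; this is a naturality verification in the six-functor formalism. Second, in the smooth case one must confirm that the comparison morphism intertwines the two Poincar\'e-duality isomorphisms, i.e. that the algebraic and adic orientation (trace) maps correspond under adification. Both are compatibility and normalization statements rather than new ideas, but they are genuinely where the argument must be pinned down; granting them, the d\'evissage and the five-lemma step are formal.
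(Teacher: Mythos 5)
Your proposal follows essentially the same route as the paper's proof: induction on dimension using the localization triangle attached to a smooth dense open $j\colon U\hookrightarrow V$ and its closed complement $i\colon Z\hookrightarrow V$, Huber's comparison theorem for pushforwards under adification applied to $i_*$ and $Rj_*$, purity on the smooth locus, and the inductive hypothesis on $Z$ (the paper's adic-side triangle being supplied by Lemma \ref{dual of the restriction sequence}). The only difference is one of rigor rather than strategy: you construct an explicit comparison morphism and conclude by the five lemma, whereas the paper simply produces the two triangles and asserts that ``since the maps above are natural'' the middle terms agree --- exactly the compatibility-of-triangles point you correctly single out as the technical heart.
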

  \begin{proof}
  We proceed by induction on the dimension $\mathrm{dim}(V)$ of the variety $V$. 
      Let $j : U \to V$ be a Zariski dense open smooth sub-variety of $V$ and 
      let $i : Z := V \smallsetminus U \to V$ be the inclusion of the complement of $U$ 
      into $V$. 
      We add the super script $\mathrm{ad}$ to $i$ and $j$ to denote the respective adifications. 
     It is well known from the classical étale cohomology theory for varieties that we have 
        \begin{align*} 
      i_*i^{!}p_V^!(\Lambda) \to p_V^!(\Lambda) \to Rj_*j^{*}p_V^!(\Lambda) \to \cdot
          \end{align*} 
          Since $U$ is smooth, $j^{*}p_V^!(\Lambda) \simeq \oplus_t \Lambda[2d_t]$ where $d_t$ runs over the dimension of the irreducible components of $U$.
        By \cite[Theorem 3.8.1]{hub96},
        $\mu_V^*i_*p_Z^!(\Lambda) = i^{\mathrm{ad}}_*\mu_Z^*p_Z^!(\Lambda)$ and 
          $\mu_V^*Rj_*j^*(\oplus_t \Lambda[2d_t]) = Rj^{\mathrm{ad}}_*(\oplus_t \Lambda[2d_t])$.
        Our induction hypothesis implies that 
        $i^{\mathrm{ad}}_*\mu_Z^*i^!(\Lambda) = i_*^{\mathrm{ad}}p_{Z^{\mathrm{ad}}}^!(\Lambda)$. 
        Hence applying $\mu_V^*$ to the triangle above gives
        \begin{align*} 
          i_*^{\mathrm{ad}}p_{Z^{\mathrm{ad}}}^!(\Lambda) \to \mu_V^*p_V^!(\Lambda) \to Rj^{\mathrm{ad}}_* (\oplus_t \Lambda [2d_t]) \to \cdot 
        \end{align*} 

      By Lemma \ref{dual of the restriction sequence} and the fact that $p_{U^{\mathrm{ad}}}^!(\Lambda) = \oplus_t \Lambda[2d_t]$, we get the following triangle
      \begin{align*} 
      i^{\mathrm{ad}}_*p_{Z^{\mathrm{ad}}}^!(\Lambda) \to p_{V^{\mathrm{ad}}}^!(\Lambda) \to Rj^{\mathrm{ad}}_*( \oplus_t \Lambda[2d_t]) \to \cdot
          \end{align*} 
         Since the maps above are natural, 
         we get that 
         $p_{V^{\mathrm{ad}}}^!(\Lambda) \simeq  \mu_V^*p_V^!(\Lambda)$.
          
   \end{proof} 
          
 \begin{prop} 
 There exists a 
   singular curve $C$ over $k$ such that $H^{-2}(p_{C^{\mathrm{ad}}}^!(\Lambda))$ is not Huber constructible. 
 \end{prop}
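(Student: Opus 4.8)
The plan is to take for $C$ a proper curve over $k$ possessing a single $k$-rational node $x$ (for instance a nodal plane cubic), to write $j \colon U \hookrightarrow C$ for the inclusion of the smooth locus $U = C \smallsetminus \{x\}$ and $i \colon \{x\} \hookrightarrow C$ for the complementary closed immersion, and to show that $\mathcal{G} := H^{-2}(p_{C^{\mathrm{ad}}}^!(\Lambda))$ is the sheaf which is constant of rank one on $U^{\mathrm{ad}}$ but has a two-dimensional stalk at the single type $1$ point $x$. The locus where the stalk jumps is then exactly the closed point $x$, and since a closed point is not a Huber constructible subset, $\mathcal{G}$ cannot be Huber constructible.

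First I would transfer the computation to the scheme side. By Proposition \ref{comparison result} one has $p_{C^{\mathrm{ad}}}^!(\Lambda) \simeq \mu_C^*(p_C^!(\Lambda))$, and since $\mu_C^*$ is exact it commutes with the formation of cohomology sheaves, so $\mathcal{G} = \mu_C^*\big(H^{-2}(p_C^!(\Lambda))\big)$. Feeding $C$ into Lemma \ref{suitable singular curve} gives the distinguished triangle $i_*\Lambda \to p_C^!(\Lambda) \to Rj_*\Lambda[2] \to \cdot$, and the associated long exact sequence of cohomology sheaves identifies the term in degree $-2$: as $i_*\Lambda$ sits in degree $0$ and $H^{-2}(Rj_*\Lambda[2]) = R^0 j_*\Lambda = j_*\Lambda$, one obtains $H^{-2}(p_C^!(\Lambda)) \cong j_*\Lambda$. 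Here it is essential that the singularity be a node, i.e. have at least two branches: the stalk $(j_*\Lambda)_x$ is the $H^0$ of a punctured strict-henselian neighbourhood of $x$, which is $\Lambda^2$ for a node, whereas over every point of $U$ the stalk is $\Lambda$. A unibranch singularity such as a cusp would merely produce the constant sheaf and no counterexample, so the node is forced.

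Next I would pass to the adic space. The support map underlying $\mu_C$ sends an adic point $\xi$ to a scheme point, and the stalk of $\mu_C^*(j_*\Lambda)$ at $\xi$ equals $(j_*\Lambda)_{\mu_C(\xi)}$. The fibre of $\mu_C$ over the node consists of the valuations with support the corresponding maximal ideal, hence factoring through the residue field $\Spa(k,k^0)$, which is a single point; thus $\mu_C^{-1}(x) = \{x\}$. Consequently $\mathcal{G} = \mu_C^*(j_*\Lambda)$ has stalk $\Lambda^2$ at the single type $1$ point $x$ and stalk $\Lambda$ at every other point of $C^{\mathrm{ad}}$.

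Finally I would derive the contradiction. Suppose $\mathcal{G}$ were Huber constructible and choose a finite stratification of $C^{\mathrm{ad}}$ into constructible locally closed subsets $\{S_i\}$ with each $\mathcal{G}|_{S_i}$ locally constant of finite stalk; let $S_{i_0}$ be the stratum containing $x$. Local constancy produces a neighbourhood of $x$ in $S_{i_0}$ on which $\mathcal{G}$ is constant of rank two, and since $x$ is the only point of rank two this neighbourhood must be $\{x\}$, so $\{x\}$ is open in $S_{i_0}$. Intersecting with a small rational (retrocompact open) neighbourhood $W$ of $x$ gives $\{x\} = S_{i_0} \cap W$, a constructible subset of $C^{\mathrm{ad}}$. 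But $C$ is proper, so $C^{\mathrm{ad}}$ is quasi-compact, and the argument of the text example on the disc applies verbatim: a constructible set would have quasi-compact complement, whereas $C^{\mathrm{ad}} \smallsetminus \{x\}$ is not quasi-compact, as one sees after applying the continuous retraction $C^{\mathrm{ad}} \to C^{\Berk}$, under which $x$ is not isolated. Hence $\{x\}$ is not constructible, a contradiction, and $\mathcal{G}$ is not Huber constructible. I expect the last step to be the main obstacle: one must verify carefully that Huber constructibility of $\mathcal{G}$ genuinely forces the jump locus to be the constructible singleton $\{x\}$, thereby reducing the whole statement to the non-constructibility of a closed point.
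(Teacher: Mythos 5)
Your proposal is correct and follows essentially the same route as the paper: the nodal cubic, the triangle of Lemma \ref{suitable singular curve}, the comparison isomorphism of Proposition \ref{comparison result}, the computation that the stalk of $j_*\Lambda$ at the node is $\Lambda \oplus \Lambda$ (the paper does this via the normalization and SGA~4, you via the punctured strict henselization, which is the same fact), and the final reduction to the non-constructibility of the closed point $\{x\}$. The only difference is presentational, in that you spell out more explicitly both the identification $H^{-2}(p_C^!\Lambda) \cong j_*\Lambda$ and the step showing that Huber constructibility would force $\{x\}$ to be a constructible subset.
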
 
 \begin{proof} 
   Let $C$ be the singular curve in $\mathbb{P}^2_k$ given by the homogenous 
   equation $\langle Y^2 = X^2(X-Z) \rangle$.
    Observe that $C$ has a singularity at the point $x := (0:0:1)$.    
    Let $f : \tilde{C} \to C$ denote the normalization of $C$. 
    Since the morphism $f$ is finite, $f_*$ is exact. 
    A quick calculation shows that the preimage of $x$ in $\tilde{C}$ has two points. 
    Let $U := C \smallsetminus \{x\}$, $j : U \hookrightarrow C$ 
    and $j' : U \hookrightarrow \tilde{C}$ 
   be the associated open immersions.
   We have that 
   $Rj_* = f_* Rj'_*$. 
   Let $y$ be a preimage of $x$ in $\tilde{C}$. 
   By \cite[Exposé VIII, Théor\`{e}me 5.2]{sga4tome2}, we have that $(j'_*(\Lambda))_{\bar{y}} = H^0(K(y),\Lambda)$ where 
   $K(y)$ is the quotient field of the strict henselization of $\mathcal{O}_{\tilde{C},y}$. 
   Hence we get that $[j_*(\Lambda)]_{\bar{x}} = \Lambda \oplus \Lambda$. 
   This must mean in particular that $j_*(\Lambda)$ cannot be locally constant and 
   is only trivial along the stratification of $C$ given by $\{U,x\}$.      

  By Lemma \ref{suitable singular curve} and 
  Lemma \ref{comparison result}, we see that $H^{-2}(p_{C^{\mathrm{ad}}}^!(\Lambda))$ cannot be locally constant and is only trivial along the 
   stratification of $C^{\mathrm{ad}}$ given by $\{U^{\mathrm{ad}},x\}$. 
   However, $\{x\}$ is not a constructible subset of $C^{\mathrm{ad}}$. Hence 
   $H^{-2}(p_{C^{\mathrm{ad}}}^!(\Lambda))$ is not Huber constructible.  
  \end{proof} 
  
   \subsection{Stability by upper shriek} \label{stabbyupshrei}
  
In general, upper shriek does not preserve constructibility nor semi-constructibility, cf. Example \ref{es:nonstabupshr}. However in this section we prove that for a morphism of fine $k$-adic spaces $f \colon X \to Y$, $f^!$ of a Huber constructible sheaf is constructible. In particular the dualizing complex (which appears in Verdier duality) is constructible. The main ingredients are Gabber's weak uniformization theorem \cite[Exp. VII, Theorem 1.1]{TraGabbunilo} and Deligne's cohomological descent theory (cf. \cite[Exposé Vbis]{sga4tome2}  and \cite{Del74THdeHIII}) adapted to the setting of adic spaces.
  
  First we need a factorization lemma.
  
  \begin{lem} \label{lem:factmorfinek}
  Let $f \colon X \to Y$ be a morphism of fine $k$-adic spaces. Then locally (for the analytic topology) on $X$, $f$ factors into $X \xrightarrow{i} Z \xrightarrow{g} Y$ where $Z$ is a fine $k$-adic space, $i$ is a closed embedding and $g$ is a smooth morphism of pure dimension $d$. 
  \end{lem}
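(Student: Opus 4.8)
The plan is to reduce immediately to the affinoid situation and then exhibit the factorization through a relative polydisc, which is the adic analogue of the classical fact that a finite type morphism factors locally as a closed immersion into affine space followed by the smooth structure projection. The statement is local on $X$ for the analytic topology, so first I would invoke Lemma \ref{lem:adlfttf}, by which $f$ is locally of finite type; Definition \ref{finite type ring morphism}(2) then provides, for any $x \in X$, affinoid open neighbourhoods $U = \Spa(B,B^+) \ni x$ and $V = \Spa(A,A^+) \ni f(x)$ with $f(U) \subset V$ such that $(A,A^+) \to (B,B^+)$ is topologically of finite type. I would replace $X$ and $Y$ by $U$ and $V$ and prove the lemma in this affinoid setting.

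Next I would unwind the definition of topologically of finite type, which furnishes a continuous, surjective, open homomorphism $A\langle T_1, \ldots, T_d\rangle_M \twoheadrightarrow B$ for some tuple $M$. Since $A$ is Tate, $\pi$ is a unit in $A$, and after rescaling the variables one may take $M_i = \{1\}$, so that $A\langle T_1, \ldots, T_d\rangle_M$ becomes the standard relative Tate algebra $A\langle T_1, \ldots, T_d\rangle = \{\sum a_v T^v : a_v \to 0\}$, whose associated affinoid space is the relative unit polydisc over $Y$. Writing $\mathfrak{a}$ for the kernel, this yields $B \cong A\langle T_1, \ldots, T_d\rangle/\mathfrak{a}$, with $B^+$ the integral closure of the image of the ring of integral elements $C \subset A\langle T_1, \ldots, T_d\rangle$ attached to $A^+$.

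With this in hand I would set $Z := \Spa(A\langle T_1, \ldots, T_d\rangle, C)$, let $g \colon Z \to Y$ be the projection, and let $i \colon X \to Z$ be the morphism induced by the surjection $A\langle T_1, \ldots, T_d\rangle \twoheadrightarrow B$. Since $A\langle T_1, \ldots, T_d\rangle = A \widehat{\otimes}_k k\langle T_1, \ldots, T_d\rangle$, the space $Z$ is identified with the fibre product $Y \times_{\Spa(k,k^0)} \mathbb{D}^d$, where $\mathbb{D}^d$ is the $d$-dimensional closed unit polydisc over $k$; as $\mathbb{D}^d$ and $Y$ are fine, $Z$ is fine by Lemma \ref{lem:morsitfin}. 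Because $A$ is Noetherian the algebra $A\langle T_1, \ldots, T_d\rangle$ is Noetherian, so $\mathfrak{a}$ is finitely generated and $i$ is a closed embedding, while $f|_U = g \circ i$. It remains only to see that $g$ is smooth of pure dimension $d$.

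The routine steps above — the identification of $A\langle T\rangle_M$ with the standard relative Tate algebra and the finite generation of $\mathfrak{a}$ — are harmless, and the hard part is precisely the verification that the projection $g$ from the relative polydisc is smooth of pure dimension $d$ in Huber's sense. For this I would appeal directly to the characterization of smooth morphisms of adic spaces in \cite{hub96}, using that $A \to A\langle T_1, \ldots, T_d\rangle$ is topologically of finite type and formally smooth, with the module of relative differentials free of rank $d$, so that the relative dimension equals $d$ at every point.
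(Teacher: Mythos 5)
Your proposal follows the same route as the paper's proof: restrict to affinoid neighbourhoods $U = \Spa(B,B^+)$ and $V = \Spa(A,A^+)$ using that $f$ is (locally) topologically of finite type, use the resulting presentation of $B$ to realize $U$ as a closed adic subspace of a relative polydisc over $V$, and conclude because the projection from that polydisc to $V$ is smooth. In the paper this last fact is supplied by \cite[Corollary 1.6.10]{hub96}, which gives smoothness of $\Spa(A\langle X_1,\ldots,X_n\rangle_M, C) \to V$ for an \emph{arbitrary} tuple $M$; that corollary is the precise reference behind the ``characterization of smooth morphisms'' you invoke at the end, and since it already covers general $M$, no reduction to the standard Tate algebra is needed at all.

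The one step that is wrong as written is the rescaling claim: ``since $A$ is Tate \ldots\ after rescaling the variables one may take $M_i = \{1\}$''. Rescaling only handles singletons: if $M_i = \{s_i\}$, then openness of $s_iA$ forces $s_i$ to be a unit (some $\pi^N$ lies in $s_iA$ and $\pi$ is invertible), and $X_i \mapsto s_iX_i$ does the job. But the $M_i$ are finite \emph{sets}, and when $M_i$ has two or more elements, none of which is a unit (e.g. $M_1 = \{a,b\}$ with $(a,b) = A$), no substitution in the single variable $X_1$ converts $A\langle X_1\rangle_M$ into $A\langle X_1\rangle$. The true statement over a Tate base --- essentially the equivalence of Huber's notion of topologically of finite type with the classical one, cf. \cite{hub93} --- is that such an algebra is a quotient of a standard Tate algebra in possibly \emph{more} variables (for $M_1 = \{a,b\}$ one maps $A\langle Y_1,Y_2\rangle \to A\langle X_1\rangle_M$ by $Y_1 \mapsto aX_1$, $Y_2 \mapsto bX_1$), which still yields a closed embedding into a relative polydisc, just of larger relative dimension. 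Either repair --- adding variables, or simply quoting \cite[Corollary 1.6.10]{hub96} for general $M$ as the paper does --- makes your argument go through. Your remaining verifications (fineness of $Z$ via Lemma \ref{lem:morsitfin}, closedness of $i$ from Noetherianity of the relative Tate algebra) are correct and are left implicit in the paper's proof.
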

  
  \begin{proof}
    Let $x \in X$ and $y := f(x)$. By definition of a morphism of finite type, there exists 
    open affinoid neighbourhoods $U := \mathrm{Spa}(B,B^+) \subset X$ and $V := \mathrm{Spa}(A,A^+) \subset Y$ such that 
    $f$ restricts to a morphism $U \to V$ which is topologically of finite type. 
    By definition, $f_{|U}$ is induced by a morphism $A \to B$ that factors 
    through a surjective open continuous map $g : A\langle X_1,\ldots,X_n\rangle_M \to B$  where $M$ is as in Definition \ref{finite type ring morphism}.  
    Hence, we can identify $U$ with a closed adic sub-space of $\mathrm{Spa}(\langle A\langle X_1,\ldots,X_n\rangle_M,C)$ where $C$ is
    as in Definition \ref{finite type ring morphism}.
    By \cite[Corollary 1.6.10]{hub96}, 
    the morphism $$\mathrm{Spa}(\langle A\langle X_1,\ldots,X_n\rangle_M,C) \to V$$ is smooth.     
   
 %The result is true for schemes. \textbf{This needs to be completed.} Some version of this is proven in Proposition 1.6.8(d), \cite{hub96}.
  \end{proof}

\begin{lem} \label{uppershbaschgrfi}
Let $K/k$ be an extension of algebraically closed non-Archimedean fields. Let $X$ be a fine $k$-adic space and $X' := X \times_k K$. Then from the cartesian diagram
  $$
\begin{tikzcd}[row sep = large, column sep = large]
X' \arrow[d, "g'"] \arrow[r, "p_{X'}"] &
K \arrow[d, "g"] \\
X \arrow[r, "p_X"]
&
k  
\end{tikzcd}
$$
the canonical morphism (coming from adjunction)
\[
g'^*p_X^!(\Lambda) \to p_{X'}^!(\Lambda)
\]
is an isomorphism.
\end{lem}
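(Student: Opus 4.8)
The plan is to reduce the assertion, via a local factorisation of $p_X$, to the two basic cases of a smooth morphism and a closed immersion: the smooth case is handled by Poincaré duality and the closed-immersion case by the localisation triangle. This isolates the single genuinely non-formal ingredient, namely base change for $Rj_*$ along the field extension $K/k$, which I expect to be the main obstacle.

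First I would note that the statement is local on $X$ for the analytic topology: if $u \colon W \hookrightarrow X$ is an open immersion then $u^! = u^*$, the cartesian square defining $X'$ pulls back to the analogous square for $W$, and the base change morphism for $p_W = p_X \circ u$ is the restriction along $u$ of the one for $p_X$; hence it suffices to prove the claim after replacing $X$ by the members of an analytic cover. By Lemma \ref{lem:factmorfinek} I may then assume $p_X$ factors as $X \xrightarrow{i} Z \xrightarrow{q} \mathrm{Spa}(k)$ with $Z$ fine, $i$ a closed embedding and $q$ smooth of pure dimension $d$. This factorisation is stable under $- \times_k K$, yielding $X' \xrightarrow{i_K} Z_K \xrightarrow{q_K} \mathrm{Spa}(K)$ with $i_K$ a closed embedding, $q_K$ smooth of pure dimension $d$, and projection $\pi_Z \colon Z_K \to Z$ satisfying $\pi_Z \circ i_K = i \circ g'$. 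Since $p_X^! = i^! q^!$ and the base change transformations for upper-shriek functors compose, it is enough to establish the base change isomorphism separately for $q^!$ and for $i^!$.

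For the smooth morphism $q$, Poincaré duality (cf. \cite[\S 7.5]{hub96}) gives $q^!\Lambda \simeq \Lambda(d)[2d]$, and likewise $q_K^!\Lambda \simeq \Lambda(d)[2d]$; since $\pi_Z^*$ carries $\Lambda(d)[2d]$ to $\Lambda(d)[2d]$ and the purity isomorphism is natural under base change, the base change morphism is an isomorphism for $q^!$. For the closed immersion $i$, let $j \colon V := Z \smallsetminus X \hookrightarrow Z$ be the complementary open immersion, with base change $j_K \colon V_K \hookrightarrow Z_K$ and projection $\pi_V \colon V_K \to V$. Applying the exact functor $\pi_Z^*$ to the localisation triangle $i_* i^! \to \mathrm{id} \to Rj_* j^*$ (evaluated at $\mathcal{G} := q^!\Lambda$) and comparing with the localisation triangle for $i_K$, using the identities $\pi_Z^* i_* = i_{K*} g'^*$ (base change for the closed, hence proper, immersion $i$) and $j_K^* \pi_Z^* = \pi_V^* j^*$, reduces the base change isomorphism for $i^!$ to the base change isomorphism $\pi_Z^* Rj_* \xrightarrow{\sim} Rj_{K*} \pi_V^*$ for the open immersion $j$; here one concludes using that $i_{K*}$ is fully faithful.

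The remaining point — base change for $Rj_*$ along the extension $K/k$ of algebraically closed non-Archimedean fields — is the main obstacle, and I would verify it on stalks. For a geometric point of $Z_K$ lying over a geometric point $\bar z$ of $Z$, both stalks of $\pi_Z^* Rj_* \Lambda$ and of $Rj_{K*} \pi_V^* \Lambda$ compute the étale cohomology of the constant sheaf on the trace of $V$ on an étale neighbourhood of $\bar z$, respectively on its base change to $K$; these agree by the invariance of the étale cohomology of fine adic spaces under extension of an algebraically closed complete base field — the same input, via the continuity of the Galois action \cite[Proposition 2.6.12]{hub96}, that underlies Lemma \ref{lem:redberkad}. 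Granting this, the two localisation triangles are identified, the canonical base change morphism is an isomorphism, and the lemma follows.
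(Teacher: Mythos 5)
Your reduction follows the paper's own route almost step for step: the morphism is obtained by adjunction from base change for compact support, the problem is local for the analytic topology, Lemma \ref{lem:factmorfinek} factors $p_X$ into a closed embedding $i$ followed by a smooth morphism, Poincar\'e duality disposes of the smooth factor, and the localisation triangle $i_*i^!(\Lambda) \to \Lambda \to Rj_*(\Lambda) \to \cdot$ reduces everything to the single claim that $Rj_*(\Lambda)$ commutes with the ground field extension $K/k$. Up to that point your argument is correct and is essentially the paper's proof.

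The gap is in how you discharge this last claim. You assert it can be checked on stalks and that the required comparison follows from continuity of the Galois action, \cite[Proposition 2.6.12]{hub96}, i.e.\ the same input that underlies Lemma \ref{lem:redberkad}. This is wrong on two counts. First, that proposition only yields the statement used in Lemma \ref{lem:redberkad}: cohomology over the completed algebraic closure $\widehat{L^a}$ of a non-algebraically-closed field $L$ is the colimit of the cohomologies over the finite subextensions $K/L$. For an extension $K/k$ of \emph{algebraically closed} fields the Galois group is trivial and there are no finite subextensions --- $K$ is typically a huge transcendental extension of $k$ --- so continuity of the Galois action says nothing about the comparison you need; invariance of \'etale cohomology of non-Archimedean spaces under such extensions is itself a deep theorem, not a formal consequence. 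Second, even granting that invariance, your stalkwise identification is too quick: the stalk of $Rj_{K*}\pi_V^*\Lambda$ at a geometric point $\bar z'$ of $Z_K$ is a colimit over \emph{all} \'etale neighbourhoods of $\bar z'$ in $Z_K$, and neighbourhoods of the form $U \times_k K$ with $U \to Z$ \'etale are not cofinal among these (already at the Gauss point of a disc there are many more neighbourhoods defined only over $K$); so the two sides are not literally the same colimit, and identifying them is precisely the content of the theorem rather than an observation. The paper supplies the missing ingredient by citation: the corresponding statement for Berkovich spaces, established in the proof of \cite[Theorem 7.6.1]{berk}, transported to fine adic spaces via Huber's comparison theorem \cite[Theorem 8.3.5]{hub96}. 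Replacing your final paragraph with that input (or with an actual proof of the invariance theorem) would complete the argument.
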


\begin{proof}
By base change for compact support (cf. \cite[Theorem 5.5.9(i)]{hub96}), we have a canonical morphism (given by the counit map)
\[
p_{X'!}g'^{*}p_X^!(\Lambda) \simeq g^*p_{X!}p_X^!(\Lambda) \to g^*(\Lambda)
\]
and by adjunction this gives a morphism $g'^*p_X^!(\Lambda) \to p_{X'}^!(\Lambda)$. Proving it is an isomorphism is local for the analytic topology on $X'$, so by Lemma \ref{lem:factmorfinek}, we can assume that $p_X$ factorises as $X \xrightarrow{i} Y \xrightarrow{p_Y} k$ where $Y$ is a fine $k$-adic space, $i$ is a closed embedding and $p_Y$ is a smooth morphism of pure dimension. Let $j \colon Z \to Y$ be the complement of $i$, so that we have a distinguised triangle
\[
i_*i^!(\Lambda) \to \Lambda \to Rj_*(\Lambda) \to \cdot
\]
It suffices to prove that $i^!(\Lambda)$ is compatible with change of base field, or equivalently $Rj_*(\Lambda)$ is compatible with change of base field. This follows from the equivalent statement for Berkovich spaces (cf. the proof of \cite[Theorem 7.6.1]{berk}) and \cite[Theorem 8.3.5]{hub96}.
\end{proof}

%\begin{rem} \label{rem:basechagroufelgenupsh}
%The same proof of Lemma \ref{uppershbaschgrfi} shows the following more general result which will be needed later: Let $K/k$ be an extension of algebraically closed non-Archimedean fields. Let $f \colon X \to Y$ be a morphism of fine $k$-adic spaces. Denote $X' := X \times_k K$ and $Y' := X \times_k K$ the base changes to $K$. Then from the cartesian diagram
%  $$
%\begin{tikzcd}[row sep = large, column sep = large]
%X' \arrow[d, "g'"] \arrow[r, "f'"] &
%Y' \arrow[d, "g"] \\
%X \arrow[r, "f"]
%&
%Y  
%\end{tikzcd}
%$$
%the canonical morphism (coming from adjunction)
%\[
%g'^*f^!(\Lambda) \to f'^!(\Lambda)
%\]
%is an isomorphism.
%\end{rem}

\subsubsection{Cohomological descent} \label{cohomological descent}
We need a base change result for upper shriek.

\begin{lem} \label{lem:uppshreibaschange1}
Let
  $$
\begin{tikzcd}[row sep = large, column sep = large]
X' \arrow[d, "g'"] \arrow[r, "f'"] &
Y' \arrow[d, "g"] \\
X \arrow[r, "f"]
&
Y  
\end{tikzcd}
$$
be a cartesian diagram of fine $k$-adic spaces. Then for $\mathscr{F} \in D^+(Y'_{\text{ét}}, \Lambda)$ there is an isomorphism of functors
\[
f^! \circ Rg_*(\mathscr{F}) \simeq Rg'_* \circ f'^!(\mathscr{F}).
\]
\end{lem}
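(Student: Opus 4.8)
The plan is to obtain the statement purely formally, by passing to right adjoints in the compactly-supported base change isomorphism; the only input with genuine geometric content is that base change, which is already available. Concretely, I would first record the proper (compactly-supported) base change isomorphism for the given cartesian square: there is a canonical isomorphism
\[
g^* \circ Rf_! \xrightarrow{\sim} Rf'_! \circ g'^*
\]
of functors $\mathcal{D}^+(X^f_{\text{\'et}},\Lambda) \to \mathcal{D}^+({Y'}^f_{\text{\'et}},\Lambda)$. This is Huber's base change for compact support \cite[Theorem 5.5.9(i)]{hub96}; it applies in our setting because every morphism of fine $k$-adic spaces is separated, taut and of finite type by Lemma \ref{lem:septaut}, and the computation transports to the fine \'etale site via Proposition \ref{comparison theorem}. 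Both sides send $\mathcal{D}^+$ to $\mathcal{D}^+$, since $g^*$ and $g'^*$ are exact and $Rf_!$, $Rf'_!$ preserve bounded-below complexes.

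Next I would pass to right adjoints. By Theorem \ref{upper shriek} we have the adjunctions $Rf_! \dashv f^!$ and $Rf'_! \dashv f'^!$, while $g^* \dashv Rg_*$ and $g'^* \dashv Rg'_*$ are the standard adjunctions of the fine \'etale topos. Since the right adjoint of a composite is the composite of the right adjoints in the reverse order, the right adjoint of $g^* \circ Rf_!$ is $f^! \circ Rg_*$, and the right adjoint of $Rf'_! \circ g'^*$ is $Rg'_* \circ f'^!$. By the mate (Beck--Chevalley) calculus, the natural transformation $f^! \circ Rg_* \to Rg'_* \circ f'^!$ conjugate to the base change map of the previous step is invertible precisely because that base change map is; equivalently, right adjoints of naturally isomorphic functors are canonically isomorphic. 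This yields the desired isomorphism $f^! \circ Rg_* \simeq Rg'_* \circ f'^!$.

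I do not expect a serious obstacle here, as the content is concentrated in \cite[Theorem 5.5.9(i)]{hub96} and the remainder is the formal mate calculus. The only points demanding care are bookkeeping: checking that the cited base change isomorphism is the correct Beck--Chevalley $2$-cell, so that its conjugate is the natural comparison map one actually wants, and confirming that all four functors are defined and adjoint at the level of the bounded-below derived categories $\mathcal{D}^+$ on which Theorem \ref{upper shriek} provides $f^!$ and $f'^!$. Since $Rf_!$ preserves $\mathcal{D}^+$ and $f^!$ lands in $\mathcal{D}^+$, these compatibilities hold and the argument goes through.
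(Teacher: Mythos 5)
Your proof is correct and is essentially the paper's own argument: the paper's proof reads, in full, that the lemma ``follows from \cite[Theorem 5.4.6]{hub96} and adjunction,'' which is exactly your two steps of invoking Huber's compactly-supported base change $g^* \circ Rf_! \simeq Rf'_! \circ g'^*$ and then passing to right adjoints. The only cosmetic difference is the citation (the paper points to Theorem 5.4.6 rather than Theorem 5.5.9(i), though it uses the latter for the same purpose elsewhere), so no changes are needed.
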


\begin{proof}
This follows from \cite[Theorem 5.4.6]{hub96} and adjunction.
\end{proof}

Since we will be dealing with hypercoverings, what we will really need is a simplicial version of Lemma \ref{lem:uppshreibaschange1}. So let us recall and apply some definitions and facts from cohomological descent theory to fine $k$-adic spaces. Something similar has been done by Berkovich in \cite[\S 1.2]{berk13} and we refer the reader there for more details. A simplicial object of $Fine_{k-ad}$ is a contravariant functor $\Delta \to Fine_{k-ad}$, where $\Delta$ is the category whose objects are the sets $[n] = \{0,1, \ldots, n\}$, $n \geq 0$ and morphisms are nondecreasing maps. Such an object is denoted by $X_\bullet = (X_n)_{n \geq 0}$, where $X_n$ is the image of $[n ]$ and $X_\bullet(f)$ denotes the morphism $X_m \to X_n$ that corresponds to a morphism $f \colon [n] \to [m]$. One can then make sense of the étale site $X_{\bullet \text{ét}} $ and the étale topos $X_{\bullet \text{ét}}^\sim$.

Let $S$ be a fine $k$-adic space. This defines a constant simplicial object $S_\bullet$ which corresponds to the functor on $\Delta$ that takes the constant value $S$. By an augmentation of a simplicial object $X_\bullet$ to $S$, we mean a morphism
\[
a = (a_n)_{n \geq 0} \colon X_\bullet \to S_\bullet
\] 
which is briefly denoted by $a \colon X_\bullet \to S$. If $\mathscr{F}$ is an étale sheaf on $S$, then
\[
a^*(\mathscr{F}) = (a_n^*\mathscr{F})_{n \geq 0}
\]
is an étale sheaf on $X_\bullet$. The functor $\mathscr{F} \mapsto a^*\mathscr{F}$ has a right adjoint $a_*$ defined by
\[
a_*(\mathscr{F}^\bullet) := \ker(a_{0*}(\mathscr{F}^0) \rightrightarrows a_{1*}(\mathscr{F}^1))
\]
where the two arrows are induced by the two morphisms $[0] \rightrightarrows [1]$. Finally for a morphism of simplicial fine $k$-adic spaces
\[
\varphi = (\varphi_n)_{n \geq 0} \colon Y_\bullet \to X_\bullet
\]
we can define $\varphi^!$ and $R\varphi_!$ term by term.

\begin{lem} \label{lem:uppshreibaschange}
Let $X$ and $Y$ be fine $k$-adic spaces, and $X'_\bullet$ and $Y'_\bullet$ simplicial fine $k$-adic spaces. Suppose
  $$
\begin{tikzcd}[row sep = large, column sep = large]
X'_\bullet \arrow[d, "g'"] \arrow[r, "f'"] &
Y'_\bullet \arrow[d, "g"] \\
X \arrow[r, "f"]
&
Y  
\end{tikzcd}
$$
is a cartesian diagram, where the vertical arrows are augmented simplicial $k$-adic spaces. Then for $\mathscr{F} \in D^+(Y'_{\bullet \text{ét}}, \Lambda)$ there is an isomorphism of functors
\[
f^! \circ Rg_*(\mathscr{F}) \simeq Rg'_* \circ f'^!(\mathscr{F}).
\]
\end{lem}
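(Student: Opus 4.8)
The plan is to reduce the simplicial statement to the non-simplicial Lemma~\ref{lem:uppshreibaschange1} by passing to left adjoints, the point being that although $Rg_*$ of a simplicial sheaf is computed by a \emph{totalization} rather than degreewise, its left adjoint $g^*$ \emph{is} degreewise. Recall that $f^!$ is right adjoint to $Rf_!$ (Theorem~\ref{upper shriek}), that $Rg_*$ is right adjoint to the exact pullback $g^*$, and that for the augmentation $g'$ and the simplicial morphism $f'$ the functors $g'^*$, $Rg'_*$, $f'^!$, $Rf'_!$ are defined degreewise together with their degreewise adjunctions. A formal manipulation of these adjunctions shows that $f^!\circ Rg_*$ is right adjoint to $g^*\circ Rf_!$, and that $Rg'_*\circ f'^!$ is right adjoint to $Rf'_!\circ g'^*$, the latter two being functors $D^+(X_{\text{\'et}},\Lambda)\to D^+(Y'_{\bullet\text{\'et}},\Lambda)$. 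It therefore suffices to produce a natural isomorphism
\[
g^*\circ Rf_! \simeq Rf'_!\circ g'^*,
\]
after which the two right adjoints agree by uniqueness of adjoints.

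To construct this isomorphism I would argue one simplicial degree at a time. For each $n\ge 0$ the square
$$
\begin{tikzcd}[row sep = large, column sep = large]
X'_n \arrow[d, "g'_n"] \arrow[r, "f'_n"] &
Y'_n \arrow[d, "g_n"] \\
X \arrow[r, "f"]
&
Y
\end{tikzcd}
$$
is cartesian, so Huber's base change theorem \cite[Theorem 5.4.6]{hub96} (the input to Lemma~\ref{lem:uppshreibaschange1}) gives $g_n^*\circ Rf_!\simeq Rf'_{n!}\circ g'^*_n$. Since $g^*$, $g'^*$ and $Rf'_!$ are computed in degree $n$ by $g_n^*$, $g'^*_n$ and $Rf'_{n!}$, these are exactly the components of the sought-after morphism in $D^+(Y'_{\bullet\text{\'et}},\Lambda)$.

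The real work, and the only delicate step, is to check that these degreewise isomorphisms are compatible with the cosimplicial structure, i.e. that they commute with the transition maps induced by the nondecreasing maps $\alpha\colon[m]\to[n]$. This is again a base change assertion: for each $\alpha$ the square relating $X'_\bullet(\alpha)\colon X'_n\to X'_m$ and $Y'_\bullet(\alpha)\colon Y'_n\to Y'_m$ is cartesian (both being base changed from $f\colon X\to Y$), and the required commutativity is precisely the functoriality of Huber's base change isomorphism with respect to composition of cartesian squares. This bookkeeping is the adic analogue of the cohomological descent formalism used by Berkovich in \cite[\S 1.2]{berk13} and in \cite[Expos\'e Vbis]{sga4tome2}, and transporting it through Lemma~\ref{lem:uppshreibaschange1} is routine. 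As a conceptual check one may instead compute $Rg_*\mathscr{F}$ as the totalization of the cosimplicial object $n\mapsto Rg_{n*}\mathscr{F}^n$ and use that the right adjoint $f^!$ preserves this homotopy limit; combined with the degreewise Lemma~\ref{lem:uppshreibaschange1} this recovers the same isomorphism and explains why degreewise base change is exactly what the statement needs.
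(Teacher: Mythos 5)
Your proposal is correct and follows essentially the same route as the paper: pass to left adjoints so that the statement reduces to a simplicial version of Huber's base change theorem \cite[Theorem 5.4.6]{hub96}, namely $g^*\circ Rf_!\simeq Rf'_!\circ g'^*$, which is then verified degree by degree on honest cartesian squares of fine $k$-adic spaces. Your additional attention to compatibility of the degreewise isomorphisms with the cosimplicial transition maps is a point the paper leaves implicit, but it is an elaboration of the same argument rather than a different one.
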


\begin{proof}
By adjunction it suffices to prove a simplicial version of \cite[Theorem 5.4.6]{hub96}: That is for $\mathscr{G} \in D^+(X_{ \text{ét}}, \Lambda)$ an isomorphism $g^* \circ Rf_!(\mathscr{G}) \simeq f'_! \circ g'^*(\mathscr{G})$. This can be checked in each separate degree, where one has an honest cartesian diagram of fine $k$-adic spaces.
\end{proof}

We now turn our attention to specific hypercoverings. We need the following analogue of \cite[Theorem 1.3.1]{berk13}.

\begin{lem} \label{lem:hypcovtru}
Let $X$ be a fine $k$-adic space. Then there exists a 
surjective morphism $\coprod_{i \in I} Y_i \to X$ with 
each $Y_i$ of the form $\mathcal{Y}_{\eta}^\wedge$, where $\mathcal{Y}$ is 
an affine scheme finitely presented over $k^0$.
\end{lem}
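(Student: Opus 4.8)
The plan is to reduce to an affine situation by choosing a formal model, and then to \emph{algebraize} each affine piece; the conceptual content sits entirely in this algebraization step, which is the adic shadow of the geometric input behind \cite{berk13}. First I would use that $X$, being fine, is quasi-compact and separated, so by Raynaud's theorem (cf. \S\ref{rigid varieties and adic spaces}) it admits a quasi-compact separated admissible formal model $\mathfrak{X}$ over $k^0$ with $\mathfrak{X}_\eta \cong X$. Covering $\mathfrak{X}$ by finitely many affine formal opens $\mathfrak{X}_j = \Spf(A_j)$, with each $A_j$ topologically of finite presentation over $k^0$, the generic fibres $(\mathfrak{X}_j)_\eta$ are affinoid fine $k$-adic spaces and the open immersions $(\mathfrak{X}_j)_\eta \hookrightarrow X$ form a finite cover. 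Hence $\coprod_j (\mathfrak{X}_j)_\eta \to X$ is already surjective, and it remains only to exhibit each $(\mathfrak{X}_j)_\eta$ in the form $\mathcal{Y}_\eta^\wedge$ with $\mathcal{Y}$ affine and finitely presented over $k^0$.

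The remaining point is the algebraization statement: every $k^0$-algebra $A$ topologically of finite presentation is the $\pi$-adic completion $\widehat{B}$ of a finitely presented $k^0$-algebra $B$. Granting this, $\mathfrak{X}_j = \Spf(A_j) = \Spf(\widehat{B_j}) = \widehat{\mathcal{Y}_j}$ with $\mathcal{Y}_j := \Spec(B_j)$, so that $(\mathfrak{X}_j)_\eta = \mathcal{Y}_{j,\eta}^\wedge$ has exactly the required shape. To produce $B$ one writes $A = k^0\langle T_1,\ldots,T_n\rangle/(g_1,\ldots,g_r)$ with the $g_i \in k^0\langle \overline{T}\rangle$ restricted power series, approximates each $g_i$ by a polynomial $g_i'$ with $g_i - g_i' \in \pi^N k^0\langle \overline{T}\rangle$, and sets $B := k^0[\overline{T}]/(g_1',\ldots,g_r')$. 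Reducing modulo $\pi$ already gives $B/\pi B \cong A/\pi A$, since modulo $\pi$ restricted power series are polynomials; promoting this to an isomorphism of $\pi$-adic completions $\widehat{B} \cong A$ is precisely the algebraization theorem. This is the main obstacle, and it is genuinely delicate because $k^0$ need not be Noetherian, so that naive truncation of the defining ideal does \emph{not} preserve the completion (the ideals $(g_i')$ and $(g_i)$ generally differ); the correct statement and its proof belong to the non-Noetherian formal/rigid geometry of Fujiwara--Kato and Bosch--L\"utkebohmert, and are the direct analogue of the input used by Berkovich.

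Alternatively, and perhaps most cleanly, one may bypass a direct algebraization by transporting the result across the equivalence of \S\ref{rigid varieties and adic spaces} between fine $k$-adic spaces and compact separated strictly $k$-analytic Berkovich spaces. Applying \cite[Theorem 1.3.1]{berk13} to the Berkovich space $X^{\mathrm{an}}$ yields a surjection $\coprod_i \mathcal{Y}_{i,\eta}^{\mathrm{an}} \to X^{\mathrm{an}}$ with each $\mathcal{Y}_i$ affine and finitely presented over $k^0$. One then checks that the comparison functor carries the Berkovich generic fibre of a formal scheme to its adic generic fibre, so that $\mathcal{Y}_{i,\eta}^{\mathrm{an}}$ corresponds to $\mathcal{Y}_{i,\eta}^\wedge$, and that surjectivity is preserved: a morphism of fine $k$-adic spaces is surjective if and only if the induced map on associated Berkovich spaces is, via the continuous retraction $X \to X^{\Berk}$. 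This produces the desired surjection $\coprod_i \mathcal{Y}_{i,\eta}^\wedge \to X$ and completes the proof.
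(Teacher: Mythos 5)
Your second route is, in outline, the paper's own proof: the paper also obtains the covering by applying \cite[Theorem 1.3.1]{berk13} to the Berkovich space attached to $X$ and then adifying. But the step you treat as routine is exactly the one the paper singles out as ``the only thing to check,'' and your justification of it does not hold up. You assert that a morphism of fine $k$-adic spaces is surjective if and only if the induced map on Berkovich spaces is, ``via the continuous retraction $X \to X^{\Berk}$.'' The retraction gives only the easy direction. For the direction you need (surjective on rank-one points $\Rightarrow$ surjective on all adic points), the retraction produces, for a higher-rank point $x$ with maximal generalization $\tilde{x}$, a preimage of $\tilde{x}$ but not of $x$; and since the image of a quasi-compact morphism of adic spaces is only pro-constructible --- in particular not stable under specialization (already the image of a rational subdomain fails this) --- nothing formal forces $x$ into the image. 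The paper closes this gap not with a general principle but by exploiting the specific shape of Berkovich's morphism: reducing to $X = \Spa(A)$, the covering of \cite{berk13} arises from a covering in the alteration topology of $\Spec(A)$, hence is built from proper surjective morphisms (cf. \cite[Proposition 2.6.9]{berk}) and \'etale ones (cf. \cite[Proposition 3.3.11]{berk}), and for these two classes surjectivity is preserved under adification by \cite[Proposition 8.3.4]{hub96} and its proof. If you insist on the general statement, you must prove it --- e.g.\ by showing that every nonempty constructible subset of a fine $k$-adic space contains a rank-one point, so that a pro-constructible set containing $X^{\Berk}$ is all of $X$ --- which is a real argument, not a one-line consequence of the retraction.

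Your first route has a more serious gap: the algebraization claim that every $k^0$-algebra topologically of finite presentation is the $\pi$-adic completion of a finitely presented $k^0$-algebra is not a theorem you can cite. It does not appear in Bosch--L\"utkebohmert or Fujiwara--Kato; over the non-Noetherian ring $k^0$ there is no Artin--Rees or approximation mechanism to promote the congruence $B/\pi B \cong A/\pi A$ into an isomorphism of completions, and the known algebraization results (Elkik and its refinements) require rig-smoothness or rig-\'etaleness hypotheses. Indeed, were such a general algebraization available, Berkovich's Theorem 1.3.1 --- whose proof goes through de Jong's alterations --- would be essentially unnecessary for the statement of this lemma, which is a strong hint that no such result is known. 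Note also that even granting the algebraization, the covering you produce is a Zariski covering by arbitrary algebraizable affinoids, with no trace of the smoothness built into Berkovich's construction; it is precisely that extra structure which Corollary \ref{cor:spehypsmooad} silently extracts from this lemma when it asserts that the hypercovering consists of affinoids in analytifications of \emph{smooth} affine $k$-schemes. So the first route rests on an unestablished (possibly false) assertion, and the second becomes a proof only once the surjectivity transfer is argued as the paper does.
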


\begin{proof}
This is a direct consequence of taking the adification of \cite[Theorem 1.3.1]{berk13}. The only thing to check is that the morphism in loc.cit. remains surjective after adification. It suffices to check this for $X = \Spa (A)$. The morphism in loc.cit. is then coming from the analytification of a covering in the alteration topology of $\Spec (A)$. Thus it is either a proper surjective morphism (cf. \cite[Proposition 2.6.9]{berk}) or an étale morphism (cf. \cite[Proposition 3.3.11]{berk}). In the former situation the adification of a proper surjective morphism is again surjective (cf. the proof of \cite[Proposition 8.3.4]{hub96}), and in the latter situation the adification is again surjective by Proposition 8.3.4 in loc.cit.
\end{proof}

\begin{cor} \label{cor:spehypsmooad}
Let $X$ be a fine $k$-adic space. Then $X$ admits a hypercovering in the étale topology of universal $\Lambda$-cohomological descent of the form $a \colon Y_{\bullet} \to X$, in which all the $Y_n$ are disjoint unions of affinoids in the analytifications of smooth affine schemes over $k$.
\end{cor}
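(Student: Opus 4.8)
The plan is to produce $a \colon Y_\bullet \to X$ by the standard coskeleton construction of cohomological descent theory, as adapted to adic spaces in \cite[\S 1.2]{berk13}, feeding Lemma \ref{lem:hypcovtru} in at every stage so that the smooth-affinoid shape of the terms is maintained. First I would isolate the relevant class $\mathcal{E}$ of covering morphisms, namely the surjections $\coprod_i \mathcal{Y}_{i,\eta}^\wedge \to W$ produced by Lemma \ref{lem:hypcovtru}, where each $\mathcal{Y}_i$ is affine and finitely presented over $k^0$. Since the covering of loc. cit. comes from the alteration topology, i.e. from alterations of $\Spec(A)$, and $k$ is algebraically closed hence perfect, the generic fibres $\mathcal{Y}_{i,\eta}$ may be taken smooth (regular equals smooth over a perfect field). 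Each $\mathcal{Y}_{i,\eta}^\wedge$ is then an affinoid inside the analytification of the smooth affine $k$-scheme $\mathcal{Y}_{i,\eta}$, so any $\mathcal{E}$-cover already has the form demanded in the statement.

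Next I would record that every morphism of $\mathcal{E}$ is of universal $\Lambda$-cohomological descent. As observed in the proof of Lemma \ref{lem:hypcovtru}, such a cover is, locally on its target, the adification of either a proper surjective or an \'etale surjective morphism; proper surjections are of universal cohomological descent by proper descent and \'etale surjections by \'etale descent, and both properties are stable under composition and arbitrary base change. Together with the base-change compatibility of $Rg_*$ and $f^!$ furnished by Lemma \ref{lem:uppshreibaschange}, this supplies precisely the input required by the descent formalism of \cite[Expos\'e Vbis]{sga4tome2}.

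The inductive assembly then proceeds as usual. Set $Y_0 \to X$ to be an $\mathcal{E}$-cover obtained from Lemma \ref{lem:hypcovtru}. Having built the $n$-truncated simplicial fine $k$-adic space $\mathrm{sk}_n Y_\bullet$, I form its $(n+1)$-st matching object $M_{n+1} := (\mathrm{cosk}_n \mathrm{sk}_n Y_\bullet)_{n+1}$; this is again a fine $k$-adic space because $Fine_{k-ad}$ admits fibre products by Lemma \ref{lem:morsitfin}, so Lemma \ref{lem:hypcovtru} supplies an $\mathcal{E}$-cover $Y_{n+1} \to M_{n+1}$. The augmented simplicial object $a \colon Y_\bullet \to X$ so obtained is by construction a hypercovering all of whose terms $Y_n$ are disjoint unions of affinoids in analytifications of smooth affine $k$-schemes. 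Deligne's theorem, in the adic form of \cite[\S 1.2]{berk13}, that a hypercovering whose transition covers lie in a base-change-stable class of morphisms of universal cohomological descent is itself of universal cohomological descent, then shows that $a$ is of universal $\Lambda$-cohomological descent.

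The main obstacle is not the combinatorial bookkeeping but the verification that the universal cohomological descent formalism genuinely transports to fine $k$-adic spaces: one must check that the matching objects $M_{n+1}$ remain inside $Fine_{k-ad}$ (which is where Lemma \ref{lem:morsitfin} is essential) and that the descent isomorphism $\mathscr{F} \xrightarrow{\sim} Ra_* a^* \mathscr{F}$, stably under base change, really follows from the degreewise proper and \'etale descent together with Lemma \ref{lem:uppshreibaschange}. This is exactly the package adapted from \cite[\S 1.2]{berk13}; the one genuinely new point is ensuring that the smooth-affinoid form survives at every level, which is handled simply by re-applying Lemma \ref{lem:hypcovtru} to each matching object.
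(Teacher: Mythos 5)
Your construction is correct in substance, but it is not the paper's argument. The paper's proof is a single sentence: it takes the \v{C}ech nerve $\text{cosk}_0(\coprod_{i \in I} Y_i/X) \to X$ of the \emph{one} covering produced by Lemma \ref{lem:hypcovtru}, and then invokes the generalizing base change theorem \cite[Theorem 4.1.1(c)]{hub96} together with \cite[Expos\'e Vbis, Proposition 3.2.4]{sga4tome2}. You instead run the Deligne-style inductive construction, re-applying Lemma \ref{lem:hypcovtru} to the matching object $(\text{cosk}_n\, \text{sk}_n Y_\bullet)_{n+1}$ at every simplicial level. The trade-off is real: the bare coskeleton route is shorter, but it only visibly produces terms of the advertised shape in degree $0$, since for $n \geq 1$ the terms of $\text{cosk}_0$ are the $(n+1)$-fold fibre products $Y_{i_0} \times_X \cdots \times_X Y_{i_n}$, which are closed adic subspaces of $Y_{i_0} \times_k \cdots \times_k Y_{i_n}$ and have no reason to be disjoint unions of affinoids in analytifications of \emph{smooth} affine schemes (fibre products of alterations over $X$ are typically singular). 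Your level-by-level refinement is exactly what guarantees the stated form of every $Y_n$, it is closer to Berkovich's original argument in \cite{berk13} that this corollary is modelled on, and your remark that smoothness of the generic fibres comes out of the alteration construction supplies a hypothesis that the statement of Lemma \ref{lem:hypcovtru} omits. Two details you pass over, neither fatal: when extending a truncation you must also adjoin degeneracies, so $Y_{n+1}$ is really the disjoint union of the new cover of $M_{n+1}$ with degenerate copies of the lower nondegenerate parts --- harmless, since the class of spaces in the statement is stable under finite disjoint unions; and the claim that proper surjective and \'etale surjective morphisms of fine $k$-adic spaces are of universal cohomological descent is not a formal transport from schemes but rests on Huber's base change theorems, which is precisely the work done by the paper's citation of \cite[Theorem 4.1.1(c)]{hub96}. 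Finally, Lemma \ref{lem:uppshreibaschange} is not actually needed to prove the corollary itself; it enters only in the applications (Proposition \ref{prop:huberstableuppeshrei} and Corollary \ref{cor:uppeshreconstshea}).
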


\begin{proof}
This follows by taking the the coskeleton of the morphism in Lemma \ref{lem:hypcovtru}, $\text{cosk}_0(\coprod_{i \in I} Y_i/X) \to X$, the generalizing base change theorem (\cite[Theorem 4.1.1(c)]{hub96}) and \cite[Exposé Vbis, Proposition 3.2.4]{sga4tome2}.
\end{proof}

  \begin{prop} \label{prop:huberstableuppeshrei}
  Let $X$ be a fine $k$-adic space with structure morphism $p_X \colon X \to k$. Then the dualizing complex $p_X^!\Lambda$ is constructible.
  \end{prop}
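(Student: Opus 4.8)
The plan is to verify the finiteness criterion for constructibility of Theorem \ref{theorem above} by reducing, through a cohomological descent argument, to the smooth situation where the dualizing complex is a shift of the constant sheaf. The starting point is that, since $D^{\mathrm{ad}}(\sF) = R\Homs(\sF, p_X^!\Lambda)$, one has $p_X^!\Lambda = D^{\mathrm{ad}}(\Lambda)$. As the constant sheaf $\Lambda$ is Huber constructible, hence constructible, Corollary \ref{Corollary 2}(1) already shows that $p_X^!\Lambda$ is semi-constructible; the whole content of the proposition is to upgrade this to constructibility, i.e. to show it is preserved by the base changes appearing in Definition \ref{constructible sheaves}. I would first reduce to an absolute finiteness statement. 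By Theorem \ref{theorem above}, $p_X^!\Lambda$ is constructible if and only if $R\Gamma(V, g^* p_L^*(p_X^!\Lambda))$ has finite cohomology for every complete algebraically closed extension $L/k$ and every morphism $g \colon V \to X_L$ of fine $L$-adic spaces. Lemma \ref{uppershbaschgrfi} identifies $p_L^*(p_X^!\Lambda) \simeq p_{X_L}^!\Lambda$, so, absorbing $L$ into the base field, it suffices to prove the following statement uniformly over every algebraically closed complete field: for $X$ fine over $k$ and any morphism $g \colon V \to X$ of fine $k$-adic spaces, the complex $R\Gamma(V, g^*(p_X^!\Lambda))$ has finite cohomology in every degree (recall $p_X^!\Lambda$ is bounded, since $\mathrm{dim.tr}(p_X) < \infty$ as in Theorem \ref{upper shriek}).

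Next I would invoke Corollary \ref{cor:spehypsmooad} to choose a hypercovering $a \colon Y_\bullet \to X$ of universal $\Lambda$-cohomological descent whose terms $Y_n$ are disjoint unions of affinoids in analytifications of smooth affine $k$-schemes. On the constant sheaf, cohomological descent gives an isomorphism $\Lambda_X \xrightarrow{\sim} Ra_*\Lambda_{Y_\bullet}$, where $\Lambda_{Y_\bullet} = a^*\Lambda_X$. Applying $D^{\mathrm{ad}}_X$ and commuting it past $Ra_*$ by the (term-wise, hence simplicial) form of Corollary \ref{Corollary 2}(2) — legitimate because $\Lambda_{Y_\bullet}$ is term-wise constructible — yields a dual descent isomorphism
\[
p_X^!\Lambda \simeq Ra_!\, K_{Y_\bullet},
\]
where $K_{Y_\bullet} := a^!(p_X^!\Lambda)$ has $n$-th term $K_{Y_n} = p_{Y_n}^!\Lambda$. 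Since each $Y_n$ is smooth, Proposition \ref{comparison result} and the identification $p_{V^{\mathrm{ad}}}^!\Lambda = \oplus_t \Lambda[2d_t]$ show that $K_{Y_n}$ is a finite sum of shifts of the constant sheaf; in particular it is constructible and, being a shift of a constant sheaf, remains so after any pullback.

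Finally, given $g \colon V \to X$, I would pull the hypercovering back to $a_V \colon Y_{\bullet,V} := Y_\bullet \times_X V \to V$, which is again of universal cohomological descent, with projection $g_\bullet \colon Y_{\bullet,V} \to Y_\bullet$. The simplicial base change underlying Lemma \ref{lem:uppshreibaschange} gives $g^*(p_X^!\Lambda) \simeq Ra_{V!}\, g_\bullet^* K_{Y_\bullet}$, and $g_\bullet^* K_{Y_\bullet}$ is term-wise constructible by the previous step. The associated descent spectral sequence
\[
E_1^{p,q} = H^q\big(V, Ra_{p,V!}\, g_p^* K_{Y_p}\big) \Longrightarrow H^{p+q}\big(V, g^*(p_X^!\Lambda)\big)
\]
has finite $E_1$-terms: $Ra_{p,V!}$ preserves constructibility by Proposition \ref{stability for pushforwards}, and constructible complexes on fine adic spaces have finite cohomology by Corollary \ref{constructible implies finite global sections}. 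Together with the finite cohomological dimension (\cite[Corollary 2.8.3]{hub96}) and the boundedness of $p_X^!\Lambda$, this forces only finitely many terms to contribute in each total degree, so each $H^{p+q}(V, g^*(p_X^!\Lambda))$ is finite, which is exactly the criterion needed.

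I expect the genuine obstacle to be the last two steps rather than the bookkeeping: namely, setting up the dual cohomological descent isomorphism $p_X^!\Lambda \simeq Ra_!\, K_{Y_\bullet}$ on \emph{simplicial} fine adic spaces (commuting $D^{\mathrm{ad}}$ with a totalization) and controlling the convergence of the descent spectral sequence so that term-wise finiteness forces finiteness of the abutment. This is precisely the adic adaptation of Gabber's weak uniformization and Deligne's cohomological descent, following \cite[\S 1.2]{berk13}; the uniform cohomological-dimension bound is what keeps the smooth pieces, whose dimensions grow along the hypercovering, from producing infinitely many contributions in a fixed total degree.
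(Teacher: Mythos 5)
Your skeleton — removing field extensions via Lemma \ref{uppershbaschgrfi}, choosing the smooth hypercovering of Corollary \ref{cor:spehypsmooad}, and reducing to smooth pieces where the dualizing complex is a shifted constant sheaf — is the same as the paper's, but the step you defer as ``the genuine obstacle'' is a genuine gap, not bookkeeping, and your proposed route to it breaks down. The isomorphism $p_X^!\Lambda \simeq Ra_!K_{Y_\bullet}$ presupposes a realization functor $Ra_!$ attached to the \emph{augmentation} $a \colon Y_\bullet \to X$; the paper's simplicial formalism provides $R\varphi_!$ and $\varphi^!$ only term-wise for morphisms of simplicial spaces, and for augmentations only $a^*$ and $Ra_*$ — a realization is a homotopy colimit over $\Delta^{\op}$, which a bare triangulated category does not supply. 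Worse, your justification, ``the (term-wise, hence simplicial) form of Corollary \ref{Corollary 2}(2)'', does not apply: $Ra_*$ for an augmentation is a totalization (a homotopy limit over the cosimplicial direction), not a term-wise pushforward, so commuting $D^{\mathrm{ad}} = R\Homs(-,p_X^!\Lambda)$ past it is precisely the problem of pulling $R\Homs$ out of a homotopy limit and into a homotopy colimit, which fails in general and here needs exactly the convergence control you also leave unproven. (The convergence worry itself is fixable — each $Ra_{p,V!}g_p^*K_{Y_p}$ is concentrated in degrees $\leq 0$, so a fixed total degree receives contributions from finitely many simplicial degrees $p$ — but the spectral sequence cannot even be written down before the realization is constructed.) A smaller point: you invoke the finiteness criterion of Theorem \ref{theorem above}(1) for the complex $p_X^!\Lambda$, whereas it is stated for sheaves.

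The paper's proof is arranged precisely to avoid dualizing the descent isomorphism. Writing $\omega_X := p_X^!\Lambda = D^{\mathrm{ad}}(\Lambda)$, it first uses duality only at the level of honest (non-simplicial) spaces: by Lemma \ref{uppershbaschgrfi} and Definition \ref{constructible sheaves} one must show $f^*\omega_X$ is semi-constructible for every morphism $f \colon Y \to X$ of fine adic spaces, and the identity $f^!\Lambda = D^{\mathrm{ad}}(f^*\omega_X)$ reduces this to semi-constructibility of $f^!\Lambda$. It then applies $f^!$ — not $D^{\mathrm{ad}}$ — to the descent isomorphism $\Lambda \simeq Ra_*\Lambda$ and commutes it past $Ra_*$ via the simplicial base change $f^! \circ Ra_* \simeq Ra'_* \circ f'^!$ of Lemma \ref{lem:uppshreibaschange}, so the entire argument stays on the pushforward side, where the simplicial machinery is actually defined: term-wise smoothness gives $f'^!\Lambda$ semi-constructible, and stability of semi-constructibility under pushforward (Lemma \ref{semi-con stability for pushforwards}) concludes. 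To repair your argument you should replace your dual-descent step by this $f^!$-base-change step; otherwise you are committed to constructing the realization functor and the duality between totalizations and realizations yourself, which is substantially more machinery than the proposition requires.
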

  
  \begin{proof}
  Denote by $\omega_X := p_X^!\Lambda$. Since the anaytic Verdier dual preserves semi-constructibility, cf. Theorem \ref{big result small proof} and the constant sheaf is semi-constructible, it follows that $D^{\mathrm{ad}}(\Lambda) = \omega_X$ is semi-constructible. Let $f \colon Y \to X$ be a morphism of fine $k$-adic spaces. By Lemma \ref{uppershbaschgrfi} it suffices to show that $f^*\omega_X$ is semi-constructible. Now $f^!\Lambda = D^{\mathrm{ad}} \circ f^* \circ D^{\mathrm{ad}}(\Lambda) = D^{\mathrm{ad}}(f^*\omega_X)$, so it suffices to show $f^!\Lambda$ is semi-constructible. Note first that if $X$ is smooth, then this is true since up to a shift and twist $f^!(\Lambda) = p_Y^!(\Lambda)$ and now one applies the result for $p_Y^!(\Lambda)$. In the general case, choose a hypercovering $a \colon Z_{\bullet} \to X$ in which all $Z_n$ are disjoint unions of affinoids in the analytifications of smooth affine schemes over $k$ (cf. Corollary \ref{cor:spehypsmooad}). Consider the cartesian diagram
    $$
\begin{tikzcd}[row sep = large, column sep = large]
Z'_{\bullet} \arrow[d, "a'"] \arrow[r, "f'"] &
Z_{\bullet} \arrow[d, "a"] \\
Y \arrow[r, "f"]
&
X. 
\end{tikzcd}
$$

Now by cohomological descent we have $\Lambda \simeq Ra_{*}(\Lambda)$ and so by Lemma \ref{lem:uppshreibaschange} we have $f^!(\Lambda) \simeq f^!(Ra_{*}(\Lambda)) \simeq Ra'_{*}(f'^!(\Lambda))$. Since the $Z_n$ are smooth, by the previous paragraph we have that $f'^!(\Lambda)$ is semi-constructible. Finally semi-constructibility is preserved via pushforward and so $Ra'_{*}(f'^!(\Lambda))$ is semi-constructible. It follows that $f^{!}(\Lambda)$ is semi-constructible and this proves the result.

%  Let $\mathfrak{Y}$ be a formal model for $Y$. It suffices to show $R\psi_{\mathfrak{Y}}(f^!\Lambda)$ is constructible on $\mathfrak{Y}_s$.\textbf{This needs to be completed.}
  \end{proof}
  
  \begin{cor} \label{cor:uppeshreconstshea}
  Let $f \colon Y \to X$ be a morphism between fine $k$-adic spaces. Then $f^!(\Lambda)$ is constructible.
  \end{cor}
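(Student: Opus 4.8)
The plan is to verify constructibility of $f^!(\Lambda)$ directly from Definition \ref{constructible sheaves}: for every complete algebraically closed non-Archimedean extension $L/k$ and every morphism of fine $L$-adic spaces $g \colon Z \to Y_L$, one must show that $g^* p_L^*(f^!\Lambda)$ is semi-constructible. The proof of Proposition \ref{prop:huberstableuppeshrei} already establishes, over an arbitrary such base field, that $h^!(\Lambda)$ is semi-constructible for every morphism $h$ of fine adic spaces; the only genuinely new difficulty is that semi-constructibility is \emph{not} stable under arbitrary pullback (indeed $D^{\mathrm{ad}}$ does not preserve constructibility, cf. \S \ref{counterexample for Verdier dual stability}), so one cannot simply invoke this. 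The whole point is therefore to convert the pullbacks $g^* p_L^*$ back into shrieks of the constant sheaf, to which the Proposition applies.

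First I would record the behaviour of $f^!(\Lambda)$ under change of base field. Using Corollary \ref{Corollary 2}(3) for the semi-constructible sheaf $\Lambda$ we have $f^!(\Lambda) = D^{\mathrm{ad}}(f^*\omega_X)$ with $\omega_X = p_X^!(\Lambda)$, and $f^*\omega_X$ is constructible since $\omega_X$ is (Proposition \ref{prop:huberstableuppeshrei}) and constructibility is stable under pullback. Combining the compatibility of $R\Homs$ with the pullback $p_L^*$ for the constructible sheaf $f^*\omega_X$, the identity $p_L^* f^* = f_L^* p_L^*$, and the base-change isomorphism $p_L^*\omega_X \simeq \omega_{X_L}$ of Lemma \ref{uppershbaschgrfi}, I expect to obtain a canonical isomorphism $p_L^*(f^!\Lambda) \simeq f_L^!(\Lambda)$, where $f_L \colon Y_L \to X_L$ is the base change of $f$. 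This reduces the problem to showing that $g^*(f_L^!\Lambda)$ is semi-constructible for every morphism $g \colon Z \to Y_L$ of fine $L$-adic spaces.

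Next, since semi-constructibility is an �tale-local property (cf. \S \ref{semi-constructibility is an etale local property}), I would work locally on $Z$ and apply the factorization of Lemma \ref{lem:factmorfinek} to write $g = \iota \circ \pi$ with $\pi$ smooth of pure dimension $d$ and $\iota$ a closed embedding, so that $g^* = \iota^* \pi^*$. For the smooth part, Poincar� duality for the smooth morphism $\pi$ gives $\pi^* \simeq \pi^!(-d)[-2d]$, whence $\pi^*(f_L^!\Lambda) \simeq (f_L \circ \pi)^!(\Lambda)(-d)[-2d]$; this is semi-constructible by the base-field-$L$ version of Proposition \ref{prop:huberstableuppeshrei}. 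For the closed embedding $\iota$, with open complement $j$, I would use the localization triangle $j_!j^* \to \mathrm{id} \to \iota_*\iota^* \to$: the first term is semi-constructible because $j^*$ preserves semi-constructibility (an �tale, hence open, pullback) and $j_!$ does too (Lemma \ref{semi-con stability for pushforwards}), so the cone $\iota_*\iota^*(\pi^* f_L^!\Lambda)$ is semi-constructible. Passing to nearby cycles, $R\psi$ commutes with the proper pushforward $\iota_*$ (Lemma \ref{vanishing cycles commutes with lower shriek}), and a closed immersion of special fibres reflects constructibility of �tale sheaves in the classical setting; this yields that $\iota^*(\pi^* f_L^!\Lambda)$ is itself semi-constructible, completing the reduction.

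The main obstacle is precisely the failure of semi-constructibility under pullback, which forces the detour through the factorization Lemma \ref{lem:factmorfinek}; the delicate points are the field-extension base-change isomorphism $p_L^*(f^!\Lambda) \simeq f_L^!(\Lambda)$ (which rests on the compatibility of $R\Homs$ with $p_L^*$ and on Lemma \ref{uppershbaschgrfi}) and the smooth-case purity isomorphism $\pi^* \simeq \pi^!(-d)[-2d]$, which is what allows one to trade the $*$-pullback for an upper-shriek and thereby re-enter the regime covered by Proposition \ref{prop:huberstableuppeshrei}.
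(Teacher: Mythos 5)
Your argument fails at the closed-embedding factor $\iota$ of the test morphism, and the failure is fundamental rather than cosmetic. Write the factorization as $Z \xrightarrow{\iota} W \xrightarrow{\pi} Y_L$ and set $\sG := \pi^*(f_L^!\Lambda)$. Your localization triangle $j_!j^*\sG \to \sG \to \iota_*\iota^*\sG \to \cdot$ uses the open complement $j \colon W \smallsetminus Z \hookrightarrow W$, but $W \smallsetminus Z$ is in general not quasi-compact (e.g. the punctured disk), hence not a fine adic space; consequently neither the assertion that $j^*$ preserves semi-constructibility (semi-constructibility is defined only on fine spaces and tested against the fine étale site) nor Lemma \ref{semi-con stability for pushforwards} (stated for morphisms of fine $k$-adic spaces) can be invoked for $j_!j^*$. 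Worse, the conclusion you are after cannot be reached by any argument that uses only semi-constructibility of $\sG$ together with such formal stabilities: the paper's counterexample in \S\ref{counterexample for Verdier dual stability} exhibits a semi-constructible complex, $D^{\mathrm{ad}}(\sF)$ on the adic unit disk, whose $*$-pullback along the closed immersion of the origin has infinite $H^0$ (Lemma \ref{lem:infinstal}) and hence is not semi-constructible (cf. Example \ref{es:nonstabupshr}); running your triangle argument verbatim with $\sG = D^{\mathrm{ad}}(\sF)$ would "prove" that this stalk is finite. Finally, even granting that $\iota_*\iota^*\sG$ were semi-constructible on $W$, your passage back to semi-constructibility of $\iota^*\sG$ on $Z$ is unjustified: étale maps $U \to Z$ and formal models of $U$ need not arise by restriction from étale maps to $W$ and formal models over $W$, so the remark that "closed immersions of special fibres reflect constructibility" does not close the loop.

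A secondary gap is the base-change isomorphism $p_L^*(f^!\Lambda) \simeq f_L^!(\Lambda)$: you derive it from a compatibility of $R\Homs$ with $p_L^*$, but Lemma \ref{projection formula adic spaces} applies only to morphisms of fine $k$-adic spaces, which $p_L$ is not, and Lemma \ref{uppershbaschgrfi} treats only the dualizing complex itself; this step would need its own proof. The paper's proof is organized exactly so as never to take a $*$-pullback of a non-constant semi-constructible complex along a closed immersion: since constructibility is étale-local on $Y$ (Theorem \ref{theorem above}), it factors $f$ \emph{itself} (Lemma \ref{lem:factmorfinek}) as $Y \xrightarrow{i} Z \xrightarrow{g} X$ and stays on the shriek side, $f^!\Lambda \simeq i^!\Lambda[2d](d)$; the smooth factor is handled by Poincaré duality, a closed embedding into a smooth space reduces to the dualizing complex (Proposition \ref{prop:huberstableuppeshrei}), and a general middle space $Z$ is handled by a hypercovering by smooth spaces (Corollary \ref{cor:spehypsmooad}) together with cohomological descent, the simplicial upper-shriek base change of Lemma \ref{lem:uppshreibaschange}, and stability of constructibility under pushforward (Proposition \ref{stability for pushforwards}). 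If you wish to keep your plan of verifying the definition directly, the closed-embedding part of the test morphism must likewise be converted into shrieks (e.g. by descent), not handled by a $*$-pullback stability claim, since no such claim is true.
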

  
  \begin{proof}
   Constructibility of $f^!(\Lambda)$ is local for the étale topology on $Y$, cf. Theorem \ref{theorem above}. Thus by Lemma \ref{lem:factmorfinek}, we can assume that $f$ factors into $Y \xrightarrow{i} Z \xrightarrow{g} X$ 
  where $Z$ is a fine $k$-adic space, $i$ is a closed embedding and $g$ is a smooth morphism of pure dimension $d$. Poincaré duality gives $g^!\Lambda = g^*\Lambda[2d](d)$, which is again constructible. So it suffices to prove the proposition for $f = i$. If $Z$ is smooth, then this follows from Proposition \ref{prop:huberstableuppeshrei}. In the general case, choose a hypercovering $a \colon U_{\bullet} \to Z$ in which all $U_n$ are disjoint unions of affinoids in the analytifications of smooth affine schemes over $k$ (cf. Corollary \ref{cor:spehypsmooad}). Consider the cartesian diagram
    $$
\begin{tikzcd}[row sep = large, column sep = large]
U'_{\bullet} \arrow[d, "a'"] \arrow[r, "i'"] &
U_{\bullet} \arrow[d, "a"] \\
Y \arrow[r, "i"]
&
Z. 
\end{tikzcd}
$$
By cohomological descent one has $\Lambda \simeq Ra_*a^*(\Lambda)$ and so by Lemma \ref{lem:uppshreibaschange},  $i^!(\Lambda) \simeq i^!(Ra_*a^*(\Lambda)) \simeq Ra'_*i'^!(a^*(\Lambda))$. Now $\Lambda = a^*(\Lambda)$ is again constant and so by the previous paragraph $i'^!(a^*(\Lambda))$ is constructible. Finally constructibility is preserved via pushforward and so $Ra'_*i'^!(a^*(\mathscr{F}))$ is constructible. This means $i^!(\Lambda)$ is constructible as promised.
  \end{proof}
  
  \begin{cor} \label{cor:uppershrilocsisconst}
  Let $f \colon Y \to X$ be a morphism between fine $k$-adic spaces. Let $\mathscr{F}$ be a locally constant sheaf of finite type on $X_{\text{ét}}$ Then $f^!(\mathscr{F})$ is constructible.
  \end{cor}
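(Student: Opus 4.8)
The plan is to reduce the statement, in two stages, to the already established Corollary \ref{cor:uppeshreconstshea}, which handles the constant sheaf $\Lambda$. The guiding principle is that constructibility of $f^!(\mathscr{F})$ can be tested locally for the �tale topology on $Y$, by Theorem \ref{theorem above}(2), so I am free to replace $Y$ (and correspondingly $X$) by the members of a convenient �tale cover. Since $\mathscr{F}$ is locally constant of finite type, there is an �tale cover $\{u_i \colon U_i \to X\}$ such that each $\mathscr{F}|_{U_i} = u_i^*\mathscr{F}$ is a \emph{constant} sheaf $\underline{M_i}$ associated to a finite $\Lambda$-module $M_i$. Pulling this cover back along $f$ produces an �tale cover $\{w_i \colon V_i \to Y\}$ fitting into cartesian squares with base-changed maps $f_i \colon V_i \to U_i$, and it suffices to prove that each $w_i^* f^!(\mathscr{F})$ is constructible.

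The key technical point, which I expect to be the main obstacle, is the compatibility of the upper-shriek functor with �tale localization, i.e. the identification
\[
w_i^* f^!(\mathscr{F}) \simeq f_i^!\bigl(\mathscr{F}|_{U_i}\bigr) = f_i^!(\underline{M_i}).
\]
I would deduce this formally from two inputs. First, for an �tale morphism $u$ one has $u^! \simeq u^*$: both are right adjoints of $Ru_! = u_!$ (compare the adjunction $\operatorname{Hom}(u_!\sF,\sG)=\operatorname{Hom}(\sF,u^*\sG)$ for �tale $u$ with the defining adjunction of $u^!$ in Theorem \ref{upper shriek}), so they agree by uniqueness of adjoints. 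Second, $(-)^!$ is compatible with composition, $(g\circ h)^! \simeq h^!\circ g^!$, obtained by taking right adjoints in $R(g\circ h)_! \simeq Rg_!\circ Rh_!$. Applying these to the equal composites $f\circ w_i = u_i\circ f_i$ gives $w_i^* f^! = w_i^! f^! = (f\circ w_i)^! = (u_i\circ f_i)^! = f_i^! u_i^! = f_i^! u_i^*$, which is exactly the displayed formula. This reduces the whole problem to showing that $f_i^!(\underline{M})$ is constructible for a morphism of fine $k$-adic spaces and a constant sheaf attached to a finite $\Lambda$-module $M$.

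For the final dévissage I would argue as follows. When $\Lambda$ is a field (the case of interest for the duality theory, cf.\ Lemma \ref{suitable singular curve}) the module $M$ is free, so $\underline{M}\simeq \underline{\Lambda}^{\oplus r}$ and hence $f_i^!(\underline{M})\simeq f_i^!(\underline{\Lambda})^{\oplus r}$, which is constructible by Corollary \ref{cor:uppeshreconstshea} together with stability of constructible complexes under finite direct sums. In general $\underline{M}$ is built from $\underline{\Lambda}$ by finitely many direct sums and extensions; since $f_i^!$ is a triangulated functor and the class of constructible complexes is stable under extensions and cones (the category of constructible sheaves being stable under extensions), a short induction along a finite filtration of $M$ reduces every case to $\underline{\Lambda}$ and finishes the proof via Corollary \ref{cor:uppeshreconstshea}. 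The only delicate point beyond the �tale base-change identification is this last reduction in the non-field case, where one must keep track of the triangles $f_i^!(\underline{M'})\to f_i^!(\underline{M})\to f_i^!(\underline{M''})\to\cdot$ coming from short exact sequences of constant sheaves and apply the two-out-of-three property for constructibility.
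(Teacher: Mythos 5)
Your proposal is correct and follows essentially the same route as the paper's own proof: trivialize $\mathscr{F}$ on an \'etale cover, identify $w_i^*f^!(\mathscr{F})$ with $f_i^!(\mathscr{F}\lvert_{U_i})$ by base change, apply Corollary \ref{cor:uppeshreconstshea}, and conclude by the \'etale-local nature of constructibility (Theorem \ref{theorem above}). The only difference is that you make explicit two points the paper leaves implicit, namely the formal derivation of $w_i^*f^! \simeq f_i^!u_i^*$ from $u^!\simeq u^*$ for \'etale $u$ together with compatibility of upper shriek with composition, and the d\'evissage from $\underline{\Lambda}$ to the constant sheaf of an arbitrary finite $\Lambda$-module, which is genuinely needed when $\Lambda = \mathbb{Z}/\ell^n\mathbb{Z}$ with $n>1$ (so that finite modules need not be free), since Corollary \ref{cor:uppeshreconstshea} is stated only for $\Lambda$ itself.
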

  
  \begin{proof}
  Let $h_i \colon X_i \to X$ be an étale covering, such that the restriction $\mathscr{F}\lvert_{X_i}$ is the constant sheaf associated with a finitely generated $\Lambda$-module. Let $h'_i \colon Y_i = X_i \times_X Y \to Y$ be the corresponding étale covering of $Y$ and $f_i \colon Y_i \to X_i$ the base change of $f$. Then $h_i'^*f^!(\mathscr{F}) = f_i^!(\mathscr{F}\lvert_{X_i})$ and by Corollary \ref{cor:uppeshreconstshea}, $f_i^!(\mathscr{F}\lvert_{X_i})$ is constructible. Hence $f^!(\mathscr{F})$ is étale locally constructible and hence constructible by Theorem \ref{theorem above}.
  \end{proof}
  
  \begin{lem} \label{lem:fujwe}
  Let $X$ be a fine $k$-adic space and $\mathscr{F}$ a locally constant sheaf of finite type on $X_{\text{ét}}$. Then $D^{\mathrm{ad}}(\mathscr{F})$ is constructible.
  \end{lem}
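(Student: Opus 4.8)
The plan is to reduce to the case of a \emph{constant} sheaf by exploiting that constructibility is an \'etale-local condition, and then to identify the adic dual of a constant sheaf with an exceptional pullback of a finitely generated $\Lambda$-module, to which Corollary \ref{cor:uppershrilocsisconst} applies verbatim.

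First I would record that $D^{\mathrm{ad}}$ commutes with \'etale pullback. For an \'etale morphism $h \colon U \to X$ of fine $k$-adic spaces one has $h^! \simeq h^*$ (as already used in step (3) of Proposition \ref{compved}), hence $h^* p_X^!(\Lambda) \simeq (p_X \circ h)^!(\Lambda) = p_U^!(\Lambda)$; since $h^*$ is exact and commutes with $R\Homs$, this yields
\[
h^* D^{\mathrm{ad}}(\sF) \simeq R\Homs(h^*\sF, p_U^!\Lambda) = D^{\mathrm{ad}}(h^*\sF).
\]
Because $\sF$ is locally constant of finite type, I may choose an \'etale cover $\{h_i \colon U_i \to X\}$ on which $\sF$ restricts to a constant sheaf $\underline{M_i}$ for some finitely generated $\Lambda$-module $M_i$. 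By the displayed compatibility, $D^{\mathrm{ad}}(\sF)\lvert_{U_i} = D^{\mathrm{ad}}(\underline{M_i})$, so by Theorem \ref{theorem above}(2) it suffices to treat the constant case. I am thus reduced to $\sF = \underline{M} = p_X^*(M)$ with $X$ an arbitrary fine $k$-adic space.

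For the constant case I would apply Lemma \ref{projection formula adic spaces}(1) to $f = p_X \colon X \to \Spa(k)$ with $A = M$ and $B = \Lambda$, giving
\[
D^{\mathrm{ad}}(\underline{M}) = R\Homs(p_X^*M,\, p_X^!\Lambda) \simeq p_X^!\, R\Homs(M,\Lambda).
\]
Since $\Lambda = \mathbb{Z}/\ell^n\mathbb{Z}$ is quasi-Frobenius, it is injective as a module over itself, so $R\Homs(M,\Lambda)$ on the point $\Spa(k)$ is concentrated in degree zero and equals the finitely generated $\Lambda$-module $M^{\vee} := \Hom_{\Lambda}(M,\Lambda)$; as a sheaf on the point it is constant of finite type. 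Hence $D^{\mathrm{ad}}(\underline{M}) \simeq p_X^!(M^{\vee})$, and Corollary \ref{cor:uppershrilocsisconst} shows this is constructible.

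The only genuinely delicate points are the two identifications in the reduction: that $h^! \simeq h^*$ for \'etale $h$, so that $D^{\mathrm{ad}}$ descends through the trivializing cover, and that the self-injectivity of $\Lambda$ collapses $R\Homs(M,\Lambda)$ to its degree-zero term. Neither is an obstacle in substance — the former is standard in Huber's formalism and already invoked earlier in the excerpt, and the latter is elementary ring theory — so the real content is simply that Corollary \ref{cor:uppershrilocsisconst} was engineered precisely to dualize locally constant sheaves, which makes the lemma an essentially immediate consequence.
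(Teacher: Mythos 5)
Your proof is correct, but it follows a genuinely different route from the paper's. The paper makes no reduction at all: for an arbitrary morphism $f \colon Y \to X$ of fine $k$-adic spaces (and likewise for a base-field extension $g \colon X_L \to X$) it uses reflexivity of $\mathscr{F}$ together with Lemma \ref{projection formula adic spaces}(2) to write $f^!(\mathscr{F}) = D^{\mathrm{ad}}(f^*D^{\mathrm{ad}}(\mathscr{F}))$, and then extracts semi-constructibility of $f^*D^{\mathrm{ad}}(\mathscr{F})$ from the constructibility of $f^!(\mathscr{F})$ supplied by Corollary \ref{cor:uppershrilocsisconst}, thereby checking the definition of constructibility pullback by pullback. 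You instead localize: \'etale locality of constructibility (Theorem \ref{theorem above}(2)) reduces to the constant case, where Lemma \ref{projection formula adic spaces}(1) applied to $p_X$ and the self-injectivity of $\Lambda$ give the identification $D^{\mathrm{ad}}(\underline{M}) \simeq p_X^!(M^{\vee})$, to which Corollary \ref{cor:uppershrilocsisconst} applies directly. Both arguments hinge on the same corollary, so neither is more elementary in substance, but your route buys two things: it avoids the double-dual manipulation entirely (the paper still has to disentangle $f^*D^{\mathrm{ad}}(\mathscr{F})$ from its own dual, which implicitly invokes biduality for semi-constructible complexes), and it makes explicit what the dual of a locally constant sheaf \emph{is} --- an exceptional pullback of a finitely generated module, i.e.\ a twisted dualizing complex. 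Two small points to patch: the trivializing cover should be refined so that its members are fine $k$-adic spaces (Lemma \ref{lem:2.14} provides this), and Theorem \ref{theorem above} is stated for sheaves rather than for objects of $\mathcal{D}^{b}(X^f_{\text{�t}},\Lambda)$, though the paper itself already applies it to complexes in Corollaries \ref{cor:uppeshreconstshea} and \ref{cor:uppershrilocsisconst}, so your usage is consistent with its conventions.
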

  
  \begin{proof}
  Let $f \colon Y \to X$ be a morphism of fine $k$-adic spaces. We show that $f^*D^{\mathrm{ad}}(\mathscr{F})$ is semi-constructible. Note however $f^!(\mathscr{F}) = D^{\mathrm{ad}} \circ f^*D^{\mathrm{ad}}(\mathscr{F})$ and the result follows from Corollary \ref{cor:uppershrilocsisconst}. Similarly for an extension $L/k$ of algebraically closed non-Archimedean complete fields $g^!(\mathscr{F}) = D^{\mathrm{ad}} \circ g^*D^{\mathrm{ad}}(\mathscr{F})$ where $g \colon X_L \to X$ is the projection map. 
  \end{proof}
  
  \begin{lem} \label{lem:4.13}
  Let $X$ be a fine $k$-adic space and $j \colon U \to X$ be an étale morphism of fine $k$-adic spaces. Then $D^{\mathrm{ad}}(j_!\mathscr{F})$ is constructible where $\mathscr{F}$ is a locally constant sheaf of $\Lambda$-modules of finite type on $U_{\text{ét}}$. 
  \end{lem}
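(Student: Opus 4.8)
The plan is to reduce the statement to two facts already in hand: that the adic Verdier dual of a locally constant sheaf of finite type is constructible (Lemma \ref{lem:fujwe}), and that $Rj_*$ preserves constructibility (Proposition \ref{stability for pushforwards}). The bridge between them is the duality interchange of Lemma \ref{projection formula adic spaces}(3).

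First I would observe that, since $j \colon U \to X$ is \'etale, the functor $j_!$ is exact, so that $Rj_!(\mathscr{F}) = j_!(\mathscr{F})$ as objects of $\mathcal{D}^b(X^f_{\text{et}},\Lambda)$. In particular $j_!(\mathscr{F})$ is a bounded complex (concentrated in degree $0$), so all the six-functor machinery of \S 2 applies to it. Next I would apply Lemma \ref{projection formula adic spaces}(3) with the morphism $j$ in the role of $f$, obtaining
\[
D^{\mathrm{ad}}(j_!\mathscr{F}) = D^{\mathrm{ad}}(Rj_!\mathscr{F}) \simeq Rj_*(D^{\mathrm{ad}}(\mathscr{F})),
\]
where on the left-hand side $D^{\mathrm{ad}}$ denotes the adic Verdier dual on $X$ and on the right-hand side it denotes the adic Verdier dual on $U$. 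I would emphasise that this identity is valid for \emph{any} $\mathscr{F} \in \mathcal{D}^b(U^f_{\text{et}},\Lambda)$ and needs no constructibility hypothesis on $\mathscr{F}$: it is the formal consequence of $j^!\omega_X = \omega_U$ (where $\omega_X := p_X^!\Lambda$) together with the $(Rj_!,j^!)$-adjunction, exactly as in the proof of that lemma.

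It then remains to identify the right-hand side as constructible. Because $\mathscr{F}$ is locally constant of finite type on $U_{\text{et}}$, and $U$ is itself a fine $k$-adic space, Lemma \ref{lem:fujwe} applied on $U$ shows that $D^{\mathrm{ad}}(\mathscr{F})$ is constructible on $U$. Finally, Proposition \ref{stability for pushforwards} guarantees that $Rj_*$ carries constructible complexes to constructible complexes, so $Rj_*(D^{\mathrm{ad}}(\mathscr{F}))$ is constructible on $X$; by the displayed isomorphism this is precisely $D^{\mathrm{ad}}(j_!\mathscr{F})$, which proves the lemma.

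I do not anticipate a genuine obstacle here, as the argument is an assembly of results already proven; the only points meriting care are the bookkeeping of which space each occurrence of $D^{\mathrm{ad}}$ is taken over, and the verification that Lemma \ref{projection formula adic spaces}(3) really does apply to the input $\mathscr{F}$, which is neither constructible nor assumed semi-constructible a priori (it does, since that interchange is purely formal). Should one wish to avoid invoking Lemma \ref{projection formula adic spaces}, an alternative but longer route would test constructibility of $D^{\mathrm{ad}}(j_!\mathscr{F})$ \'etale-locally via Theorem \ref{theorem above}, pulling back along an arbitrary $g \colon Y \to X$ and commuting $g^*$ past $Rj_*$ by base change; the route sketched above is cleaner and I would prefer it.
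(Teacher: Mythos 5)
Your proposal is correct and is essentially the paper's own proof: the paper likewise writes $D^{\mathrm{ad}}(j_!\mathscr{F}) = Rj_*D^{\mathrm{ad}}(\mathscr{F})$ (Lemma \ref{projection formula adic spaces}(3)), applies Lemma \ref{lem:fujwe} to get constructibility of $D^{\mathrm{ad}}(\mathscr{F})$ on $U$, and concludes by stability of constructibility under pushforward. Your extra care about which space each $D^{\mathrm{ad}}$ lives on and about the interchange requiring no constructibility hypothesis is sound but does not change the argument.
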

  
  \begin{proof}
  Since $D^{\mathrm{ad}}(j_!\mathscr{F}) = Rj_*D^{\mathrm{ad}}(\mathscr{F})$, the result follows from Lemma \ref{lem:fujwe} and stability of constructibility by pushforwards.
  \end{proof}
  
  \begin{lem} \label{lem:4.14}
  Let $f \colon Y \to X$ and $j \colon U \to X$ be morphisms between fine $k$-adic spaces with $j$ étale. Then $f^!(j_!\mathscr{F})$ is constructible where $\mathscr{F}$ is a locally constant sheaf of $\Lambda$-modules of finite type on $U_{\text{ét}}$. 
  \end{lem}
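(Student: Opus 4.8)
The plan is to establish the slightly more flexible assertion that $f^{!}$ sends any Huber constructible sheaf on $X$ to a constructible complex on $Y$, of which the lemma is the case $\mathscr{A} = j_{!}\mathscr{F}$, and to argue by induction on $\dim X$. One is first tempted to argue purely formally: since $j_{!}\mathscr{F}$ is semi-constructible (Lemma \ref{semi-con stability for pushforwards}) and hence reflexive, Corollary \ref{Corollary 2}(3) gives $f^{!}(j_{!}\mathscr{F}) = D^{\mathrm{ad}}(f^{*}D^{\mathrm{ad}}(j_{!}\mathscr{F}))$, where $f^{*}D^{\mathrm{ad}}(j_{!}\mathscr{F})$ is constructible by Lemma \ref{lem:4.13} and stability of constructibility under $f^{*}$. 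However this only yields that $f^{!}(j_{!}\mathscr{F})$ is semi-constructible, because $D^{\mathrm{ad}}$ does not preserve constructibility (cf. \S \ref{counterexample for Verdier dual stability}); upgrading to constructibility forces a geometric argument in the spirit of Corollaries \ref{cor:uppeshreconstshea} and \ref{cor:uppershrilocsisconst}.

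First I would reduce to the case where $j$ is an open immersion. Using Huber's compactification \cite[Corollary 5.1.13]{hub96}, write $j = \bar{\jmath} \circ j^{\circ}$ with $j^{\circ}$ an open immersion and $\bar{\jmath}$ proper, so that $j_{!}\mathscr{F} = R\bar{\jmath}_{*}(j^{\circ}_{!}\mathscr{F})$. The base change isomorphism of Lemma \ref{lem:uppshreibaschange1} then gives $f^{!}(j_{!}\mathscr{F}) \simeq R\bar{\jmath}'_{*}(f'^{!}(j^{\circ}_{!}\mathscr{F}))$ for the evident base changes $\bar{\jmath}'$ and $f'$; since $R\bar{\jmath}'_{*}$ preserves constructibility (Proposition \ref{stability for pushforwards}) it suffices to treat $j^{\circ}$, so we may assume $j$ is an open immersion, in which case $j_{!}\mathscr{F}$ is Huber constructible. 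Next, to reduce the ambient space to the smooth case, I would choose a hypercovering $a \colon Z_{\bullet} \to X$ with each $Z_{n}$ a disjoint union of affinoids in analytifications of smooth affine $k$-schemes (Corollary \ref{cor:spehypsmooad}); base changing the open immersion along $a$ keeps it an open immersion with locally constant coefficients in each degree, and cohomological descent $\mathscr{A} \simeq Ra_{*}a^{*}\mathscr{A}$ together with the simplicial base change of Lemma \ref{lem:uppshreibaschange} reduces the claim, degree by degree, to the case where $X$ is smooth.

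It remains to prove, for $X$ smooth, that $f^{!}(j_{!}\mathscr{F})$ is constructible when $j \colon U \hookrightarrow X$ is an open immersion and $\mathscr{F}$ is locally constant of finite type; here I would induct on $\dim X$. Letting $i \colon Z = X \smallsetminus U \hookrightarrow X$ be the closed complement, I apply $f^{!}$ to the triangle
\[
j_{!}\mathscr{F} \to Rj_{*}\mathscr{F} \to i_{*}i^{*}Rj_{*}\mathscr{F} \to \cdot
\]
and use that constructibility is stable under triangles, reducing to the outer two terms. For the middle term, Lemma \ref{lem:uppshreibaschange1} gives $f^{!}(Rj_{*}\mathscr{F}) \simeq R\tilde{\jmath}_{*}(f''^{!}\mathscr{F})$ with $f''^{!}\mathscr{F}$ constructible by Corollary \ref{cor:uppershrilocsisconst}, whence $f^{!}(Rj_{*}\mathscr{F})$ is constructible by Proposition \ref{stability for pushforwards}. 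For the boundary term, the key point, and the main obstacle, is that after refining $Z$ by a stratification into smooth locally closed pieces, purity for the pair shows that $i^{*}Rj_{*}\mathscr{F}$ is locally constant of finite type on each stratum, hence is Huber constructible on $Z$; since $\dim Z < \dim X$, the induction hypothesis applied to the base change $f_{Z} \colon f^{-1}(Z) \to Z$ (again via Lemma \ref{lem:uppshreibaschange1}) yields that $f^{!}(i_{*}i^{*}Rj_{*}\mathscr{F})$ is constructible. Combining the two halves through the triangle closes the induction, and the opening reductions then give the lemma in general. The delicate step throughout is controlling the boundary sheaf $i^{*}Rj_{*}\mathscr{F}$ by purity so that the dimension induction can be carried out.
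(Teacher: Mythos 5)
Your opening reductions (compactifying $j$ to an open immersion, then descending to smooth ambient spaces via Corollary \ref{cor:spehypsmooad} and Lemma \ref{lem:uppshreibaschange}) are sound, as is your treatment of the middle term $f^!(Rj_*\mathscr{F})$. The gap is the boundary term $i_*i^*Rj_*\mathscr{F}$, and it is fatal to the induction. For an open immersion $j\colon U\hookrightarrow X$ of adic spaces the complement $Z=X\smallsetminus U$ is only a closed subset of the underlying topological space --- a pseudo-adic space in Huber's sense --- not an analytic subvariety: indeed, after your first reduction $j$ is Huber's compactification $U\hookrightarrow \bar{U}$, whose boundary typically consists of higher-rank points and admits no stratification by smooth locally closed adic subspaces, so ``purity for the pair'' is not available. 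Even when $Z$ does carry analytic structure, it generally has the \emph{same} dimension as $X$ (take $U$ a small open disk inside the closed unit disk: the complement contains a full annulus), so the inequality $\dim Z<\dim X$ on which your induction rests is simply false; open immersions of adic spaces are not complements of divisor-like loci. Finally, the assertion that $i^*Rj_*\mathscr{F}$ is Huber constructible is exactly the kind of finiteness statement that fails in this theory: the paper records that Huber constructibility is not stable under $Rf_*$ (cf.\ also \cite{FM}, cited in the introduction), and Huber's finiteness theorems for direct images hold only under restrictive hypotheses. So the boundary term cannot be fed back into the induction hypothesis, and the dimension induction never gets started.

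The paper's proof avoids geometry on the boundary altogether. To check constructibility of $f^!(j_!\mathscr{F})$ it tests against an arbitrary morphism $g\colon Z\to Y$ of fine $k$-adic spaces and shows $g^*f^!(j_!\mathscr{F})$ is semi-constructible by dualizing, $D^{\mathrm{ad}}(g^*f^!(j_!\mathscr{F}))=g^!D^{\mathrm{ad}}(f^!(j_!\mathscr{F}))$, and then commuting $g^!$ through a chain of identities --- $f^!=D^{\mathrm{ad}}\circ f^*\circ D^{\mathrm{ad}}$ on semi-constructibles, $D^{\mathrm{ad}}(j_!\mathscr{F})=Rj_*D^{\mathrm{ad}}(\mathscr{F})$ together with its constructibility (Lemma \ref{lem:4.13}), generalizing base change, and Lemma \ref{lem:uppshreibaschange} --- until everything rests on the constructibility of $f'^!\mathscr{F}$ for locally constant $\mathscr{F}$ (Corollary \ref{cor:uppershrilocsisconst}) and stability of semi-constructibility under pushforward. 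In other words, the correct way to upgrade the ``purely formal'' argument you rightly rejected at the outset is not an open/closed d\'evissage but to carry the test morphism $g$ through the duality identities; that is the missing idea.
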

  
  \begin{proof}
Let $g \colon Z \to Y$ be a morphism of fine $k$-adic spaces. We have a commutative diagram 
    $$
\begin{tikzcd}[row sep = large, column sep = large]
U'' \arrow[d, "g'"] \arrow[r, "j''"] &
Z \arrow[d, "g"] \\
U' \arrow[d, "f'"] \arrow[r, "j'"] &
Y \arrow[d, "f"] \\
U \arrow[r, "j"]
&
X 
\end{tikzcd}
$$ 
where the squares are cartesian. It suffices to show that $g^*f^!(j_!\mathscr{F})$ is semi-constructible or equivalently $D^{\mathrm{ad}}(g^*f^!(j_!\mathscr{F})) = g^!D^{\mathrm{ad}}(f^!(j_!\mathscr{F}))$ is semi-constructible. We compute
\begin{align*}
g^!D^{\mathrm{ad}}(f^!(j_!\mathscr{F})) &\overset{(i)}{=} g^!D^{\mathrm{ad}}(D^{\mathrm{ad}} \circ f^* \circ D^{\mathrm{ad}}(j_!\mathscr{F})) \\
&\overset{(ii)}{=} g^!f^*D^{\mathrm{ad}}(j_!\mathscr{F}) \\
&\overset{(iii)}{=} g^!f^*Rj_*D^{\mathrm{ad}}(\mathscr{F}) \\
&\overset{(iv)}{=} g^!Rj'_*f'^*D^{\mathrm{ad}}(\mathscr{F}) \\
&\overset{(v)}{=} Rj''_*g'^!f'^*D^{\mathrm{ad}}(\mathscr{F}) \\
&\overset{(vi)}{=} Rj''_*g'^!D^{\mathrm{ad}}(f'^!\mathscr{F}) \\
&\overset{(vii)}{=} Rj''_*D^{\mathrm{ad}}(g'^*f'^!\mathscr{F})
\end{align*}
where (i) follows from $f^! = D^{\mathrm{ad}} \circ f^* \circ D^{\mathrm{ad}}$ for semi-constructible sheaves, (ii) follows from Lemma \ref{lem:4.13} which says that $D^{\mathrm{ad}}(j_!\mathscr{F})$ is constructible and hence so is $f^* \circ D^{\mathrm{ad}}(j_!\mathscr{F})$ (and in particular reflexive), (iii) follows from adjunction, (iv) follows from generalizing base change (cf. \cite[Theorem 4.1.1(c)]{hub96}), (v) follows from Lemma \ref{lem:uppshreibaschange}, (vi) follows from the constructibility of $f'^!\mathscr{F}$ (cf. Corollary \ref{cor:uppershrilocsisconst}) and (vii) follows from adjunction. Now $g'^*f'^!\mathscr{F}$ is constructible and so $D^{\mathrm{ad}}(g'^*f'^!\mathscr{F})$ is semi-constructible. Since semi-constructibility is stable via pushforward it follows that $Rj''_*D^{\mathrm{ad}}(g'^*f'^!\mathscr{F})$ is semi-construcible. 
  \end{proof}
  
    \begin{prop}
  Let $f \colon Y \to X$ be a morphism between fine $k$-adic spaces. Suppose $\mathscr{F}$ is a Huber constructible $\Lambda$-module on $X_{\text{ét}}$. Then $f^!(\mathscr{F})$ is constructible. 
  \end{prop}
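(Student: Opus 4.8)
The plan is to argue by dévissage, reducing the assertion to the two cases already settled in Corollary \ref{cor:uppershrilocsisconst} and Lemma \ref{lem:4.14}. Recall that $\mathscr{F}$ being Huber constructible means there is a finite partition of $X$ into constructible locally closed subspaces $S_1, \ldots, S_n$ with $\mathscr{F}\lvert_{S_i}$ locally constant of finite type for every $i$; after refining the partition we may assume that the closure of each stratum is a union of strata, so that at least one stratum is open in $X$ (a maximal stratum for the specialization order). Since $X$ is fine, hence a spectral space, a \emph{constructible} open subset is automatically quasi-compact, and thus such an open stratum is itself a fine $k$-adic space. We induct on the number $n$ of strata. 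The key formal inputs are that $f^!$ is a triangulated functor, being right adjoint to the triangulated functor $Rf_!$ (Theorem \ref{upper shriek}), and that constructible complexes form a triangulated subcategory, so constructibility propagates along distinguished triangles. When $n = 1$ the sheaf $\mathscr{F}$ is locally constant of finite type on all of $X$, and $f^!(\mathscr{F})$ is constructible by Corollary \ref{cor:uppershrilocsisconst}; this is the base case.

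For the inductive step, let $j \colon U \hookrightarrow X$ be an open stratum, quasi-compact hence fine, and $i \colon Z \hookrightarrow X$ the closed complement, which is again fine and on which $i^*\mathscr{F}$ is Huber constructible with a stratification by the remaining $n-1$ strata. The localization triangle reads
\[
j_! j^* \mathscr{F} \to \mathscr{F} \to i_* i^* \mathscr{F} \to \cdot
\]
Applying the triangulated functor $f^!$ produces a distinguished triangle, so it suffices to prove that $f^!(j_! j^*\mathscr{F})$ and $f^!(i_* i^*\mathscr{F})$ are constructible. Since $j$ is an open immersion, in particular étale, and $j^*\mathscr{F} = \mathscr{F}\lvert_U$ is locally constant of finite type, the first term is constructible by Lemma \ref{lem:4.14}. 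For the second, form the cartesian square with $f_Z \colon Y_Z := Y \times_X Z \to Z$ and $i' \colon Y_Z \to Y$ the base changes of $f$ and $i$, noting that $Y_Z$ is fine by Lemma \ref{lem:morsitfin}. Since $i$ is a closed immersion we have $i_* = Ri_*$, so base change for upper shriek (Lemma \ref{lem:uppshreibaschange1}) gives
\[
f^!(i_* i^*\mathscr{F}) \simeq Ri'_*\bigl(f_Z^!(i^*\mathscr{F})\bigr).
\]
By the inductive hypothesis applied to $f_Z$ and the Huber constructible sheaf $i^*\mathscr{F}$, the complex $f_Z^!(i^*\mathscr{F})$ is constructible, and constructibility is preserved by $Ri'_*$ (Proposition \ref{stability for pushforwards}). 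Hence the second term is constructible, and therefore $f^!(\mathscr{F})$ lies in a distinguished triangle between two constructible complexes and is itself constructible.

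I expect the main obstacle to be the topological bookkeeping required to run the induction cleanly rather than any analytic difficulty: one must refine Huber's stratification so that some stratum is genuinely open in $X$, verify that this open stratum is quasi-compact and hence a fine adic space to which Lemma \ref{lem:4.14} applies, and confirm that the closed complement $Z$ inherits a Huber constructible structure with strictly fewer strata. The remaining content is light, being entirely absorbed into the already established base-change identity (Lemma \ref{lem:uppshreibaschange1}), the stability of constructibility under pushforward (Proposition \ref{stability for pushforwards}), and the two anchoring results in Corollary \ref{cor:uppershrilocsisconst} and Lemma \ref{lem:4.14}.
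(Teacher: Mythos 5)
Your dévissage breaks down at the inductive step, and the failure is specific to the adic setting. When you pass to the closed complement $Z = X \smallsetminus U$ of the open stratum, $Z$ is a closed \emph{constructible subset} of $X$, not a Zariski-closed analytic subspace: in Huber's theory such a subset is only a \emph{pseudo-adic} space $(X,|Z|)$, not an adic space, let alone a fine $k$-adic space. (This is not a pedantic distinction -- the complement of an open subdisk in the closed unit disk is exactly this kind of object, and the paper itself has to invoke pseudo-adic spaces, e.g.\ in Lemma \ref{the first cohomology does not vanish}, precisely for this reason.) Consequently the inductive hypothesis, which is a statement about morphisms \emph{between fine $k$-adic spaces}, cannot be applied to $f_Z \colon Y_Z \to Z$; nor can Lemma \ref{lem:uppshreibaschange1} or Proposition \ref{stability for pushforwards}, both of which are proved only for fine $k$-adic spaces; nor does the notion of constructibility used in the conclusion (defined via nearby cycles of formal models of objects of the fine \'etale site) even make sense over $Z$, which has no formal models. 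A secondary problem is your refinement step: arranging that some stratum be open uses that closures of constructible sets interact well with the constructible topology, which is a noetherian phenomenon; fine adic spaces are highly non-noetherian as topological spaces (the paper shows a classical point of the closed disk is not even a constructible subset), so this refinement is not automatic either.

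The paper avoids stratifications altogether. It cites \cite[Lemma 2.7.9]{hub96} to the effect that Huber constructible sheaves are \emph{compact objects} of the category of sheaves, hence lie in the smallest subcategory closed under retracts and containing the objects $j_!\mathscr{F}$ with $j \colon U \to X$ \'etale between fine $k$-adic spaces and $\mathscr{F}$ locally constant of finite type; the result then follows from Lemma \ref{lem:4.14} (your use of this lemma, and of Corollary \ref{cor:uppershrilocsisconst} for the base case, is the part of your argument that does align with the paper). In other words, the correct dévissage here is categorical (retracts of $j_!$ of local systems along \'etale maps, which stay inside the world of fine adic spaces) rather than geometric (open/closed decompositions, which do not). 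If you want to rescue an open/closed argument, you would first have to extend the whole constructibility formalism of the paper to pseudo-adic spaces, which is a substantial undertaking and not something the cited lemmas provide.
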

  
  \begin{proof}
  By \cite[Lemma 2.7.9]{hub96}, every Huber constructible sheaf on $X_{\text{ét}}$ is a compact object. Thus the category of Huber constructible sheaves is a full subcategory of the smallest category which is closed under retracts and contains every object of the form $j_!\mathscr{F}$ where $j \colon U \to X$ is an étale morphism between fine $k$-adic spaces and $\mathscr{F}$ is a locally constant sheaf of $\Lambda$-modules of finite type on $U_{\text{ét}}$ (cf. the proof of \cite[Proposition 4.2.2]{lurgaitsinf}). We conclude by Lemma \ref{lem:4.14}.

% By Theorem \ref{huber implies constructible}, we can choose a morphism $\alpha \colon  \bigoplus_{k=1}^m j_{V_{k}!}\Lambda_{V_{k}} \to \bigoplus_{i=1}^n j_{U_{i}!}\Lambda_{U_{i}}$ such that $\coker (\alpha) = \mathscr{F}$. Then we have a distinguished triangle
% \[
% \bigoplus_{k=1}^m j_{V_{k}!}\Lambda_{V_{k}} \to \bigoplus_{i=1}^n j_{U_{i}!}\Lambda_{U_{i}} \to \mathscr{F} \oplus \ker(\alpha)[1] \to \cdot
% \]
% Apply $f^!$ and using Lemma \ref{lem:4.14} gives that $f^!(\mathscr{F})$ is constructible.
  \end{proof}
  
  \subsection{Constructibility is not stable for the adic Verdier dual} \label{counterexample for Verdier dual stability}
  
        In this section we provide an example of a constructible sheaf whose Verdier dual is not constructible. 
        The example was
        suggested to us by P. Scholze while Johannes Nicaise helped us with the proof.

       \subsubsection{Construction} 
       
            Let $k$ be of equi-characteristic zero and
            let $X$ be the adic closed unit disk. Let 
            $r_n \in |k^*|$ be a decreasing sequence of real numbers that tend to zero. 
            For every $n$, let $X_n$ denote the adic closed ball around $0$ of radius $r_n$
            and let $h_n$ denote the open immersion $X_n \hookrightarrow X$. 
            We set $U_n := X_n \smallsetminus \{0\}$ and use $j_n$ to denote the 
            open embedding $U_n \hookrightarrow X_n$. 
            Let $\sF_n := h_{n*}j_{n!}(\Lambda)$ and $\sF = \oplus_{n \in \mathbb{N}} \sF_n$. 
            
            We need a lemma which allows us to calculate the cohomology of closed adic subspaces via tubular neighbourhoods. 
        
       \begin{lem} \label{global cohomology same as fibre cohomology}
        Let $r > 0$ be an element of $|k^*|$ and $X(r)$ denote the closed adic ball around $0$ of radius $r$. 
       Let $Y$ be a fine $k$-adic space and $f : Y \to X(r)$. There exists $0< s < r$ with $s \in |k^*|$ such that 
       for every $0 < s' \leq s$ with $s' \in |k^*|$, 
      the restriction map 
      \begin{align*} 
       R\Gamma(Y \times_{X(r)} X(s'),\Lambda) \to R\Gamma(f^{-1}(0),\Lambda) 
      \end{align*} 
      is an isomorphism.
       \end{lem}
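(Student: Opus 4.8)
The plan is to realise the fibre $f^{-1}(0)$ as a cofiltered limit of the tubes $Y_{s'} := Y \times_{X(r)} X(s')$, promote the resulting continuity statement to an honest stabilisation, and extract from this the desired isomorphism. The one-dimensionality of the base and the equicharacteristic zero hypothesis will be what powers the stabilisation.

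First I would record that for $s' \in |k^*|$ the closed balls $X(s') = \{|T| \leq s'\}$ are rational (hence open) subsets of $X(r)$ forming a cofinal system of neighbourhoods of the type-$1$ point $0$, with $\bigcap_{s'} X(s') = \{0\}$. Consequently the open subspaces $Y_{s'} = f^{-1}(X(s'))$ form a cofinal system of neighbourhoods of $f^{-1}(0)$, whose intersection is $f^{-1}(0)$. A continuity (limit) theorem for \'etale cohomology of adic spaces, cf. \cite{hub96}, then yields
\[
R\Gamma(f^{-1}(0), \Lambda) \simeq \varinjlim_{s' \to 0} R\Gamma(Y_{s'}, \Lambda),
\]
the transition maps being the restrictions along $Y_{s''} \hookrightarrow Y_{s'}$ for $s'' \leq s'$. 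This already exhibits the maps of the statement as the structural maps into a filtered colimit; what remains is to show that these maps are isomorphisms for all sufficiently small $s'$, i.e. that the system $\bigl(R\Gamma(Y_{s'}, \Lambda)\bigr)_{s'}$ is essentially constant.

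For the stabilisation I would work with the sheaf $\mathscr{G} := Rf_*(\Lambda)$ on $X(r)$. Since $f$ is of finite type (Lemma \ref{lem:adlfttf}), the base $X(r)$ is one-dimensional, and $k$ has equicharacteristic zero, Huber's results for pushforwards over a base of dimension at most one in characteristic zero (cf. \cite{hub98a}) show that each $R^q f_*(\Lambda)$ is Huber constructible on $X(r)$. Hence there is $s_0 \in |k^*|$ such that, on the punctured disc $\{0 < |T| < s_0\}$, every $R^q f_*(\Lambda)$ is locally constant of finite type. Writing $j$ for the inclusion of this punctured disc into the open disc $\{|T| < s_0\}$ and $i$ for that of $\{0\}$, the germ at $0$ is governed by the distinguished triangle $i_*i^!\mathscr{G} \to \mathscr{G} \to Rj_*j^*\mathscr{G} \to \cdot$. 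Because $X(s')$ is open we have $R\Gamma(Y_{s'}, \Lambda) = R\Gamma(X(s'), \mathscr{G})$, so for $s'' \leq s' < s_0$ the transition map $R\Gamma(X(s'), \mathscr{G}) \to R\Gamma(X(s''), \mathscr{G})$ is controlled by the restriction of $j^*\mathscr{G}$ between the punctured balls $\{0 < |T| \leq s'\}$ and $\{0 < |T| \leq s''\}$.

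The main obstacle is precisely this last step: a filtered colimit of finite groups that happens to be finite need not be essentially constant, so the continuity statement alone is insufficient and one genuinely needs to control the radial variation of $\mathscr{G}$. This is where equicharacteristic zero enters. The monodromy of the locally constant sheaf $j^*\mathscr{G}$ on the punctured disc is tame, and the punctured balls $\{0 < |T| \leq s'\}$ for small $s'$ share a common (circle-like) skeleton, so the restriction maps between their cohomologies with coefficients in $j^*\mathscr{G}$ are isomorphisms for $s'$ small; the contribution of the complementary term $i_*i^!\mathscr{G}$ is the constant germ at $0$. Feeding this into the triangle above shows that $R\Gamma(X(s'), \mathscr{G}) \to R\Gamma(X(s''), \mathscr{G})$ is an isomorphism for all sufficiently small $s'' \leq s'$. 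Thus the system is essentially constant, and combining with the colimit identification of the first paragraph produces an $s \leq s_0$ for which $R\Gamma(Y_{s'}, \Lambda) \to R\Gamma(f^{-1}(0), \Lambda)$ is an isomorphism for every $0 < s' \leq s$, as required.
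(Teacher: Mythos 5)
Your route is genuinely different from the paper's, and its overall architecture is reasonable, but it has an under-justified step at exactly the point where the real content lies. The paper's proof is a one-line application of Huber's tube theorem, \cite[Theorem 3.6(a)]{hub98a}, taken upstairs on $Y$: the fibre $f^{-1}(0)$ is the zero set of $T \circ f$, the subsets $Y \times_{X(r)} X(s')$ are precisely Huber's tubes around it, and the constant sheaf $\Lambda$ is oc-quasi-constructible, so the stabilization of the cohomology of the tubes onto $R\Gamma(f^{-1}(0),\Lambda)$ is exactly what that theorem asserts --- no pushforward to the base, no continuity argument, no monodromy analysis. Your proof instead works on the one-dimensional base with $\mathscr{G} = Rf_*\Lambda$; this is legitimate, and it is in fact the strategy the paper itself uses for the companion Lemmas \ref{global cohomology same as fibre cohomology 2} and \ref{global cohomology same as fibre cohomology 3}. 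Your first two steps can be made precise: the colimit identification can be replaced by (or combined with) the quasi-compact base change theorem \cite[Theorem 4.3.1]{hub96}, which identifies the stalk of $Rf_*\Lambda$ at $0$ with $R\Gamma(f^{-1}(0),\Lambda)$, and \cite[Theorem 0.1, \S 1.3(iv)]{hub98a} does give local constancy of each $R^qf_*\Lambda$ on a small punctured disc (note these sheaves are oc-quasi-constructible, not ``Huber constructible'', and this is where equicharacteristic zero genuinely enters).

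The gap is your final stabilization claim: that for a locally constant sheaf $\sL$ of finite type on $\{0 < |T| < s_0\}$ the restrictions $H^q(\{0<|T|\le s'\},\sL) \to H^q(\{0<|T|\le s''\},\sL)$ are isomorphisms for all small $s'' \le s'$. ``Tame monodromy'' and ``a common circle-like skeleton'' are not arguments in the \'etale setting: \'etale cohomology of an adic annulus with locally constant coefficients is not read off from the topological skeleton without further input. What is actually needed is (a) the Riemann-existence type fact that in residue characteristic zero every finite \'etale cover of a sufficiently small punctured disc is Kummer, so that $\sL$ is trivialized by $z \mapsto z^n$ after shrinking; (b) the computation of $H^*$ of punctured closed discs with constant coefficients (for instance by writing them as increasing unions of closed annuli together with a Mittag-Leffler argument); (c) Hochschild--Serre to pass from constant coefficients to $\sL$; and (d) a d\'evissage from the cohomology sheaves $R^qf_*\Lambda$ to the complex $Rf_*\Lambda$. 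None of this is false, but it is the entire mathematical content of the lemma: carried out rigorously, your stabilization step is itself an instance of the tube theorem \cite[Theorem 3.6(a)]{hub98a}, applied on the base to $Rf_*\Lambda$, so as written the proposal defers the hard part to precisely the assertion being proved.
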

    \begin{proof} 
       This follows from \cite[Theorem 3.6(a)]{hub98a} by taking the space $Y$ in place of Huber's choice of $X$,
       $f^{-1}(0)$ instead of $Z$ and noting that $\Lambda$ is oc-quasi-constructible. 
     \end{proof}

    \begin{prop} \label{verdier dual not constructible}
      The sheaf $\sF$ on $X$ is constructible. However, the stalk of its Verdier dual at the origin i.e. 
      $[D^{\mathrm{ad}}(\sF)]_{\overline{0}}$ is not in $\mathcal{D}^b_c(\Lambda)$. 
    \end{prop}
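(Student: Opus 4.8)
The plan is to prove the two assertions separately, using the finiteness criterion of Theorem \ref{theorem above}(1) for constructibility and a direct computation of the stalk of the dual via Proposition \ref{compved}. First I would record that each summand $\sF_n=h_{n*}j_{n!}(\Lambda)$ is constructible: $\Lambda$ is constructible on $U_n$, $j_{n!}$ is extension by zero along an open immersion, and constructibility is stable under the pushforward $h_{n*}$ by Proposition \ref{stability for pushforwards}. For the infinite sum I would apply Theorem \ref{theorem above}(1): for every complete algebraically closed $L/k$ and every $f\colon V\to X_L$ it suffices that $R\Gamma(V,f^*p_L^*\sF)=\bigoplus_n R\Gamma(V,f^*p_L^*\sF_n)$ have finite cohomology. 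Each summand is finite since $\sF_n$ is constructible, so everything reduces to the vanishing claim: \emph{for each $j$ one has $H^j(V,f^*p_L^*\sF_n)=0$ for all but finitely many $n$.} Away from the origin this is automatic, since near a point $x$ with $|T(x)|>0$ only the finitely many $n$ with $r_n\ge|T(x)|$ contribute (as $r_n\to0$); thus the content is entirely concentrated at the origin.

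For the vanishing claim the essential tool is the tubular-neighbourhood Lemma \ref{global cohomology same as fibre cohomology}. Applying it to $f\colon V\to X=X(1)$ yields $s>0$ such that for all $s'\le s$ the restriction $R\Gamma(V\times_X X(s'),\Lambda)\to R\Gamma(f^{-1}(0),\Lambda)$ is an isomorphism. For $n$ with $r_n\le s$, I would pull back the defining short exact sequence $0\to\sF_n\to h_{n*}\Lambda_{X_n}\to i_{0*}\Lambda\to0$ along $f$ (pullback is exact) and pass to $R\Gamma(V,-)$, so that $f^*i_{0*}\Lambda=i'_{0*}\Lambda$ contributes $R\Gamma(f^{-1}(0),\Lambda)$ while the middle term is governed by the tube $f^{-1}(X_n)$; the tubular isomorphism then forces $R\Gamma(V,f^*\sF_n)=0$. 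I expect this to be the main obstacle: one must match the cohomology of $f^*h_{n*}\Lambda_{X_n}$ (whose boundary behaviour records the higher direct images of the non-closed open immersion $h_n$) with the tube cohomology to which Lemma \ref{global cohomology same as fibre cohomology} applies, and it is here that the equi-characteristic zero hypothesis and the results of \cite{hub98a} enter.

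For the failure of constructibility of the dual, note first that $R\Homs$ turns coproducts in its first variable into products, so $D^{\mathrm{ad}}(\sF)=\prod_n D^{\mathrm{ad}}(\sF_n)$. I would compute the stalk at the origin as the filtered colimit $[D^{\mathrm{ad}}(\sF)]_{\overline{0}}=\varinjlim_m R\Gamma(X_m,D^{\mathrm{ad}}(\sF))$ over the cofinal system of balls $X_m\ni0$, and since $R\Gamma$ commutes with products this is $\varinjlim_m\prod_n R\Gamma(X_m,D^{\mathrm{ad}}(\sF_n))$. For $n\le m$ one has $X_m\subseteq X_n$ and $\sF_n|_{X_m}=j_{m!}\Lambda$, so by Proposition \ref{compved} in the form $R\Gamma(X_m,D^{\mathrm{ad}}(\sG))=R\mathrm{Hom}(R\Gamma_c(X_m,\sG),\Lambda)$ this factor equals $R\mathrm{Hom}(R\Gamma_c(U_m,\Lambda),\Lambda)$. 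A Poincaré-duality computation on the punctured disc gives $R\Gamma_c(U_m,\Lambda)$ with $H^1_c=\Lambda$ and $H^2_c=\Lambda(-1)$, so this factor has $H^{-2}=\Lambda(1)\neq0$; moreover the restriction maps along $X_{m+1}\hookrightarrow X_m$ induce isomorphisms on these $H^{-2}$, being dual to the isomorphism on top compactly supported cohomology of the open inclusion $U_{m+1}\subset U_m$.

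Finally I would collect the $n\le m$ factors: they exhibit $\Lambda(1)^{\,m+1}$ as a direct summand of $H^{-2}\!\big(R\Gamma(X_m,D^{\mathrm{ad}}(\sF))\big)$, compatible with the transition maps, on which the transitions act by the coordinate inclusion $\Lambda^{m+1}\hookrightarrow\Lambda^{m+2}$. Since filtered colimits are exact and commute with $H^{-2}$, passing to the limit yields an injection $\Lambda^{(\mathbb{N})}\hookrightarrow H^{-2}\big([D^{\mathrm{ad}}(\sF)]_{\overline{0}}\big)$. Hence $H^{-2}\big([D^{\mathrm{ad}}(\sF)]_{\overline{0}}\big)$ is infinite, and therefore $[D^{\mathrm{ad}}(\sF)]_{\overline{0}}\notin\mathcal{D}^b_c(\Lambda)$, as required; in particular $D^{\mathrm{ad}}(\sF)$ is not constructible even though, by Theorem \ref{big result small proof}, it remains semi-constructible.
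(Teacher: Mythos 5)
Your overall architecture is right, and your treatment of the second assertion takes a genuinely different (and in one respect leaner) route than the paper's, but as written there are two gaps, one of which you flag yourself and then leave open.

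In the constructibility half, the step you call ``the main obstacle'' is a real obstacle and it is not filled. To get from the tubular-neighbourhood isomorphism of Lemma \ref{global cohomology same as fibre cohomology} to the vanishing $R\Gamma(V,f^*\sF_n)=0$ you must identify $f^*\sF_n$ with $Rh'_{n*}j'_{n!}(\Lambda)$, where $h'_n\colon V\times_X X_n\hookrightarrow V$ is the tube; only then does $R\Gamma(V,f^*\sF_n)$ become the cone (up to shift) of $R\Gamma(V\times_X X_n,\Lambda)\to R\Gamma(f^{-1}(0),\Lambda)$, which the tubular isomorphism kills. The paper fills this in two steps: Lemma \ref{vanishing lower star} shows $R^ih_{n*}(\Lambda)=0$ for $i\geq 1$ (via Huber's comparison with the Berkovich pushforward, \cite[Proposition 8.3.5]{hub96}), which combined with the triangle $Rh_{n*}j_{n!}\Lambda\to Rh_{n*}\Lambda\to Rh_{n*}i_{n*}\Lambda$ gives $\sF_n\simeq Rh_{n*}j_{n!}(\Lambda)$; then Huber's quasi-compact base change \cite[Theorem 4.3.1]{hub96}, together with \cite[Proposition 5.2.2(iv)]{hub96}, converts $f^*Rh_{n*}j_{n!}(\Lambda)$ into $Rh'_{n*}j'_{n!}(\Lambda)$. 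Note also that your guess at what closes the gap is misplaced: \cite{hub98a} enters only through Lemma \ref{global cohomology same as fibre cohomology}, which you already invoke, and the equi-characteristic-zero hypothesis plays no role in this half of the proposition --- it is needed in the stalk computation below.

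For the second assertion your route differs from the paper's in a useful way: instead of first computing $D^{\mathrm{ad}}(\sF_n)\simeq h_{n!}Rj_{n*}\Lambda[2]$ as a complex of sheaves (the paper's Lemma \ref{Verdier dual computation}, which requires Corollary \ref{Corollary 2} and the semi-constructibility of $j_{n!}\Lambda$), you evaluate $R\Gamma(X_m,D^{\mathrm{ad}}(\sF_n))=R\mathrm{Hom}(R\Gamma_c(X_m,\sF_n|_{X_m}),\Lambda)$ by global duality (step (5) in the proof of Proposition \ref{compved}) and compute $R\Gamma_c(U_m,\Lambda)$ directly; moreover, since you only need a lower bound on the stalk, the finite sub-product over $n\leq m$ splits off as a direct summand of the derived product $R\prod_n$, so you can sidestep the Mittag--Leffler spectral-sequence argument that the paper runs in Lemma \ref{lem:infinstal}. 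The remaining gap is your unjustified assertion that the closed balls $X_m$ form a cofinal system of \'etale neighbourhoods of $\overline{0}$: a stalk on the fine \'etale site is a colimit over all \'etale neighbourhoods, and cofinality of the balls is precisely where equi-characteristic zero is used --- the paper proves it from Lemma \ref{refining covers} and \cite[Theorem 6.3.2]{berk}, the point being that a finite \'etale cover of a small disk is trivial when the residue characteristic is zero. In positive residue characteristic this statement fails (there are nontrivial wildly ramified covers of disks), so this step genuinely needs both the hypothesis and an argument, not just the word ``cofinal''.
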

    \begin{proof} 
     We begin by showing that $\mathscr{F}$ is constructible. For $L/k$ an extension of algebraically closed non-Archimedean fields (extending the absolute value of $k$) we have a diagram (where each square is cartesian)
$$
\begin{tikzcd}[row sep = large, column sep = large]
U_{n,L} \arrow[d, "p_{n,L}^U"] \arrow[hookrightarrow, r, "j_{n,L}"] &
X_{n,L} \arrow[d, "p_{n,L}"] \arrow[hookrightarrow, r, "h_{n,L}"] &
X_L \arrow[d, "p_L"] \\
U_n \arrow[hookrightarrow, r, "j_n"] &
X_n \arrow[hookrightarrow, r, "h_n"] &
X.
\end{tikzcd}
$$ 
We compute
\begin{align*}
p_L^*\mathscr{F}_n &= p_L^*h_{n*}j_{n!}(\Lambda) \\
&\overset{(i)}{=} h_{n,L*}p_{n,L}^*j_{n!}(\Lambda) \\
&\overset{(ii)}{=} h_{n,L*}j_{n,L!}(\Lambda)
\end{align*}
where (i) follows from \cite[Theorem 4.1.1(c)]{hub96} and (ii) follows from \cite[Proposition 5.2.2(iv)]{hub96}. Hence by Theorem \ref{theorem above}, it suffices to prove the following statement. 
      If $f : Y \to X$ is a morphism of fine $k$-adic spaces then
     for every $i \in \mathbb{Z}$, $H^i(Y,f^*(\sF))$ is finite. 
     %By taking an affinoid cover of $Y$, the \v{C}ech-to-derived functor spectral sequence shows that it is enough to assume $Y$ is in addition affinoid.
     We make use of the notation from the lemma above.  
     For $0 < r < 1$ and $r \in |k^*|$, let $Y(r)$ denote the %affinoid domain $Y \times_X X(r)$ and $Y_{{0}}$ the fibre over $0$.  
     By Lemma \ref{global cohomology same as fibre cohomology}, 
     there exists $0 < s < 1$ with $s \in |k^*|$ such that for every $0 < s' \leq s$ with $s' \in |k^*|$,
     the natural map $R\Gamma(Y(s),\Lambda) \to R\Gamma(Y_0,\Lambda)$ is an 
     isomorphism.  
      
       We claim that for all $n \in \mathbb{N}$ such that 
       $r_n \leq s$, 
       $R\Gamma(Y,f^*(\sF_n)) = 0$.   
       By \cite[Proposition 5.2.2(iv)]{hub96}, $f_{|Y(r_n)}^*j_{n!}(\Lambda) \simeq j'_{n!}(\Lambda)$ 
       where $j'_n$ is the open embedding $Y(r_n) \smallsetminus Y_0 \hookrightarrow Y(r_n)$.
        
       Lemma \ref{vanishing lower star} applied to the triangle
               \begin{align}
          Rh_{n*}j_{n!}(\Lambda) \to Rh_{n*}\Lambda \to Rh_{n*}i_{n*}(\Lambda) \to \cdot 
        \end{align}
         where $i_n$ denotes the embedding $\{0\} \hookrightarrow X_n$, implies that 
       $\sF_n  = Rh_{n*}j_{n!}(\Lambda)$.  
       By Theorem 4.3.1 in loc.cit. and the isomorphism above, 
      $f^*(\sF_n) = f^*Rh_{n*}j_{n!}(\Lambda) \simeq Rh'_{n*} j'_{n!}(\Lambda)$ 
      where $h'_{n}$ is the embedding $Y(r_n) \hookrightarrow Y$. 
      Hence, 
     \begin{align} 
     R\Gamma(Y,f^*(\sF_n)) \simeq R\Gamma(Y(r_n),j'_{n!}(\Lambda)). 
    \end{align} 

      Observe that we have the following exact sequence on $Y(r_n)$
      \begin{align*} 
        0 \to j'_{n!}(\Lambda) \to \Lambda  \to i_*\Lambda_{|Y_{0}} \to 0 
           \end{align*} 
           where $i : Y_{0} \hookrightarrow Y(r_n)$. This
      gives us the triangle
    \begin{align*} 
     R\Gamma(Y(r_n),j'_{n!}(\Lambda)) \to R\Gamma(Y(r_n),\Lambda) \to R\Gamma(Y_{0},\Lambda) \to \cdot
    \end{align*} 
     By Lemma \ref{global cohomology same as fibre cohomology} and (4) above, $R\Gamma(Y,f^*(\sF_n)) \simeq 0$. 
     We have thus verified the claim. 
     
     The claim above implies that 
     \begin{align*} 
     H^i(Y,f^*(\sF)) =& \oplus_{n \in \mathbb{N}} H^i(Y,\sF_n) \\
                            =&   \oplus_{n | r_n > s} H^i(Y,\sF_n).
                            \end{align*} 
     Hence, $\sF$ is constructible. 
     
       By Lemma \ref{Verdier dual computation}, the Verdier dual of $\sF$ is the complex $R\prod_n h_{n!}Rj_{n*}(\Lambda[2])$. 
        Finally by Lemma \ref{lem:infinstal} the stalk of $D^{\mathrm{ad}}(\sF)$ at $0$ is infinite and it is therefore not constructible.           
    \end{proof}
    
    \begin{lem} \label{Verdier dual computation}
      The Verdier dual of $\sF$ is the complex (up to a twist) $R\prod_n h_{n!}Rj_{n*}(\Lambda[2])$. 
        \end{lem}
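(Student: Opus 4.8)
The plan is to reduce the computation of $D^{\mathrm{ad}}(\sF)$ to that of the individual summands $\sF_n = h_{n*}j_{n!}(\Lambda)$ and then to strip off the two functors $h_{n*}$ and $j_{n!}$ using the exchange formulas for the adic Verdier dual established in Lemma \ref{projection formula adic spaces} and Corollary \ref{Corollary 2}. First I would record that, since $X$ is the smooth one-dimensional adic unit disk, Poincar\'e duality gives $p_X^!(\Lambda) = \Lambda(1)[2]$, so that $D^{\mathrm{ad}}(-) = R\Homs(-,\Lambda(1)[2])$; the Tate twist $(1)$ is exactly the ``twist'' appearing in the statement, and I shall suppress it below. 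Next, since internal $\Homs$ carries a direct sum in its first argument to a product, resolving the dualizing complex by injectives yields
\[
D^{\mathrm{ad}}(\sF) = R\Homs\Big(\bigoplus_{n} \sF_n,\, p_X^!\Lambda\Big) \simeq R\prod_{n} R\Homs(\sF_n, p_X^!\Lambda) = R\prod_n D^{\mathrm{ad}}(\sF_n),
\]
so it suffices to identify each $D^{\mathrm{ad}}(\sF_n)$.

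For a fixed $n$, I would first recall from the proof of Proposition \ref{verdier dual not constructible} that the higher direct images under the open immersion $h_n$ vanish, so that $\sF_n = Rh_{n*}(j_{n!}\Lambda)$, and that $j_{n!}\Lambda$ is constructible, hence semi-constructible, on $X_n$. Applying Corollary \ref{Corollary 2}(2) to the morphism $h_n$ gives $D^{\mathrm{ad}}(Rh_{n*}(j_{n!}\Lambda)) = Rh_{n!}\big(D^{\mathrm{ad}}(j_{n!}\Lambda)\big)$, and since $h_n$ is an open immersion $Rh_{n!} = h_{n!}$. To compute the inner dual I would apply Lemma \ref{projection formula adic spaces}(3) to the open immersion $j_n$, obtaining $D^{\mathrm{ad}}(j_{n!}\Lambda) = Rj_{n*}\big(D^{\mathrm{ad}}_{U_n}(\Lambda)\big)$. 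Finally, $U_n = X_n \smallsetminus \{0\}$ is smooth of pure dimension one, so Poincar\'e duality again gives $D^{\mathrm{ad}}_{U_n}(\Lambda) = p_{U_n}^!\Lambda = \Lambda(1)[2]$. Stringing these together yields $D^{\mathrm{ad}}(\sF_n) = h_{n!}Rj_{n*}(\Lambda(1)[2])$, and substituting into the product formula gives the claimed complex $R\prod_n h_{n!}Rj_{n*}(\Lambda[2])$, up to the twist $(1)$.

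Once the exchange formulas are in hand the computation is essentially bookkeeping, so the only genuine points requiring care are (i) the passage of $D^{\mathrm{ad}}$ across the infinite direct sum, which must be justified by resolving $p_X^!\Lambda$ injectively and using that a product of injective resolutions computes the derived product $R\prod_n$; and (ii) the verification that the hypotheses of the exchange formulas are met, namely that $j_{n!}\Lambda$ is semi-constructible so that Corollary \ref{Corollary 2}(2) applies, and that $R^q h_{n*}(j_{n!}\Lambda)=0$ for $q>0$ so that $\sF_n$ really is $Rh_{n*}(j_{n!}\Lambda)$. I expect the first of these to be the main (though still routine) obstacle, since boundedness of the resulting complex—and hence the fact that $D^{\mathrm{ad}}(\sF)$ lands in $\mathcal{D}^b$—must be checked by observing that each $h_{n!}Rj_{n*}(\Lambda[2])$ is concentrated in a single uniform range of degrees.
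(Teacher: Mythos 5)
Your proposal is correct and follows essentially the same route as the paper: reduce to $D^{\mathrm{ad}}(\sF) = R\prod_n D^{\mathrm{ad}}(\sF_n)$, identify $\sF_n = Rh_{n*}(j_{n!}\Lambda)$ via Lemma \ref{vanishing lower star}, exchange $D^{\mathrm{ad}}$ past $Rh_{n*}$ using Corollary \ref{Corollary 2}(2), past $j_{n!}$ using the adjunction of Lemma \ref{projection formula adic spaces}(3), and finish with Poincar\'e duality on the smooth $U_n$. The only cosmetic difference is that the paper establishes semi-constructibility of $j_{n!}\Lambda$ via the triangle $j_{n!}\Lambda \to \Lambda \to i_{n*}\Lambda$ rather than by citing constructibility, and it asserts the direct-sum-to-product step without the injective-resolution justification you spell out.
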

      \begin{proof} 
        By definition, $\sF = \oplus_n \sF_n$. It follows that 
        $D^{\mathrm{ad}}(\sF) = R\prod_n D^{\mathrm{ad}}(\sF_n)$. 
        Let $i_n$ denote the embedding $\{0\} \hookrightarrow X_n$. 
        We claim that $j_{n!}(\Lambda)$ is semi-constructible. 
        Indeed, consider the following distinguished triangle 
        \begin{align}
          j_{n!}(\Lambda) \to \Lambda \to i_{n*}(\Lambda) \to \cdot 
        \end{align}  
       Since $\Lambda$ and $i_{n*}(\Lambda)$ are semi-constructible, we get that 
       $j_{n!}(\Lambda)$ is semi-constructible as well. 
       It follows that $Rh_{n*}(j_{n!}(\Lambda))$ is also semi-constructible. 
       Applying $Rh_{n*}$ to the triangle above and invoking Lemma 
       \ref{vanishing lower star}, we deduce that 
       $Rh_{n*}(j_{n!}(\Lambda)) \simeq h_{n*}(j_{n!}(\Lambda))$. 
       By Corollary \ref{Corollary 2}, $D^{\mathrm{ad}}(\sF_n) = h_{n!}D^{\mathrm{ad}}(j_{n!}(\Lambda))$.
      
        To calculate $D^{\mathrm{ad}}(j_{n!}(\Lambda))$, consider
        the following isomorphisms, 
       \begin{align*} 
       D^{\mathrm{ad}}(j_{n!}(\Lambda)) &\simeq R\mathscr{H}om(j_{n!}(\Lambda),p_{X_n}^!(\Lambda)) \\  
                                   &\simeq Rj_{n*}R\mathscr{H}om(\Lambda,p_{U_n}^!(\Lambda)) \\ 
                                   &\simeq Rj_{n*}p_{U_n}^!(\Lambda)
                                   \end{align*} 
                                    where the second isomorphism follows by adjointness. 
        Since $U_n$ is smooth, we get that 
        $p_{U_n}^! \simeq p_{U_n}^*[2]$ (ignoring twists). 
        Hence we see that 
        $D^{\mathrm{ad}}(\sF_n) = h_{n!}Rj_{n*}(\Lambda[2])$. 
       
       %To calculate $D^{\mathrm{ad}}(j_{n!}(\Lambda))$, we apply 
       %$R\mathscr{H}om(-,p_{X_n}^!(\Lambda))$ to the distinguished triangle above. 
       %Applying Corollary \ref{Corollary 2} then gives 
       %\begin{align*}
         %i_*(\Lambda) \to p_{X_n}^!(\Lambda) \to D^{\mathrm{ad}}(j_{n!}(\Lambda)) \to 
       %\end{align*} 
       % Since $X_n$ is smooth, we get that $p_{X_n}^!(\Lambda) = \Lambda[2](1)$
      \end{proof} 
    
    \begin{lem}  \label{lem:infinstal}
      Let $A$ be the complex $[R\prod_n h_{n!}Rj_{n*}(\Lambda)]_{\overline{0}}$. 
       We have that $H^0(A)$ is infinite.     
    \end{lem}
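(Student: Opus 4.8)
The plan is to write $H^0(A)$ as a filtered colimit over a neighbourhood basis of the origin, reduce it to a product of very concrete cohomology groups, and then exhibit infinitely many linearly independent classes by hand. The one point that needs care, and which I expect to be the main obstacle, is that the stalk of an infinite product is \emph{not} the product of the stalks: filtered colimits do not commute with infinite products, so I cannot simply assert $[R\prod_n \mathscr{G}_n]_{\overline{0}} = \prod_n (\mathscr{G}_n)_{\overline{0}}$. The way around this is to keep a canonical comparison map to $\prod_n \Lambda$ and use it only to produce a lower bound on the image.

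First I would reduce the derived product to an honest one. Since products are exact in the category of sheaves of $\Lambda$-modules, $R\prod_n = \prod_n$; choosing an injective resolution $\mathscr{G}_n$ of each $h_{n!}Rj_{n*}(\Lambda)$, the termwise product $\prod_n \mathscr{G}_n$ is a complex of injectives which computes both $R\prod_n h_{n!}Rj_{n*}(\Lambda)$ and, via honest sections, $R\Gamma$. The closed subdiscs $X(s)$ with $s \in |k^*|$, $s \to 0$, form a cofinal system of neighbourhoods of the origin (as in Lemma \ref{global cohomology same as fibre cohomology}), so the stalk at $\overline{0}$ is $\varinjlim_s R\Gamma(X(s),-)$. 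Because filtered colimits are exact, because $\Gamma(X(s),\prod_n \mathscr{G}_n) = \prod_n \Gamma(X(s),\mathscr{G}_n)$, and because products are exact on $\Lambda$-modules, I obtain
\[
H^0(A) \;=\; \varinjlim_{s}\ \prod_n H^0\bigl(X(s),\, h_{n!}Rj_{n*}(\Lambda)\bigr).
\]

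Next I would compute each factor for $s$ small. Fix $n$. As soon as $s \le r_n$ one has $X(s) \subseteq X_n$, so $h_{n!}Rj_{n*}(\Lambda)$ restricts on $X(s)$ to $Rj_{n*}(\Lambda)$ itself, whence $R\Gamma(X(s), h_{n!}Rj_{n*}(\Lambda)) \simeq R\Gamma(X(s)\smallsetminus\{0\},\Lambda)$. The punctured disc $X(s)\smallsetminus\{0\}$ is connected, so this $H^0$ is $\Lambda$, generated by the constant section, and the restriction maps between two such radii $s' \le s \le r_n$ are the identity on $\Lambda$. Hence for every fixed $n$ the factor $\varinjlim_s H^0(X(s), h_{n!}Rj_{n*}(\Lambda))$ stabilizes to $\Lambda$, and the interchange of colimit and product furnishes a canonical comparison map
\[
\Phi \colon H^0(A) \longrightarrow \prod_n \varinjlim_s H^0\bigl(X(s), h_{n!}Rj_{n*}(\Lambda)\bigr) \;=\; \prod_n \Lambda .
\]

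Finally I would produce enough classes to force infinitude. For each $m$ pick $s = r_m$; since $r_n \ge r_m$ for $n \le m$, all indices $n \le m$ lie in the regime $X(s)\subseteq X_n$. Define $\xi_m \in \prod_n H^0(X(r_m), h_{n!}Rj_{n*}(\Lambda))$ to be the constant section $1 \in \Lambda$ in the slot $n=m$ and the zero section in every other slot (the zero section always exists, so I never need to compute the remaining factors). Tracking $\xi_m$ through $\Phi$, its image is the standard basis vector $e_m \in \prod_n \Lambda$. Since the $e_m$ are $\Lambda$-linearly independent and $\Lambda \neq 0$, the image of $\Phi$ contains $\bigoplus_n \Lambda$ and is therefore infinite; a fortiori $H^0(A)$ is infinite. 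The merit of routing everything through the lower bound $\bigoplus_n \Lambda$ on $\operatorname{im}\Phi$ is precisely that it sidesteps the failure of colimits to commute with the infinite product, which is the delicate step.
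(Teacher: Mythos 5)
Your reduction of the stalk to a colimit over the closed discs $X(s)$ is the one step that genuinely needs an argument, and the reference you give for it (Lemma \ref{global cohomology same as fibre cohomology}) does not supply it: that lemma is a tubular-neighbourhood statement about fibres, not a statement about the \'etale local structure of $X$ at the origin. The stalk $[\,\cdot\,]_{\overline{0}}$ is a filtered colimit over \emph{\'etale} neighbourhoods of $\overline{0}$, so to write $H^0(A)=\varinjlim_s \prod_n H^0(X(s),h_{n!}Rj_{n*}(\Lambda))$ --- and to have your map $\Phi$, whose target involves colimits over discs only, be defined on all of $H^0(A)$ --- you must show that every \'etale neighbourhood of $\overline{0}$ is refined by some closed disc $X(s)$. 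This is precisely what the paper's proof establishes first: it refines an arbitrary \'etale neighbourhood to a finite \'etale map $U\to V$ with $V$ a small closed disc (Lemma \ref{refining covers}) and then uses the standing assumption that $k$ has equi-characteristic zero, via \cite[Theorem 6.3.2]{berk}, to conclude that $U\to V$ is an isomorphism. This is exactly where the hypothesis $\mathrm{char}=0$ enters the example; in positive residue characteristic the discs are \emph{not} cofinal among \'etale neighbourhoods (there are nontrivial finite \'etale covers of discs), so this step cannot be waved through. As written, this is the genuine gap in your proof.

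A second, separate point: your opening justification that ``products are exact in the category of sheaves of $\Lambda$-modules, hence $R\prod_n=\prod_n$'' is false --- AB4* fails for sheaf categories, and indeed the nonvanishing of $R^1\prod$ for sheaves is used elsewhere in this very paper (Lemma \ref{Verdier dual computation}, Lemma \ref{lem:finpeuofsh}, Proposition \ref{the case of the direct sum}). Fortunately you never actually use that claim: what your argument really uses is that a termwise product of bounded-below complexes of injectives is again a bounded-below complex of injectives computing $R\prod$, that sections of a product of sheaves are the product of the sections, and that products are exact on $\Lambda$-\emph{modules}; all of that is correct. With these two points repaired, your proof is essentially the paper's: the paper also arrives at $H^0(A)=\varinjlim_s\prod_n H^0(X(s),h_{n!}Rj_{n*}(\Lambda))$, commuting $R\prod$ past $R\Gamma$ and then killing $R^p\prod$ for $p>0$ by Mittag-Leffler together with finiteness of each $R\Gamma(X(s),h_{n!}Rj_{n*}(\Lambda))$, where you instead invoke exactness of products of modules (your route is leaner, needing no finiteness input); both proofs then observe that for $s\le r_n$ the $n$-th factor is $\Lambda$ with identity transition maps. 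Your endgame --- the explicit classes $\xi_m$ and the comparison map $\Phi$, giving $\bigoplus_n\Lambda\subseteq\operatorname{im}\Phi$ --- is a clean and slightly more explicit way of extracting infinitude than the paper's closing sentence.
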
 
    \begin{proof}  Since we assumed the field $k$ to be of equi-characteristic zero, we have that the system of closed disks centred 
      at $0$ with radii in 
      $|k^*|$ form a fundamental system of étale neighbourhoods of the origin. Indeed,
    let $Y \to X$ be a fine étale neighbourhood of the origin. 
     By Lemma \ref{refining covers}, there exists an open affinoid neighbourhood $V$ of the origin 
     and an affinoid open subset $U$ of $Y$ such that $U \to V$ is finite. Since the 
     family of closed disks containing the origin forms a fundamental system of open neighbourhoods
     in the adic topology, we replace $V$ with a closed disk that is contained in it. By \cite[Theorem 6.3.2]{berk},
     and since the field $k$ is of equi-characteristic zero, we get that 
     $U \to V$ is the identity map.  
     This shows that the family of adic closed disks around the origin are terminal amongst all étale neighbourhoods of the origin. 
     Hence the claim. 
     
           By definition and since $R\prod_n$ commutes with taking global sections,
     \begin{align*} 
     A = \varinjlim_{0 < s < 1 | s \in |k^*|} R\prod_n R\Gamma(X(s), h_{n!}Rj_{n*}(\Lambda)). 
     \end{align*} 
   For every $n \in \mathbb{N}$, $R\Gamma(X(s), h_{n!}Rj_{n*}(\Lambda))$ has finite cohomology (this is because $D^{\mathrm{ad}}(\sF_n)$ is semi-constructible). Hence 
   by the Mittag-Leffler condition applied to the spectral sequence
   \[
   E_2^{p,q} = R^p\prod_nH^{q}( R\Gamma(X(s), h_{n!}Rj_{n*}(\Lambda))) \Rightarrow H^{p+q}\left( R\prod_n R\Gamma(X(s), h_{n!}Rj_{n*}(\Lambda))\right)
   \]
   we see that 
   $$H^{0}\left(R\prod_n R\Gamma(X(s), h_{n!}Rj_{n*}(\Lambda)) \right) = \prod_n H^{0}(R\Gamma(X(s), h_{n!}Rj_{n*}(\Lambda))).$$ 
   Now by the spectral sequence
   \[
   E_2^{p,q} = R^p\Gamma(X(s), h_{n!}R^{q}j_{n*}(\Lambda)) \Rightarrow H^{p+q}( R\Gamma(X(s), h_{n!}Rj_{n*}(\Lambda)))
   \]
   we see that
   \begin{align*} 
      H^0(A) &= \varinjlim_{0 < s < 1 | s \in |k^*|} \prod_n H^0 R\Gamma(X(s), h_{n!}Rj_{n*}(\Lambda)) \\
                  &= \varinjlim_{0 < s < 1 | s \in |k^*|} \prod_n H^0(X(s), h_{n!}j_{n*}(\Lambda)). 
      \end{align*}
      Now for $s < r_n$, $H^0(X(s), h_{n!}j_{n*}(\Lambda)) = \Lambda$ and so we see that $H^0(A)$ is infinite.

       \end{proof}

    \begin{lem}\label{vanishing lower star}
      Let $h : X(r) \hookrightarrow X$ be the open immersion from the closed adic disc around zero of radius $r$ to
      the unit disc. Then $R^ih_*(\Lambda) = 0$ if $i \geq 1$.     
    \end{lem}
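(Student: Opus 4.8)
The plan is to prove $R^ih_*\Lambda=0$ for $i\geq 1$ by computing the stalks of $R^ih_*\Lambda$ at every geometric point of $X$; since the (fine) �tale topos has enough points, this suffices. Write $X=\{|T|\leq 1\}$ and $X(r)=\{|T|\leq r\}$, and fix $c\in k$ with $|c|=r$ (here $r\in|k^*|$, as in the application). Both $\{|T|\leq r\}$ and $\{|T|\geq r\}$ are rational subsets, hence open; in particular $X(r)$ is open, so $h$ is an open immersion, but $X(r)$ is \emph{not} closed, and $R^ih_*\Lambda$ is supported on the boundary $\partial X(r):=\overline{X(r)}\smallsetminus X(r)$. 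The stalk of $R^ih_*\Lambda$ at a geometric point $\bar x$ is $\varinjlim_V H^i(V\times_X X(r),\Lambda)$, the colimit running over �tale neighbourhoods $V\to X$ of $\bar x$.

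First I would dispose of the easy points. If $x\in X(r)$ then, $X(r)$ being open, $h$ is a local isomorphism near $x$, and the stalk is $\Lambda$ in degree $0$ and vanishes in positive degrees. If $x\notin\overline{X(r)}$ then some $V$ is disjoint from $X(r)$, so $V\times_X X(r)=\emptyset$ and the stalk vanishes in all degrees. It remains to treat $x\in\partial X(r)$. Such an $x$ satisfies $|T(x)|>r$ (since $x\notin X(r)$), and I claim its maximal generization $y$, a rank-one point, lies on the circle $C_r:=\{|T|=r\}$: if $|T(y)|>r$ as a real number, then by divisibility of $|k^*|$ we may pick $r'\in|k^*|$ with $r<r'<|T(y)|$, and the open set $\{|T|\geq r'\}$ would be a neighbourhood of $x$ disjoint from $X(r)$, contradicting $x\in\overline{X(r)}$. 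Hence $|T(y)|=r$ and $y\in C_r$.

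For the main case I would use that $x\in\{|T|\geq r\}$, an open set, so the �tale neighbourhoods $V$ of $\bar x$ whose image is contained in $\{|T|\geq r\}$ are cofinal. For such $V$ one has $V\times_X X(r)=V\times_X\big(\{|T|\leq r\}\cap\{|T|\geq r\}\big)=V\times_X C_r$, so the stalk equals $\varinjlim_V H^i(V\times_X C_r,\Lambda)$. Since $y$ generizes $x$ and open sets are stable under generization, each such $V$ contains $y$, and the $V\times_X C_r$ form a cofinal system of �tale neighbourhoods of the geometric point $\bar y$ of $C_r$ over $y$. Consequently the stalk is the (derived) stalk of the constant sheaf $\Lambda$ on $C_{r,\text{�t}}$ at $\bar y$, which is $\Lambda$ in degree $0$ and vanishes in every positive degree. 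This yields $R^ih_*\Lambda=0$ for $i\geq 1$.

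The hard part will be the final cofinality assertion. Although each individual $V\times_X C_r$ carries nontrivial $H^1$ (the circle $C_r$ is an annulus, on which the unit $T$ contributes a class in $H^1(\,\cdot\,,\mu_\ell)$), one must check that every such class dies after shrinking $V$, i.e. that $\{V\times_X C_r\}$ is genuinely cofinal among �tale neighbourhoods of $\bar y$ in $C_r$; this is a point-set statement about specialization in adic spaces. Alternatively it can be bypassed by invoking Huber's tube-comparison theorem \cite[Theorem 3.6(a)]{hub98a} (exactly as in Lemma \ref{global cohomology same as fibre cohomology}) to identify the stalk at $x$ with the cohomology of the fibre of $C_r$ over $y$, which is a geometric point and hence has no higher cohomology.
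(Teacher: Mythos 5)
Your reductions are fine as far as they go: the stalks over $X(r)$ and over the complement of $\overline{X(r)}$ are handled correctly, and at a boundary point $x\in\overline{X(r)}\smallsetminus X(r)$ (concretely, the unique such point is the rank-two ``outward'' specialization of the Gauss point $\eta_r$ of the circle $C_r$) you correctly rewrite the stalk as $\varinjlim_V H^i(V\times_X C_r,\Lambda)$, the colimit running over \'etale neighbourhoods $V$ of $\bar x$ inside $\{|T|\ge r\}$. The gap is the final cofinality assertion, which is not a routine verification but the entire content of the lemma, and which is \emph{not} ``a point-set statement about specialization''. Indeed it cannot be: every \emph{open} neighbourhood $V$ of $x$ meets $C_r$ in an open subset containing $\eta_r$, and on any such subset the Kummer class of $T$ is nonzero in $H^1(\,\cdot\,,\mu_\ell)$ (an $\ell$-th root of $T$ would have to lie in $\mathcal{O}_{C_r,\eta_r}\subset\mathcal{H}(\eta_r)$, whose residue field is $\tilde{k}(t)$, where $t$ admits no $\ell$-th root). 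Hence the colimit over \emph{open} neighbourhoods of $\bar x$ is nonzero in degree $1$; the stalk vanishes only because one may pass to genuinely \'etale, non-open neighbourhoods of $\bar x$ --- e.g.\ the Kummer covers $S^{\ell^m}=T$ of thin outward annuli $\{r\le |T|\le r'\}$, which kill the class. To complete your argument you would therefore have to prove that every finite \'etale cover of $C_r$ pointed at $\bar\eta_r$ is dominated, over $C_r$, by some \'etale neighbourhood of $\bar x$: a statement about extending or trivializing \'etale covers across the boundary point which is exactly as deep as the lemma itself (it amounts to the overconvergence of $Rh_*\Lambda$). This is precisely what the paper outsources to Huber's comparison $Rh_*\Lambda\simeq \mu_X^*Rh^{\mathrm{an}}_*\Lambda$ of \cite[Proposition 8.3.5]{hub96}: on the Berkovich side $X(r)^{\mathrm{an}}$ is compact with \emph{open} complement, so there the stalk argument really is point-set (with Berkovich's quasi-immersion theory supplying the cover-extension input).

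Your proposed fallback does not close the gap either. \cite[Theorem 3.6(a)]{hub98a}, as used in Lemma \ref{global cohomology same as fibre cohomology}, compares the cohomology of preimages of shrinking discs around a \emph{classical} point with the cohomology of the fibre over that point, i.e.\ a tube around a Zariski-closed subset; here the centre is the rank-two point $x$, resp.\ the Gauss point $y$, and the theorem does not apply as stated. Worse, no tube-type statement over open neighbourhoods could possibly suffice: the open tubes $\{r\le|T|\le r'\}$ satisfy $\{r\le|T|\le r'\}\cap X(r)=C_r$ for every $r'>r$, so the limit of their cohomology is $H^*(C_r,\Lambda)$, which is nonzero in degree $1$. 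The discrepancy between this limit and the actual (vanishing) stalk is exactly the \'etale-cover phenomenon above, so any correct proof must either supply that cover-extension input directly or pass through the Berkovich comparison as the paper does.
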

    \begin{proof} 
       %This is a special case of \cite[($\gamma$), pg. 176]{hub96}, but we provide a sketch (using Berkovich spaces). 
       Let $h^{\mathrm{an}}$ denote the associated map between the respective Berkovich spaces. 
     By \cite[Proposition 8.3.5]{hub96}, $Rh_*(\Lambda) = \mu_X^*Rh^{\mathrm{an}}_*(\Lambda)$ where 
     $\mu_X : X_{\text{ét}} \to X^{\mathrm{an}}_{\text{ét}}$.
    However, $Rh^{\mathrm{an}}_* = h^{\mathrm{an}}_*$ because $h^{\mathrm{an}}_*$ is an exact functor. 
    Indeed, firstly observe that for an étale sheaf $\sG$ on $X(r)^{\mathrm{an}}$, $[h^{\mathrm{an}}_*(\sG)]_y = 0$ where 
    $y \in X^{\mathrm{an}} \smallsetminus X(r)^{\mathrm{an}}$. This
     follows from the fact that $X^{\mathrm{an}} \smallsetminus X(r)^{\mathrm{an}}$ 
     is an open analytic domain of $X^{\mathrm{an}}$ and hence an object of the site 
     $X^{\mathrm{an}}_{\text{ét}}$. The exactness of $h^{\mathrm{an}}_*$ can be deduced from this fact. 
        \end{proof}
        
       \begin{es}[\textbf{Non-stability of $R\mathscr{H}om(-,-)$}]\label{es:nonstarhm}
         \emph{Observe that $D^{\mathrm{ad}}(\sF)$ is a semi-constructible complex that is not constructible. In particular $R\mathscr{H}om(-,-)$ is not stable for constructiblility. 
        Moreover we show that $R\mathscr{H}om(A,B)$ is not stable for semi-constructible complexes $A$ and $B$. 
  Indeed, let $x \in X$ be the origin. The geometric stalk $i_{x}^*(D^{\mathrm{ad}}(\sF))$ is not a complex with finite cohomology. Let  
   $A := D^{\mathrm{ad}}(\sF)$, $B = i_{x*}(\Lambda)$. Then $R\Gamma(X,R\mathscr{H}om(A,B)) =    
   RHom(i_x^*A,\Lambda)$ which is not a complex with finite cohomology by our choice of $x$ and since $\Lambda$ is an injective $\Lambda$-module on $\Spa(k)$.}
   \end{es}
   
   \begin{es}[\textbf{Non-stability of upper shriek}] \label{es:nonstabupshr}
   \emph{Furthermore by Corollary \ref{Corollary 2}(3), and
   $$i_x^!(\sF) = D^{\mathrm{ad}}i_x^*(D^{\mathrm{ad}}(\sF))$$ and hence $i_x^!(\sF)$ is neither constructible nor semi-constructible.}
  \end{es}
  
  \begin{es}[\textbf{Non-stability of $- \otimes^L -$}]
   \emph{It can be deduced from Corollary \ref{Corollary 2}(4) that $- \otimes^L -$ is not stable for semi-constructible sheaves. Indeed $R\mathscr{H}om(A,B) = D^{\mathrm{ad}}(A \otimes^L D^{\mathrm{ad}}B)$ is not semi-constructible by Example \ref{es:nonstarhm}. Thus $A \otimes^L D^{\mathrm{ad}}B$ is not semi-constructible.} 
     \end{es}

  \subsubsection{Non-stability of $\otimes^L$} \label{counterexample for tensor product stability}
     
     We provide an example of constructible sheaves $\sF_1$ and $\sF_2$ on a fine $k$-adic space such that  
     $\sF_1 \otimes^L \sF_2$ is not constructible. 
     We recall the notation introduced in \S 4.3. 
     
      Let $k$ be of equi-characteristic zero and
            let $X$ be the adic closed unit disk. Let 
            $r_n \in |k^*|$ be a decreasing sequence of real numbers that tend to zero. 
            For every $n$, let $X_n$ denote the adic closed ball around $0$ of radius $r_n$
            and let $h_n$ denote the open immersion $X_n \hookrightarrow X$. 
            We set $U_n := X_n \smallsetminus \{0\}$ and use $j_n$ to denote the 
            open embedding $U_n \hookrightarrow X_n$. 
            
     For every $n \in \mathbb{N}$, let 
     $u_n : V_n \hookrightarrow X_n$ denote an adic closed ball that is defined over $k$ and does not contain $0$. 
     Let $v_n := h_n \circ u_n : V_n \hookrightarrow X$. 
     Let $a_n, b_n$ be distinct $k$-points in $V_n$. 
     Let $W_{1n} := V_n \smallsetminus \{a_n\}$ and $W_{2n} :=  V_n \smallsetminus \{b_n\}$. 
     We use $w_{1n}$ and $w_{2n}$ to denote the open immersions 
     $W_{1n} \hookrightarrow V_n$ and $W_{2n} \hookrightarrow V_n$. 
     Let $\sF_1 := \bigoplus_{n \in \mathbb{N}} v_{n*}w_{1n!}(\Lambda)$
     and $\sF_2 := \bigoplus_{n \in \mathbb{N}} v_{n*}w_{2n!}(\Lambda)$. 
     For every $n \in \mathbb{N}$, let
     $\sF_{1n} := v_{n*}w_{1n!}(\Lambda)$ and $\sF_{2n} := v_{n*}w_{2n!}(\Lambda)$. 
      
      We begin by showing that the sheaf $\sF_1$ is constructible.
      
     \begin{lem} \label{global cohomology same as fibre cohomology 2}
        Given $r \in \mathbb{R}$, let $X(r)$ denote the closed unit adic ball around $0$ of radius $r$ with $r \in |k^*|$.
       Let $Y$ be a fine $k$-adic space and $f : Y \to X(r)$ be a morphism of $k$-adic spaces. There exists $0< s < r$ %with $s \in |k^*|$
       such that 
       for every $0 < s' \leq s$
       % with $s' \in |k^*|$ 
       and every $i \in \mathbb{N}$, 
       $R^if_*(\Lambda)$ is a locally constant sheaf of finite type when restricted to  
       $X(s') \smallsetminus \{0\}$.  
           \end{lem}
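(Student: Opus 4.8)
The plan is to leverage the one-dimensionality of the base $X(r)$ together with the equi-characteristic zero hypothesis to invoke Huber's finiteness theorem for higher direct images, and then to use the structure theory of constructible sheaves on a curve to push the locus of non-local-constancy away from the punctured disc around the origin.

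First I would observe that $X(r)$ is a one-dimensional fine $k$-adic space and that $f$ is of finite type (Lemma \ref{lem:adlfttf}), so that, since $\dim X(r) \leq 1$ and $\mathrm{char}(k) = 0$, the stability results of \cite{hub98a} apply to the torsion sheaf $\Lambda$; this is the same range of hypotheses that produced \cite[Theorem 3.6]{hub98a}, which was used in Lemma \ref{global cohomology same as fibre cohomology}. These results give that each $R^if_*(\Lambda)$ is an overconvergent constructible sheaf of finite type on $X(r)$. Moreover, by \cite[Corollary 2.8.3]{hub96} the morphism $f$ has bounded cohomological dimension, so only finitely many of the $R^if_*(\Lambda)$ are nonzero.

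Next I would pass to the associated compact Berkovich space. Under Huber's comparison (cf. \cite[Proposition 8.3.5]{hub96}), an overconvergent sheaf on $X(r)$ is the pullback of a sheaf on $X(r)^{\mathrm{an}}$, and constructibility is preserved. On the one-dimensional Berkovich disc a constructible sheaf is locally constant of finite type on the complement of a closed, nowhere-dense, constructible subset $Z_i$; since $\dim X(r) = 1$ this $Z_i$ is zero-dimensional, and by compactness of $X(r)^{\mathrm{an}}$ it is a finite set of classical (rank one) points. Setting $Z := \bigcup_i Z_i$, where $i$ ranges over the finitely many indices with $R^if_*(\Lambda) \neq 0$, we obtain a finite set $Z$ of classical points of $X(r)$. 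Because $Z$ is finite it cannot accumulate at the origin, so there exists $0 < s \leq r$ with $s \in |k^*|$ such that $Z \cap X(s) \subseteq \{0\}$. For any $0 < s' \leq s$, the open subspace $X(s') \smallsetminus \{0\}$ is then disjoint from $Z$, hence is contained in the open locus on which every $R^if_*(\Lambda)$ is locally constant of finite type; since restriction to an open subspace preserves local constancy and finite type, this is exactly the assertion.

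The main obstacle is the second step: confirming that $R^if_*(\Lambda)$ is genuinely overconvergent constructible on the one-dimensional base, and that its non-local-constancy locus is finite and bounded away from $0$. This is precisely where the hypotheses $\dim X(r) = 1$ and $\mathrm{char}(k) = 0$ are indispensable, as Huber's direct-image finiteness fails in general without them, and it is the reason the entire construction of \S \ref{counterexample for tensor product stability} is carried out over the disc rather than over a higher-dimensional base.
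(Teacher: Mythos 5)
Your high-level strategy coincides with the paper's: bound the number of non-vanishing $R^if_*(\Lambda)$, invoke Huber's finiteness theorem from \cite{hub98a} (which indeed requires $\dim X(r) \leq 1$ and $\mathrm{char}\,k = 0$), and then show that the locus where these sheaves fail to be locally constant stays away from a punctured neighbourhood of $0$. The first and last ingredients are fine, but the middle step has a genuine gap. First, a terminological slip with real consequences: \cite[Theorem 0.1]{hub98a} does not say that $R^if_*(\Lambda)$ is (overconvergent) constructible; it says it is \emph{oc-quasi-constructible}, a strictly weaker notion, so you cannot feed it into a structure theory for constructible sheaves. Second, and more seriously, the structural claim you rely on --- that on a one-dimensional disc such a sheaf is locally constant outside a finite set of \emph{classical} (rank-one) points --- is false: it imports the algebraic picture (lisse outside finitely many closed points) into the analytic setting, where it breaks down. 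Already for $f$ the open immersion $j \colon X(t) \hookrightarrow X(r)$ of a closed sub-disc ($0 < t < r$, $t \in |k^*|$), the sheaf $R^0j_*\Lambda = j_*\Lambda$ has stalk $\Lambda$ at the rank-two specialization of the Gauss point of radius $t$ lying outside $X(t)$ (every open neighbourhood of that point contains its generization, the Gauss point, hence meets $X(t)$), while its stalk at rank-one points of radius slightly larger than $t$ vanishes; so this direct image fails to be locally constant at a non-classical point. The Berkovich-side version of the same example ($j_*\Lambda$ for the open disc of radius $t$) fails to be locally constant at the type-(2) boundary point. Finally, the inference ``zero-dimensional and compact, hence a finite set of points'' is also invalid: a compact zero-dimensional subset of the Berkovich disc can be infinite (a convergent sequence of classical points together with its limit, or a Cantor-type set).

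What rescues the argument is precisely what the paper cites in place of your structural claim: Huber's local structure theory for quasi-constructible sheaves. The proof of \cite[\S 1.3 (iv)]{hub98a} yields directly that for each $i$ there is an open subset $U$ containing $0$ such that $R^if_*(\Lambda)|_{U \smallsetminus \{0\}}$ is locally constant of finite type; note that in the example above the bad locus does stay away from $0$, but this is a theorem about quasi-constructible sheaves, not a consequence of finiteness of a set of classical points. Combining that statement with the boundedness of $Rf_*(\Lambda)$ (the paper gets this from Lemma \ref{semi-con stability for pushforwards}; your appeal to \cite[Corollary 2.8.3]{hub96} is an acceptable substitute) and with the fact that the closed adic discs centred at $0$ form a fundamental system of neighbourhoods of $0$ (which the paper justifies via \cite[Proposition 1.6]{BR10} and the equivalences of \S \ref{rigid varieties and adic spaces}) gives the lemma. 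So your proof needs its middle step replaced by the citation of Huber's \S 1.3 (iv), or by an honest reproof of it; as written, the claimed description of the non-local-constancy locus is incorrect.
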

           \begin{proof}
                     By Proposition \ref{semi-con stability for pushforwards}, $Rf_*(\Lambda) \in \mathcal{D}^b_{sc}(X(r)^f_{\text{ét}},\Lambda)$. In particular,  
          there exists $d \in \mathbb{N}$ such that for every $d' \geq d$, 
          $R^{d'}f_*(\Lambda) = 0$. 
          Hence it suffices to show that if 
          $i \in D := \{1,\ldots,d\}$ 
          then there exists a closed disk $U$ around $0$ such that 
          $R^if_*(\Lambda)_{|U \smallsetminus \{0\}}$ is locally constant of finite type.  
          
          Now note that the set of adic closed disks centred at $0$ forms
         a fundamental system of open neighbourhoods of $0$ 
        for the adic topology. This follows from the explicit description of compact 
        neighbourhoods for the Berkovich analytic topology of the closed disk as
         implied by \cite[Proposition 1.6]{BR10}. Although the reference doesn't concern adic spaces, one 
         can still apply it since it only suffices to describe the affinoid open subspaces of $X$ and these correspond to
         Berkovich affinoid domains of $X$ via
           the equivalences in 
         \S \ref{rigid varieties and adic spaces}.
          
%        We begin by making the following claims. Firstly, the set of adic closed disks centred at $0$ forms
%         a fundamental system of open neighbourhoods of $0$ 
%        for the adic topology. Secondly, given a locally closed locally constructible set $L$
%        containing $0$, there exists an adic closed
%         disk $O$ centred at $0$ of positive radius and contained in $L$. 
%        These claims follow from the explicit description of compact 
%        neighbourhoods for the Berkovich analytic topology of the closed disk as
%         implied by \cite[Proposition 1.6]{BR10}. Although the reference doesn't concern adic spaces, one 
%         can still apply it since it only suffices to describe the affinoid open subspaces of $X$ and these correspond to
%         Berkovich affinoid domains of $X$ via
%           the equivalences in 
%         \S \ref{rigid varieties and adic spaces}.

               By \cite[Theorem 0.1]{hub98a}, the sheaf
               $R^if_*(\Lambda)$ is oc-quasi-constructible. By the proof of \cite[\S 1.3 (iv)]{hub98a} we see that there exists an open subset $U$ containing 0 such that $R^if_*(\Lambda)_{|U \smallsetminus \{0\}}$ is locally constant of finite type. The result now follows by the previous paragraph.

                    \end{proof} 
   
        \begin{lem} \label{global cohomology same as fibre cohomology 3}
       Let $Y$ be a fine $k$-adic space and $f : Y \to X$ be a morphism of $k$-adic spaces.
        There exists $n_0 \in \mathbb{N}$ such that for every $n \geq n_0$, 
      the restriction map 
      \begin{align*} 
       R\Gamma(Y \times_{X} V_n,\Lambda) \to R\Gamma(f^{-1}(a_n),\Lambda) 
      \end{align*} 
      is an isomorphism.
       \end{lem}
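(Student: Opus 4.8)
The plan is to reduce the computation of $R\Gamma(Y \times_X V_n, \Lambda)$ to the behaviour of the higher direct images $R^i f_*\Lambda$ on a \emph{fixed} punctured neighbourhood of $0$, and then to exploit the fact that in equicharacteristic zero a closed disc carries no nontrivial locally constant sheaves and is cohomologically trivial for constant coefficients. Note first that each $v_n \colon V_n \hookrightarrow X$ is an open immersion: a closed disc of radius in $|k^*|$ not meeting $0$ is a rational, hence open, subset of $X$. Thus generalizing base change (cf. \cite[Theorem 4.1.1(c)]{hub96}) along $v_n$ gives
\[
R\Gamma(Y \times_X V_n, \Lambda) \simeq R\Gamma(V_n, (Rf_*\Lambda)|_{V_n}),
\]
where I abusively write $f$ also for its base change to $V_n$.

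First I would apply Lemma \ref{global cohomology same as fibre cohomology 2} to $f \colon Y \to X = X(1)$ to produce $s > 0$ with $s \in |k^*|$ such that for every $i$ the sheaf $R^i f_*\Lambda$ is locally constant of finite type on $X(s) \smallsetminus \{0\}$. Since $r_n \to 0$, I choose $n_0$ with $r_{n_0} \leq s$; then for $n \geq n_0$ one has $V_n \subset X(r_n) \smallsetminus \{0\} \subseteq X(s) \smallsetminus \{0\}$, so each $R^i f_*\Lambda|_{V_n}$ is locally constant of finite type. Because $k$ is of equicharacteristic zero and $V_n$ is a closed disc, which admits no nontrivial connected finite �tale cover (cf. the argument in the proof of Lemma \ref{lem:infinstal}, resting on \cite[Theorem 6.3.2]{berk}), every such sheaf is in fact \emph{constant} on $V_n$.

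Next, using that a closed disc is cohomologically trivial for constant torsion coefficients, the hypercohomology spectral sequence $E_2^{p,q} = H^p(V_n, R^q f_*\Lambda|_{V_n}) \Rightarrow H^{p+q}(V_n, (Rf_*\Lambda)|_{V_n})$ degenerates and identifies $H^i(V_n, (Rf_*\Lambda)|_{V_n})$ with the stalk $(R^i f_*\Lambda)_{\overline{a_n}}$. To match this with the fibre cohomology I would translate so that $a_n$ becomes the centre $0$ of $V_n$ (every point of a non-archimedean disc is a centre) and apply Lemma \ref{global cohomology same as fibre cohomology} to $f^{-1}(V_n) \to V_n$ at $a_n$: there is $\delta > 0$ with $R\Gamma(f^{-1}(D(a_n,\delta)), \Lambda) \xrightarrow{\sim} R\Gamma(f^{-1}(a_n), \Lambda)$, while constancy of the $R^q f_*\Lambda$ on $V_n \supseteq D(a_n,\delta)$ makes the restriction $R\Gamma(f^{-1}(V_n),\Lambda) \to R\Gamma(f^{-1}(D(a_n,\delta)),\Lambda)$ an isomorphism, both sides computing the same stalk. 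Composing, the restriction map of the statement is an isomorphism for $n \geq n_0$.

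The main obstacle I expect lies in the final identification: one must check that the isomorphisms produced by constancy and by Lemma \ref{global cohomology same as fibre cohomology} are compatible with the \emph{natural} restriction map appearing in the statement, rather than merely abstractly isomorphic, which is why I route them through the intermediate disc $D(a_n,\delta)$ by functoriality of restriction. The constancy step is where equicharacteristic zero is essential, and it is precisely the point at which the argument would break down in mixed characteristic.
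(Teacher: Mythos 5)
Your proof is correct, and its core is the same as the paper's: apply Lemma \ref{global cohomology same as fibre cohomology 2} to produce $s$ with the $R^if_*\Lambda$ locally constant of finite type on $X(s)\smallsetminus\{0\}$, take $n_0$ with $r_{n_0}\leq s$, upgrade local constancy to constancy on $V_n$ via \cite[Theorem 6.3.2]{berk} in equicharacteristic zero, and collapse the Leray spectral sequence for $f'\colon Y\times_X V_n \to V_n$ using acyclicity of the closed disc for constant coefficients. Where you genuinely diverge is the final identification with the fibre. The paper invokes the quasi-compact base change theorem \cite[Theorem 4.3.1]{hub96}, which gives $[R^qf'_*\Lambda]_{\overline{a_n}} = H^q(f^{-1}(a_n),\Lambda)$ directly, and then uses constancy plus connectedness of $V_n$ to equate this stalk with $H^0(V_n,R^qf'_*\Lambda)$. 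You avoid base change entirely and instead factor the restriction map through an intermediate disc $D(a_n,\delta)$: the inner map is an isomorphism by Lemma \ref{global cohomology same as fibre cohomology} (resting on the finiteness results of \cite{hub98a}), and the outer one by constancy of the $R^qf'_*\Lambda$. Both routes are valid; the paper's is shorter and uses only the elementary base change theorem of \cite{hub96}, while yours leans a second time on the \cite{hub98a} machinery already underlying Lemma \ref{global cohomology same as fibre cohomology 2} — with the mild benefit that compatibility with the \emph{natural} restriction map of the statement, which the paper's chain of equalities leaves implicit, is handled explicitly by your factorization through $D(a_n,\delta)$ and functoriality of restriction.
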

    \begin{proof} 
       By Lemma \ref{global cohomology same as fibre cohomology 2},
       there exists 
       $n_0 \in \mathbb{N}$ such that 
       for every $i \in \mathbb{N}$ and $n \geq n_0$  
       $R^if_*(\Lambda)_{|V_n}$ is locally constant. 
       By \cite[Theorem 6.3.2]{berk}, 
       we get that  
        $R^if_*(\Lambda)_{|V_n}$ is constant.
        Observe that since $V_n \to X$ is étale, if 
        $f'$ denotes the base change $Y \times_X V_n \to V_n$ then 
        $R^if_*(\Lambda)_{|V_n} = R^if'_*(\Lambda)$. 
         
         We have the following spectral sequence
         \begin{align*} 
           H^p(V_n,R^qf'_*(\Lambda)) \Rightarrow H^{p+q}(Y \times_X V_n,\Lambda). 
         \end{align*} 
         Since $R^qf'_*(\Lambda)$ is constant and $V_n$ is a closed $k$-adic disk, we know that $H^p(V_n,R^qf'_*(\Lambda)) = 0$ for all $p \geq 1$. 
           Hence, $H^0(V_n,R^qf'_*(\Lambda)) = H^q(Y \times_X V_n,\Lambda)$. 
           By the quasi-compact base change theorem (cf. \cite[Theorem 4.3.1]{hub96}), 
           $[R^qf'_*(\Lambda)]_{\overline{a_n}} = H^q(f^{-1}(a_n),\Lambda)$. 
           Since $R^qf'_*(\Lambda)$ is constant and $V_n$ is connected, we get that 
           $[R^qf'_*(\Lambda)]_{\overline{a_n}} = H^0(V_n,R^qf'_*(\Lambda))$. Thus, 
           \begin{align*} 
           H^q(Y \times_X V_n,\Lambda) = H^q(f^{-1}(a_n),\Lambda).
           \end{align*}  
                   \end{proof}

%   Moreover $- \otimes^L -$ is not stable for constructibility.
%   Indeed by adjointness, observe that 
%   $$R\mathscr{H}om(i_{x*}(\Lambda),D^{\mathrm{ad}}(\sF))$$ is not semi-constructible \textbf{This needs to be fixed.}. 
%  Corollary \ref{Corollary 2} and the fact that $\sF$ is reflexive imply}
%  \begin{align*} 
%   R\mathscr{H}om(i_{x*}(\Lambda),D^{\mathrm{ad}}(\sF)) \simeq D^{\mathrm{ad}}(i_{x*}(\Lambda) \otimes \sF). 
%   \end{align*} 
%   \emph{Hence we see that $i_{x*}(\Lambda) \otimes \sF$ is not semi-constructible. It follows that 
% the operation - $\otimes$ does not preserve constructibility.}
  
      \begin{prop} 
      The sheaf $\sF_1$ on $X$ is constructible. 
    \end{prop}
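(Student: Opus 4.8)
The plan is to verify the criterion of Theorem \ref{theorem above}(1): it suffices to show that for every complete algebraically closed non-Archimedean field extension $L/k$ and every morphism of fine $L$-adic spaces $f : Y \to X_L$, the cohomology $H^i(Y, f^* p_L^*(\sF_1))$ is finite for all $i$. As a first reduction I would commute $p_L^*$ past the construction of $\sF_1$, exactly as in the proof of Proposition \ref{verdier dual not constructible}: using generalizing base change \cite[Theorem 4.1.1(c)]{hub96} for the open immersions $v_n$ and \cite[Proposition 5.2.2(iv)]{hub96} for the lower shriek $w_{1n!}$, one obtains $p_L^*\sF_{1n} = v_{n,L*}w_{1n,L!}(\Lambda)$, the analogous sheaf built over $L$ on $X_L$. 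Since everything in sight, including Lemmas \ref{global cohomology same as fibre cohomology 2} and \ref{global cohomology same as fibre cohomology 3}, holds over any algebraically closed non-Archimedean complete base field, this reduces the problem to the case $L = k$: it is enough to prove that $H^i(Y, f^*\sF_1)$ is finite for every morphism $f : Y \to X$ of fine $k$-adic spaces.

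The heart of the argument is the claim that $R\Gamma(Y, f^*\sF_{1n}) = 0$ for all but finitely many $n$. To establish this I would first record, by the argument of Lemma \ref{vanishing lower star} applied to the open immersion $v_n : V_n \hookrightarrow X$ together with the triangle $w_{1n!}(\Lambda) \to \Lambda \to (i_{a_n})_*\Lambda \to \cdot$, that the higher direct images $R^{>0}v_{n*}$ vanish on these sheaves, so that $\sF_{1n} = Rv_{n*}w_{1n!}(\Lambda)$. Writing $Y_n := f^{-1}(V_n)$ with open immersion $v'_n : Y_n \hookrightarrow Y$ and $f_n : Y_n \to V_n$ the base change of $f$, quasi-compact base change \cite[Theorem 4.3.1]{hub96} together with \cite[Proposition 5.2.2(iv)]{hub96} then gives
\[
f^*\sF_{1n} \simeq Rv'_{n*}\, w'_{1n!}(\Lambda),
\]
where $w'_{1n} : Y_n \smallsetminus f_n^{-1}(a_n) \hookrightarrow Y_n$, and hence $R\Gamma(Y, f^*\sF_{1n}) \simeq R\Gamma(Y_n, w'_{1n!}(\Lambda))$. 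Denoting by $\iota : f_n^{-1}(a_n) \hookrightarrow Y_n$ the closed immersion, the short exact sequence $0 \to w'_{1n!}(\Lambda) \to \Lambda \to \iota_*\Lambda \to 0$ produces a distinguished triangle relating $R\Gamma(Y_n, w'_{1n!}(\Lambda))$ to the restriction map $R\Gamma(Y_n, \Lambda) \to R\Gamma(f_n^{-1}(a_n), \Lambda)$. By Lemma \ref{global cohomology same as fibre cohomology 3} there is an $n_0$ such that for all $n \ge n_0$ this restriction map is an isomorphism, whence $R\Gamma(Y_n, w'_{1n!}(\Lambda)) = 0$, proving the claim.

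To finish, I would use that $f^*$ commutes with direct sums and that \'etale cohomology on the quasi-compact quasi-separated space $Y$ commutes with the filtered colimit $\sF_1 = \bigoplus_n \sF_{1n}$, so that $H^i(Y, f^*\sF_1) = \bigoplus_n H^i(Y, f^*\sF_{1n})$. By the previous paragraph this sum has only finitely many nonzero terms, namely those with $n < n_0$; and each $\sF_{1n} = v_{n*}w_{1n!}(\Lambda)$ is constructible, being obtained from $\Lambda$ by the constructibility-preserving operations $w_{1n!}$ and $Rv_{n*}$ of Proposition \ref{stability for pushforwards}, so each $H^i(Y, f^*\sF_{1n})$ is finite by Corollary \ref{constructible implies finite global sections}. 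Therefore $H^i(Y, f^*\sF_1)$ is finite for every $i$, and $\sF_1$ is constructible.

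I expect the genuinely delicate points to be the two softer inputs rather than the geometric core: verifying the higher-direct-image vanishing $R^{>0}v_{n*} = 0$ for the off-centre disc $v_n$ (so that the clean derived base change applies), which requires adapting the Berkovich-comparison argument of Lemma \ref{vanishing lower star} to a non-concentric affinoid domain, and justifying the commutation of cohomology with the infinite direct sum on $Y$. The essential geometric content — that as $n \to \infty$ the discs $V_n$ contract towards the origin so that $R\Gamma(Y_n, \Lambda)$ is computed by the single fibre over $a_n$ — is already packaged in Lemma \ref{global cohomology same as fibre cohomology 3}, which in turn rests on the fineness of $Y$ via Lemma \ref{global cohomology same as fibre cohomology 2}.
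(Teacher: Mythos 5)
Your proposal is correct and follows essentially the same route as the paper's own proof: the same reduction to $L = k$ via \cite[Theorem 4.1.1(c)]{hub96} and \cite[Proposition 5.2.2(iv)]{hub96}, the same identification $\sF_{1n} = Rv_{n*}w_{1n!}(\Lambda)$ via Lemma \ref{vanishing lower star}, the same use of quasi-compact base change and the short exact sequence to invoke Lemma \ref{global cohomology same as fibre cohomology 3} for the vanishing when $n \geq n_0$, and the same conclusion from finiteness of the remaining finitely many terms. The only difference is cosmetic: you spell out slightly more explicitly why each $\sF_{1n}$ is constructible and why cohomology commutes with the direct sum, points the paper leaves implicit.
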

    \begin{proof} 
    For $L/k$ an extension of algebraically closed non-Archimedean fields (extending the absolute value of $k$) we have a diagram (where each square is cartesian)
$$
\begin{tikzcd}[row sep = large, column sep = large]
W_{1n,L} \arrow[d, "p_{n,L}^W"] \arrow[hookrightarrow, r, "w_{1n,L}"] &
V_{n,L} \arrow[d, "p_{n,L}"] \arrow[hookrightarrow, r, "v_{n,L}"] &
X_L \arrow[d, "p_L"] \\
W_{1n} \arrow[hookrightarrow, r, "w_{1n}"] &
V_n \arrow[hookrightarrow, r, "v_n"] &
X.
\end{tikzcd}
$$ 
We compute
\begin{align*}
p_L^*\mathscr{F}_{1n} &= p_L^*v_{n*}w_{1n!}(\Lambda) \\
&\overset{(i)}{=} v_{n,L*}p_{n,L}^*w_{1n!}(\Lambda) \\
&\overset{(ii)}{=} v_{n,L*}w_{1n,L!}(\Lambda)
\end{align*}
where (i) follows from \cite[Theorem 4.1.1(c)]{hub96} and (ii) follows from \cite[Proposition 5.2.2(iv)]{hub96}. Hence by Theorem \ref{theorem above}, it suffices to prove the following statement. 
      If $f : Y \to X$ is a morphism of fine $k$-adic spaces then
     for every $i \in \mathbb{Z}$, $H^i(Y,f^*(\sF_1))$ is finite. 
     %By taking an affinoid cover of $Y$, the \v{C}ech-to-derived functor spectral sequence shows that it is enough to assume $Y$ is in addition affinoid.
     %We make use of the notation from the lemmas above.  
     %For $0 < r < 1$ and $r \in |k^*|$, let $Y(r)$ denote the affinoid domain $Y \times_X X(r)$ and $Y_{{0}}$ the fibre over $0$.  
     By Lemma \ref{global cohomology same as fibre cohomology 3}, there exists 
     $n_0 \in \mathbb{N}$ such that for every $n \geq n_0$, 
      the restriction map 
      \begin{align*} 
       R\Gamma(Y \times_{X} V_n,\Lambda) \to R\Gamma(f^{-1}(a_n),\Lambda) 
      \end{align*} 
      is an isomorphism.

       We claim that for all $n \in \mathbb{N}$ such that 
       $n \geq n_0$, 
       $R\Gamma(Y,f^*(\sF_{1n})) = 0$.   
      Let $Y_{V_n} := Y \times_X V_n$ and $Y_{a_n}$ be the fibre over $a_n$. 
       By \cite[Proposition 5.2.2(iv)]{hub96}, $f_{|Y_{V_n}}^*w_{1n!}(\Lambda) \simeq w'_{1n!}(\Lambda)$ 
       where $w'_{1n}$ is the open embedding $Y_{V_n} \smallsetminus Y_{a_n} \hookrightarrow Y_{V_n}$.
        
       Lemma \ref{vanishing lower star} %applied to the triangle
               %\begin{align}
          %Rv_{n*}w_{1n!}(\Lambda) \to Rv_{n*}\Lambda \to Rv_{n*}i_{n*}(\Lambda) \to \cdot 
        %\end{align}
         %where $i_n$ denotes the embedding $\{a_n\} \hookrightarrow V_n$,
          implies that 
       $\sF_{1n}  = Rv_{n*}w_{1n!}(\Lambda)$.  
       By Theorem 4.3.1 in loc.cit. and the isomorphism above, 
      $f^*(\sF_{1n}) = f^*Rv_{n*}w_{1n!}(\Lambda) \simeq Rv'_{n*} w'_{1n!}(\Lambda)$ 
      where $v'_{n}$ is the embedding $Y_{V_n} \hookrightarrow Y$. 
      Hence, 
     \begin{align} 
     R\Gamma(Y,f^*(\sF_{1n})) \simeq R\Gamma(Y_{V_n},w'_{1n!}(\Lambda)). 
    \end{align} 

      Observe that we have the following exact sequence on $Y_{V_n}$
      \begin{align*} 
        0 \to w'_{1n!}(\Lambda) \to \Lambda  \to i_*\Lambda_{|Y_{a_n}} \to 0 
           \end{align*} 
           where $i : Y_{a_n} \hookrightarrow Y_{V_n}$. This
      gives us the triangle
    \begin{align*} 
     R\Gamma(Y_{V_n},w'_{1n!}(\Lambda)) \to R\Gamma(Y_{V_n},\Lambda) \to R\Gamma(Y_{a_n},\Lambda) \to \cdot
    \end{align*} 
     By Lemma \ref{global cohomology same as fibre cohomology 3} and (6) above, $R\Gamma(Y,f^*(\sF_{1n})) \simeq 0$. 
     We have thus verified the claim. 
     
     The claim above implies that 
     \begin{align*} 
     H^i(Y,f^*(\sF_1)) =& \oplus_{n \in \mathbb{N}} H^i(Y,f^*\sF_{1n}) \\
                            =&   \oplus_{n | n \leq n_0} H^i(Y,f^*\sF_{1n}).
                            \end{align*} 
     Hence, $\sF_1$ is constructible since for every $n \in \mathbb{N}$, $\sF_{1n}$ is constructible and hence 
     $H^i(Y,f^*\sF_{1n})$ is finite for all $i \in \mathbb{N}$.  
             \end{proof}
             
    We have thus shown that both $\sF_1$ and $\sF_2$ are constructible. We now show that 
    $\sF_1 \otimes^L \sF_2$ is not constructible.
             
\begin{prop} 
  The complex $\sF_1 \otimes^L \sF_2$ does not 
  belong to $\mathcal{D}^b_c(X^f_{\text{ét}},\Lambda)$.  
  \end{prop}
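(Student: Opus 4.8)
The plan is to reduce everything to one explicit global computation. Since $\sF_1\otimes^L\sF_2$ will turn out to be a sheaf concentrated in degree $0$, and since $R\Gamma(X,-)\simeq R\Gamma(\mathfrak X_s,R\psi_{\mathfrak X}(-))$ for the standard model $\mathfrak X=\Spf(k^0\langle T\rangle)$ (cf. \cite[Corollary 4.5]{berk94}), it suffices to prove that $H^1(X,\sF_1\otimes^L\sF_2)$ is infinite. As a constructible complex on the variety $\mathfrak X_s$ has finite-dimensional cohomology, this already forces $R\psi_{\mathfrak X}(\sF_1\otimes^L\sF_2)\notin\mathcal D^b_c(\mathfrak X_s,\Lambda)$, so that the complex is not even semi-constructible, and in particular does not lie in $\mathcal D^b_c(X^f_{\text{\'et}},\Lambda)$. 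First I would arrange the balls $V_n$ to have pairwise disjoint closures in $X$, say by taking $V_n$ inside the annulus $X_n\smallsetminus X_{n+1}$. Then the supports of $\sF_{1m}$ and $\sF_{2n}$ are disjoint for $m\neq n$, and as $\otimes^L$ commutes with direct sums this gives $\sF_1\otimes^L\sF_2\simeq\bigoplus_{n}\sF_{1n}\otimes^L\sF_{2n}$.

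Next I would compute a single diagonal term over $X$. The stalks of $\sF_{in}=v_{n*}w_{in!}\Lambda$ are free $\Lambda$-modules (each is $0$ or $\Lambda$), so the higher Tor-sheaves vanish, every $\sF_{1n}\otimes^L\sF_{2n}$ is concentrated in degree $0$, and $\sF_1\otimes^L\sF_2$ is genuinely a sheaf, to which Theorem \ref{theorem above} applies. As in the proof that $\sF_1$ is constructible, Lemma \ref{vanishing lower star} yields $Rv_{n*}\Lambda=v_{n*}\Lambda$ together with the excision triangle $\sF_{in}\to Rv_{n*}\Lambda\to i_*\Lambda$, where $i$ is the inclusion of $a_n$, respectively $b_n$. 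Tensoring the triangle for $\sF_{2n}$ with $\sF_{1n}$ produces
\[
\sF_{1n}\otimes^L\sF_{2n}\to\sF_{1n}\otimes^L Rv_{n*}\Lambda\to\sF_{1n}\otimes^L i_{b_n*}\Lambda\to\cdot
\]
Here $Rv_{n*}\Lambda$ is the unit $\Lambda$ on the support of $\sF_{1n}$, so the middle term is $\sF_{1n}$ itself and $R\Gamma(X,\sF_{1n})=0$; this vanishing is precisely the single-puncture computation that makes $\sF_1$ constructible. The right-hand term is $i_{b_n*}\Lambda$, since $b_n\neq a_n$ gives $(\sF_{1n})_{b_n}=\Lambda$, and $R\Gamma(X,i_{b_n*}\Lambda)=\Lambda$ in degree $0$. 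The triangle therefore gives $R\Gamma(X,\sF_{1n}\otimes^L\sF_{2n})\simeq\Lambda[-1]$.

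Finally, since $X$ is quasi-compact its \'etale cohomology commutes with direct sums, so
\[
H^1(X,\sF_1\otimes^L\sF_2)\simeq\bigoplus_{n}H^1(X,\sF_{1n}\otimes^L\sF_{2n})\simeq\bigoplus_{n}\Lambda,
\]
which is infinite. Taking $L=k$ and $f=\mathrm{id}_X$ in Theorem \ref{theorem above}, we conclude that $\sF_1\otimes^L\sF_2$ is not constructible.

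The conceptual heart of the argument — and the point I would be most careful to get right — is the contrast between the single and the double puncture: removing one point $a_n$ from the disc $V_n$ leaves $R\Gamma(X,\sF_{in})=0$, which is exactly why $\sF_1$ and $\sF_2$ are individually constructible, whereas the derived tensor product effectively deletes both $a_n$ and $b_n$ and thus contributes a nonzero class in degree $1$ for every $n$. The main technical obstacle is to carry out the computation of $R\Gamma(X,\sF_{1n}\otimes^L\sF_{2n})$ without becoming entangled in the boundary of the open (but not closed) ball $V_n$; working globally with $R\Gamma(X,-)$ and the excision triangles above, instead of restricting to $V_n$, is what circumvents this, since it isolates the only two inputs needed, namely $R\Gamma(X,\sF_{1n})=0$ and the cohomological triviality of the closed disc.
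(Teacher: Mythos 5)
Your reduction steps coincide with the paper's own: decomposing $\sF_1\otimes^L\sF_2$ into a direct sum, commuting $\mathbb{H}^i(X,-)$ with that sum over the quasi-compact $X$, and deducing failure of semi-constructibility (hence of membership in $\mathcal{D}^b_c$) from an infinite $H^1$ via the nearby-cycles comparison is exactly how the paper argues. Your normalization making the $V_n$ pairwise disjoint is harmless but unnecessary: the paper keeps all cross terms $\sF_{1n}\otimes^L\sF_{2m}$ and never computes them, since infinitely many nonzero diagonal summands already make the direct sum infinite. Where you genuinely diverge is in the treatment of a single diagonal term. The paper (Lemma \ref{the first cohomology does not vanish}) restricts $\sG:=\sF_{1n}\otimes^L\sF_{2n}$ to the pseudo-adic closure $\overline{V_n}$, excises along the boundary, and feeds in Lemmas \ref{intermediate calculation I} and \ref{intermediate calculation II}, concluding only that $\mathbb{H}^1(X,\sG)\neq 0$; you instead tensor the defining triangle of $\sF_{2n}$ with $\sF_{1n}$ and compute $R\Gamma(X,-)$ globally, which (if completed) yields the sharper statement $R\Gamma(X,\sG)\simeq\Lambda[-1]$ with no pseudo-adic formalism.

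However, the identification of the middle term, $\sF_{1n}\otimes^L Rv_{n*}\Lambda\simeq\sF_{1n}$, is a genuine gap, and it sits precisely where the boundary difficulty you claim to have circumvented actually lives. The sheaf $v_{n*}\Lambda$ is not $v_{n!}\Lambda$: it has nonzero stalks on the closure $\overline{V_n}$, in particular at the rank-two points of $\overline{V_n}\smallsetminus V_n$ (the outward specializations of the Gauss point of $V_n$). The cone of the natural map $\sF_{1n}=\sF_{1n}\otimes^L\Lambda\to\sF_{1n}\otimes^L Rv_{n*}\Lambda$, induced by the adjunction unit $\Lambda\to Rv_{n*}\Lambda$, is $\sF_{1n}\otimes^L\mathrm{cone}(\Lambda\to Rv_{n*}\Lambda)$; this vanishes over $V_n$ (where the unit is an isomorphism) and outside $\overline{V_n}$ (where $\sF_{1n}$ has zero stalks), so it is supported exactly on the boundary $\overline{V_n}\smallsetminus V_n$. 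To kill it you must prove that the unit $\Lambda\to v_{n*}\Lambda$ is an isomorphism on stalks at every such boundary point, i.e. that $v_{n*}\Lambda$ restricted to $\overline{V_n}$ really is the constant sheaf. This is not formal: it is the content of the claim $i_x^*v_{n*}\Lambda\simeq\Lambda$ established inside the paper's Lemma \ref{intermediate calculation II}, proved there via overconvergence, namely \cite[Theorem 8.3.5]{hub96}, which identifies $v_{n*}\Lambda$ with the pullback of a sheaf from the Berkovich unit disc so that its stalk at a boundary point agrees with the stalk at the maximal generalization, which lies in $V_n$. So your global computation does not avoid the boundary of $V_n$; it hides it in the unproved sentence ``$Rv_{n*}\Lambda$ is the unit $\Lambda$ on the support of $\sF_{1n}$''. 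The gap is fillable: insert the overconvergence argument at that point, and your proof becomes correct and in fact cleaner and more precise than the paper's.
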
 
  \begin{proof} 
   We deduce from the definitions of $\sF_1$ and $\sF_2$ that 
   \begin{align*} 
    \sF_1 \otimes^L \sF_2 \simeq \oplus_{n \in \mathbb{N}} \oplus_{m \in \mathbb{N}} (\sF_{1n} \otimes^L \sF_{2m}). 
   \end{align*} 
      Hence for every $i \in \mathbb{N}$, 
      \begin{align*} 
    \mathbb{H}^i(X,\sF_1 \otimes^L \sF_2) \simeq \oplus_{n} \oplus_m \mathbb{H}^i(X,\sF_{1n} \otimes^L \sF_{2m}).  
      \end{align*} 
      where $\mathbb{H}^i(X,-) = H^iR\Gamma(X, -)$ is the hyper-cohomology. 
      By Lemma \ref{the first cohomology does not vanish}, we get that 
      for all $n$, $\mathbb{H}^i(X,\sF_{1n} \otimes^L \sF_{2n})$ is not zero. Hence
      $\mathbb{H}^1(X,\sF_1 \otimes^L \sF_2)$ is infinite which implies that $\sF_1 \otimes^L \sF_2$ is not even
      semi-constructible.     
  \end{proof} 

   Let $n \in \mathbb{N}$. To simplify notation, we write $\sG := \sF_{1n} \otimes^L \sF_{2n}$. 

\begin{lem} \label{the first cohomology does not vanish}
  We have that $\mathbb{H}^1(X,\sG) \neq 0$. 
\end{lem}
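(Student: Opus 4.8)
The plan is to compute the tensor product sheaf $\sG = \sF_{1n} \otimes^L \sF_{2n}$ explicitly and to reduce the nonvanishing of $\mathbb{H}^1(X,\sG)$ to an excision computation on the closed ball $V_n$ with the two points $a_n, b_n$ removed. First I would restrict to $V_n$: since $v_n$ is an open immersion we have $v_n^*\sF_{1n} = w_{1n!}(\Lambda)$ and $v_n^*\sF_{2n} = w_{2n!}(\Lambda)$, and as $v_n^*$ is symmetric monoidal, $v_n^*\sG = w_{1n!}(\Lambda)\otimes^L w_{2n!}(\Lambda)$. Applying the projection formula $j_!(A)\otimes^L B = j_!(A\otimes^L j^*B)$ together with the flatness of $\Lambda$ (so that no higher Tor sheaves appear), this tensor product is the extension by zero $\kappa_{n!}(\Lambda)$, where $\kappa_n \colon V_n \smallsetminus \{a_n,b_n\} \hookrightarrow V_n$. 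In particular $\sG$ is concentrated in degree $0$, so $\mathbb{H}^1(X,\sG)=H^1(X,\sG)$. A stalk computation then shows that on all of $X$ the sheaf $\sG$ is supported on $\overline{V_n}$ and equals $\Lambda$ away from $\{a_n,b_n\}$: for $x\notin\overline{V_n}$ small neighbourhoods of $x$ miss $V_n$ so the stalks vanish, at $a_n$ and $b_n$ one of the two tensor factors vanishes, and at every other point both stalks are $\Lambda$ with no higher Tor.

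With $\sG$ identified, I would compute $H^1$ by excision. Writing $\Lambda_{V_n}$ for the constant sheaf on the ball and $\Lambda_{a_n},\Lambda_{b_n}$ for the skyscrapers, the short exact sequence $0 \to \kappa_{n!}(\Lambda) \to \Lambda_{V_n} \to \Lambda_{a_n}\oplus\Lambda_{b_n}\to 0$ yields
\begin{equation*}
H^0(V_n,\Lambda) \longrightarrow \Lambda\oplus\Lambda \longrightarrow H^1(V_n,\kappa_{n!}\Lambda) \longrightarrow H^1(V_n,\Lambda).
\end{equation*}
Since $V_n$ is a closed ball over the algebraically closed field $k$ it is cohomologically trivial, i.e. $H^0(V_n,\Lambda)=\Lambda$ and $H^q(V_n,\Lambda)=0$ for $q\geq 1$. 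The restriction map $\Lambda\to\Lambda\oplus\Lambda$ is the diagonal, whose cokernel is $\Lambda$, and hence $H^1(V_n,\kappa_{n!}\Lambda)\cong\Lambda\neq 0$. It is precisely at this point that the hypothesis $a_n\neq b_n$ enters: with a single puncture the restriction map would be an isomorphism and $H^1$ would vanish, so it is the tensoring of the sheaf punctured at $a_n$ with the sheaf punctured at $b_n$ that produces the class.

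The remaining—and main—difficulty is that the cohomology I actually need is over $X$, not over $V_n$, and $v_n$ is an open immersion whose image is \emph{not} closed (indeed $V_n$ cannot be clopen, since $X$ is connected). Consequently $\sF_{1n}$ and $\sF_{2n}$ have nonzero stalks along the boundary $\partial V_n$, and $\sG$ is supported on the closure $\overline{V_n}$ rather than on $V_n$ itself, so one cannot naively identify $R\Gamma(X,\sG)$ with $R\Gamma(V_n,\kappa_{n!}\Lambda)$. To overcome this I would pass to Berkovich spaces through the morphism of \'etale topoi $\mu_X$, where the closed ball $V_n^{\mathrm{an}}$ is genuinely closed and compact. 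There $\sG$ is the honest extension by zero of $\Lambda$ off $\{a_n,b_n\}$ from the compact disc $V_n^{\mathrm{an}}$, the excision computation of the previous paragraph applies verbatim (the Berkovich closed disc being cohomologically trivial, cf. \cite{berk}), and Huber's comparison \cite[Proposition 8.3.5]{hub96} identifies $R\Gamma(X,\sG)$ with the corresponding Berkovich cohomology. This delivers $\mathbb{H}^1(X,\sG)\cong\Lambda\neq0$, which is exactly the input needed for the direct-sum argument showing that $\mathbb{H}^1(X,\sF_1\otimes^L\sF_2)$ is infinite.
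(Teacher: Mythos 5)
Your proposal is correct in outline and does prove the statement, but it follows a genuinely different route from the paper's. The paper never leaves Huber's adic framework: it treats the closure $\overline{V_n}$ as a closed \emph{pseudo-adic} subspace, writes $\sG \simeq \bar{v}_{n*}(\sG_{|\overline{V_n}})$, and applies excision on $\overline{V_n}$ with respect to the open subset $V_n$ and the boundary pseudo-adic point $x$; the two outer terms of the resulting long exact sequence are supplied by Lemma \ref{intermediate calculation I} (which rests on $H^j(X,v_{n!}\Lambda)\simeq H^j_c(V_n,\Lambda)$, computed by Huber) and by Lemma \ref{intermediate calculation II} (the stalk of $v_{n*}\Lambda$ at $x$ \emph{as a Galois module}, via overconvergence), and the conclusion is extracted from the non-surjectivity of a map $\Lambda \to \Lambda\oplus\Lambda$ --- so the paper obtains only $\mathbb{H}^1\neq 0$. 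You instead move the whole computation to the Berkovich side, where $V_n^{\mathrm{an}}$ is honestly closed, and obtain the sharper statement $\mathbb{H}^1(X,\sG)\cong\Lambda$ by ordinary excision on the compact ball. This works, but your pivotal sentence, that over $X^{\mathrm{an}}$ the complex $\sG$ \emph{is} the extension by zero of $\Lambda$ off $\{a_n,b_n\}$ from $V_n^{\mathrm{an}}$, is exactly where the real content sits, and closedness of $V_n^{\mathrm{an}}$ alone does not deliver it. You need three inputs: (i) Huber's comparison of pushforwards (your citation of \cite[Proposition 8.3.5]{hub96}) applied to the overconvergent sheaves $w_{1n!}\Lambda$ and $w_{2n!}\Lambda$, not merely to the constant sheaf, so that $\sF_{1n}\otimes^L\sF_{2n} = \mu_X^*\left(v^{\mathrm{an}}_{n*}w^{\mathrm{an}}_{1n!}\Lambda\otimes^L v^{\mathrm{an}}_{n*}w^{\mathrm{an}}_{2n!}\Lambda\right)$; (ii) vanishing of the stalks of $v^{\mathrm{an}}_{n*}$ outside $V_n^{\mathrm{an}}$, which holds because the complement is an open analytic domain (this is the argument of Lemma \ref{vanishing lower star}); and (iii) the identity $v^{\mathrm{an}*}_n v^{\mathrm{an}}_{n*}\simeq \mathrm{id}$, i.e. that the stalk of the pushforward at the Shilov boundary point of $V_n^{\mathrm{an}}$ is $\Lambda$, which is Berkovich's quasi-immersion theorem, cf. \cite[\S 4.3]{berk}; this last point is precisely the counterpart of the boundary-stalk computation that occupies Lemma \ref{intermediate calculation II} in the paper, so it cannot be waved away as "both stalks are $\Lambda$". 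With those three citations supplied your argument is complete, and it is arguably cleaner than the paper's, since it computes $\mathbb{H}^*(X,\sG)$ exactly rather than only detecting nonvanishing.
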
 
\begin{proof} 
   By definition, 
   \begin{align*} 
   \sG := v_{n*}w_{1n!}\Lambda \otimes^L v_{n*}w_{2n!}\Lambda. 
   \end{align*} 
   Let $\overline{V_n} = (X,\overline{|V_n|})$ be the closed 
   pseudo-adic subspace \cite[Definition 1.10.8 (i)]{hub96} where 
   $\overline{|V_n|}$ is the topological space which is the closure of $|V_n|$ in $|X|$. 
   Let $\bar{v}_n : \overline{V_n} \hookrightarrow X$ denote the associated closed embedding. 
    Observe that $\mathrm{supp}(\sG) \subset \overline{V_n}$. 
    By \cite[Lemma 1.10.17 (i)]{hub96}, $\bar{v}_n$ is proper and hence $\bar{v}_{n*} = \bar{v}_{n!}$. 
    It follows that $\bar{v}_{n*}$ is exact. Furthermore, one checks at stalks that 
     we have an isomorphism 
    $\sG \simeq \bar{v}_{n*}\sG_{|\overline{V_n}}$. 
    In particular, it suffices to show that $\mathbb{H}^1(\overline{V_n},\sG_{|\overline{V_n}})$ is not zero. 
    
    Observe that $p_n : V_n \hookrightarrow \overline{V_n}$ is an open embedding whose complement 
    is the pseudo adic space $x := (X, |\overline{V_n}| \smallsetminus |V_n|)$.   
    Let $i_{x} : \{x\} \hookrightarrow \overline{V_n}$ be the closed embedding of pseudo-adic spaces. 
    Consider the following 
    exact sequence associated to the 
    open embedding $V_n \hookrightarrow \overline{V_n}$ : 
    \begin{align*} 
      p_{n!}\sG_{|V_n} \to \sG_{|\overline{V_n}} \to i_{x*}i_x^*(\sG_{|\overline{V_n}}) \to \cdot 
        \end{align*}     
      Observe that the open embedding $v_n : V_n \hookrightarrow X$   
      factors through the closed embedding $\overline{V_n} \hookrightarrow X$ via the map $p_n$. 
      Hence, for every $m \in \mathbb{N}$, 
      $\mathbb{H}^m(\overline{V_n},p_{n!}\sG_{|V_n}) \simeq \mathbb{H}^m(X,v_{n!}\sG_{|V_n})$. 
     Applying Lemmas \ref{intermediate calculation I} and \ref{intermediate calculation II},
     gives the following long exact sequence :
     \begin{align*} 
      0 \to 0 \to \mathbb{H}^0(\overline{V_n},\sG_{|\overline{V_n}}) \to \Lambda \to \Lambda \oplus \Lambda \to \mathbb{H}^1(\overline{V_n},\sG_{|\overline{V_n}}) \to \ldots 
     \end{align*} 
         Since the map $\Lambda \to \Lambda \oplus \Lambda$ cannot be surjective, we get that 
         $\mathbb{H}^1(\overline{V_n},\sG_{|\overline{V_n}})$ cannot be zero. 
\end{proof} 
   
 \begin{lem} \label{intermediate calculation I}
  We have the following isomorphisms. 
  \begin{enumerate} 
  \item $\mathbb{H}^0(X,v_{n!}(\sG_{|V_n})) \simeq 0.$
  \item $\mathbb{H}^1(X,v_{n!}(\sG_{|V_n})) \simeq \Lambda \oplus \Lambda$.
  \item $\mathbb{H}^2(X,v_{n!}(\sG_{|V_n})) \simeq \Lambda$. 
  \end{enumerate}
 \end{lem}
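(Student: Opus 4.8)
The plan is to reduce $v_{n!}(\sG_{|V_n})$ to the extension by zero of a constant sheaf, and then to read off its cohomology from the excision triangle that compares it with the constant sheaf on $X$ and on a closed complement.

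First I would identify $\sG_{|V_n}$. Since $v_n$ is an open immersion we have $v_n^* v_{n*} = \mathrm{id}$, and $v_n^*$ commutes with $\otimes^L$, so
\[
\sG_{|V_n} = v_n^*\sF_{1n} \otimes^L v_n^*\sF_{2n} = w_{1n!}\Lambda \otimes^L w_{2n!}\Lambda .
\]
The sheaves $w_{1n!}\Lambda$ and $w_{2n!}\Lambda$ have stalks that are free $\Lambda$-modules (of rank $1$ or $0$), hence $\mathrm{Tor}$-acyclic, so the derived tensor product is concentrated in degree $0$; together with the standard identity that the tensor product of two extensions by zero from open immersions is the extension by zero from the intersection, this gives $\sG_{|V_n} = w_{n!}\Lambda$, where $w_n \colon W_n := V_n \smallsetminus \{a_n,b_n\} \hookrightarrow V_n$. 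Consequently $v_{n!}(\sG_{|V_n}) = \tilde{w}_{n!}\Lambda$ for the open immersion $\tilde{w}_n := v_n \circ w_n \colon W_n \hookrightarrow X$.

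Next I would run excision. Writing $i \colon Z := X \smallsetminus W_n \hookrightarrow X$ for the closed complement, the short exact sequence $0 \to \tilde{w}_{n!}\Lambda \to \Lambda_X \to i_*\Lambda_Z \to 0$ produces a long exact sequence in cohomology. The closed unit disk is $\Lambda$-acyclic in positive degrees with $H^0(X,\Lambda)=\Lambda$ (cf. \cite{berk}), so this collapses to
\[
0 \to \mathbb{H}^0(X,\tilde{w}_{n!}\Lambda) \to \Lambda \xrightarrow{\ \alpha\ } H^0(Z,\Lambda) \to \mathbb{H}^1(X,\tilde{w}_{n!}\Lambda) \to 0, \qquad \mathbb{H}^2(X,\tilde{w}_{n!}\Lambda) \cong H^1(Z,\Lambda).
\]
To finish I would analyse $Z = (X \smallsetminus V_n) \cup \{a_n,b_n\}$. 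As $a_n,b_n$ are classical $k$-points in the interior of the closed disk $V_n$, each admits a small closed disk neighbourhood contained in $V_n$ and missing the other point, so $\{a_n\}$ and $\{b_n\}$ are clopen in $Z$ and $Z = (X\smallsetminus V_n)\sqcup\{a_n\}\sqcup\{b_n\}$; thus $H^*(Z,\Lambda) = H^*(X\smallsetminus V_n,\Lambda)\oplus\Lambda\oplus\Lambda$. Since $X\smallsetminus V_n$ is a disk with one hole it is connected with $H^0 = \Lambda$ and $H^1(X\smallsetminus V_n,\Lambda)\cong\Lambda$ (the annulus computation). Then $H^0(Z,\Lambda)=\Lambda^{3}$ and $\alpha$ is the diagonal, whence $\mathbb{H}^0 = \ker\alpha = 0$, $\mathbb{H}^1 = \mathrm{coker}\,\alpha = \Lambda\oplus\Lambda$, and $\mathbb{H}^2 = H^1(Z,\Lambda)=\Lambda$.

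The main obstacle will be the last step: justifying that $Z$ splits as the asserted disjoint union (which genuinely uses that $a_n,b_n$ are classical interior points of $V_n$) and pinning down the geometric inputs $H^*(X,\Lambda)=\Lambda[0]$ and $H^1(X\smallsetminus V_n,\Lambda)=\Lambda$. Everything else is formal bookkeeping with the excision sequence.
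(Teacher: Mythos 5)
Your reduction of $v_{n!}(\sG_{|V_n})$ to $\tilde{w}_{n!}\Lambda$ is exactly the paper's first step, but after that you take a genuinely different route. The paper compares $\tilde{w}_{n!}\Lambda$ with $v_{n!}\Lambda_{V_n}$ (the extension by zero from the ball $V_n$), using the short exact sequence $0 \to \tilde{w}_{n!}\Lambda \to v_{n!}\Lambda \to i_{a_n*}\Lambda \oplus i_{b_n*}\Lambda \to 0$ together with the identification $H^j(X, v_{n!}\Lambda) \simeq H^j_c(V_n,\Lambda)$, which Huber computes to be $\Lambda$ in degree $2$ and zero otherwise (Example 0.4.6 of \cite{hub96}). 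You instead compare $\tilde{w}_{n!}\Lambda$ with the constant sheaf $\Lambda_X$, which forces you to compute the \'etale cohomology of the closed complement $Z = X \smallsetminus W_n$. Your clopen splitting of $Z$ is fine ($a_n, b_n$ are classical, hence closed, points, and small closed balls inside $V_n$ are \emph{open} subsets of $X$ separating them from the rest of $Z$), and $H^*(X,\Lambda) = \Lambda$ in degree $0$ is standard.

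The gap is precisely where you located it: the claim $H^0(X \smallsetminus V_n,\Lambda) = H^1(X \smallsetminus V_n,\Lambda) = \Lambda$ cannot be justified by "the annulus computation." In the adic setting $V_n$ is a rational, hence open, subset of $X$, so $X \smallsetminus V_n$ is a \emph{closed pseudo-adic} subspace, not an adic annulus: it contains the rank-$2$ point that specializes the Gauss point of $V_n$ in the outward direction, and every open neighbourhood of $X \smallsetminus V_n$ must contain that Gauss point (open sets are stable under generalization), so it is not even an intersection of its open neighbourhoods. Cohomology of such pseudo-adic subspaces is delicate -- note that the paper's Lemma \ref{intermediate calculation II} needs overconvergence and a Galois-theoretic argument just to handle the one-point boundary $\overline{V_n} \smallsetminus V_n$. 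The natural rigorous way to obtain $H^*(X \smallsetminus V_n, \Lambda)$ is to run excision for $V_n \subset X$ and feed in $H^*(X, v_{n!}\Lambda) \simeq H^*_c(V_n,\Lambda) = \Lambda[-2]$; but that is exactly the paper's input, and once you have it the paper's arrangement is shorter, since comparing with $v_{n!}\Lambda$ rather than with $\Lambda_X$ avoids the closed complement altogether. So your argument is completable, but the step you deferred is the real content, and filling it essentially collapses your proof into the paper's.
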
   
 \begin{proof} 
 Recall that the derived tensor product commutes with pullbacks i.e. the identity $g^*(A \otimes^L B) \simeq g^*A \otimes^L g^*B$. 
   One deduces from this that $\sG_{|V_n \smallsetminus \{a_n,b_n\}}$ is the constant sheaf $\Lambda$ 
   and
   $\sG_{\overline{a_n}} = \sG_{\overline{b_n}} = 0$. Hence we see that 
    $v_{n!}(\sG_{|V_n}) \simeq v'_{n!}(\Lambda)$ where $v'_n$ is the open
    immersion $V_n \smallsetminus \{a_n,b_n\} \hookrightarrow X$.
    On $V_n$, we have the following exact sequence
    \begin{align*} 
     0 \to v''_{n!}(\Lambda) \to \Lambda \to i_{a_n*}(\Lambda) \oplus i_{b_n*}(\Lambda) \to 0 
    \end{align*} 
  where $v''_n : V_n \smallsetminus \{a_n,b_n\} \hookrightarrow V_n$ and $i_{a_n} : \{a_n\} \hookrightarrow V_n$.  
  Composing with $v_{n!}$ gives 
  \begin{align*} 
       0 \to v'_{n!}(\Lambda) \to v_{n!}(\Lambda) \to i_{a_n*}(\Lambda) \oplus i_{b_n*}(\Lambda) \to 0 
  \end{align*} 
    where we abuse notation and use $i_{a_n}$ to denote the closed embedding $a_n \hookrightarrow X$ as well. 
   Applying the global sections functor gives a long exact sequence in cohomology. Hence, it suffices 
   to determine $H^j(X,v_{n!}(\Lambda))$ for all $j \in \mathbb{N}$. 
   Since $\overline{V_n}$ embeds into $X$, we see that $H^j(X,v_{n!}(\Lambda)) \simeq H^j_c(V_n,\Lambda)$. 
   By \cite[Example 0.4.6]{hub96}, 
   $H_c^j(V_n,\Lambda)$ is $0$ if 
       $j \neq 2$ and equal to $\Lambda$ when $j = 2$. 
       %Choosing coordinates, we can assume that $V_n = \mathrm{Spa}(k\langle T \rangle, k^0\langle T \rangle)$.
       %Let $\mathfrak{V}_n := \mathrm{Spf}(k^0\langle T \rangle)$ and $R\psi_{\mathfrak{V}_n}$ the associated nearby cycles
       %functor.  By Lemma \ref{vanishing cycles commutes with lower shriek} and noting that 
       %$R\psi_{\mathfrak{V}_n}(\Lambda) \simeq \Lambda$, we see that 
       %$H_c^j(V_n,\Lambda) \simeq H^j_c(\mathbb{A}^1_{\tilde{k}},\Lambda)$. 
      
        \end{proof} 
 
  \begin{lem} \label{intermediate calculation II}
   Recall the notation introduced in Lemma \ref{the first cohomology does not vanish}. 
   Let $x$ be the pseudo-adic space $(X, \overline{|V_n|} \smallsetminus |V_n|)$ and
    let $i_{x} : \{x\} \hookrightarrow \overline{V_n}$ be the closed embedding of pseudo-adic spaces. 
  We have the following isomorphism 
  %\begin{enumerate} 
  %\item 
  $$\mathbb{H}^0(x,i_x^*(\sG_{|\overline{V_n}})) \simeq \Lambda.$$
  %\item
   %$H^1(x,i_x^*(\sG_{|\overline{V_n}})) \simeq \Lambda$.
  %\item $H^2(x,i_x^*(\sG_{|\overline{V_n}}) ) \simeq 0$. 
  %\end{enumerate}
 \end{lem}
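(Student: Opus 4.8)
The plan is to reduce the whole statement to a stalk computation at the single boundary point $x$, exploiting that the interior $k$-points $a_n$ and $b_n$ lie away from $x$. First I would record that restriction to the closed pseudo-adic subspace $x$ is a pullback, so by the identity $g^*(A\otimes^L B)\simeq g^*A\otimes^L g^*B$ (used already in the proof of Lemma \ref{intermediate calculation I}) it commutes with the derived tensor product. Writing $\sG=\sF_{1n}\otimes^L\sF_{2n}$, this gives $i_x^*(\sG_{|\overline{V_n}})\simeq i_x^*\bigl((\sF_{1n})|_{\overline{V_n}}\bigr)\otimes^L i_x^*\bigl((\sF_{2n})|_{\overline{V_n}}\bigr)$, so it suffices to identify each of the two tensor factors.

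For a single factor I would use the factorization $v_n=\bar v_n\circ p_n$ from the proof of Lemma \ref{the first cohomology does not vanish}, where $p_n\colon V_n\hookrightarrow\overline{V_n}$ is the open immersion and $\bar v_n\colon\overline{V_n}\hookrightarrow X$ is the closed embedding. Since $\bar v_n^*\bar v_{n*}=\mathrm{id}$, one has $(\sF_{1n})|_{\overline{V_n}}=p_{n*}(w_{1n!}\Lambda)$. The point $x$ lies in $\overline{|V_n|}\smallsetminus|V_n|$, i.e. on the topological boundary of the disk $V_n$, whereas $a_n$ is an interior $k$-point; hence $w_{1n!}\Lambda$ agrees with the constant sheaf $\Lambda$ on a neighbourhood of $x$ inside $V_n$, and therefore $i_x^*\bigl((\sF_{1n})|_{\overline{V_n}}\bigr)=i_x^*p_{n*}\Lambda$. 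The identical argument with $b_n$ in place of $a_n$ handles $\sF_{2n}$.

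The main computation, and the step I expect to be the real obstacle, is then to show $i_x^*p_{n*}\Lambda\simeq\Lambda$. Concretely, the stalk of $p_{n*}\Lambda$ along $x$ is $\varinjlim_W H^0(W\cap V_n,\Lambda)$, the colimit running over neighbourhoods $W$ of the boundary point $x$ in $\overline{V_n}$, and a cofinal family of such $W\cap V_n$ is given by the collars $\{\rho'<|T-c_n|\le\rho_n\}$ shrinking onto the boundary. Each such collar is connected, so $H^0=\Lambda$ and there is no contribution in positive degrees. I would justify the connectedness (and that $x$ is itself a single connected point) either through the explicit description of neighbourhoods of a boundary point of a disk, via the equivalences recalled in \S\ref{rigid varieties and adic spaces}, or by passing to the associated Berkovich space using \cite[Proposition 8.3.5]{hub96}, where the boundary of the closed disk is a single point admitting connected collar neighbourhoods. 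The delicate point throughout is precisely this local geometry at $x$, which is what pins the value down to exactly $\Lambda$ rather than a larger module.

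Finally I would assemble the pieces: each tensor factor equals $\Lambda$ concentrated in degree $0$, and since $\Lambda$ is free of rank one over itself, $\Lambda\otimes^L_\Lambda\Lambda=\Lambda$ with no higher $\mathrm{Tor}$. Thus $i_x^*(\sG_{|\overline{V_n}})\simeq\Lambda$ as a constant sheaf on $x$, and because $x$ is connected we obtain $\mathbb{H}^0(x,i_x^*(\sG_{|\overline{V_n}}))=H^0(x,\Lambda)=\Lambda$, with all higher hypercohomology vanishing, as claimed.
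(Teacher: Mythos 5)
Your reduction steps are sound and match the paper's: commuting $i_x^*$ with $\otimes^L$, and discarding $a_n$ (resp.\ $b_n$) because it lies away from $x$ (the paper implements exactly your idea via the open set $O=\overline{V_n}\smallsetminus\{a_n\}$ and base change). The genuine gap is in your main step, the identification $i_x^*v_{n*}\Lambda\simeq\Lambda$: you treat $x$ as a bare topological point and compute a colimit of sections over analytic collar neighbourhoods. But $x$ is a pseudo-adic point of higher rank whose \'etale topos is that of $\mathrm{Spec}(k(x)^h)$, where $k(x)^h$ is the henselization of $k(x)$ with respect to $k(x)^+$ (cf.\ \cite[Proposition 2.3.10]{hub96}, as invoked in the paper), and its Galois group $G=\mathrm{Gal}(k(x)^{h,\mathrm{sep}}/k(x)^h)$ is \emph{not} trivial: for instance, extracting prime-to-$p$ roots of a coordinate defining $V_n$ gives nontrivial extensions, since the value group of $x$ contains an element infinitesimally larger than the radius of $V_n$. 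A sheaf on $x_{\mathrm{et}}$ is a $G$-module, and $H^0(x,-)$ is the $G$-invariants of the geometric stalk. Connectedness of your collars (even granting the limit theorem needed to identify the analytic-neighbourhood colimit with $H^0(x,i_x^*p_{n*}\Lambda)$, which you do not justify) at best yields $H^0(x,i_x^*v_{n*}\Lambda)=\Lambda$; it says nothing about the $G$-module structure of the stalk, so it does not show that $i_x^*v_{n*}\Lambda$ is the \emph{constant} sheaf.

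This is not a pedantic point, because your final step genuinely fails without it: $\mathbb{H}^0(x,A\otimes^L A)$ is not determined by $H^0(x,A)=\Lambda$. For example, if the stalk of $A$ had rank two with $G$ acting by swapping a basis (and $\ell$ odd), one would still have $H^0(x,A)=\Lambda$, yet $H^0(x,A\otimes A)$ would have rank two; a rank-one stalk twisted by a character $\chi$ with $\chi^2\neq 1$ would make the invariants of the tensor square vanish. The paper closes exactly this gap with a two-pronged argument you are missing: (a) $H^0(x,i_x^*v_{n*}\Lambda)=\Lambda$, obtained not from collars but from the exact sequence $0\to p_{n!}\Lambda\to p_{n*}\Lambda\to i_{x*}i_x^*p_{n*}\Lambda\to 0$ together with the computation of $H^*_c(V_n,\Lambda)$ in Lemma \ref{intermediate calculation I}; and (b) the geometric stalk $[v_{n*}\Lambda]_{\overline{x}}$ equals $\Lambda$, because $v_{n*}\Lambda$ is pulled back from the Berkovich space (\cite[Theorem 8.3.5]{hub96}), hence overconvergent, so its stalk at $\overline{x}$ agrees with the stalk at the rank-one generalization $\overline{y}$, and $y\in V_n$. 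Since by (a) the invariants exhaust the whole stalk computed in (b), the $G$-action is trivial, and only then do $i_x^*(\sG_{|\overline{V_n}})\simeq\Lambda\otimes^L\Lambda=\Lambda$ and $\mathbb{H}^0(x,i_x^*(\sG_{|\overline{V_n}}))=\Lambda$ follow. To repair your proof you must supply prong (b) or an equivalent control of the Galois action; connectedness of collars cannot do this.
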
   
    \begin{proof} 
    Since pullbacks commute with derived tensor products, we get that 
    $$i_x^*(\sG_{|\overline{V_n}}) \simeq i_x^*v_{n*}w_{1n!}\Lambda \otimes^L i_x^*v_{n*}w_{2n!}\Lambda.$$
    %Furthermore, $i_{x*}$ is a closed embedding of pseudo-adic spaces (cf. \cite[Definition 1.10.8 (ii)]{hub96})
   %and hence it is proper by \cite[Lemma 1.10.17 (i)]{hub96}. It follows that
   %$i_{x*} = i_{x!}$.
   %Furthermore, 
     %for a sheaf $\sF$ on $\{x\}$, we have that 
   %$\sF \simeq i_{x*}i_x^*(\sF)$. One checks this at stalks 
   %noting that $X \smallsetminus \{x\}$ is an element of the étale site of $X$. 
   %It follows that $i_{x*}$ is fully faithful as well. 
   %Hence by Lemma \ref{tensor product commutes}, 
   %$$i_{x*}i_x^*(\sG_{|\overline{V_n}}) \simeq i_{x*}i_x^*v_{n*}w_{1n!}\Lambda \otimes^L i_{x*}i_x^*v_{n*}w_{2n!}\Lambda.$$
   
      %Let us calculate 
      %$i_{x*}i_x^*v_{n*}w_{1n!}\Lambda$. 
      %Let $W$ be a quasi-compact open neighbourhood of $x$ that does not contain $a_n$. 
      %Observe by \cite[Theorem 4.5.1]{hub96}, we have that 
      %$[v_{n*}w_{1n!}\Lambda]_{|W} = v'_{n*}(\Lambda_{|W})$ where $v'_n : W \times_{\overline{V_n}} V_n \hookrightarrow W$.
      %Hence we see that $i_x^*v_{n*}w_{1n!}(\Lambda) = i_{x}^*v'_{n*}(\Lambda_{|W})$. 
       Observe that the complement of $\{a_n\}$ in 
       $\overline{V_n}$ is an étale open neighbourhood of $x$. 
       Let us call this open neighbourhood $O$ and $j_O : O \hookrightarrow \overline{V_n}$ be the associated open embedding.  
       The morphism $i_x : x \hookrightarrow \overline{V_n}$ factors through $j_O$ and the inclusion $x \hookrightarrow O$ which we abuse notation and 
       denote by $i_x$ as well.
       Hence 
       $i_x^*p_{n*}w_{1n!}\Lambda \simeq i_x^*j_O^*p_{n*}w_{1n!}\Lambda$. 
       By smooth base change \cite[Theorem 4.1.1]{hub96}, 
        $i_x^*j_O^*p_{n*}w_{1n!}\Lambda \simeq i_x^*p'_{n*}\Lambda$ where $p'_n : V_n \times_{\overline{V_n}} O \to O$. 
       We now show 
       $i_x^*p'_{n*}\Lambda = i_x^*p_{n*}(\Lambda)$. 
       Indeed, by identical arguments as above 
       \begin{align*} 
       i_x^*p_{n*}(\Lambda) &\simeq i_{x}^*j_O^*p_{n*}(\Lambda) \\
                      &\simeq i_{x}^*p'_{n*}(\Lambda).
                      \end{align*}
             %By checking at stalks, one sees that 
        %$i_x^*v_{n*}w_{1n!}\Lambda \simeq i_x^*v_{n*}\Lambda$.
       Observe that $i_x^*p_{n*}(\Lambda) = i_x^*v_{n*}(\Lambda)$. This is because 
       $v_n$ can be seen as the composition of $p_n : V_n \to \overline{V_n}$ and 
       $\overline{V_n} \hookrightarrow X$.  
        Hence we see that 
        \begin{align}
        i_x^*(\sG_{|\overline{V_n}}) \simeq i_x^*v_{n*}\Lambda \otimes^L i_x^*v_{n*}\Lambda
        \end{align}
                
         We claim $i_x^*v_{n*}\Lambda \simeq \Lambda$.
         We first show that $H^0(x,i_x^*v_{n*}\Lambda) = \Lambda$.        
         Consider the following exact sequence on $\overline{V_n}$ : 
        $$0 \to p_{n!}\Lambda \to p_{n*}\Lambda \to i_{x*}i_x^*p_{n*}\Lambda \to 0$$
        where $p_n : V_n \hookrightarrow \overline{V_n}$. 
        Applying the derived functor $R\Gamma(\overline{V_n},-)$ 
        gives a long exact sequence in cohomology. Note 
        that $H^m(\overline{V}_n,p_{n!}(\Lambda)) = H^m_c(V_n,\Lambda)$. 
        In Lemma \ref{intermediate calculation I}, we saw that 
        $H^m_c(V_n,\Lambda) = 0$ if $m \neq 2$ and is equal to $\Lambda$
        if $m = 2$. 
        Since ${V_n}$ is connected, we
        get that $H^0(\overline{V_n},p_{n*}\Lambda) = H^0(V_n,\Lambda) = \Lambda$. The long exact sequence
        proves that $H^0(x, i_x^*v_{n*}\Lambda) = \Lambda$. 
        
             By \cite[Theorem 8.3.5]{hub96}, the sheaf 
             $v_{n*}(\Lambda)$ is the pullback of a sheaf on the étale site of the
              Berkovich space $X^{\mathrm{an}}$ (the Berkovich analytic unit disk).
             Hence $v_{n*}(\Lambda)$ is overconvergent. 
             Let $\overline{x}$ be a geometric point over $x$ and 
             let $y$ be the unique generalization of $x$. 
             By definition of overconvergence, we have a bijection 
             $[v_{n*}(\Lambda)]_{\overline{x}} \to [v_{n*}(\Lambda)]_{\overline{y}}$ where 
             $\overline{y}$ is a geometric point over $y$ that generalizes $\overline{x}$. 
             Since $y \in V_n$, we get that $[v_{n*}(\Lambda)]_{\overline{y}} = \Lambda$ 
              and hence $[v_{n*}(\Lambda)]_{\overline{x}} = \Lambda$ as a set. 
              By Proposition 2.3.10 in loc.cit., 
              the topos associated to the site $\{x\}_{\text{ét}}$ can be identified with 
              the topos associated to $\mathrm{Spec}(k(x)^h)_{\text{ét}}$ where 
              $k(x)^h$ denotes the henselization of the field $k(x)$ with respect to $k(x)^+$. 
              Hence, we can identify sheaves on 
              $\{x\}_{\text{ét}}$ with $G := \mathrm{Gal}(k(x)^{h,\mathrm{sep}}/k(x)^h)$-modules 
              where the identification takes a sheaf on $\{x\}_{\text{ét}}$ and 
              sends it to the stalk at $\overline{x}$.
              Furthermore, we 
              see that for a sheaf $\sF$ on $x$, $H^0(x,\sF) = [\sF]_{\overline{x}}^G$. 
               Since $H^0(x,i_x^*v_{n*}(\Lambda)) = \Lambda$ and 
               $[v_{n*}(\Lambda)]_{\overline{x}} = \Lambda$,
               we get that $G$ acts trivially on $[v_{n*}(\Lambda)]_{\overline{x}}$. 
               This proves the claim and by equation (7), 
               we see that 
               $i_{x}^*(\sG_{|\overline{V_n}}) \simeq \Lambda$. 
               \end{proof} 
    
  %\begin{lem} \label{tensor product commutes}
    %Let $f : Z_1 \to Z_2$ be a proper morphism of pseudo-adic spaces such that 
    %$Rf_*$ is fully faithful. If $A,B \in \mathcal{D}^b(Z_1,\Lambda)$ then we have
    %that 
    %\begin{align*} 
     % Rf_*(A \otimes^L B) \simeq Rf_*(A) \otimes^L Rf_*(B). 
    %\end{align*} 
  %\end{lem}   
    %\begin{proof} 
     %The proof is completely formal. 
    % Let $C \in \mathcal{D}^b(Z_2,\Lambda)$. We have the following isomorphisms
     %\begin{align*}
    %Hom(Rf_*(A) \otimes^L Rf_*(B),C)  &\overset{(1)}\simeq Hom(Rf_*(A),R\Homs(Rf_*(B),C)) \\
    %            &\overset{(2)} \simeq Hom(Rf_*(A),Rf_*R\Homs(B,f^!C)) \\
       %         &\overset{(3)} \simeq Hom(A,R\Homs(B,f^!C)) \\
          %      &\overset{(4)} \simeq Hom(A \otimes^L B, f^!C) \\
            %    &\overset{(5)} \simeq Hom(Rf_*(A \otimes^L B), C).
    % \end{align*} 
     %(1) follows from the adjointness of the tensor product and $R\Homs$, 
     %(2) is because $Rf_* = Rf_!$ and the adjointness of $Rf_!$ and $f^!$, 
     %(3) is implied by the fact that $Rf_*$ is fully faithful, 
     %(4) is a consequence of the adjointness of the tensor product and $R\Homs$ while (5)
     %results from the the adjointness of $Rf_!$ and $f^!$.
         %        The sequence of isomorphisms above prove and the Yoneda lemma prove the lemma. 
    %\end{proof} 

\section{Reflexivity} \label{sfjxoosdfsdfsd}

         Recall from \cite[Exposé XVIII, Théor\`{e}me 3.1.4]{sga4tome3} that if
  $g : Z' \to Z$ is a morphism of varieties over an algebraically closed field $L$ then there exists a functor 
  $g^! : \mathcal{D}^b(Z_{\text{ét}},\Lambda) \to \mathcal{D}^b(Z'_{\text{ét}},\Lambda)$ which is right adjoint to 
  the functor $Rg_! : \mathcal{D}^b(Z'_{\text{ét}},\Lambda) \to \mathcal{D}^b(Z_{\text{ét}},\Lambda)$. 
  Given an object $\sF \in \mathcal{D}^b(Z_{\text{ét}},\Lambda)$, 
  the Verdier dual of $\sF$ is defined to be 
   $D(\sF) :=  R\Homs(\sF,g^!(\Lambda))$ where $g : Z \to \mathrm{Spec}(L)$ is the structure map. 
   
   \begin{defi}
   \emph{Let $L$ be an algebraically closed field and let $Z$ be an $L$-variety. 
    An object $\sF \in \mathcal{D}^b(Z_{\text{ét}},\Lambda)$ is said to be} reflexive \emph{if the canonical map
    \begin{align*} 
      \sF  \to  D \circ D(\sF)
    \end{align*}    
    is an isomorphism. 
    Likewise, if $X$ is a fine $k$-adic space and 
    $\sF \in \mathcal{D}^b(X^f_{\text{ét}},\Lambda)$ is said to be} reflexive \emph{if the canonical map
    \begin{align*} 
      \sF  \to  D^{\mathrm{ad}} \circ D^{\mathrm{ad}}(\sF)
    \end{align*}
    is an isomorphism.} 
\end{defi}
  
     Our results from \S 4 let us connect reflexive sheaves on the fine étale site of a fine $k$-adic space 
   $X$ and reflexive sheaves on the special fibre of any of its formal models via the nearby cycles functor. 
   The precise statement is the following. 
   
\begin{thm} \label{reflexive on generic and special}
 Let $X$ be a fine $k$-adic space. Let $\sF$ be a complex in $\mathcal{D}^b(X^f_{\text{ét}},\Lambda)$. 
 \begin{enumerate}
 \item If $\sF$ is reflexive then for any 
 étale morphism $U \to X$ of fine $k$-adic spaces and any formal model 
 $\mathfrak{U}$ of $U$, 
 $R\psi_{\mathfrak{U}}(\sF_{|U})$ is reflexive. 
 \item Suppose for every 
 étale morphism $U \to X$ of fine $k$-adic spaces, there exists a formal model 
 $\mathfrak{U}$ of $U$ such that 
 $R\psi_\mathfrak{U}(\sF_{|U})$ is reflexive then
 $\sF$ is reflexive. 
 \end{enumerate} 
\end{thm}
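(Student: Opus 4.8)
The plan is to reduce both directions to the single identity of Theorem \ref{big result small proof}(2), after recording how $D^{\mathrm{ad}}$ interacts with étale restriction. First I would check that for an étale morphism $j \colon U \to X$ the adic Verdier dual commutes with restriction, i.e. $(D^{\mathrm{ad}}(\sF))_{|U} \simeq D^{\mathrm{ad}}(\sF_{|U})$ naturally in $\sF$: since $j^{*} = j^{!}$ for étale $j$, one has $j^{*}p_X^{!}(\Lambda) = (p_X \circ j)^{!}(\Lambda) = p_U^{!}(\Lambda)$, and $j^{*}$ commutes with $R\Homs$. By naturality of the biduality transformation it follows that the canonical map $c_\sF \colon \sF \to D^{\mathrm{ad}}\circ D^{\mathrm{ad}}(\sF)$ restricts to the biduality map $c_{\sF_{|U}}$ of $\sF_{|U}$, after the identification $(D^{\mathrm{ad}}\circ D^{\mathrm{ad}}(\sF))_{|U} \simeq D^{\mathrm{ad}}\circ D^{\mathrm{ad}}(\sF_{|U})$.

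The crucial point is then the following compatibility, which I would establish next: under the identification $R\psi_{\mathfrak{U}}\circ D^{\mathrm{ad}}\circ D^{\mathrm{ad}} \simeq D\circ D\circ R\psi_{\mathfrak{U}}$ furnished by Theorem \ref{big result small proof}(2), the morphism $R\psi_{\mathfrak{U}}(c_{\sF_{|U}})$ is carried to the classical biduality morphism $R\psi_{\mathfrak{U}}(\sF_{|U}) \to D\circ D(R\psi_{\mathfrak{U}}(\sF_{|U}))$ on $\mathfrak{U}_s$. Equivalently, the natural isomorphism of Theorem \ref{big result small proof}(1) intertwines the adic and classical evaluation maps. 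Granting this, the reflexivity of $R\psi_{\mathfrak{U}}(\sF_{|U})$ is exactly the assertion that $R\psi_{\mathfrak{U}}(c_{\sF_{|U}})$ is an isomorphism.

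For part (1), if $\sF$ is reflexive then $c_\sF$ is an isomorphism, hence so is $(c_\sF)_{|U} = c_{\sF_{|U}}$ for every étale $U \to X$, and applying $R\psi_{\mathfrak{U}}$ gives that $R\psi_{\mathfrak{U}}(c_{\sF_{|U}})$ is an isomorphism for every formal model $\mathfrak{U}$; by the compatibility above this says precisely that $R\psi_{\mathfrak{U}}(\sF_{|U})$ is reflexive. For part (2), to prove that $c_\sF$ is an isomorphism I would argue by conservativity: set $C := \mathrm{Cone}(c_\sF)$, so that $R\psi_{\mathfrak{U}}(C_{|U}) \simeq \mathrm{Cone}(R\psi_{\mathfrak{U}}(c_{\sF_{|U}}))$ since restriction and $R\psi_{\mathfrak{U}}$ are triangulated. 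The hypothesis supplies, for each étale $U \to X$, a formal model $\mathfrak{U}$ with $R\psi_{\mathfrak{U}}(\sF_{|U})$ reflexive, whence $R\psi_{\mathfrak{U}}(c_{\sF_{|U}})$ is an isomorphism by the compatibility and $R\psi_{\mathfrak{U}}(C_{|U}) = 0$. Using the comparison $R\Gamma(U,-) \simeq R\Gamma(\mathfrak{U}_s, R\psi_{\mathfrak{U}}((-)_{|U}))$ of \cite[Corollary 4.5]{berk94}, this gives $R\Gamma(U,C) = 0$ for every $U$ in $X^f_{\text{ét}}$; since the stalks of the cohomology sheaves of $C$ are computed as filtered colimits of such groups, all cohomology sheaves vanish, so $C = 0$ and $\sF$ is reflexive.

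The main obstacle is the coherence statement of the second paragraph: one must show that the isomorphism of Theorem \ref{big result small proof}(1) is compatible with the unit/evaluation maps defining biduality, not merely that it identifies source and target abstractly. This is a diagram chase using the construction $D^{\mathrm{ad}} = R\Homs(-,p_X^{!}\Lambda)$ together with the adjunctions $(Rh_!,h^!)$ occurring in Proposition \ref{compved}; the delicate part is to track the evaluation map through the chain of isomorphisms $(1)$–$(9)$ of that proof and through the admissible-blow-up and base-change steps of Lemma \ref{vanishing cycles commutes with lower shriek}, verifying that each is a map of dualizable objects intertwining the respective coevaluations.
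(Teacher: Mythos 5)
Your proposal is correct and follows essentially the same route as the paper: part (1) is the paper's argument (commute $D^{\mathrm{ad}}$ with \'etale restriction via Lemma \ref{projection formula adic spaces}(2), then apply Theorem \ref{big result small proof}(2)), and your cone-plus-conservativity argument for part (2) is equivalent to the paper's check that $R\Gamma(U,\sF) \simeq R\Gamma(U,D^{\mathrm{ad}}\circ D^{\mathrm{ad}}(\sF))$ for every \'etale $U \to X$, resting on the same comparison $R\Gamma(U,-) \simeq R\Gamma(\mathfrak{U}_s,R\psi_{\mathfrak{U}}((-)_{|U}))$. The coherence point you single out as the main obstacle --- that the identification $R\psi_{\mathfrak{U}} \circ D^{\mathrm{ad}} \circ D^{\mathrm{ad}} \simeq D \circ D \circ R\psi_{\mathfrak{U}}$ must intertwine the adic and classical evaluation maps, since reflexivity is a statement about the canonical morphism and not about an abstract isomorphism --- is genuine, but the paper's own proof uses this compatibility silently without verifying it, so your attempt is if anything more careful than the published argument on exactly this point.
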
  
  \begin{proof}
    We begin by proving part (1). 
    Suppose $\sF$ is reflexive and $U \to X$ is an étale morphism of fine $k$-adic spaces. 
    Let $\mathfrak{U}$ be a formal model of $U$. 
    We must verify that $R\psi_{\mathfrak{U}}(\sF_{|U})$ is reflexive on $\mathfrak{U}_{s,\text{ét}}$ i.e.
    %It suffices to prove that if  $\mathcal{V} \to \mathfrak{U}_{s}$ is an étale morphism
    %of $\tilde{k}$-varieties then R\Gamma(\mathcal{V} .... R\Gamma(\mathcal{V},
    $$R\psi_{\mathfrak{U}}(\sF_{|U}) \simeq D \circ D \circ R\psi_{\mathfrak{U}}(\sF_{|U}).$$
    By part (2) in Theorem \ref{big result small proof}, 
    we see that 
    $$D \circ D \circ R\psi_{\mathfrak{U}}(\sF_{|U}) \simeq R\psi_{\mathfrak{U}} \circ D^{\mathrm{ad}} \circ D^{\mathrm{ad}}(\sF_{|U}).$$
   By part (2) of Lemma \ref{projection formula adic spaces}, 
     $$D^{\mathrm{ad}} \circ D^{\mathrm{ad}}(\sF_{|U}) \simeq [D^{\mathrm{ad}} \circ D^{\mathrm{ad}}(\sF)]_{|U}.$$
  Since $\sF$ is reflexive, we get that 
  $\sF \simeq D^{\mathrm{ad}} \circ D^{\mathrm{ad}}(\sF)$ which completes the proof of (1). 
  
   We now prove part (2).
   We must show that $\sF$ is reflexive.
    It suffices to prove that if   
   $U \to X$ is an étale morphism of fine $k$-adic spaces then 
   $$R\Gamma(U,\sF) \simeq R\Gamma(U, D^{\mathrm{ad}} \circ D^{\mathrm{ad}}(\sF)).$$
   Let $\mathfrak{U}$ be a formal model of $U$ such that $R\psi_{\mathfrak{U}}(\sF_{|U})$ is 
   reflexive. 
   Observe that we have the following chain of isomorphisms. 
   \begin{align*} 
  R\Gamma(U,D^{\mathrm{ad}} \circ D^{\mathrm{ad}}(\sF)) &\overset{(i)}{\simeq} R\Gamma(U,D^{\mathrm{ad}} \circ D^{\mathrm{ad}}(\sF_{|U})) \\
   &\overset{(ii)}{\simeq} R\Gamma(\mathfrak{U}_s,R\psi_{\mathfrak{U}} \circ D^{\mathrm{ad}} \circ D^{\mathrm{ad}}(\sF_{|U})) \\
   &\overset{(iii)}{\simeq} R\Gamma(\mathfrak{U}_s, D \circ D \circ R\psi_{\mathfrak{U}}(\sF_{|U})) \\
   &\overset{(iv)}{\simeq} R\Gamma(\mathfrak{U}_s, R\psi_{\mathfrak{U}}(\sF_{|U})) \\ 
   &\overset{(v)}{\simeq} R\Gamma(U,\sF_{|U}). 
   \end{align*} 
    The isomorphism (i) follows from part (2) of Lemma \ref{projection formula adic spaces}, (ii) is because of the composition of derived functors, 
    (iii) is a direct consequence of part (2) in Theorem \ref{big result small proof}, (iv) is because by assumption 
    $R\psi_{\mathfrak{U}}(\sF_{|U})$ is reflexive and finally (v) is simply by the composition of derived functors.
     % The proof is similar to part (3) in Theorem \ref{big result small proof}. 
    \end{proof}

  Our goal is to classify the reflexive objects in $\mathcal{D}^b(X^f_{\text{ét}},\Lambda)$ where 
  $X$ is fine $k$-adic. Theorem \ref{reflexive on generic and special} suggests that 
  it suffices to classify reflexive objects on the special fibre of a model of $X$.   
  To this end, we propose the following conjecture concerning 
  reflexive objects on $\mathcal{D}^b(Z_{\text{ét}}, \Lambda)$ where 
  $Z$ is a variety over an algebraically closed field. 
   
\begin{con} \label{reflexivity conjecture} \label{classcons1}
Let $L$ be an algebraically closed field and let 
$Z$ be an $L$-variety.  
An object $\sF \in \mathcal{D}^b(Z_{\text{ét}}, \Lambda)$ is reflexive if and only if its cohomology sheaves are constructible i.e.
$\sF \in \mathcal{D}^b_c(Z_{\text{ét}},\Lambda)$. 
\end{con}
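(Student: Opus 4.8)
The statement is a biconditional, so I would treat the two implications separately. The direction \emph{constructible $\Rightarrow$ reflexive} carries no new content: for $\sF \in \mathcal{D}^b_c(Z_{\text{�t}},\Lambda)$ with $Z$ a variety over an algebraically closed field and $\ell$ prime to the residue characteristic, the canonical map $\sF \to D \circ D(\sF)$ is an isomorphism by the classical finiteness and biduality theorems of \cite{SGA4.5}; that is, the Verdier dual restricts to an involution on $\mathcal{D}^b_c(Z_{\text{�t}},\Lambda)$. I would simply invoke this. All the difficulty lies in the reverse implication \emph{reflexive $\Rightarrow$ constructible}, and my plan is to mirror the proof of Proposition \ref{classical constructibles are finite}, replacing its finiteness \emph{hypothesis} by consequences of reflexivity.

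The first, purely pointwise, input comes from the fact that Verdier duality exchanges $i_x^{*}$ and $i_x^{!}$. For a geometric point $i_x$ one has $i_x^{*} D = \mathbb{D}_x\, i_x^{!}$ and $i_x^{!} D = \mathbb{D}_x\, i_x^{*}$ with $\mathbb{D}_x := R\Hom_\Lambda(-,\Lambda)$, so reflexivity of $\sF$ yields
\[
i_x^{*}\sF \;\simeq\; i_x^{*} D D \sF \;\simeq\; \mathbb{D}_x \mathbb{D}_x\big(i_x^{*}\sF\big).
\]
Since $\Lambda = \mathbb{Z}/\ell^n\mathbb{Z}$ is self-injective, the biduality map on a bounded complex of $\Lambda$-modules is an isomorphism exactly when all its cohomology modules are finitely generated, hence finite. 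Thus every stalk $\mathcal{H}^q(\sF)_{\bar x}$ is finite; this already settles the base case $\dim Z = 0$ and provides the pointwise finiteness the SGA $4.5$ argument needs.

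To upgrade finite stalks to constructibility I would run the dimension induction of Proposition \ref{classical constructibles are finite}: embed $Z \hookrightarrow \mathbb{A}^n_L$, choose a projection onto $\mathbb{A}^1_L$ with dense image, pass to the geometric generic point $\bar\eta$ so that $Z_{\bar\eta}$ has dimension $\dim Z - 1$, apply the inductive hypothesis to conclude that $\sF|_{Z_{\bar\eta}}$ is constructible, and then use \cite[Lemma 3.5]{SGA4.5} to produce, for each $q$, a constructible subobject $\mathscr{H} \subset \mathcal{H}^q(\sF)$ whose quotient has finite-support sections. The stalk finiteness of the previous paragraph forces this quotient to be a finite sum of finite skyscrapers, so each $\mathcal{H}^q(\sF)$ is constructible and $\sF \in \mathcal{D}^b_c(Z_{\text{�t}},\Lambda)$. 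By (the derived strengthening of) Proposition \ref{classical constructibles are finite}, it is enough to carry this out after every pullback $f^{*} p_{L'}^{*}$ along $f \colon V \to Z_{L'}$, with $L'/L$ algebraically closed, so that the requisite finiteness of $H^q(V, f^{*} p_{L'}^{*}\sF)$ follows.

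The one step that is \emph{not} formal, and the reason this is stated as a conjecture, is that the inductive hypothesis and the stalk argument must be applied to $\sF$ restricted to $Z_{\bar\eta}$ and pulled back along the $f$ above, and I would need those restrictions to remain reflexive. Unlike constructibility, reflexivity is not known to be stable under $f^{*}$: the functor $f^{*}$ does not commute with the biduality functor $D \circ D$ for non-constructible objects, precisely because the dualizing complex is incompatible with arbitrary $f^{*}$ (it involves $f^{!}\Lambda$, which agrees with $f^{*}\Lambda$ only up to shift and twist for smooth $f$). I expect this gap to reduce to controlling how $D$ interacts with restriction to subvarieties and generic fibres, which is a ramification-theoretic question; this is consistent with the role the Grothendieck--Ogg--Shafarevich formula plays in the verified cases. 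Granting that the pullbacks occurring in Proposition \ref{classical constructibles are finite} preserve reflexivity, the argument above closes the conjecture; establishing that preservation unconditionally is the main obstacle.
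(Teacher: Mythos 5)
First, a point of framing: the statement you were asked to prove is stated in the paper as Conjecture \ref{reflexivity conjecture}; the paper itself contains no complete proof, only the reduction of Lemma \ref{classcons} and verifications of special cases (Corollary \ref{cor:trsfinindust}, Propositions \ref{the case of the direct sum} and \ref{prop:5223}). Your overall skeleton --- the forward direction by classical biduality, a dimension induction through projections to $\mathbb{A}^1_L$ and the geometric generic fibre, the use of \cite[Lemma 3.5]{SGA4.5}, and the identification of pullback-stability of reflexivity as the missing ingredient --- is essentially the paper's own reduction: your ``granting that the pullbacks occurring in Proposition \ref{classical constructibles are finite} preserve reflexivity'' is statement (a) of Lemma \ref{classcons}, which the paper likewise leaves open.

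However, your treatment of the residual piece (the paper's statement (b)) contains a genuine gap, in two places. First, the duality exchange $i_x^{*}D = \mathbb{D}_x\, i_x^{!}$ that you use to deduce finiteness of stalks is not a formal identity. The identity valid for arbitrary bounded complexes is the other one, $i_x^{!}D = \mathbb{D}_x\, i_x^{*}$ (this is Lemma \ref{projection formula varieties}); the exchange you invoke is deduced from it \emph{via biduality}, hence is available only for complexes already known to be constructible (or at least reflexive together with their duals) --- precisely what is to be proved, so the step is circular. Concretely, $i_x^{*}$ does not commute with $R\Homs(-,p_Z^{!}\Lambda)$ when the first argument is not constructible, so from $\sF \simeq DD\sF$ one cannot conclude $i_x^{*}\sF \simeq \mathbb{D}_x\mathbb{D}_x(i_x^{*}\sF)$. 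Second, even granting finite stalks, your conclusion that the quotient produced by \cite[Lemma 3.5]{SGA4.5} is a \emph{finite} sum of finite skyscrapers does not follow: finiteness of stalks bounds each $M_x$ but says nothing about the number of points in the support. By Lemma \ref{classification} that quotient is a possibly infinite direct sum $\bigoplus_x i_{x*}M_x$, and proving that such a sheaf, when reflexive, has finite support is exactly the hard content of the skyscraper case. The paper handles it by a genuinely global argument (Corollary \ref{cor:trsfinindust}): the quotient inherits reflexivity, its dual is computed as the product $\prod_x i_{x*}M_x^{\wedge}$ via a Mittag-Leffler argument (Lemma \ref{lem:skrefc}), the Ext-vanishing of Lemma \ref{lem:vanext} gives surjectivity of $\Homs\bigl(\prod_x i_{x*}M_x^{\wedge},p_X^{!}\Lambda\bigr) \to \Homs\bigl(\bigoplus_x i_{x*}M_x^{\wedge},p_X^{!}\Lambda\bigr)$, and the resulting surjection $\bigoplus_x i_{x*}M_x \to \prod_x i_{x*}M_x$ forces all but finitely many $M_x$ to vanish. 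Note also that in Proposition \ref{classical constructibles are finite}, on which you model your induction, what kills the quotient is finiteness of \emph{global sections}, not of stalks. So, beyond the pullback-stability gap that you correctly flag, your proposal needs this global skyscraper argument (or an equivalent) to be repaired before it matches even the partial results the paper actually establishes.
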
 

\begin{rem}
We will see from the proof of Lemma \ref{classcons}, that Conjecture \ref{reflexivity conjecture} is true if it is true for $Z = \mathbb{A}^n_L$.
\end{rem}

  \begin{thm} \label{biduality theorem}
  Let $X$ be a fine $k$-adic space and 
  let $p_X : X \to k$ denote the structure morphism. 
  Let $\sF \in \mathcal{D}^b(X^f_{\text{ét}}, \Lambda)$.
  If $\sF$ is semi-constructible then it is 
  reflexive. The converse is true if we suppose that 
   Conjecture \ref{reflexivity conjecture} is true. 
   \end{thm}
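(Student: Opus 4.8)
The plan is to treat the two implications separately; both follow almost formally from the machinery assembled in Section 4 together with Theorem \ref{reflexive on generic and special}, so the real work has already been done.

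For the forward implication (semi-constructible $\Rightarrow$ reflexive) there is essentially nothing new to prove: this is precisely Theorem \ref{big result small proof}(3), which asserts that $D^{\mathrm{ad}} \circ D^{\mathrm{ad}}(\sF) = \sF$ whenever $\sF \in \mathcal{D}^b_{sc}(X^f_{\text{et}},\Lambda)$. The only point I would spell out is that the isomorphism produced there is the canonical biduality morphism $\sF \to D^{\mathrm{ad}} \circ D^{\mathrm{ad}}(\sF)$; since it was obtained by evaluating on objects $U \to X$ of the fine \'etale site and identifying with the global-sections comparison of Proposition \ref{compved}, its naturality (hence its identification with the canonical map) is automatic. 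Thus $\sF$ is reflexive by definition.

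For the converse I would assume Conjecture \ref{reflexivity conjecture} and let $\sF$ be reflexive. First apply Theorem \ref{reflexive on generic and special}(1): for every \'etale morphism $U \to X$ of fine $k$-adic spaces and every formal model $\mathfrak{U}$ of $U$, the complex $R\psi_{\mathfrak{U}}(\sF_{|U})$ is reflexive with respect to the classical Verdier dual on $\mathfrak{U}_{s,\text{et}}$. By Lemma \ref{lem:fincohdimnecy} this complex lies in $\mathcal{D}^b(\mathfrak{U}_s,\Lambda)$, and because $k$ is algebraically closed so is its residue field $\tilde{k}$; hence $\mathfrak{U}_s$ is a variety over the algebraically closed field $\tilde{k}$ and Conjecture \ref{reflexivity conjecture} applies, giving $R\psi_{\mathfrak{U}}(\sF_{|U}) \in \mathcal{D}^b_c(\mathfrak{U}_s,\Lambda)$. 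Since this holds for every such $U$ and every formal model $\mathfrak{U}$, the defining condition of semi-constructibility (Definition \ref{derived category of constructible sheaves}(1)) is met, so $\sF$ is semi-constructible.

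The argument is short precisely because the genuine analytic content is already packaged into the compatibility $R\psi_{\mathfrak{X}} \circ D^{\mathrm{ad}} = D \circ R\psi_{\mathfrak{X}}$ of Theorem \ref{big result small proof} and into Theorem \ref{reflexive on generic and special}. The only subtleties I anticipate are bookkeeping: confirming that \emph{adic} reflexivity transports, via nearby cycles, to \emph{classical} reflexivity on each $\mathfrak{U}_s$ in exactly the formulation the conjecture demands, and checking the boundedness that places $R\psi_{\mathfrak{U}}(\sF_{|U})$ in the domain of the conjecture. There is no hard obstacle in the proof itself; all the difficulty is displaced entirely onto Conjecture \ref{reflexivity conjecture}, which is the single unproven ingredient, everything else being a formal consequence of the results of Section 4.
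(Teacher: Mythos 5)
Your proof is correct and takes essentially the same route as the paper: the forward implication is exactly Theorem \ref{big result small proof}(3), and the converse combines Theorem \ref{reflexive on generic and special}(1) with Conjecture \ref{reflexivity conjecture} applied to the $\tilde{k}$-varieties $\mathfrak{U}_s$. The only (harmless) divergence is that the paper first verifies by an explicit computation, via Lemma \ref{projection formula adic spaces}(2), that $f^*\sF$ is reflexive for each \'etale $f : U \to X$ before invoking Theorem \ref{reflexive on generic and special}, whereas you apply part (1) of that theorem directly as stated --- legitimate, since its statement already quantifies over all \'etale $U \to X$ --- and your explicit appeals to Lemma \ref{lem:fincohdimnecy} for boundedness and to $\tilde{k}$ being algebraically closed merely make precise details the paper leaves implicit.
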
    
\begin{proof} 
  %Suppose first $\sF$ is a semi-constructible complex. By Proposition 
   %\ref{compved}, we get that 
   %$R\mathrm{Hom}(\sF,p^!(\Lambda)) \simeq D^{\mathrm{ad}}(\sF)$. 
  %Using Theorem \ref{big result small proof}(1), it can be deduced   
   %that $D^{\mathrm{ad}}(\sF)$ is a semi-constructible complex. 
   %It follows that 
  %\begin{align*} 
  % R\mathrm{Hom}(R\mathrm{Hom}(\sF,p^!(\Lambda)),p^!(\Lambda)) \simeq D^{\mathrm{ad}} \circ D^{\mathrm{ad}}(\sF).
   %\end{align*}  
  %Theorem \ref{big result small proof}(3) shows that the morphism 
   %\begin{align*} 
   % \sF \to R\mathrm{Hom}(R\mathrm{Hom}(\sF,p^!(\Lambda)),p^!(\Lambda))
  %\end{align*} 
  Let $\sF$ be a semi-constructible complex. Theorem \ref{big result small proof}(3) shows that $\sF$ is reflexive. 
 Suppose $\sF \in \mathcal{D}^b(X^f_{\text{ét}},\Lambda)$ is reflexive. 
   To show that $\sF$ is semi-constructible, we must show that for every étale
   $f : U \to X$ and every formal model 
   $\mathfrak{U}$ of $U$, $R\psi_{\mathfrak{U}}(f^{*}\sF) \in \mathcal{D}^b_c(\mathfrak{U}_s,\Lambda)$.
	The following computation shows that $f^{*}\sF$ is reflexive:
	\begin{align*}
	R\Homs(R\mathrm{Hom}(f^{*}\sF, p_{U}^{!}(\Lambda)),p_{U}^{!}(\Lambda)) &\overset{(i)}{=} R\Homs(f^{!}R\Homs(\sF, p_{X}^{!}(\Lambda)),p_{U}^{!}(\Lambda)) \\
	&\overset{(ii)}{=} R\Homs(f^{*}R\Homs(\sF, p_{X}^{!}(\Lambda)),p_{U}^{!}(\Lambda)) \\
	&\overset{(iii)}= f^{!}R\Homs(R\Homs(\sF, p_{X}^{!}(\Lambda)),p_{X}^{!}(\Lambda)) \\
	&\overset{(iv)}= f^{*}R\Homs(R\Homs(\sF, p_{X}^{!}(\Lambda)),p_{X}^{!}(\Lambda)) \\
	&= f^{*}\sF
	\end{align*}
	where (i) and (iii) follow from Lemma \ref{projection formula adic spaces}(2); and (ii) and (iv) follow from the fact that $f$ is étale.
	
	By Theorem \ref{reflexive on generic and special}, $R\psi_{\mathfrak{U}}(f^{*}\sF)$ is reflexive.
	% This is a direct computation:
	%\begin{align*} 
	%D \circ D \circ R\psi_{\mathfrak{U}}(f^{*}\sF) &\overset{(i)}{=} R\psi_{\mathfrak{U}}(D^{\mathrm{ad}}(D^{\mathrm{ad}}f^{*}\sF)) \\
	%&\overset{(ii)}{=} R\psi_{\mathfrak{U}}(f^{*}\sF)
	%\end{align*}
	%where (i) follows from Theorem \ref{big result small proof}(2) and (ii) follows from the biduality of $f^*\sF$ and Proposition \ref{compved}. 
        The given hypothesis 
	implies $R\psi_{\mathfrak{U}}(f^{*}\sF)$ is constructible and hence $\sF$ is semi-constructible as promised.   
  \end{proof} 
 
 \subsection{Reflexivity for varieties} 
 
  For the rest of this section we fix a field $k$ which is algebraically closed. Let $\ell$ denote a prime number different from the characteristic of $k$ and set
 $\Lambda := \mathbb{Z}/\ell\mathbb{Z}$ for some $n \in \mathbb{N}$. 
 
   Our goal is to study Conjecture \ref{classcons1} which provides a classification of the class of reflexive étale sheaves 
 on a variety $X$ over $k$. In what follows, we show that the conjecture holds true at least in certain specific instances. 
  
   \begin{lem} \label{classcons}
 The following statements are equivalent. 
\begin{enumerate} 
\item Let $X$ be a $k$-variety. An étale sheaf $\sF$ of $\Lambda$-modules on $X$ is reflexive if and only if it is constructible. 
\item  
\begin{enumerate}   
\item   Let $X$ be a $k$-variety. Let $\sF$ be a reflexive étale sheaf of $\Lambda$-modules. %object in $\mathcal{D}^b(X_{\text{ét}},\Lambda)$. 
                         Suppose we have a $k$-morphism $f : X \to \mathbb{A}^1_k$. Let $\eta$ denote the 
                         generic point of $\mathbb{A}^1_k$ and $X_{\bar{\eta}} := X \times_{\mathbb{A}^1_k} \bar{\eta}$.  
                         Then the sheaf $f_{\eta}^*(\sF)$ is reflexive on $X_{\bar{\eta}}$. \\ 
\item Let $X$ be a $k$-variety. Let  $\sF$ be a reflexive étale sheaf
          of $\Lambda$-modules on $X$. %object in $\mathcal{D}^b(X_{\text{ét}},\Lambda)$.                       
                         Suppose $\sF$ is the direct sum of skyscraper sheaves. Then $\sF$ 
                         is constructible. % \in \mathcal{D}_c^b(X_{\text{ét}},\Lambda)$. \\ 
   \end{enumerate} 
   \end{enumerate} 
  %  The following statements are then equivalent. 
       %Conjecture \ref{reflexivity conjecture} is equivalent to (A) + (B). 
    \end{lem}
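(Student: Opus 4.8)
The plan is to prove the two implications separately, noting that the essential content lies in $(2) \Rightarrow (1)$, while $(1) \Rightarrow (2)$ is almost formal. Throughout I will use the classical fact that a constructible complex on a variety over an algebraically closed field is reflexive (Verdier biduality, cf. \cite[Expos\'e XVIII]{sga4tome3}), so that the force of statement $(1)$ is its converse half: every reflexive $\Lambda$-sheaf is constructible. For $(1) \Rightarrow (2)$, given a reflexive sheaf $\sF$ on a $k$-variety $X$, statement $(1)$ makes $\sF$ constructible; since constructibility is stable under pullback, $f_\eta^*(\sF)$ is constructible on $X_{\bar\eta}$ and hence reflexive by biduality, which is exactly part (a); and part (b) is immediate from the ``reflexive $\Rightarrow$ constructible'' half of $(1)$.

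For the substantial implication $(2) \Rightarrow (1)$ I would prove ``reflexive $\Rightarrow$ constructible'' by induction on $d = \dim X$, the statements being read over all algebraically closed fields, in accordance with the formulation of Conjecture \ref{reflexivity conjecture}. Both reflexivity and constructibility are Zariski local, and $D$ commutes with restriction to opens, so one may assume $X$ affine. When $d = 0$ the sheaf $\sF$ is automatically a direct sum of skyscraper sheaves and part (b) gives the claim. For $d \geq 1$, fix a closed immersion $X \hookrightarrow \mathbb{A}^n_k$ and a linear projection $f : X \to \mathbb{A}^1_k$ with dense image, exactly as in the proof of Proposition \ref{classical constructibles are finite}, so that $X_{\bar\eta}$ has dimension $d-1$; by part (a) the restriction $f_\eta^*(\sF)$ is reflexive, hence constructible by the inductive hypothesis applied over $\overline{k(\eta)}$.

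I would then run the \cite{SGA4.5} d\'evissage: by \cite[Lemma 3.5]{SGA4.5} there is a constructible subsheaf $\mathscr{H} \subseteq \sF$ agreeing with $\sF$ over a dense open, whose quotient $\sF/\mathscr{H}$ has local sections of finite support and is therefore a direct sum of skyscraper sheaves (cf. Lemma \ref{classification}). The key observation is that $\sF/\mathscr{H}$ is again reflexive: applying the biduality natural transformation $\mathrm{id} \to D \circ D$ to the distinguished triangle $\mathscr{H} \to \sF \to \sF/\mathscr{H} \to \mathscr{H}[1]$ produces a morphism of triangles whose first two vertical arrows are isomorphisms ($\mathscr{H}$ is constructible, hence reflexive, and $\sF$ is reflexive by hypothesis), so the two-out-of-three property in the triangulated category forces the third to be an isomorphism. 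Being a reflexive direct sum of skyscraper sheaves, $\sF/\mathscr{H}$ is constructible by part (b), and then $\sF$ is constructible as an extension of the constructible sheaves $\mathscr{H}$ and $\sF/\mathscr{H}$, closing the induction.

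The main obstacle will be the d\'evissage step: extracting from \cite[Lemma 3.5]{SGA4.5} a constructible subsheaf whose quotient is punctual, and confirming that this quotient is genuinely a direct sum of skyscraper sheaves (and not merely a sheaf with finite-support sections) so that part (b) applies verbatim. A secondary subtlety is the bookkeeping of base fields: passing to $X_{\bar\eta}$ replaces $k$ by $\overline{k(\eta)}$, so the inductive hypothesis --- equivalently statement $(2)$ --- must be invoked over this larger algebraically closed field, which is why the equivalence is naturally formulated for arbitrary algebraically closed fields.
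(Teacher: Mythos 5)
Your proposal is correct and follows essentially the same route as the paper's own proof: Zariski localization of reflexivity via Lemma \ref{projection formula varieties}, induction on dimension (with the hypothesis quantified over all algebraically closed base fields), part (a) plus the inductive hypothesis to make $\sF_{|X_{\bar\eta}}$ constructible, the \cite[Lemma 3.5]{SGA4.5} d\'evissage together with Lemma \ref{classification}, reflexivity of the skyscraper quotient by the two-out-of-three argument in the triangulated category, and part (b) to conclude. The only differences are cosmetic (your treatment of the zero-dimensional case via part (b) rather than reduction to a point, and your explicit spelling out of the triangle argument that the paper merely asserts).
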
 
 \begin{proof} 
 It suffices to verify (a) + (b) implies (1). 
For an open immersion $f: U \to X$ we have that
\begin{align*}
	R\Homs(R\Homs(f^{*}\sF, p_{U}^{!}(\Lambda)),p_{U}^{!}(\Lambda)) &\overset{(i)}{=} R\Homs(f^{!}R\Homs(\sF, p_{X}^{!}(\Lambda)),p_{U}^{!}(\Lambda)) \\
	&\overset{(ii)}{=} R\Homs(f^{*}R\Homs(\sF, p_{X}^{!}(\Lambda)),p_{U}^{!}(\Lambda)) \\
	&\overset{(iii)}= f^{!}R\Homs(R\Homs(\sF, p_{X}^{!}(\Lambda)),p_{X}^{!}(\Lambda)) \\
	&\overset{(iv)}= f^{*}R\Homs(R\Homs(\sF, p_{X}^{!}(\Lambda)),p_{X}^{!}(\Lambda)) \\
	\end{align*}
	where (i) and (iii) follow from Lemma \ref{projection formula varieties}(2); and (ii) and (iv) follow from the fact that $f$ is étale.
	Thus we see that the property of being reflexive is local for the Zariski topology on $X$. Hence we can assume that $X$ is a connected affine $k$-variety. 
	We proceed by induction on $\mathrm{dim}(X)$. 
	 If $\mathrm{dim}(X) = 0$, the site $X_{\text{ét}}$ 
	 is equivalent to $\Spec(A)_{\text{ét}}$ where $A$ is a finite $k$-algebra. 
	 Note that $\Spec(A)$ is the disjoint union of a finite number of points each of which is open 
	 in $X$ for the Zariski topology. 
	 Since reflexivity is preserved for pullbacks along open immersions, we can reduce to the case that $X$ is a single point. Since 
	 $\Spec(A)_{\text{ét}}$ is equivalent to $\Spec(A_{\text{red}})_{\text{ét}}$, we can assume that $A$ is a field. 
	 The case $X = \mathrm{Spec}(k)$ is well known. 
	 
Let us assume the following. For every algebraically closed field extension $K$ of $k$ and 
every $K$-variety $Y$ of dimension strictly less than $\mathrm{dim}(X)$, 
a reflexive sheaf $\mathscr{G}$ on $Y$ is constructible. 

  By construction, there exists an immersion $h : X \to \mathbb{A}_k^{n}$ for some $n \in \mathbb{N}$. 
  Let $p_i : \mathbb{A}^{N}_k \to \mathbb{A}_k^1$ be the projection onto the $i$-th coordinate
  and $p'_i := p_i \circ h$. Let $\eta$ denote the generic point of 
  $\mathbb{A}^1_k$. 
  By (a), we have that $\sF_{| X \times_{\mathbb{A}^1_k} \overline{k(\eta)}}$ is reflexive.
	
    Then by our induction hypothesis, we have 
  that $\sF_{| X \times_{\mathbb{A}^1} \overline{k(\eta)}}$ is constructible. 
  It follows from \cite[Lemma 3.5]{SGA4.5}, that there exists 
  a constructible sheaf $\mathscr{G} \subset \sF$ such that the local sections of 
  $\mathscr{H} := \sF/\mathscr{G}$ have finite support. 
  By Lemma \ref{classification}, $\mathscr{H}$ is the direct sum of skyscrapers sheaves. 
  Since $\mathscr{G}$ is constructible, it is reflexive. It follows that $\mathscr{H}$ is reflexive.
 By (b),
 $\mathscr{H}$ is constructible, from which it can be deduced that 
 $\sF$ is constructible. 
 \end{proof} 
 
\begin{rem} 
  \emph{In the section that follows, we provide a proof of Statement (b), cf. Corollary \ref{cor:trsfinindust}} 
\end{rem}
  
  \begin{lem} \label{projection formula varieties} 
  Let $f : X \to Y$ be a morphism of $k$-varieties. Let 
  $p_X : X \to \mathrm{Spec}(k)$ and $p_Y : Y \to \mathrm{Spec}(k)$ be the structure 
  maps of $X$ and $Y$. 
  Let $A,B \in \mathcal{D}^b(Y_{\text{ét}},\Lambda)$.
  \begin{enumerate} 
  \item  $f^!R\Homs(A,B) = R\Homs(f^*(A),f^!(B))$. 
  \item  $f^!R\Homs(A,p_Y^!(\Lambda)) = R\Homs(f^*(A),p_X^!(\Lambda))$. 
  \end{enumerate} 
     \end{lem}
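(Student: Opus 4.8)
The plan is to deduce part (2) from part (1) and to prove part (1) by a formal Yoneda argument resting on the projection formula together with the standard adjunctions of the six operations for varieties. For part (2), I would simply specialize (1) to $B = p_Y^!(\Lambda)$: since $Rf_!$ and $R(p_Y)_!$ compose to $R(p_Y\circ f)_!$, passing to right adjoints gives $f^!\circ p_Y^! = (p_Y\circ f)^!$, and as $f$ is a morphism of $k$-varieties we have $p_Y\circ f = p_X$, so $f^! p_Y^!(\Lambda) = p_X^!(\Lambda)$. Part (1) then reads $f^! R\Homs(A,p_Y^!(\Lambda)) = R\Homs(f^*A,p_X^!(\Lambda))$, which is precisely (2).

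For part (1), I would first produce a canonical comparison morphism $f^! R\Homs(A,B) \to R\Homs(f^*A, f^!B)$. By tensor-hom adjunction this is the same as a map $f^! R\Homs(A,B) \otimes^L f^*A \to f^!B$, which under the $(Rf_!, f^!)$ adjunction corresponds to a map $Rf_!\bigl(f^! R\Homs(A,B) \otimes^L f^*A\bigr) \to B$; using the projection formula this source is identified with $Rf_!f^! R\Homs(A,B) \otimes^L A$, and one applies the counit $Rf_!f^! \to \id$ followed by the evaluation $R\Homs(A,B)\otimes^L A \to B$. I would then verify this map is an isomorphism by Yoneda, checking that it induces an isomorphism on $\mathrm{Hom}(C,-)$ for every $C \in \mathcal{D}^b(X_{\text{\'et}},\Lambda)$ via the chain
\begin{align*}
\mathrm{Hom}(C, f^! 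R\Homs(A,B))
&\cong \mathrm{Hom}(Rf_!C, R\Homs(A,B)) \\
&\cong \mathrm{Hom}(Rf_!C \otimes^L A, B) \\
&\cong \mathrm{Hom}(Rf_!(C \otimes^L f^*A), B) \\
&\cong \mathrm{Hom}(C \otimes^L f^*A, f^!B) \\
&\cong \mathrm{Hom}(C, R\Homs(f^*A, f^!B)),
\end{align*}
where the first and fourth isomorphisms are the $(Rf_!,f^!)$ adjunction, the second and fifth are tensor-hom adjunction, and the third is the projection formula $Rf_!(C \otimes^L f^*A) \cong Rf_!C \otimes^L A$ (available for morphisms of $k$-varieties by SGA 4, \cite{sga4tome3}). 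Since each step is natural in $C$, Yoneda gives the asserted isomorphism of objects.

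The main obstacle I anticipate is purely formal bookkeeping: one must confirm that the composite identification above is induced by the genuine canonical morphism constructed in the previous step, rather than being a mere natural bijection of Hom-sets, so that Yoneda legitimately applies. This amounts to checking compatibility of the projection-formula isomorphism with the units and counits of the two adjunctions, which is standard and requires no geometric input beyond the existence of $Rf_!$, $f^!$ and the projection formula for $f$. Indeed, this is exactly the chain of manipulations invoked, via \cite[Theorem 5.5.9(ii)]{hub96}, in the proof of the adic analogue Lemma \ref{projection formula adic spaces}.
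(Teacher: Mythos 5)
Your proposal is correct and follows essentially the same route as the paper: the paper also proves (1) by the Yoneda chain $\mathrm{Hom}(C,f^!R\Homs(A,B)) \cong \mathrm{Hom}(Rf_!(C)\otimes^L A,B) \cong \mathrm{Hom}(Rf_!(C\otimes^L f^*A),B) \cong \mathrm{Hom}(C,R\Homs(f^*A,f^!B))$, citing the projection formula from SGA 4 (Expos\'e XVII, Proposition 5.2.9), and then deduces (2) as an immediate consequence of (1). Your extra care in constructing the canonical comparison morphism so that Yoneda applies to an actual map is a worthwhile refinement, but the mathematical content is identical.
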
 
  \begin{proof} 
 \begin{enumerate}
 \item   It suffices to show that for any $C \in \mathcal{D}^b(X_{\text{ét}},\Lambda)$.
 \begin{align*} 
  \mathrm{Hom}(C,f^!R\Homs(A,B)) = \mathrm{Hom}(C,R\Homs(f^*(A),f^!(B))). 
 \end{align*} 
   Let $C \in D^b(X_{\text{ét}},\Lambda)$. 
  \begin{align*} 
    \mathrm{Hom}(C,f^!R\Homs(A,B)) &= \mathrm{Hom}(Rf_!(C), R\Homs(A,B)) \\ 
                                         &= \mathrm{Hom}(Rf_!(C) \otimes^L A, B). \\ 
  \end{align*}       
  By \cite[Exposé XVII, Proposition 5.2.9]{sga4tome3}, we have 
    \begin{align*}
    \mathrm{Hom}(Rf_!(C) \otimes^L A,B) &= \mathrm{Hom}(Rf_!(C \otimes^L f^*(A)),B) \\ 
                                                 &= \mathrm{Hom}(C,R\Homs(f^*(A),f^!(B))).
  \end{align*} 
  \item  Part (2) follows directly from part (1). 
  \end{enumerate} 
    \end{proof}

\subsection{Calculation for skyscraper sheaves}

%Conjecture \ref{reflexivity conjecture} when $\sF$ is a direct sum of 
%skyscraper sheaves for any variety $X$ over an
 %algebraically closed field $k$.
 In this section we prove Statement (B) from Lemma \ref{classcons}. 
  For the rest of this section we write  
$$\sF = \bigoplus_{x \in X(k)} i_{x*}M_x$$
where $X(k)$ is the set of closed points of $X$, $i_x \colon x \hookrightarrow X$ is the inclusion and 
$M_x$ is a $\Lambda$-vector space. We begin with a general lemma.

\begin{lem} \label{lem:dire}
Suppose $\sG = \oplus_{i \in I} \sG_i$ is a reflexive sheaf on $X$, where $I$ is any indexing set. Then $\sG_i$ is also reflexive.
\end{lem}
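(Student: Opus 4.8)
The plan is to exploit the fact that each $\sG_i$ is a \emph{retract} of $\sG$ and that the biduality morphism is natural, so that reflexivity passes to direct summands by a formal argument that needs no information about how $D\circ D$ interacts with the infinite direct sum. Concretely, I would first record the structure maps of the coproduct: let $\iota_i \colon \sG_i \to \sG$ be the canonical inclusion and $\pi_i \colon \sG \to \sG_i$ the projection. The latter exists for an arbitrary index set $I$ by the universal property of the coproduct $\bigoplus_{j} \sG_j$, namely the map determined by $\id_{\sG_i}$ on the $i$-th summand and $0$ on all others. These satisfy $\pi_i \circ \iota_i = \id_{\sG_i}$, exhibiting $\sG_i$ as a retract of $\sG$ in $\mathcal{D}^b(X_{\text{�t}},\Lambda)$, where $D(-) = R\Homs(-,p_X^!\Lambda)$.

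Next I would use that the canonical map defines a natural transformation $\epsilon \colon \id \Rightarrow D\circ D$. Naturality applied to $\iota_i$ and $\pi_i$ yields the identities
\begin{align*}
DD(\iota_i) \circ \epsilon_{\sG_i} &= \epsilon_{\sG} \circ \iota_i, &
\epsilon_{\sG_i} \circ \pi_i &= DD(\pi_i) \circ \epsilon_{\sG}.
\end{align*}
Since $\sG$ is reflexive, $\epsilon_{\sG}$ is an isomorphism, and I claim the morphism
\[
\beta_i := \pi_i \circ \epsilon_{\sG}^{-1} \circ DD(\iota_i) \colon DD(\sG_i) \to \sG_i
\]
is a two-sided inverse of $\epsilon_{\sG_i}$. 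Indeed, using the first identity, $\beta_i \circ \epsilon_{\sG_i} = \pi_i \circ \epsilon_{\sG}^{-1} \circ \epsilon_{\sG} \circ \iota_i = \pi_i \circ \iota_i = \id_{\sG_i}$; and using the second identity together with functoriality $DD(\pi_i)\circ DD(\iota_i) = DD(\pi_i \circ \iota_i) = \id$, one finds $\epsilon_{\sG_i} \circ \beta_i = DD(\pi_i) \circ \epsilon_{\sG} \circ \epsilon_{\sG}^{-1} \circ DD(\iota_i) = \id_{DD(\sG_i)}$. Hence $\epsilon_{\sG_i}$ is an isomorphism, i.e. $\sG_i$ is reflexive.

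The argument is essentially purely formal: it establishes that any retract of a reflexive object is reflexive, and it applies verbatim with the adic dual $D^{\mathrm{ad}}$ in place of $D$. Accordingly there is no real obstacle; the only points requiring a word of care are the existence of the projection $\pi_i$ when $I$ is infinite (handled above via the universal property) and the naturality of the biduality transformation $\id \Rightarrow D\circ D$, which is standard since $\epsilon$ is induced from the adjunction between $R\Homs(-,p_X^!\Lambda)$ and $(-)\otimes^L(-)$.
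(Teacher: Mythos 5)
Your proof is correct, and it shares its starting point with the paper's: both exhibit $\sG_i$ as a retract of $\sG$ via the inclusion $\iota_i$ and projection $\pi_i$ with $\pi_i\circ\iota_i=\id$, and both exploit the same two naturality squares for the biduality transformation. Where you diverge is in how the conclusion is drawn. The paper argues that $\delta_{\sG_i}\colon \sG_i\to DD\sG_i$ is a monomorphism (from the square involving $\iota_i$) and an epimorphism (from the square involving $\pi_i$), and then invokes the fact that in a triangulated category a morphism which is both a monomorphism and an epimorphism is an isomorphism --- a true but not entirely innocent fact, resting on the splitting of monomorphisms and epimorphisms in triangulated categories. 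You instead write down an explicit two-sided inverse $\beta_i = \pi_i\circ\epsilon_{\sG}^{-1}\circ DD(\iota_i)$ and verify both composites directly from naturality and the functoriality identity $DD(\pi_i)\circ DD(\iota_i)=\id$. Your route is the cleaner one: it is purely formal, makes no appeal whatsoever to the triangulated structure, and proves the general categorical statement that a natural transformation which is an isomorphism at an object is an isomorphism at every retract of that object; in particular, as you note, it applies verbatim to $D^{\mathrm{ad}}$ on the adic side. What the paper's formulation buys in exchange is only brevity of verification --- two squares and one standard fact --- at the cost of importing that fact about triangulated categories.
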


\begin{proof}
This is a purely category theoretic result. For $\mathscr{F} \in \mathcal{D}^b(X_{\text{ét}},\Lambda)$,
 we denote by $\delta_{\mathscr{F}} \colon \mathscr{F} \to DD\mathscr{F}$ the canonical morphism. Since
  we are working in a triangulated category, if $\delta_{\mathscr{F}}$ is a monomorphism and an
   epimorphism then it is an isomorphism. Let $i \colon \mathscr{G}_i \to \mathscr{G}$ and
    $\pi \colon \mathscr{G} \to \mathscr{G}_i$ be the canonical morphisms. Note that 
    $\pi \circ i = \id_{\mathscr{G}_i}$. Thus $i$ is a monomorphism and $\pi$ is an epimorphism.

We have the commutative diagram
$$
\begin{tikzcd} [row sep = large, column sep = large] 
\mathscr{G}_i \arrow[r, "\delta_{\mathscr{G}_i}"] \arrow[d, "i"] &
DD\mathscr{G}_i  \arrow[d, "DDi"] \\
\mathscr{G} \arrow[r, "\delta_{\mathscr{G}}"] &
DD\mathscr{G}  
\end{tikzcd}
$$ 
where by assumption $\delta_{\mathscr{G}}$ is an isomorphism. Thus $\delta_{\mathscr{G}_i}$ is a monomorphism. Similarly we have the commutative diagram
$$
\begin{tikzcd} [row sep = large, column sep = large] 
\mathscr{G} \arrow[r, "\delta_{\mathscr{G}}"] \arrow[d, "\pi"] &
DD\mathscr{G}  \arrow[d, "DD\pi"] \\
\mathscr{G}_i \arrow[r, "\delta_{\mathscr{G}_i}"] &
DD\mathscr{G}_i  
\end{tikzcd}
$$ 
where $DD\pi$ satisfies $DD\pi \circ DDi = \id_{DD\mathscr{G}_i}$ and so $DD\pi$ is an epimorphism. Hence $\delta_{\mathscr{G}_i}$ is an epimorphism. We have shown that $\delta_{\mathscr{G}_i}$ is both a monomorphism and an epimorphism and hence it is an isomorphism.

%Let $e \colon \sG \to \sG_i \to \sG$ be the idempotent corresponding to $\sG_i$. We have that
%\[
%e = e \circ e \colon \sG \to \sG_i \to \sG \to \sG_i \to \sG.
%\]
%Note that the composition $\sG_i \to \sG \to \sG_i$ in the above composition is the identity. Since $\sG$ is reflexive, $e$ is canonically the map $DD(e)$:
%\[
%DDe = DDe \circ DDe \colon \sG \to DD\sG_i \to \sG \to DD\sG_i \to \sG.
%\]
%The composition $DD\sG_i \to \sG \to DD\sG_i$ in the above composition is again the identity and it factors through $\sG_i$. This shows $\sG_i$ is reflexive. 
\end{proof}

%\begin{lem} \label{lem:dire}
%Suppose $\sG = \oplus_{i \in I} \sG_i$ is a reflexive sheaf on $X$, where $I$ is any indexing set. Then $\sG_i$ is also reflexive.
%\end{lem}

%\begin{proof}
%Let $e \colon \sG \to \sG_i \to \sG$ be the idempotent corresponding to $\sG_i$. We have that
%\[
%e = e \circ e \colon \sG \to \sG_i \to \sG \to \sG_i \to \sG.
%\]
%Note that the composition $\sG_i \to \sG \to \sG_i$ in the above composition is the identity. Since $\sG$ is reflexive, $e$ is canonically the map $DD(e)$:
%\[
%DDe = DDe \circ DDe \colon \sG \to DD\sG_i \to \sG \to DD\sG_i \to \sG.
%\]
%The composition $DD\sG_i \to \sG \to DD\sG_i$ in the above composition is again the identity and it factors through $\sG_i$. This shows $\sG_i$ is reflexive. 
%\end{proof}

\begin{lem} \label{lem:vanext}
For any étale sheaf $\sG$ of $\Lambda$-modules, 
$\Exts^{i}(\sG, p_X^{!}\Lambda) = 0$ for $i >0$. %In general for any étale sheaves $\sG_1$ and $\sG_2$, $\Exts^i(\sG_1, \sG_2) = 0$ for $i > 2\mathrm{dim}(X)$. 
\end{lem}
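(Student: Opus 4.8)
The plan is to reduce the lemma to the assertion that the dualizing complex $p_X^!\Lambda$ admits a representative by a \emph{bounded complex of injective $\Lambda$-sheaves concentrated in non-positive degrees}. Granting such a resolution $p_X^!\Lambda \simeq \mathscr{I}^\bullet$, with each $\mathscr{I}^n$ injective and $\mathscr{I}^n=0$ for $n>0$, one computes $R\Homs(\sG,p_X^!\Lambda)$ by the naive complex $\Homs(\sG,\mathscr{I}^\bullet)$, which is visibly concentrated in degrees $\le 0$; passing to cohomology yields $\Exts^i(\sG,p_X^!\Lambda)=0$ for all $i>0$ and \emph{all} sheaves $\sG$ at once. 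This is the decisive advantage of the approach: it avoids writing a general $\sG$ as the filtered union of its constructible subsheaves, a move which would force one to control the $\varprojlim^1$-terms produced by the contravariant variable.

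To construct the resolution I would induct on $d=\dim X$. When $d=0$, the variety $X$ is a finite union of copies of $\Spec(k)$, $p_X^!\Lambda=\Lambda$ lies in degree $0$, and $\Lambda$ is injective since $\Lambda$ is a self-injective ring. For $d\ge 1$, pick a dense open $j\colon U\hookrightarrow X$ which is smooth of pure dimension $d$, with closed complement $i\colon Z\hookrightarrow X$ of dimension $<d$. Using $i^!p_X^!=p_Z^!$ and $j^*p_X^!=p_U^!$, the localization triangle reads $i_*(p_Z^!\Lambda)\to p_X^!\Lambda \to Rj_*(p_U^!\Lambda)\to \cdot$. By induction $p_Z^!\Lambda$ has an injective resolution in degrees $\le 0$, and since $i_*$ is exact and preserves injectives, so does $i_*(p_Z^!\Lambda)$. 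On the smooth locus $p_U^!\Lambda=\Lambda_U(d)[2d]$; granting that $\Lambda_U$ has injective dimension $\le 2d$, its injective resolution shifted by $[2d]$ lands in degrees $\le 0$, and since $j_*$ preserves injectives (its left adjoint $j^*$ being exact) and a complex of injectives computes $Rj_*$, the term $Rj_*(p_U^!\Lambda)$ is represented by a complex of injectives in degrees $\le 0$. Realizing the connecting morphism by a genuine map of these injective complexes and forming the cone, $p_X^!\Lambda$ acquires an injective resolution in degrees $\le 0$, closing the induction.

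The crux, and the step I expect to be the main obstacle, is the smooth input that $\Lambda_U$ has injective dimension at most $2\dim U$, i.e. $\Exts^q(\sG,\Lambda_U)=0$ for $q>2\dim U$ and every sheaf $\sG$; this is a genuine cohomological-dimension assertion rather than a formal one. I would verify it on stalks: the stalk of $\Exts^q(\sG,\Lambda)$ at a geometric point $\bar x$ is a higher $\mathrm{Ext}$ in the category of \'etale sheaves on the strictly henselian scheme $\Spec(\mathcal{O}^{\mathrm{sh}}_{U,\bar x})$, so it suffices to bound the injective dimension of $\Lambda$ there by twice the dimension. Because $\Lambda$ is self-injective, the pointwise $\Lambda$-linear contribution vanishes and the bound is controlled purely by \'etale cohomological dimension; peeling off the closed point through the recollement for the punctured spectrum and inducting reduces matters to the fact that the fraction field of a strictly henselian discrete valuation ring with algebraically closed residue field has $\ell$-cohomological dimension $1$. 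In practice I would invoke the cohomological dimension estimates of \cite{sga4tome3} (in particular the bound $\mathrm{cd}_\ell(X)\le 2\dim X$), together with the self-injectivity of $\Lambda$, rather than rerun this local induction by hand.
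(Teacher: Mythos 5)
The decisive gap is in what you call the crux. On a smooth $U$ of pure dimension $d$, Poincar\'e duality identifies $\Lambda_U \simeq p_U^!\Lambda(-d)[-2d]$, so the assertion that $\Lambda_U$ has injective dimension $\le 2d$ \emph{is} Lemma \ref{lem:vanext} for $X=U$, in its group-Ext form. Your induction therefore does not reduce the lemma to anything simpler; it relocates all of it to the smooth stratum, and the argument you offer there does not hold up. First, for an arbitrary sheaf $\sG$ the stalk of $\Exts^q(\sG,\Lambda)$ at $\bar{x}$ is $\varinjlim_V \mathrm{Ext}^q_V(\sG|_V,\Lambda)$ over \'etale neighbourhoods $V$ of $\bar{x}$, and this is \emph{not} an Ext group over $\Spec(\mathcal{O}^{\mathrm{sh}}_{U,\bar{x}})$ unless $\sG$ satisfies a finiteness hypothesis such as constructibility: Ext does not commute with this filtered colimit in its first variable. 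This matters because the lemma is stated for, and in Proposition \ref{reflexive skyscraper implies constructible} applied to, wildly non-constructible sheaves (a quotient of $\prod_{x} i_{x*}M_x^{\wedge}$ by $\bigoplus_x i_{x*}M_x^{\wedge}$). Second, the bound $\mathrm{cd}_\ell(X)\le 2\dim X$ controls $H^q(V,\sF)=\mathrm{Ext}^q(j_!\Lambda_V,\sF)$, i.e.\ Ext with an \emph{induced} sheaf in the first variable, whereas injective dimension requires bounding $\mathrm{Ext}^q(\sG,\Lambda)$ with an \emph{arbitrary} $\sG$ in the first variable; there is no formal passage from the former to the latter (resolving $\sG$ by sums of $j_!\Lambda_V$ and dimension-shifting never terminates, and routing through the local-to-global spectral sequence costs an extra $\mathrm{cd}(U)$, landing you near $4d$, where you have no slack: you need exactly $\le 2d$ for the shift $[2d]$ to put $Rj_*(p_U^!\Lambda)$ in degrees $\le 0$).

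The only proof I know of your crux is the duality computation, which is the paper's proof of the whole lemma and needs neither smoothness, nor induction, nor resolutions: by the adjunction $(Rp_{U!},p_U^!)$ and the self-injectivity of $\Lambda=\mathbb{Z}/\ell\mathbb{Z}$, one has $\mathrm{Ext}^i(\sG|_U,p_U^!\Lambda)=\mathrm{Hom}_{\Lambda}(H^{-i}_c(U,\sG|_U),\Lambda)$, which vanishes for $i>0$ because a sheaf has no compactly supported cohomology in negative degrees; since $\Exts^i(\sG,p_X^!\Lambda)$ is the sheafification of $U\mapsto \mathrm{Ext}^i(\sG|_U,p_U^!\Lambda)$, the lemma follows. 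Note that this group-Ext vanishing, valid for all $\sG$ and all \'etale $U$, is precisely the statement that $p_X^!\Lambda$ is represented by a complex of injectives in degrees $\le 0$; so the resolution you set out to build does exist, but as a consequence of the lemma rather than a route to it, and the scaffolding of localization triangles, preservation of injectives and cones is dispensable.
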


\begin{proof}
It is enough to prove it for group Ext, that is $\mathrm{Ext}^i(\sG, p_X^{!}\Lambda) = 0$. In 
this case, the result follows by duality i.e. $\mathrm{Ext}^i(\sG|_U, p_U^{!}\Lambda) = \mathrm{Hom}(H^{-i}_c(U,\sG|_U), \Lambda)$. %The proof of the second statement is similar. 
\end{proof}

%We assume $X$ is an affine smooth curve. Suppose 
%$$\sF = \bigoplus_{x \in |X|} i_{x*}M_x$$
%where $|X|$ is the set of closed points of $X$, $i_x \colon x \hookrightarrow X$ is the inclusion and $M_x$ is $\Lambda$-vector space. Assume moreover that the direct sum is countable (indexed by $\mathbb{N}$, say).

\begin{lem} \label{lem:skrefc}
Suppose $k$ is countable. Recall that we assumed $\sF := \bigoplus_{x \in X(k)} i_{x*}M_x$. We will suppose in addition that for every $x \in X(k)$, 
$M_x$ is finite dimensional. 
We have that $R\Homs(\sF, p_X^!\Lambda) = \prod_{x \in X(k)} i_{x*}M_x^{\wedge}$ where $M_x^{\wedge} := \mathrm{Hom}_{\Lambda}(M_x, \Lambda)$ is the dual vector space.  
\end{lem}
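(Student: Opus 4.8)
Write $K_X := p_X^!\Lambda$ for the dualizing complex. The plan is to first compute $R\Homs(i_{x*}M_x, K_X)$ for a single skyscraper and then control the passage to the infinite direct sum. For the single skyscraper I would invoke the standard duality adjunction for the closed immersion $i_x \colon \{x\} \hookrightarrow X$, namely $R\Homs_X(i_{x*}\mathscr{G}, K_X) \simeq i_{x*} R\Homs_{\{x\}}(\mathscr{G}, i_x^! K_X)$, which is available because $i_{x*} = i_{x!}$ and $i_x^!$ is right adjoint to $i_{x*}$. Since $x$ is a $k$-point, the composite $p_X \circ i_x = p_x$ is an isomorphism onto $\Spec(k)$, so $i_x^! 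K_X = p_x^!\Lambda = \Lambda$. As $M_x$ is a finite-dimensional vector space over the field $\Lambda = \mathbb{Z}/\ell\mathbb{Z}$, it is free and $R\Homs(M_x,\Lambda) = M_x^{\wedge}$ is concentrated in degree $0$. Hence $R\Homs_X(i_{x*}M_x, K_X) = i_{x*}M_x^{\wedge}$.

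Next I would pass to the direct sum. Because $\Homs(-,-)$ sends coproducts in its first variable to products, at the derived level $R\Homs(\sF, K_X) = R\prod_{x \in X(k)} R\Homs(i_{x*}M_x, K_X) = R\prod_{x} i_{x*}M_x^{\wedge}$, where $R\prod$ denotes the derived product of \'etale sheaves. The content of the lemma is then that this derived product is underived and concentrated in degree $0$. Here the hypothesis that $k$ is countable is essential: it makes $X(k)$ countable, so that the product can be realized as the homotopy limit of the tower of finite partial products $\prod_{x \in S} i_{x*}M_x^{\wedge}$ indexed by the finite subsets $S \subset X(k)$. The transition maps of this tower are split surjective, so it satisfies the Mittag--Leffler condition and its $\lim^1$ (together with all higher derived limits) vanishes, exactly as in the argument of Lemma \ref{lem:infinstal}. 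This yields $R\prod_x i_{x*}M_x^{\wedge} = \prod_x i_{x*}M_x^{\wedge}$, placed in degree $0$, which is the assertion.

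As an independent check on the positive-degree vanishing one may instead appeal to Lemma \ref{lem:vanext}: in the spectral sequence $E_2^{p,q} = R^p\prod_x \Exts^q(i_{x*}M_x, K_X) \Rightarrow \Exts^{p+q}(\sF, K_X)$ only the row $q=0$ survives by the single-skyscraper computation, so $\Exts^{p}(\sF,K_X) = R^p\prod_x i_{x*}M_x^{\wedge}$; Lemma \ref{lem:vanext} forces the left-hand side to vanish for $p>0$, while the $p=0$ term is $\prod_x i_{x*}M_x^{\wedge}$ since the ordinary product of sheaves is left exact. I expect the main obstacle to be precisely this control of the higher derived products $R^p\prod_x$ over the infinite index set: products of abelian sheaves are not exact, so the vanishing is not formal and rests genuinely on the Mittag--Leffler/countability input. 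Everything else reduces to the adjunction identities recalled above.
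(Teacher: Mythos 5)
Your proof is correct and is essentially the paper's own argument: the same closed-immersion adjunction giving $R\Homs(i_{x*}M_x, p_X^!\Lambda) = i_{x*}R\Homs(M_x, p_x^!\Lambda) = i_{x*}M_x^{\wedge}$ (using finite-dimensionality of $M_x$), the same identification $R\Homs\bigl(\bigoplus_{x} i_{x*}M_x, p_X^!\Lambda\bigr) = R\prod_{x} i_{x*}M_x^{\wedge}$, and the same use of countability of $X(k)$ together with the Mittag--Leffler criterion to conclude $R\prod_{x} i_{x*}M_x^{\wedge} = \prod_{x} i_{x*}M_x^{\wedge}$. Your closing spectral-sequence cross-check via Lemma \ref{lem:vanext} is a harmless addition that the paper's proof does not include.
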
 

\begin{proof}
Denote by $p_x = p_X \circ i_x$. We compute
\begin{align*}
R\Homs(\sF, p_X^!\Lambda) &= R\Homs(\bigoplus_{x \in X(k)} i_{x*}M_x, p_X^!\Lambda)\\
&= R\prod_{x \in X(k)} R\Homs(i_{x*}M_x, p_X^!\Lambda) \\
&= R\prod_{x \in X(k)} i_{x*}R\Homs(M_x, p_x^!\Lambda) \\
&= R\prod_{x \in X(k)} i_{x*}M_x^{\wedge}.
\end{align*}
Since $k$ is countable, $X(k)$ is a countable set.
The Mittag-Leffler criterion, cf. \cite[Tag 0940, Lemma 21.22.5]{stacks-project} now implies that 
$R\prod_{x \in X(k)} i_{x*}M_x^{\wedge} = \prod_{x \in X(k)} i_{x*}M_x^{\wedge}$.

\end{proof}

\begin{prop} \label{reflexive skyscraper implies constructible}
Let $X$ be a variety over the algebraically closed countable field $k$. 
Let $\sF = \bigoplus_{x  \in X(k)} i_{x*}M_x$ be an étale sheaf on $X$.   
If $\sF$ is reflexive then $\sF$ is constructible. 
\end{prop}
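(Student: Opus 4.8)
The plan is to reduce constructibility of $\sF = \bigoplus_{x\in X(k)} i_{x*}M_x$ to two finiteness statements: that each $M_x$ is a finite-dimensional $\Lambda$-vector space (finite stalks), and that $M_x = 0$ for all but finitely many $x$ (finite support). Together these say exactly that $\sF$ is a finite direct sum of skyscrapers with finite stalks, hence constructible. Since reflexivity is local for the Zariski topology (by the open-immersion computation in the proof of Lemma \ref{classcons}) and support finiteness can be checked on a finite affine cover, I may replace $X$ by an affine open and assume $\dim X \geq 1$, the case $\dim X = 0$ being trivial. Throughout $D(\sG) := R\Homs(\sG, p_X^!\Lambda)$ is the classical Verdier dual.

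First I would prove finiteness of the stalks. By Lemma \ref{lem:dire} each summand $i_{x*}M_x$ is again reflexive. Writing $p_x = p_X\circ i_x$, the point $x$ is isomorphic to $\Spec k$, so $i_x^! p_X^!\Lambda = p_x^!\Lambda = \Lambda$, and since $\Lambda$ is a field one computes, in degree $0$,
\[
D(i_{x*}M_x) = i_{x*}R\Homs_\Lambda(M_x, i_x^!p_X^!\Lambda) = i_{x*}\Hom_\Lambda(M_x,\Lambda) = i_{x*}M_x^{\wedge}.
\]
Iterating gives $D D(i_{x*}M_x) = i_{x*}M_x^{\wedge\wedge}$, and the biduality morphism is, on the stalk at $x$, the canonical map $M_x \to M_x^{\wedge\wedge}$. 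This is an isomorphism precisely when $M_x$ is finite-dimensional, so reflexivity of $i_{x*}M_x$ forces $\dim_\Lambda M_x < \infty$.

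Next I would bound the support by a cardinality argument. Now that every $M_x$ is finite, Lemma \ref{lem:skrefc} applies and gives $D(\sF) = \prod_{x\in X(k)} i_{x*}M_x^{\wedge}$, concentrated in degree $0$. Applying the duality formula from the proof of Lemma \ref{lem:vanext} to this sheaf yields $H^0(X, D D(\sF)) = \Hom_\Lambda(H^0_c(X, D\sF),\Lambda)$. The key point is that $H^0_c(X,\prod_x i_{x*}M_x^{\wedge})$ consists exactly of the finite-support sections: although the sections of the product sheaf over $X$ form the full product $\prod_{x} M_x^{\wedge}$, a section with infinitely many nonzero components acquires a nonzero stalk at the generic point (this stalk is the colimit $\varinjlim_V \prod_{x\in V} M_x^{\wedge}$, which does not kill infinite-support classes), so its support is all of $X$; as $X$ is affine its only complete closed subschemes are finite, hence a section has proper support iff it lies in $\bigoplus_x M_x^{\wedge}$. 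Therefore $H^0_c(X, D\sF) = \bigoplus_{x} M_x^{\wedge}$, and using $\dim_\Lambda M_x < \infty$,
\[
H^0\bigl(X, D D(\sF)\bigr) = \Hom_\Lambda\Bigl(\textstyle\bigoplus_x M_x^{\wedge},\Lambda\Bigr) = \prod_{x} M_x .
\]
On the other hand $H^0(X,\sF) = \bigoplus_x M_x$. If $\sF$ is reflexive these two $\Lambda$-modules are isomorphic; but $k$ is countable, so $X(k)$ is countable, and each nonzero $M_x$ is finite, so if infinitely many $M_x$ were nonzero the left-hand side would be countable while $\prod_x M_x$ would be uncountable, a contradiction. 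Hence only finitely many $M_x$ are nonzero, and $\sF$ is constructible.

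The main obstacle is the middle computation — identifying $H^0_c$ of the infinite product $\prod_x i_{x*}M_x^{\wedge}$. This product is genuinely not a direct sum of skyscrapers (it has nonzero stalks at non-closed points, coming from $\prod/\bigoplus$), so $D$ cannot be applied to it formally; the argument must instead isolate the finite-support sections through the properness-of-support description of $H^0_c$, which is exactly where affineness of $X$ (the absence of complete positive-dimensional closed subschemes) enters, and where the countability hypothesis is converted into the decisive cardinality gap between $\bigoplus$ and $\prod$.
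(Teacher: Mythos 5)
Your proof is correct, and while it opens exactly as the paper's does, the decisive finiteness-of-support step is genuinely different. Both arguments begin with Lemma \ref{lem:dire} (each summand $i_{x*}M_x$ is reflexive, which forces $\dim_\Lambda M_x<\infty$) and with Lemma \ref{lem:skrefc} (giving $D(\sF)=\prod_{x\in X(k)}i_{x*}M_x^{\wedge}$, a sheaf in degree $0$). The paper then stays at the level of sheaves: it dualizes the short exact sequence $0\to\bigoplus_x i_{x*}M_x^{\wedge}\to\prod_x i_{x*}M_x^{\wedge}\to\mathscr{H}\to 0$, invokes $\Exts^{1}(\mathscr{H},p_X^{!}\Lambda)=0$ (Lemma \ref{lem:vanext}) to see that $\Homs(\prod_x i_{x*}M_x^{\wedge},p_X^{!}\Lambda)\to\Homs(\bigoplus_x i_{x*}M_x^{\wedge},p_X^{!}\Lambda)$ is surjective, and reads off from a commutative square (whose horizontal isomorphisms are the reflexivity map and a second application of Lemma \ref{lem:skrefc}) that the canonical monomorphism of sheaves $\bigoplus_x i_{x*}M_x\to\prod_x i_{x*}M_x$ is surjective; this forces finite support with no cardinality argument and no localization. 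You instead pass to global invariants: reflexivity gives $H^{0}(X,\sF)\cong\mathbb{H}^{0}(X,DD\sF)$, the duality formula quoted in the proof of Lemma \ref{lem:vanext} identifies the latter with $\Hom_\Lambda(H^{0}_{c}(X,D\sF),\Lambda)$, you compute $H^{0}_{c}(X,\prod_x i_{x*}M_x^{\wedge})=\bigoplus_x M_x^{\wedge}$ on affine $X$ (compactly supported sections are those with complete, hence finite, support), and you finish with the cardinality gap between $\bigoplus_x M_x$ (countable) and $\prod_x M_x$ (uncountable if infinitely many $M_x\neq 0$). Your route buys a very concrete contradiction and replaces the sheaf-level $\Exts$ diagram by one explicit $H^{0}_{c}$ computation; its costs are the preliminary Zariski-local reduction to affine $X$ (which the paper does not need) and a second use of countability. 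Note that, exactly as in Remark \ref{rem:inficounsh}, your argument really only needs the set $\{x\colon M_x\neq 0\}$ to be countable, so it feeds into Corollary \ref{cor:trsfinindust} just as the paper's proof does.

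One repair is needed in the middle step. The clause ``a section with infinitely many nonzero components acquires a nonzero stalk at the generic point, so its support is all of $X$'' is false in general: if the nonzero components all lie on a proper closed subset $Z\subsetneq X$ (possible as soon as $\dim X\geq 2$), the section vanishes on the dense open $X\smallsetminus Z$, so its stalk at the generic point of $X$ is zero and its support is contained in $Z$. What is true, and is all you need, is that the support of $(s_x)_x$ equals the Zariski closure of $\{x\colon s_x\neq 0\}$; if this set is infinite, its closure is an infinite closed subset of $X$, hence has an irreducible component of positive dimension, and a positive-dimensional closed subvariety of an affine variety is never complete. That is precisely the affineness principle you invoke in the very next clause, so the fix is a rewording rather than a new idea.
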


\begin{proof}
By Lemma \ref{lem:dire}, for every $x \in X(k)$, $M_x$ is a finite dimensional $\Lambda$-vector space. By Lemma \ref{lem:skrefc}, we have 
\[
R\Homs(\sF, p_X^!\Lambda) = \prod_{x \in X(k)} i_{x*}M_x^{\wedge}
\]
(where $M_x^{\wedge}$ is the dual vector space) and since $\sF$ is reflexive the canonical morphism
\begin{equation} \label{eq:clethisup}
\sF \to R\Homs \left(\prod_{x \in X(k)} i_{x*}M_x^{\wedge}, p_X^!\Lambda \right)
\end{equation}
is an isomorphism. Taking $H^0$ of \eqref{eq:clethisup} gives an isomorphism
\[
\sF \to \Homs \left(\prod_{x \in X(k)} i_{x*}M_x^{\wedge}, p_X^!\Lambda \right).
\]
Let $\mathscr{H}$ be the quotient of the canonical morphism
\[
\oplus_{x \in X(k)} i_{x*}M_x^{\wedge} \hookrightarrow \prod_{x \in X(k)} i_{x*}M_x^{\wedge}.
\]
By taking $R\Homs (-, p_X^{!}\Lambda)$, the long exact sequence associated to 
\[
0 \to \oplus_{x \in X(k)} i_{x*}M_x^{\wedge} \hookrightarrow \prod_{x \in X(k)} i_{x*}M_x^{\wedge} \to \mathscr{H} \to 0
\]
is of the form
\[
\Homs\left(\prod_{x \in X(k)} i_{x*}M_x^{\wedge}, p_X^!\Lambda\right) \to \Homs(\oplus_{x \in X(k)} i_{x*}M_x^{\wedge}, p_X^!\Lambda) \to \Exts^{1}(\mathscr{H}, p_X^{!}\Lambda) \to \cdots
\]
Noting that $\Exts^{1}(\mathscr{H}, p_X^{!}\Lambda) = 0$ (cf. Lemma \ref{lem:vanext}) shows that the morphism 
$$\Homs\left(\prod_{x \in X(k)} i_{x*}M_x^{\wedge}, p_X^!\Lambda\right) \to \Homs(\oplus_{x \in X(k)} i_{x*}M_x^{\wedge}, p_X^!\Lambda)$$ 
is surjective.

Summarizing we have a commutative diagram of sheaves
$$
\begin{tikzcd} [row sep = large, column sep = large] 
\oplus_{x \in X(k)} i_{x*}M_x \arrow[r, "\sim"] \arrow[d] &
\Homs(\prod_{x \in X(k)} i_{x*}M_x^{\wedge}, p_X^!\Lambda)  \arrow[d, twoheadrightarrow] \\
\prod_{x \in X(k)} i_{x*}M_x \arrow[r, "\sim"] &
\Homs(\oplus_{x \in X(k)} i_{x*}M_x^{\wedge}, p_X^!\Lambda)  
\end{tikzcd}
$$ 
where the bottom arrow is an isomorphism by Lemma \ref{lem:skrefc}. It follows that the left hand vertical arrow is surjective and so there are only finitely many $x \in X(k)$ for which $M_x \neq 0$. Thus $\sF$ is constructible.
\end{proof}

\begin{rem} \label{rem:inficounsh}
Both Lemma \ref{lem:skrefc} and Proposition \ref{reflexive skyscraper implies constructible} hold true under the more general assumption (without assuming $k$ is countable) that $I := \{ x \in X(k) \text{ } \lvert M_x \not= 0 \}$ is countable.
\end{rem}

We now come to the main result of this section.

\begin{cor} \label{cor:trsfinindust}
Let $X$ be a variety over the algebraically closed field $k$. 
Let $\sF = \bigoplus_{x  \in X(k)} i_{x*}M_x$ be an étale sheaf on $X$.   
If $\sF$ is reflexive then $\sF$ is constructible.
\end{cor}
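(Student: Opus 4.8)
The plan is to bootstrap from Proposition \ref{reflexive skyscraper implies constructible}, whose only use of the countability of $k$ is to guarantee that the support $I := \{x \in X(k) \mid M_x \neq 0\}$ is countable; indeed, by Remark \ref{rem:inficounsh} the conclusion already holds over an arbitrary algebraically closed $k$ provided $I$ is countable. So the genuine content of the corollary is to remove the hypothesis on $k$ altogether, and I would accomplish this by isolating a countable piece of $\sF$ and running a proof by contradiction.

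First I would suppose that $\sF$ is reflexive but not constructible, so that $I$ is infinite, and then choose a countably infinite subset $J \subseteq I$. Writing $\sF = \sF_J \oplus \sF_{I \setminus J}$ with $\sF_J := \bigoplus_{x \in J} i_{x*}M_x$ exhibits $\sF_J$ as a direct summand, equivalently a retract, of the reflexive sheaf $\sF$. I would then invoke Lemma \ref{lem:dire}: its proof applies verbatim to this two-term decomposition, since it only uses the existence of a split inclusion $\sF_J \hookrightarrow \sF$ and projection $\sF \twoheadrightarrow \sF_J$ with composite the identity, together with the fact that the biduality map $\delta$ is an isomorphism as soon as it is both mono and epi. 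Hence $\sF_J$ is itself reflexive.

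Now the nonzero stalks of $\sF_J$ are indexed precisely by the countable set $J$, so Proposition \ref{reflexive skyscraper implies constructible} in the strengthened form of Remark \ref{rem:inficounsh} applies and forces $\sF_J$ to be constructible, i.e. to have only finitely many nonzero stalks. This contradicts the infinitude of $J$; therefore $I$ is finite and $\sF$ is constructible. The one step that requires care — and the natural place for a subtle gap — is the assertion that passing to the sub-sum $\sF_J$ preserves reflexivity, since it is exactly this retract argument that converts the countable-support result into the general one; I would make sure that Lemma \ref{lem:dire} is genuinely insensitive to whether one isolates a single summand or an arbitrary sub-sum indexed by $J$.
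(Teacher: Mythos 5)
Your proposal is correct and is essentially identical to the paper's own proof: both argue by contradiction, extract a countably infinite subset $J \subseteq I$, apply Lemma \ref{lem:dire} to the two-term decomposition $\sF = \sF_J \oplus \sF_{I \setminus J}$ to see that $\sF_J$ is reflexive, and then contradict Remark \ref{rem:inficounsh}. Your added care about why Lemma \ref{lem:dire} applies to a sub-sum (viewing it as a retract) is exactly the implicit step in the paper's argument.
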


\begin{proof}
We set $I := \{ x \in X(k) \text{ } \lvert M_x \not= 0 \}$. Suppose for the sake of contradiction that $I$ is infinite. Then $I$ contains an infinite countable subset $J \subset I$ and by Lemma \ref{lem:dire} we see that $\mathscr{G} := \bigoplus_{x  \in J} i_{x*}M_x$ is reflexive. This contradicts Remark \ref{rem:inficounsh}.
\end{proof}

%\begin{prop} \label{reflexive skyscraper implies constructible}
%Let $X$ be a smooth affine curve over the algebraically closed countable field $k$. 
%Let $\sF = \bigoplus_{x  \in X(k)} i_{x*}(M_x)$ be a reflexive étale sheaf on $X$.   
%If $\sF$ is reflexive then $\sF$ is constructible. 
%\end{prop}

%\begin{proof}
%By Lemma \ref{lem:dire}, for every $x \in X(k)$, $M_x$ is a finite dimensional $\Lambda$-vector space. Suppose there are infinitely many $x \in X(k)$ for which
%$M_x \neq 0$. By Lemma \ref{lem:skrefc}, we have a commutative diagram
%$$
%\begin{tikzcd} [row sep = large, column sep = large] 
%\oplus_{x \in X(k)} i_{x*}M_x \arrow[r] \arrow[d] &
%\Homs(\prod_{x \in X(k)} i_{x*}M_x^{\wedge}, p_X^!\Lambda)  \arrow[d] \\
%\prod_{x \in X(k)} i_{x*}M_x \arrow[r, "\sim"] &
%\Homs(\oplus_{x \in X(k)} i_{x*}M_x^{\wedge}, p_X^!\Lambda)  
%\end{tikzcd}
%$$ 

%By Lemma \ref{lem:vanext}, the right vertical arrow is surjective. By assumption the top horizontal arrow is an isomorphism. Thus the left hand vertical arrow is surjective, which is a contradiction 
%\end{proof}

We close this section with a classification result.

\begin{lem} \label{classification} 
If $\sG$ is an étale sheaf of $\Lambda$-modules on $X$ whose local sections have finite
 support  (cf. \cite[Tag 04FQ, Definition 53.31.3]{stacks-project}), then $\sG$ is a direct sum of skyscraper sheaves supported on closed points. 
\end{lem}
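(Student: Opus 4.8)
The plan is to identify $\sG$ explicitly by reading off its geometric stalks. First I would observe that the finite-support hypothesis forces $\sG$ to be concentrated on closed points. Recall that for any étale $U \to X$ and any $s \in \sG(U)$ the support $\mathrm{supp}(s) = \{u \in U : s_{\bar u} \neq 0\}$ is \emph{closed} in $U$, since the locus where a germ vanishes is open. Now suppose $u \in U$ is a non-closed point with $s_{\bar u} \neq 0$. Then $\overline{\{u\}} \subseteq \mathrm{supp}(s)$, and because $U$ is of finite type over the algebraically closed field $k$, the closure of a non-closed point is infinite; this contradicts the assumption that $\mathrm{supp}(s)$ is finite. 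Hence $\sG_{\bar u} = 0$ for every non-closed point $u$, so the only possibly nonzero germs of $\sG$ sit over closed points of $X$.

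Next I would set $M_x := \sG_{\bar x}$ for each $x \in X(k)$ and assemble the canonical comparison morphism. For each closed point $x$ the unit of the adjunction $i_x^* \dashv i_{x*}$ gives a map $\sG \to i_{x*} i_x^* \sG = i_{x*} M_x$, and taking the product over all $x$ produces
\[
\sigma \colon \sG \longrightarrow \prod_{x \in X(k)} i_{x*} M_x .
\]
The key point is that $\sigma$ in fact lands in the subsheaf $\bigoplus_{x \in X(k)} i_{x*} M_x$: given $s \in \sG(U)$, the $x$-component of $\sigma(s)$ is nonzero only when some germ of $s$ lies over $x$, i.e. only when $x$ is in the image of $\mathrm{supp}(s)$ in $X$. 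Since $\mathrm{supp}(s)$ is finite, only finitely many components are nonzero, which is precisely the condition for the section to come from the direct sum. Thus I obtain a well-defined morphism $\sigma \colon \sG \to \bigoplus_{x \in X(k)} i_{x*} M_x$.

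Finally I would check that $\sigma$ is an isomorphism by passing to geometric stalks, using that a morphism of étale sheaves is an isomorphism as soon as it induces isomorphisms on all geometric stalks, and that the stalk functor (a filtered colimit) commutes with direct sums. At a geometric point $\bar y$ lying over a non-closed $y$ both sides vanish: the left by the first paragraph, the right because each skyscraper $i_{x*} M_x$ at a closed $x$ has vanishing stalk at $y$. At a closed point $y = x_0$ only the term $x = x_0$ survives on the right, and the induced map $M_{x_0} = \sG_{\bar{x_0}} \to M_{x_0}$ is the identity by construction. Hence $\sigma$ is a stalkwise isomorphism, and therefore an isomorphism. I expect the main technical obstacle to be the bookkeeping in the second step, namely the careful verification in the étale topology that the image of $\sigma$ genuinely lies in the direct sum rather than merely the product and that this subsheaf inclusion interacts correctly with sheafification; this is exactly where the finiteness of $\mathrm{supp}(s)$ for \emph{each} section is used, as opposed to mere concentration on closed points.
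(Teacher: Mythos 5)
Your proposal is correct and follows essentially the same route as the paper's proof: first showing that the finite-support hypothesis forces all nonzero stalks to sit over closed points (via the infinitude of the closure of a non-closed point over an algebraically closed field), then defining the canonical map $\sG \to \bigoplus_{x \in X(k)} i_{x*}\sG_{\overline{x}}$, well defined precisely because each section has finite support, and checking it is an isomorphism on stalks. The only cosmetic difference is that you factor the comparison map through the product before observing it lands in the direct sum, whereas the paper writes the map into the direct sum directly; the content is the same.
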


\begin{proof}
We begin by making the following claim. 
Since the sheaf $\sG$ has all of its local sections having finite support, the support of $\sG$ is only on closed points.
Indeed, suppose there exists $x \in X$ such that 
$\sG_{\overline{x}} \neq 0$ and $x$ is not a closed point. 
It follows by definition that there exists an étale neighbourhood $f : U \to X$ of $\overline{x}$ 
and $s \in \sG(U)$ such that the image of $s$ in $\sG_{\overline{x}}$ is not zero. 
Observe that $\mathrm{supp}(s) \subset U$ is a Zariski closed subset. 
Let $x'$ be an element in $U$ such that $x' \mapsto x$. 
It follows by assumption on $x$ that $\overline{\{x'\}} \subset \mathrm{supp}(s)$ 
where $\overline{\{x'\}}$ is the Zariski closure of $x'$. 
Observe that $\overline{\{x'\}}$ is an algebraic variety whose dimension is strictly greater than zero. 
Since the field $k$ is algebraically closed, it is in particular infinite. Hence, $\overline{\{x'\}}$ contains an infinite number of points.
This implies that $\mathrm{supp}(s)$ contains an infinite number of points. 
%Let $Z := f(\overline{x'})$. We have that $Z$ contains an infinite number of points. Furthermore, 
%if $z \in Z$ and $z' \in U$ such that $z' \mapsto z$ then we can choose a geometric point 
%$\overline{z}$ over $z$ such that $\overline{z} \to X$ factors through $z' \to z$. 
%Hence, $Z \subset \mathrm{supp}(\sG)$. 

 Consider the morphism
\begin{align*}
\sG \to \bigoplus_{x \in X(k)} i_{x*}\sG_{\overline{x}}
\end{align*}
which for an étale morphism $V \to X$, $s \in \sG(V)$ gets sent to $(s_{\overline{x}})_{x \in X(k)}$. This is a well-defined morphism because $s_{\overline{x}}$ is zero except for a finite number of points $x$. It is clearly an isomorphism (checking at the level of stalks for closed points, noting that taking stalks at closed points commutes with colimits). 
\end{proof}

\subsection{Calculation for direct sum of constructible sheaves}

    Let $X$ be a smooth affine curve over $k$ and suppose that $k$ is of positive characteristic (we will return to the characteristic zero case later, the proof differing to the positive characteristic case). We show that Conjecture \ref{reflexivity conjecture} above is true in a
    particular case. 

In this section we assume that $\sF$ is a direct sum of local systems. For the rest of this section we write
\[
\sF = \bigoplus_{i \in \mathbb{N}} \sL_i
\]
where $\sL_i$ are local systems corresponding to (finite) dimensional $\mathbb{F}_l$-representations $V_i$ of $\pi_1(X)$.
 We assume that the index set is the positive integers $\mathbb{N}$. We denote by $\sL_i^\wedge := \Homs(\sL_i, \Lambda)$. The spectral sequence
\[
\Exts^i(R^{-j}\prod_i \sL_i^\wedge, \Lambda) \implies \Exts^{i+j}(R\prod \sL_i^{\wedge}, \Lambda) 
\]
degenerates at the 2nd page, cf. \cite[Tag 07A9]{stacks-project}. The possible non-zero terms on the 2nd page are

$$
\begin{tikzcd} [row sep = large, column sep = large] 
\Homs(\prod_i \sL_i^\wedge,\Lambda) \arrow[rrd, "\alpha"] &
\Exts^1(\prod_i \sL_i^\wedge,\Lambda) \arrow[rrd] &
\Exts^2(\prod_i \sL_i^\wedge,\Lambda) &
0 \\
\Homs(R^1\prod \sL_i^\wedge,\Lambda) &
\Exts^1(R^1\prod \sL_i^\wedge,\Lambda) &
\Exts^2(R^1\prod \sL_i^\wedge,\Lambda) &
0 
\end{tikzcd}
$$
Assuming $\sF$ is reflexive, we get that 
$$\Exts^1(\prod_i \sL_i^\wedge,\Lambda) = \Exts^2(\prod_i \sL_i^\wedge,\Lambda) = 0$$ and
\begin{equation} \label{eq:derpro}
\Homs(R^1\prod \sL_i^\wedge,\Lambda) = 0.
\end{equation}
Moreover the morphism $\alpha$ must be surjective. Finally we have a short exact sequence
$$0 \to \Exts^1(R^1\prod \sL_i^\wedge,\Lambda) \to \bigoplus_i \sL_i \to \ker \alpha \to 0.$$

\begin{prop} \label{the case of the direct sum}  
Let $X$ be a smooth affine curve. For every $i \in \mathbb{N}$, let $\sL_i$ be a finite local system on $X$ such that 
$\oplus_i \sL_i = \prod_i \sL_i$. We suppose in addition that 
$\oplus_i \sL^\wedge_i = \prod_i \sL^\wedge_i$. Suppose, $\oplus_i \sL_i$ is a reflexive étale sheaf. Then we must have that for all but finitely 
many $i \in \mathbb{N}$, $\sL_i = 0$. 
In particular, $\oplus_i \sL_i$ is constructible. 
\end{prop}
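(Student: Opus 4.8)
The plan is to convert the reflexivity of $\sF=\bigoplus_i\sL_i$ into the vanishing of a single derived-product sheaf, and then to read off finiteness from that vanishing using global sections and the Grothendieck--Ogg--Shafarevich formula. As recorded in the spectral sequence above, reflexivity of $\sF$ already forces $\Exts^1(\prod_i\sL_i^\wedge,\Lambda)=0$. My first step is to reinterpret this term: using the hypothesis $\prod_i\sL_i^\wedge=\bigoplus_i\sL_i^\wedge$ and the identity $R\Homs(\bigoplus_i\sL_i^\wedge,\Lambda)=R\prod_i R\Homs(\sL_i^\wedge,\Lambda)=R\prod_i\sL_i$ (valid since each $\sL_i$ is a local system, so $(\sL_i^\wedge)^\wedge=\sL_i$ sits in degree $0$), this reads $R^1\prod_i\sL_i=0$. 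Combining this with the other hypothesis $R^0\prod_i\sL_i=\prod_i\sL_i=\bigoplus_i\sL_i$ and with $R^{\geq 2}\prod_i\sL_i=0$ (the étale sites in play are those of affine curves, of cohomological dimension $\leq 1$), I get a quasi-isomorphism $\bigoplus_i\sL_i\xrightarrow{\sim}R\prod_i\sL_i$.

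Next I would apply $R\Gamma(X,-)$ to this quasi-isomorphism and compute both sides. On the source, $R\Gamma(X,-)$ commutes with direct sums on the Noetherian scheme $X$, so $R\Gamma(X,\bigoplus_i\sL_i)=\bigoplus_i R\Gamma(X,\sL_i)$. On the target, products of sheaves preserve injectives and products of $\Lambda$-modules are exact, so $R\Gamma(X,R\prod_i\sL_i)=\prod_i R\Gamma(X,\sL_i)$. Passing to cohomology yields $\prod_i H^q(X,\sL_i)=\bigoplus_i H^q(X,\sL_i)$ for every $q$. Each $H^q(X,\sL_i)$ is a finite-dimensional $\Lambda$-vector space, and an equality between an infinite product and the associated direct sum of finite-dimensional spaces forces all but finitely many factors to vanish. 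In particular $H^1(X,\sL_i)=0$ for all but finitely many $i$.

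The final step is to turn the vanishing of $H^1$ into the vanishing of $\sL_i$ via Grothendieck--Ogg--Shafarevich. Reflexivity and both product-equals-sum hypotheses are preserved under restriction along an open immersion: reflexivity by the argument in the proof of Lemma \ref{classcons} (using Lemma \ref{projection formula varieties} and $j^!=j^*$ for open $j$), and the hypotheses because $j^*$, having both a left and a right adjoint, commutes with $\bigoplus$ and $\prod$. Hence, after replacing $X$ by a dense affine open $U$ obtained by deleting sufficiently many closed points, I may assume $\chi(U)\leq -1$. Since $U$ is an affine curve, $H^2(U,\sL_i)=0$, and the formula reads
\[
\dim H^0(U,\sL_i)-\dim H^1(U,\sL_i)=\rk(\sL_i)\,\chi(U)-\mathrm{Sw}(\sL_i).
\]
If $H^1(U,\sL_i)=0$ then $0\leq \dim H^0(U,\sL_i)=\rk(\sL_i)\,\chi(U)-\mathrm{Sw}(\sL_i)\leq \rk(\sL_i)\,\chi(U)$, which is strictly negative unless $\rk(\sL_i)=0$; so $\sL_i|_U=0$, and since $U$ is dense in the connected curve $X$ this gives $\sL_i=0$. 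Together with the previous paragraph this shows $\sL_i=0$ for all but finitely many $i$, so $\sF$ is a local system of finite rank and in particular constructible.

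I expect the main obstacle to lie in the first step: pinning down that the surviving spectral-sequence term killed by reflexivity is exactly $\Exts^1(\prod_i\sL_i^\wedge,\Lambda)$ and matching it with $R^1\prod_i\sL_i$, together with the justification of the interchange $R\Gamma(X,R\prod_i\sL_i)=\prod_i R\Gamma(X,\sL_i)$. Once the vanishing $R^1\prod_i\sL_i=0$ is secured, the global-sections comparison and the Grothendieck--Ogg--Shafarevich estimate are routine; the characteristic-zero case runs identically, the Swan terms simply being zero.
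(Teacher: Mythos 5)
Your proof is correct and takes essentially the same route as the paper's: reflexivity together with the two product-equals-sum hypotheses gives $R^1\prod_i \sL_i = 0$, hence $\bigoplus_i H^1(\cdot,\sL_i) = \prod_i H^1(\cdot,\sL_i)$, forcing $H^1(\cdot,\sL_i)=0$ for all but finitely many $i$, and the Grothendieck--Ogg--Shafarevich formula then forces those $\sL_i$ to vanish. The only differences are in packaging: you encode the vanishing as a quasi-isomorphism $\bigoplus_i \sL_i \simeq R\prod_i \sL_i$ and evaluate $R\Gamma$ on a single open $U$ with $\chi(U)\le -1$, where $H^1(U,\sL_i)=0$ alone makes the Euler-characteristic count negative unless $\rk(\sL_i)=0$, whereas the paper runs the argument over every \'etale $U$ and reaches its contradiction via the vanishing of $H^0(V,\sL_i)$ for large $i$ forcing $H^1(V,\sL_i)\neq 0$.
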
 
\begin{proof} 
 Our discussion concerning the spectral sequence above shows that 
 the reflexivity of $\oplus_i \sL_i$ implies that 
\begin{equation} \label{eq:stuasshisd}
\Exts^1(\prod_i \sL_i^\wedge,\Lambda) = 0.
\end{equation} 
  We claim that $\Exts^1(\prod_i \sL_i^\wedge,\Lambda) = R^1\prod_i \sL_i$. Indeed, 
 \begin{align*} 
   \Exts^1(\prod_i \sL_i^\wedge,\Lambda) = H^1(R\Homs(\prod_i \sL_i^{\wedge},\Lambda)). 
 \end{align*} 
     By assumption, $\prod_i \sL_i^\wedge = \oplus_i \sL_i^{\wedge}$ and hence we see that 
    $R\Homs(\prod_i \sL_i^{\wedge},\Lambda) = R\prod_i \sL_i$. Therefore $\Exts^1(\prod_i \sL_i^\wedge,\Lambda) = R^1\prod_i \sL_i$.  
   Equation \eqref{eq:stuasshisd} implies that 
   $R^1\prod_i \sL_i = 0$. By 
   \cite[Tag 0940, Lemma 21.22.2]{stacks-project}, we see that 
   for every $U \to X$ étale, 
   $H^1(U,\prod_i \sL_i) = \prod_i H^1(U,\sL_i)$. 
   However, by assumption, $\prod_i \sL_i = \oplus_i \sL_i$ and
   $H^1(U,\oplus_i \sL_i) = \oplus_i H^1(U,\sL_i)$.   
      Thus we get that for every $U \to X$ étale, 
      \begin{align*} 
       \bigoplus_i H^1(U,\sL_i) = \prod_i H^1(U,\sL_i).
      \end{align*} 
     This must mean that for all but a finite number of $i \in \mathbb{N}$, we get 
     $H^1(U,\sL_i) = 0$. 
     
     We claim that this is not possible if there are infinitely many $i$ such that $\sL_i \not= 0$. We argue by contradiction. Indeed, since the canonical morphism
\[
\bigoplus_i \sL_i \to \prod_i \sL_i
\]
is an isomorphism, for all open immersions $U \to X$ there exists a 
positive integer $N(U)$ such that for all $i > N(U)$, $H^0(U, \sL_i\lvert_U) = 0$. 

Let $\overline{X}$ be the smooth projective curve which contains $X$ as an open sub-scheme. 
Let $\overline{X} \backslash X = \left\{x_1, \ldots x_j \right\}$. The Grothendieck-Ogg-Shafarevich formula, cf. \cite[Exposé X, Théor\`{e}me 7.1]{sga5} says that for every open immersion $V \hookrightarrow X$ (note that since $V$ is affine, $H^{k}(V, \sL_i\lvert_V) = 0$ for $k > 1$) 

\begin{equation} \label{eq:groosh}
\dim H^{0}(V, \sL_i\lvert_V) - \dim H^{1}(V, \sL_i\lvert_V) = 
\dim \sL_{i, \overline{\eta}}\cdot(2-\lvert \overline{X} \backslash V \rvert - 2g(\overline{X})) - \sum_{1 \leq m \leq j}\mathrm{Swan}_{x_m}(\sL_i).
\end{equation}

Since $H^0(V, \sL_i\lvert_V) = 0$ for all $i > N(V)$, \eqref{eq:groosh} shows that the dimension of $H^1(V, \sL_i\lvert_V)$ is at least
 $\lvert \overline{X} \backslash V \rvert - 2$  provided $\sL_i \not= 0$. 
 Hence we get a contradiction.   
\end{proof} 

We immediately obtain a stronger result in the case $X = \mathbb{A}^1_k$.

\begin{cor} \label{cor:5.17god}
Let $X = \mathbb{A}^1_k$. For every $i \in \mathbb{N}$, let $\sL_i$ be a finite local system on $X$ such that 
$\oplus_i \sL_i = \prod_i \sL_i$. Suppose, $\oplus_i \sL_i$ is a reflexive étale sheaf. Then we must have that for all but finitely 
many $i \in \mathbb{N}$, $\sL_i = 0$. 
In particular, $\oplus_i \sL_i$ is constructible. 
\end{cor}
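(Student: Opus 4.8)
The plan is to deduce the corollary from Proposition \ref{the case of the direct sum}. Inspecting that proof, the extra hypothesis $\oplus_i\sL_i^\wedge = \prod_i\sL_i^\wedge$ is used only to identify $\Exts^1(\prod_i\sL_i^\wedge,\Lambda)$ with $R^1\prod_i\sL_i$ and thereby deduce, from reflexivity, that $R^1\prod_i\sL_i = 0$; once this vanishing is known, the Grothendieck--Ogg--Shafarevich argument runs verbatim. So on $X=\mathbb{A}^1_k$ it suffices to prove $R^1\prod_i\sL_i = 0$ using only that $\sF := \oplus_i\sL_i$ is reflexive and that $\oplus_i\sL_i = \prod_i\sL_i$. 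As in Lemma \ref{lem:dire} I may pass to a sub-direct-sum and assume every $\sL_i\neq 0$, then argue by contradiction under the assumption that infinitely many are nonzero.

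First I would package the reflexivity input. The canonical inclusion $\oplus_i\sL_i^\wedge \hookrightarrow \prod_i\sL_i^\wedge = (\oplus_i\sL_i)^\wedge$ is a monomorphism of sheaves with some cokernel $\mathscr{H}$, giving a short exact sequence $0\to\oplus_i\sL_i^\wedge\to\prod_i\sL_i^\wedge\to\mathscr{H}\to 0$. Applying $R\Homs(-,\Lambda)$ and using the reflexivity-derived vanishings $\Exts^1(\prod_i\sL_i^\wedge,\Lambda)=\Exts^2(\prod_i\sL_i^\wedge,\Lambda)=0$ (established in the discussion preceding Proposition \ref{the case of the direct sum}), together with the identity $R\Homs(\oplus_i\sL_i^\wedge,\Lambda)=R\prod_i R\Homs(\sL_i^\wedge,\Lambda)=R\prod_i\sL_i$ (each $\sL_i$ being a local system, so $\sL_i^{\wedge\wedge}=\sL_i$ and $\Exts^{>0}(\sL_i^\wedge,\Lambda)=0$), the long exact sequence collapses to a natural isomorphism $R^1\prod_i\sL_i \simeq \Exts^2(\mathscr{H},\Lambda)$. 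The whole problem is thereby reduced to showing $\Exts^2(\mathscr{H},\Lambda)=0$.

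This vanishing is the heart of the matter, and it is where $X=\mathbb{A}^1$ must enter. On a smooth curve the functor $\Exts^2(-,\Lambda)$ detects only the punctual (skyscraper) part of its argument: for a lisse sheaf it vanishes, since $\Exts^{>0}(\Lambda,\Lambda)=0$ and hence $\Exts^{>0}(\text{lisse},\Lambda)=0$. The plan is therefore to show that $\mathscr{H}=\prod_i\sL_i^\wedge/\oplus_i\sL_i^\wedge$, being assembled from the lisse sheaves $\sL_i^\wedge$, carries no punctual part on $\mathbb{A}^1$. I expect this to be the main obstacle: one must compare the stalks of the infinite product $\prod_i\sL_i^\wedge$ with those of $\oplus_i\sL_i^\wedge$ at every geometric point, and it is precisely the simple connectivity of the affine line for constant coefficients (the fact that $H^{>0}(\mathbb{A}^1,\Lambda)=0$, equivalently that $\overline{X}\setminus X$ is the single point $\infty$) that one should exploit to force the obstruction to disappear; on a curve of higher genus this is exactly the step that is expected to fail, which is why the strengthened statement is special to $\mathbb{A}^1$.

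Granting $R^1\prod_i\sL_i=0$, I would finish exactly as in Proposition \ref{the case of the direct sum}. By \cite[Tag 0940, Lemma 21.22.2]{stacks-project} one obtains $H^1(U,\prod_i\sL_i)=\prod_i H^1(U,\sL_i)$ for every \'etale $U\to X$, while $\prod_i\sL_i=\oplus_i\sL_i$ gives $H^1(U,\oplus_i\sL_i)=\oplus_i H^1(U,\sL_i)$; comparing the two forces $H^1(U,\sL_i)=0$ for all but finitely many $i$. On the other hand, taking $V=\mathbb{A}^1\smallsetminus\{a_1,a_2\}$ and applying the Grothendieck--Ogg--Shafarevich formula \cite[Expos\'e X, Th\'eor\`eme 7.1]{sga5} (with $\overline{X}=\mathbb{P}^1$, $g(\overline{X})=0$, and $\infty$ the unique point of $\overline{X}\setminus X$), one gets $\dim H^1(V,\sL_i\lvert_V)=\dim H^0(V,\sL_i\lvert_V)+\dim\sL_{i,\overline\eta}+\mathrm{Swan}_\infty(\sL_i)\geq \dim\sL_{i,\overline\eta}\geq 1$ whenever $\sL_i\neq 0$. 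Thus $H^1(V,\sL_i\lvert_V)\neq 0$ for infinitely many $i$, contradicting the finiteness above. Hence only finitely many $\sL_i$ are nonzero, and $\oplus_i\sL_i$ is constructible.
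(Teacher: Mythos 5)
Your reduction is correct as far as it goes: in the proof of Proposition \ref{the case of the direct sum} the hypothesis $\oplus_i\sL_i^\wedge=\prod_i\sL_i^\wedge$ is indeed used only to identify $\Exts^1(\prod_i\sL_i^\wedge,\Lambda)$ with $R^1\prod_i\sL_i$; your long-exact-sequence manipulation giving $R^1\prod_i\sL_i\simeq\Exts^2(\mathscr{H},\Lambda)$ is sound (each $\sL_i^\wedge$ is lisse and $\Lambda$-flat, so $R\Homs(\oplus_i\sL_i^\wedge,\Lambda)=R\prod_i\sL_i$); and the Grothendieck--Ogg--Shafarevich endgame is the same as the paper's. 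But the proof is not complete: the vanishing $\Exts^2(\mathscr{H},\Lambda)=0$, which you yourself flag as ``the main obstacle'', is never established, and it carries essentially the entire content of the corollary. Indeed, proving $R^1\prod_i\sL_i=0$ is equivalent in strength to the statement itself (it implies the corollary via GOS, and follows trivially from it once only finitely many $\sL_i$ are nonzero), so the reduction buys nothing until this step is supplied. The heuristic you propose for it is also unreliable: $\mathscr{H}$ is a quotient of an infinite product of lisse sheaves by an infinite direct sum, hence very far from constructible; the stalk of an infinite product is not the product of the stalks, so ``$\mathscr{H}$ has no punctual part'' cannot be read off pointwise; and the fact that $\Exts^{>0}(-,\Lambda)$ kills lisse sheaves does not pass to such quotients. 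Finally, the feature of $\mathbb{A}^1_k$ you invoke (vanishing of $H^{>0}(\mathbb{A}^1_k,\Lambda)$, i.e.\ simple connectivity for constant coefficients) is not the relevant one here: this section works in characteristic $p$, where nonzero finite local systems on $\mathbb{A}^1_k$ exist precisely because of wild ramification at infinity.

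The paper's proof takes a different and shorter route: it verifies the second hypothesis of Proposition \ref{the case of the direct sum} outright, i.e.\ it shows $\oplus_i\sL_i^\wedge=\prod_i\sL_i^\wedge$ --- equivalently, that your sheaf $\mathscr{H}$ is zero, not merely $\Exts^2$-acyclic. The input is representation-theoretic rather than cohomological: Lemma \ref{non empty implies dual non empty} (a Maschke-type argument) says that for a local system whose monodromy factors through a finite group of order prime to $\ell$, nonvanishing of $H^0(U,\sL_i)$ and of $H^0(U,\sL_i^\wedge)$ are equivalent; combining this with the fact that the prime-to-$p$ part of $\pi_1(\mathbb{A}^1_k)$ is trivial, the paper transfers the hypothesis ``for every \'etale $U$, all but finitely many $\sL_i(U)$ vanish'' to the dual systems, after which Proposition \ref{the case of the direct sum} applies verbatim. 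So in the paper the geometry of $\mathbb{A}^1_k$ enters through the structure of its fundamental group, feeding a duality statement about $H^0$'s, not through any vanishing theorem on $\mathbb{A}^1_k$. To salvage your approach you would need an independent proof that $R^1\prod_i\sL_i=0$ (or that $\mathscr{H}=0$); as written, that is a genuine gap.
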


\begin{proof}
By Proposition \ref{the case of the direct sum}, it suffices to show that $\oplus_i \sL_i^{\wedge} = \prod_i \sL_i^{\wedge}$. Since the prime-to-$p$ part of $\pi_1(\mathbb{A}^1_k)$ is trivial, this follows from Lemma \ref{non empty implies dual non empty}. 
\end{proof}

We finish this section by showing that the condition imposed in Proposition \ref{the case of the direct sum} is not empty. That is the existence of a countably infinite family $(\sL_i)_{i \in \mathbb{N}}$ of non-zero local systems 
on a $k$-variety such that 
$\oplus_{i \in \mathbb{N}} \sL_i \simeq \prod_{i \in \mathbb{N}} \sL_i$ and 
$\oplus_{i \in \mathbb{N}} \sL_i^{\wedge} \simeq \prod_{i \in \mathbb{N}} \sL_i^{\wedge}$.

\begin{lem} \label{non empty implies dual non empty}
  Let $X$ be a $k$-variety and let $\sL$ be a local system of $\Lambda$-modules on $X$.
  Suppose $\sL$ corresponds to a $\pi_1(X)$-representation that factors through a finite group $G$ 
  whose order $|G|$ is coprime to the prime $\ell$. If $\sL(X) \neq 0$ then $\sL^{\wedge}(X) \neq 0$. 
\end{lem}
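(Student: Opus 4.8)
The plan is to translate the statement into the representation theory of the finite group $G$ and then apply Maschke's theorem. First I would reduce to the case where $X$ is connected. Since $\sL(X) = \prod_\alpha \sL(X_\alpha)$ over the connected components $X_\alpha$ of $X$, and likewise $\sL^\wedge(X) = \prod_\alpha \sL^\wedge(X_\alpha)$, the hypothesis $\sL(X) \neq 0$ produces some component $X_\alpha$ with $\sL(X_\alpha) \neq 0$, and it suffices to exhibit a nonzero global section of $\sL^\wedge$ on that single component.

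Fix then a connected $X$ and a geometric point $\bar{x}$. Under the equivalence between local systems of $\Lambda$-modules on $X$ and finite continuous representations of $\pi_1(X,\bar{x})$, the sheaf $\sL$ corresponds to a finite-dimensional $\Lambda$-vector space $V$ on which $\pi_1(X)$ acts through the finite quotient $G$, and $\sL(X) = V^{\pi_1(X)} = V^G$. The dual local system $\sL^\wedge = \Homs(\sL,\Lambda)$ corresponds to the contragredient representation $V^\vee := \Hom_\Lambda(V,\Lambda)$, so that $\sL^\wedge(X) = (V^\vee)^G = \Hom_{\Lambda[G]}(V,\mathbf{1})$, where $\mathbf{1}$ denotes the trivial one-dimensional representation; symmetrically $\sL(X) = V^G = \Hom_{\Lambda[G]}(\mathbf{1},V)$.

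The key point is that $\Lambda = \mathbb{Z}/\ell\mathbb{Z} = \mathbb{F}_\ell$ is a field in which $|G|$ is invertible, so by Maschke's theorem the group algebra $\Lambda[G]$ is semisimple. Writing $V \cong \bigoplus_j S_j^{\oplus m_j}$ as a direct sum of simple $\Lambda[G]$-modules, the invariants $V^G = \Hom_{\Lambda[G]}(\mathbf{1},V)$ are nonzero precisely when $\mathbf{1}$ occurs among the $S_j$, i.e. when its multiplicity $m_{\mathbf{1}}$ is positive; and by Schur's lemma $(V^\vee)^G = \Hom_{\Lambda[G]}(V,\mathbf{1}) = \bigoplus_j \Hom_{\Lambda[G]}(S_j,\mathbf{1})^{\oplus m_j}$ is nonzero precisely when $\mathbf{1}$ occurs as a quotient of $V$, i.e. again exactly when $m_{\mathbf{1}} > 0$. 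Hence both $\sL(X) \neq 0$ and $\sL^\wedge(X) \neq 0$ are equivalent to $m_{\mathbf{1}} > 0$, which proves the lemma (and in fact its converse as well).

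There is no serious obstacle here; the only things to handle carefully are the identification of the dual sheaf with the contragredient representation together with the self-duality $\mathbf{1}^\vee \cong \mathbf{1}$, and the observation that the coprimality hypothesis $\gcd(|G|,\ell) = 1$ is precisely what makes Maschke's theorem apply — this is the sole place the hypothesis is used. If one prefers to avoid the module-theoretic bookkeeping, the same conclusion follows from the equality $\dim_\Lambda V^G = \dim_\Lambda (V^\vee)^G$, which holds because in a semisimple category the multiplicity of a simple module $S$ in $V$ equals that of $S^\vee$ in $V^\vee$, applied to the self-dual simple $S = \mathbf{1}$.
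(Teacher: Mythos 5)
Your proof is correct and follows essentially the same route as the paper: both translate $\sL(X)$ and $\sL^{\wedge}(X)$ into invariants of the $G$-representation $V$ and its contragredient, and both invoke Maschke's theorem (available precisely because $|G|$ is prime to $\ell$) to conclude that the trivial representation occurring in $V$ forces it to occur in $V^{\wedge}$. The paper phrases this by splitting off $W = V^{\pi_1(X)}$ as a direct summand and noting $W^{\wedge} \simeq W \subseteq [V^{\wedge}]^G$, while you decompose fully into simples and compare multiplicities of the trivial module --- the same mechanism, with your version also giving the converse.
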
 
\begin{proof} 
The local system $\sL$ corresponds to a finite 
$\pi_1(X)$-representation which we denote $V$ i.e. $V$ is a $\Lambda$-module
endowed with an action by $\pi_1(X)$. 
We have that 
$$\sL(X)  = V^{\pi_1(X)}.$$ 
Hence there exists a non-trivial sub-representation $W \subseteq V$ on which $\pi_1(X)$ acts trivially. 
By assumption, the action of $\pi_1(X)$ on $V$ factors through 
a finite quotient $G$ whose order $|G|$ is coprime to $\ell$. 
By Maschke's theorem, 
there exists $W' \subseteq V$ such that 
$V = W \oplus W'$ as $G$-representations. 
Hence we see that $V^{\wedge} = W^{\wedge} \oplus W'^{\wedge}$. 
Since $G$ acts trivially on $W$, we see that $W^{\wedge} \simeq W$ (as $G$-representations) and hence 
$W^{\wedge} \subseteq [V^{\wedge}]^G$. 
By definition, $[V^{\wedge}]^G = \sL^{\wedge}(X)$ and 
hence $\sL^{\wedge}(X)$ is not trivial. 
\end{proof} 

  The following example was inspired by a discussion with Pierre Deligne and shows that the hypothesis 
  in Proposition \ref{the case of the direct sum} is not empty. 

\begin{es}
\emph{We provide an example of a countably infinite family $(\sL_i)_{i \in \mathbb{N}}$ of non-zero local systems 
on a $k$-variety such that 
$\oplus_{i \in \mathbb{N}} \sL_i \simeq \prod_{i \in \mathbb{N}} \sL_i$ and 
$\oplus_{i \in \mathbb{N}} \sL_i^{\wedge} \simeq \prod_{i \in \mathbb{N}} \sL_i^{\wedge}$. 
Let $k$ be algebraically closed of characteristic $p$ such that $p$ divides $\ell - 1$. 
Let $\chi : \mathbb{F}_p \to (\mathbb{Z}/\ell\mathbb{Z})^*$ be a non-trivial character. 
Let $X := \mathbb{A}^1_k$. 
By \cite[Remark 5.8.6]{Sza}, $\mathrm{Hom}(\pi_1(X),\mathbb{F}_p)$ is 
infinite (here the morphisms are continuous group homomorphisms, with the discrete topology on $\mathbb{F}_p$). Let $\{\phi_i : \pi_1(X) \to \mathbb{F}_p\}_{i \in \mathbb{N}}$ be a family
of distinct elements of $\mathrm{Hom}(\pi_1(X),\mathbb{F}_p)$. 
 For every $i$, let $\sL_i$ be the rank 1 local system defined by the 
$(\mathbb{Z}/\ell\mathbb{Z})^*$ character given by the composition 
$\pi_1(X) \overset{\phi_i}{\longrightarrow} \mathbb{F}_p \to (\mathbb{Z}/\ell\mathbb{Z})^*$.
We claim 
$$\oplus_{i \in \mathbb{N}} \sL_i \simeq \prod_{i \in \mathbb{N}} \sL_i$$ and 
$$\oplus_{i \in \mathbb{N}} \sL_i^{\wedge} \simeq \prod_{i \in \mathbb{N}} \sL_i^{\wedge}.$$
By Lemma \ref{non empty implies dual non empty}, 
it suffices to verify that if $U \to X$ is an étale morphism of connected varieties then the set of $i \in \mathbb{N}$ such that   
$\sL_i(U) \neq 0$ is finite. 
Let $V_i$ denote the $\pi_1(X)$-representation corresponding 
to $\sL_i$. By definition, $\sL_i(U) = V_i^{\pi_1(U)}$. 
Since, $\sL_i$ is of rank one, we get that $V_i^{\pi_1(U)}$ is either $V_i$ or $0$. 
If $V_i^{\pi_1(U)} = V_i$ then $\pi_1(U)$ acts trivially on $V_i$ and hence by construction
$\pi_1(U)$ lies in the kernel of the map $\pi_1(X) \overset{\phi_i}{\longrightarrow} \mathbb{F}_p$.
Let $U_i \to X$ be the finite étale cover of $X$ that trivializes 
$\sL_i$ i.e. $\pi_1(U_i)$ is the kernel of the homomorphism $\phi_i : \pi_1(X) \to \mathbb{F}_p$. 
It follows that if $k(U)$, $k(U_i)$ and $k(X)$ denote the function fields of $U$, $U_i$ and $X$ then 
since $\pi_1(U), \pi_1(U_i)$ and $\pi_1(X)$ can be identified with quotients of 
$\mathrm{Gal}(k(X)^{\mathrm{sep}}/k(X))$, we get from the Galois correspondance that 
the field extension $k(X) \hookrightarrow k(U)$ factors through the extension $k(X) \hookrightarrow k(U_i)$ and an 
extension 
$k(U_i) \hookrightarrow k(U)$. 
As $U \to X$ is étale, the field extension $k(X) \hookrightarrow k(U)$ is finite and separable (in particular, by the primitive element theorem, it only has a finite number of intermediate fields). 
The family $\{U_i \to X\}_i$ is a countable set 
of distinct $p$-covers and hence the corresponding family of field extensions $\{k(X) \hookrightarrow k(U_i)\}$ are 
pairwise non-isomorphic. 
It follows that there can be only finitely many $i$ such that $k(U_i)$ embeds into $k(U)$. 
% Equivalently, there can only be finitely many $i$ such that the morphism 
%$U \to X$ factors through $U_i \to X$ via a map $U \to U_i$. 
We have thus shown that there are only finitely many $i$ such that $\sL_i(U) \neq 0$.}
\end{es} 

Finally we specialize to the case $X = \mathbb{A}^1_k$, where the characteristic of $k$ is zero. In this case the finite local systems on $X$ are constant and we claim that 
\[
\bigoplus_{i \in \mathbb{N}} \Lambda
\]
cannot be reflexive.

\begin{lem} \label{lem:finpeuofsh}
In the situation of the preceding paragraph we have that $R^1 \prod \Lambda = 0$.
\end{lem}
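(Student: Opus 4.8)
The plan is to identify the sheaf $R^1\prod_{i\in\mathbb N}\Lambda=\mathcal H^1\big(R\prod_{i\in\mathbb N}\Lambda\big)$ explicitly and then show that the presheaf it sheafifies is already locally zero. First I would record the hypercohomology computation used in Proposition \ref{the case of the direct sum}: since $R\Gamma(U,-)$ commutes with the derived product and products of complexes of $\Lambda$-modules are exact, one has for every \'etale $U\to X$ that $\mathbb H^q(U,R\prod_i\Lambda)=\prod_i H^q(U,\Lambda)=H^q(U,\Lambda)^{\mathbb N}$ (this is the same Mittag-Leffler/\cite[Tag 0940]{stacks-project} argument). Feeding $q=1$ into the spectral sequence $H^p(U,\mathcal H^q(R\prod_i\Lambda))\Rightarrow\mathbb H^{p+q}(U,R\prod_i\Lambda)$ and using that higher cohomology presheaves of $\mathcal H^0=\prod_i\Lambda$ sheafify to zero, I obtain that $R^1\prod_i\Lambda$ is the sheafification of the presheaf $P\colon U\mapsto H^1(U,\Lambda)^{\mathbb N}$. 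It therefore suffices to prove that $P$ is locally zero, i.e.\ that every section over every $U$ is annihilated by some \'etale covering of $U$.

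The decisive input is a finiteness statement. Working on the affine members of a basis of $X_{\text{\'et}}$, it is enough to treat $U$ a disjoint union of finitely many connected smooth affine curves over $k$ (here $X=\mathbb A^1_k$). For such $U$ the group $H^1(U,\Lambda)$ is \emph{finite}: this is finiteness of \'etale cohomology with coefficients prime to $\mathrm{char}(k)$, and for curves it is visible from Kummer theory via $H^1(U,\mu_\ell)\cong \mathcal O(U)^{*}/\ell\oplus\mathrm{Pic}(U)[\ell]$, both summands finite. This is exactly the point that separates the constant case from Proposition \ref{the case of the direct sum}: there the factors $H^1(U,\sL_i)$ are infinitely many \emph{distinct} finite groups whose trivializing covers are mutually incompatible, whereas here every factor is the one fixed finite group $H^1(U,\Lambda)$.

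Given a section $s=(s_i)_{i\in\mathbb N}\in H^1(U,\Lambda)^{\mathbb N}$, all of whose entries lie in the single finite group $H^1(U,\Lambda)$, the plan is to trivialize that entire group by one finite \'etale cover. On each connected component of $U$ I would take the finite \'etale cover $U'\to U$ corresponding to the open subgroup $\bigcap_{s\in H^1(U,\Lambda)}\ker(s)\subset\pi_1(U)$, where $H^1(U,\Lambda)=\mathrm{Hom}_{\mathrm{cont}}(\pi_1(U),\Lambda)$; equivalently $U'$ is the fiber product over $U$ of the finitely many $\Lambda$-torsors representing a generating set of $H^1(U,\Lambda)$. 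Since every class of $H^1(U,\Lambda)$ factors through the finite quotient defining $U'$, the restriction $H^1(U,\Lambda)\to H^1(U',\Lambda)$ is identically zero, so $s_i|_{U'}=0$ for all $i$ at once, i.e.\ $s|_{U'}=0$. As $U'\to U$ is finite \'etale and surjective, it is a covering in $X_{\text{\'et}}$, so $s$ is locally zero; handling the finitely many components of an affine $U$ in this way shows $P$ is locally zero, whence $P^{\#}=R^1\prod_i\Lambda=0$.

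The one obstacle to keep honest is precisely the failure of infinite products to commute with sheafification (equivalently with the filtered colimits computing stalks): a priori each individual $s_i$ dies on some cover, and one must instead kill all entries simultaneously. The resolution — and the sole place geometry enters — is that the entries all range over one fixed finite group, which is annihilated by a single finite \'etale cover. I would therefore take care to state the finiteness of $H^1(U,\Lambda)$ precisely and to assemble the component-wise trivializing covers into a legitimate \'etale covering of $U$.
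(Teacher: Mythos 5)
Your proposal is correct and follows essentially the same route as the paper: identify $R^1\prod_i\Lambda$ as the sheafification of the presheaf $U \mapsto \prod_{\mathbb{N}} H^1(U,\Lambda)$ (the paper cites \cite[Tag 0940, Remark 21.22.4]{stacks-project} where you rederive this via the spectral sequence), then use finiteness of $H^1(U,\Lambda)$ to produce a single finite \'etale cover --- the fiber product of the trivializing covers of the finitely many $\Lambda$-torsors --- on which every entry of a section of the product vanishes simultaneously. The only differences are cosmetic (your Kummer-theoretic justification of finiteness and the $\pi_1$-formulation of the cover), so the argument matches the paper's proof.
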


\begin{proof}
By \cite[Tag 0940, Remark 21.22.4]{stacks-project} $R^1 \prod \Lambda = 0$ is the sheafification of the presheaf
\[
(U \xrightarrow{\text{étale}} X) \mapsto \prod_{\mathbb{N}} H^1(U, \Lambda).
\]
But $H^1(U, \Lambda)$ is a finite $\Lambda$-vector space, and hence the underlying set is finite say $\{s_1, s_2, \ldots, s_m \}$. Each $s_i$ corresponds to a $\Lambda$-torsor $V_i \to U$ which is trivialized by a covering $U_i \to U$. However the sections then vanish in 
$$H^1(U_1 \times_U U_2 \times_U \cdots \times_U U_m, \Lambda).$$
Hence the associated sheaf is zero.
\end{proof}

\begin{prop} \label{prop:5223}
Let $X = \mathbb{A}^1_k$ where the characteristic of $k$ is zero. Then \[
\bigoplus_{i \in \mathbb{N}} \Lambda
\]
is not reflexive.
\end{prop}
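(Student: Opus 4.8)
The plan is to compute the double dual $D\circ D(\sF)$ of $\sF=\bigoplus_{i\in\mathbb N}\Lambda$ explicitly and to exhibit a single global cohomology class witnessing that the canonical biduality morphism $\sF\to D\circ D(\sF)$ is not an isomorphism. Since $X=\mathbb A^1_k$ is smooth of pure dimension one, Poincar\'e duality gives $p_X^!\Lambda\simeq\Lambda[2](1)$, so that, using that $R\Homs(\Lambda,-)$ is the identity and that $R\Homs$ converts the direct sum in the first variable into a derived product,
\[
D(\sF)=R\Homs\Big(\bigoplus_{i\in\mathbb N}\Lambda,\ \Lambda[2](1)\Big)\simeq\Big(R\prod_{i\in\mathbb N}\Lambda\Big)[2](1).
\]
By Lemma \ref{lem:finpeuofsh} we have $R^1\prod_{\mathbb N}\Lambda=0$ (and the higher derived products of a countable family vanish), so $D(\sF)\simeq(\prod_{i\in\mathbb N}\Lambda)[2](1)$ is concentrated in a single degree. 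Dualizing again, the shift and twist cancel and $D\circ D(\sF)\simeq R\Homs(\prod_{i\in\mathbb N}\Lambda,\Lambda)$, with the reflexivity morphism $\sF\to D\circ D(\sF)$ inducing on $H^0$ the canonical map
\[
\bigoplus_{i\in\mathbb N}\Lambda\longrightarrow \Homs\Big(\prod_{i\in\mathbb N}\Lambda,\Lambda\Big).
\]

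Next I would carry out the sheaf-theoretic identifications. Because $X$ is connected, $\prod_{i\in\mathbb N}\Lambda$ is the constant sheaf attached to the $\Lambda$-module $\Lambda^{\mathbb N}=\prod_{\mathbb N}\Lambda$; because $X$ is quasi-compact, the global sections of $\bigoplus_{i\in\mathbb N}\Lambda$ are exactly the finitely supported sequences $\bigoplus_{\mathbb N}\Lambda$. Applying $\Gamma(X,-)$ to the displayed morphism then recovers the honest biduality map $V\to V^{\vee\vee}$ of $\Lambda$-modules, where $V:=\bigoplus_{\mathbb N}\Lambda$ and $V^{\vee}=\Hom_\Lambda(V,\Lambda)=\prod_{\mathbb N}\Lambda$. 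Thus reflexivity of $\sF$ would force this biduality map to be an isomorphism; equivalently, feeding $R^1\prod_{\mathbb N}\Lambda=0$ into the short exact sequence coming from the spectral sequence at the start of this subsection collapses it to the requirement $\bigoplus_i\Lambda\xrightarrow{\sim}\Homs(\prod_i\Lambda,\Lambda)$.

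The heart of the argument is the purely algebraic fact that for the infinite free module $V=\bigoplus_{\mathbb N}\Lambda$ over the nonzero finite ring $\Lambda$, the map $V\to V^{\vee\vee}$ is injective but not surjective. I would exhibit a functional outside the image: fixing a non-principal ultrafilter $\mathcal U$ on $\mathbb N$, the ultralimit $\varphi\big((a_i)_i\big):=\lim_{\mathcal U}a_i$ is well defined (as $\Lambda$ is finite) and $\Lambda$-linear, hence $\varphi\in\Hom_\Lambda(\prod_{\mathbb N}\Lambda,\Lambda)=V^{\vee\vee}$. Writing $e_j=(0,\dots,0,1,0,\dots)\in\prod_{\mathbb N}\Lambda$ and $\mathbf 1=(1,1,\dots)$, one has $\varphi(e_j)=0$ for all $j$ but $\varphi(\mathbf 1)=1$. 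Every functional in the image of $V\to V^{\vee\vee}$ is of the form $(a_i)_i\mapsto\sum_{i\in S}v_i a_i$ for a finite set $S$, and so is determined by its values on the $e_j$; since $\varphi$ vanishes on all $e_j$ yet is nonzero, it cannot lie in the image. (When $\Lambda$ is a field one may instead invoke the classical fact that the algebraic double dual of an infinite-dimensional vector space is strictly larger.) Hence $\sF\to D\circ D(\sF)$ already fails to be an isomorphism on global sections, and $\sF$ is not reflexive.

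I expect the main obstacle to be this non-surjectivity of the biduality map together with the care needed in the identifications that reduce the sheaf statement to it: that $D(\sF)$ is genuinely concentrated in one degree (this is precisely where Lemma \ref{lem:finpeuofsh}, and through it the characteristic-zero hypothesis, enters, via $R^1\prod_{\mathbb N}\Lambda=0$), and that passing to global sections over the connected quasi-compact space $X$ yields the honest $\Lambda$-linear map $V\to V^{\vee\vee}$ rather than a sheafified surrogate. Once these identifications are in place the contradiction is immediate.
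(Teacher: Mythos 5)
Your proof is correct and follows essentially the same route as the paper: both arguments hinge on Lemma \ref{lem:finpeuofsh} to reduce reflexivity to the identity $\Homs(\prod_{i}\Lambda,\Lambda)=\bigoplus_i\Lambda$, identify sheaf homomorphisms out of $\prod_i\Lambda$ with $\Lambda$-module homomorphisms using connectedness of $\mathbb{A}^1_k$, and then contradict the fact that the infinite-dimensional vector space $\bigoplus_i\Lambda$ cannot be reflexive as a $\Lambda$-module. The only differences are cosmetic: you unwind the section's spectral sequence directly via Poincar\'e duality, and you make the final linear-algebra fact explicit with an ultrafilter functional, whereas the paper simply invokes it as classical.
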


\begin{proof}
By the spectral sequence in the beginning of this section, if $\bigoplus_{i \in \mathbb{N}} \Lambda$ is reflexive then we must have by Lemma \ref{lem:finpeuofsh}:
\[
\Homs(\prod_{i \in \mathbb{N}} \Lambda, \Lambda) = \bigoplus_{i \in \mathbb{N}} \Lambda.
\]
Taking global sections gives
\begin{equation} \label{eq:aofns}
\textrm{Hom}_{\textrm{Sh}_{\text{ét}}(\mathbb{A}^1_k, \Lambda)}(\prod_{i \in \mathbb{N}} \Lambda, \Lambda) = \bigoplus_{i \in \mathbb{N}} \Lambda,
\end{equation}
where the $\textrm{Hom}$ on the LHS of \eqref{eq:aofns} is in the category of étale sheaves in $\Lambda$-vector spaces on $\mathbb{A}^1_k$. However 
\begin{equation} \label{eq:fjkspsf}
\textrm{Hom}_{\textrm{Sh}_{\text{ét}}(\mathbb{A}^1_k, \Lambda)}(\prod_{i \in \mathbb{N}} \Lambda, \Lambda) = \textrm{Hom}_{\Lambda}(\prod_{i \in \mathbb{N}} \Lambda, \Lambda)
\end{equation}
where the $\textrm{Hom}$ on the RHS of \eqref{eq:fjkspsf} is in the category of $\Lambda$-vector spaces. We explain why \eqref{eq:fjkspsf} is true. Let $\alpha \in \textrm{Hom}_{\textrm{Sh}_{\text{ét}}(\mathbb{A}^1_k, \Lambda)}(\prod_{i \in \mathbb{N}} \Lambda, \Lambda)$. For an étale morphism $U \to \mathbb{A}^1_k$ (with $U$ connected) one demands to have a commutative square (noting that global sections commute with products)
$$
\begin{tikzcd} [row sep = large, column sep = large] 
\prod_{i \in \mathbb{N}} \Lambda(\mathbb{A}^1_k) \arrow[r, "\alpha_{\mathbb{A}^1_k}"] \arrow[d, "\id"] &
\Lambda (\mathbb{A}^1_k) \arrow[d, "\id"] \\
\prod_{i \in \mathbb{N}} \Lambda(U) \arrow[r, "\alpha_U"] &
\Lambda(U)  
\end{tikzcd}
$$
and we see that the morphism on global sections (i.e. $\alpha_{\mathbb{A}^1_k}$) determines the morphism on local sections (i.e. $\alpha_U$). Conversely $\alpha_{\mathbb{A}^1_k}$ is a morphism of vector spaces $\prod_{i \in \mathbb{N}} \Lambda \to \Lambda$ and clearly any such morphism gives rise to a unique morphism of sheaves.

Now \eqref{eq:aofns} and \eqref{eq:fjkspsf} imply that
\[
\textrm{Hom}_{\Lambda}(\prod_{i \in \mathbb{N}} \Lambda, \Lambda) = \bigoplus_{i \in \mathbb{N}} \Lambda
\]
and this implies that the infinite dimensional vector space $\bigoplus_{i \in \mathbb{N}} \Lambda$ is reflexive, which is a contradiction.
\end{proof}

  \bibliographystyle{plain}
\bibliography{library}

   \end{document}